\documentclass[11pt]{preprint}
\usepackage[english]{babel}
\usepackage{amssymb}
\usepackage{amsmath}
\usepackage{hyperref}
\usepackage{breakurl}
\usepackage{fonttable}
\usepackage{mathtools}
\usepackage{pifont}
\usepackage{mhenvs}
\usepackage{mhequ}
\usepackage{graphicx}

\usepackage{mhsymb}
\usepackage{booktabs}
\usepackage{stmaryrd}
\usepackage{float}
\usepackage{tikz}
\usepackage{mathrsfs}
\usepackage{array}
\usepackage{times}
\usepackage{microtype}
\usepackage{mathtools}
\usepackage{comment}
\usepackage{slashed}
\usepackage{upgreek}
\usepackage{cases}
\usepackage{mathtools}
\usepackage{centernot}
\usepackage{leftidx}
\usepackage{accents}
\usepackage{arydshln}
\usepackage{footnote}
\makesavenoteenv{tabular}
\usepackage{enumitem}

\usepackage{stackrel}
\usepackage{halloweenmath}
\usepackage{longtable}
\usepackage{array}
\usepackage{cprotect}
\usepackage{xstring}
\usepackage{ulem}
\usepackage{stmaryrd}
\usepackage{empheq}
\usepackage{color}
\usepackage{framed}

\definecolor{shadecolor}{gray}{0.875}

\setlength{\marginparsep}{2mm}
\setlength{\marginparwidth}{4cm}

\DeclareSymbolFont{timesoperators}{T1}{ptm}{m}{n}
\SetSymbolFont{timesoperators}{bold}{T1}{ptm}{b}{n}

\makeatletter
\renewcommand{\operator@font}{\mathgroup\symtimesoperators}
\makeatother

\colorlet{symbolsgrey}{blue!30!black!50}
\colorlet{testcolor}{green!60!black}
\definecolor{purple}{rgb}{0.55,0.05,0.8}
\definecolor{symbols}{rgb}{0.55,0.05,0.8}

\usetikzlibrary{shapes.misc}
\usetikzlibrary{shapes.symbols}
\usetikzlibrary{shapes.geometric}
\usetikzlibrary{snakes}
\usetikzlibrary{decorations}
\usetikzlibrary{decorations.markings}

\usetikzlibrary{calc}
\usetikzlibrary{external}

\let\oldskull\skull
\def\skull{\mathord{\oldskull}}

\newcommand{\cbm}{{\mcb{m}}}
\newcommand{\cbn}{{\mcb{n}}}
\newcommand{\cbB}{{\mcb{B}}}
\newcommand{\cbC}{{\mcb{C}}}

\DeclareMathAlphabet{\mathbbm}{U}{bbm}{m}{n}

\overfullrule=3mm
\marginparwidth=3.3cm

\DeclareFontFamily{U}{BOONDOX-calo}{\skewchar\font=45 }
\DeclareFontShape{U}{BOONDOX-calo}{m}{n}{
  <-> s*[1.05] BOONDOX-r-calo}{}
\DeclareFontShape{U}{BOONDOX-calo}{b}{n}{
  <-> s*[1.05] BOONDOX-b-calo}{}
\DeclareMathAlphabet{\mcb}{U}{BOONDOX-calo}{m}{n}
\SetMathAlphabet{\mcb}{bold}{U}{BOONDOX-calo}{b}{n}

\setlist{noitemsep,topsep=4pt}
\newcommand{\nnorm}[1]{{\vert\kern-0.25ex\vert\kern-0.25ex\vert #1 
    \vert\kern-0.25ex\vert\kern-0.25ex\vert}}
\newcommand{\nnnorm}[1]{{\talloblong #1 \talloblong}}
\newcommand{\norm}[1]{{\| #1 
    \|}}

\makeatletter 
\newcommand*{\bigcdot}{}
\DeclareRobustCommand*{\bigcdot}{%
  \mathbin{\mathpalette\bigcdot@{}}%
}
\newcommand*{\bigcdot@scalefactor}{.5}
\newcommand*{\bigcdot@widthfactor}{1.15}
\newcommand*{\bigcdot@}[2]{%
  \sbox0{$#1\vcenter{}$}
  \sbox2{$#1\cdot\m@th$}%
  \hbox to \bigcdot@widthfactor\wd2{%
    \hfil
    \raise\ht0\hbox{%
      \scalebox{\bigcdot@scalefactor}{%
        \lower\ht0\hbox{$#1\bullet\m@th$}%
      }%
    }%
    \hfil
  }%
}
\makeatother

\def\dash{\leavevmode\unskip\kern0.18em--\penalty\exhyphenpenalty\kern0.18em}
\def\slash{\leavevmode\unskip\kern0.15em/\penalty\exhyphenpenalty\kern0.15em}

\newcommand{\Norm}[1]{{\talloblong #1 \talloblong}}

\colorlet{darkblue}{blue!90!black}
\colorlet{darkgreen}{green!82!black}
\colorlet{darkyellow}{yellow!65!red}
\colorlet{darkred}{red!80!black}

\def\auth#1{}

\newcommand{\stau}{{\boldsymbol{\tau}}}
\newcommand{\ssigma}{{\boldsymbol{\sigma}}}
\newcommand{\slambda}{{\boldsymbol{\lambda}}}

\newcommand{\mfT}{\mathfrak{T}}

\newcommand{\mfo}{{\mathfrak{o}}}

\newcommand{\mfL}{\mathfrak{L}}

\newcommand{\mfc}{\mathfrak{c}}

\newcommand{\mfl}{\mathfrak{l}}
\newcommand{\mfz}{\mathfrak{z}}


\newcommand{\mcE}{\mathcal{E}}
\newcommand{\mcH}{\mathcal{H}}

\newcommand{\mcR}{\mathcal{R}}
\newcommand{\mcC}{\mathcal{C}}

\newcommand{\mcS}{\mathcal{S}}

\newcommand{\mcL}{\mathcal{L}}
\newcommand{\mcT}{\mathcal{T}}

\newcommand{\mcN}{\mathcal{N}}

\newcommand{\mcD}{\mathcal{D}}
\newcommand{\mcW}{\mathcal{W}}
\newcommand{\mcP}{\mathcal{P}}
\newcommand{\mcG}{\mathcal{G}}

\newcommand{\mcQ}{\mathcal{Q}}


\newcommand{\bff}{\mathbf{f}}

\newcommand{\bfL}{\mathbf{L}}

\newcommand{\bell}{\boldsymbol{\ell}}


\newcommand{\Aut}{\mathrm{Aut}}
\newcommand{\Iso}{\mathrm{Iso}}

\newcommand{\tiI}{\Tilde{I}}

\newcommand{\mfs}{\mathfrak{s}}

\newcommand{\D}{\mathrm{D}}
\newcommand{\dd}{\mathrm{d}}
\newcommand{\Id}{\mathrm{Id}}

\newcommand{\mVec}{\mathrm{Vec}}

\newcommand{\epmu}{{\eps,\mu}}
\newcommand{\epnu}{{\eps,\nu}}

\newcommand{\A}{\mathrm{A}}
\newcommand{\bfX}{\mathbf{X}}

\newcommand{\Pol}{\mathrm{Pol}}
\newcommand{\Cov}{\mathrm{Cov}}

\newcommand{\btau}{\underline{\tau}}
\newcommand{\bsigma}{\underline{\sigma}}
\newcommand{\bT}{\boldsymbol{\mathfrak{T}}}
\newcommand{\bUpsilon}{\underline{\Upsilon}}


\newcommand{\Ind}{\text{$\mathrm{Ind}$}}

\DeclareFontFamily{U} {MnSymbolA}{}
\DeclareFontShape{U}{MnSymbolA}{m}{n}{
  <-6> MnSymbolA5
  <6-7> MnSymbolA6
  <7-8> MnSymbolA7
  <8-9> MnSymbolA8
  <9-10> MnSymbolA9
  <10-12> MnSymbolA10
  <12-> MnSymbolA12}{}
\DeclareFontShape{U}{MnSymbolA}{b}{n}{
  <-6> MnSymbolA-Bold5
  <6-7> MnSymbolA-Bold6
  <7-8> MnSymbolA-Bold7
  <8-9> MnSymbolA-Bold8
  <9-10> MnSymbolA-Bold9
  <10-12> MnSymbolA-Bold10
  <12-> MnSymbolA-Bold12}{}

\DeclareSymbolFont{MnSyA} {U} {MnSymbolA}{m}{n}

\DeclareMathSymbol{\graft}{\mathrel}{MnSyA}{184}


\usetikzlibrary{calc}
\usetikzlibrary{shapes.misc}
\usetikzlibrary{shapes.symbols}
\usetikzlibrary{shapes.geometric}
\usetikzlibrary{snakes}
\usetikzlibrary{decorations}
\usetikzlibrary{decorations.markings}

\tikzset{
	root/.style={circle,fill=testcolor,inner sep=0pt, minimum size=2mm},
	dot/.style={circle,fill=symbols,draw=symbols,inner sep=0pt,minimum size=0.5mm},
	bdot/.style={circle,fill=symbols,draw=symbols,inner sep=0pt,minimum size=1mm},
	bdotsml/.style={circle,fill=symbols,draw=symbols,inner sep=0pt,minimum size=0.75mm},
	square/.style={regular polygon,regular polygon sides=4,fill=black,draw=black,inner sep=0pt,minimum size=1.2mm},
	wsquare/.style={regular polygon,regular polygon sides=4,fill=white,draw=black, inner sep=0pt,minimum size=1.2mm},
	squaresml/.style={regular polygon,regular polygon sides=4,fill=black,draw=black,inner sep=0pt,minimum size=0.9mm},
	wsquaresml/.style={regular polygon,regular polygon sides=4,fill=white,draw=black, inner sep=0pt,minimum size=0.9mm},
	eps/.style={circle,fill=white,draw=symbols,inner sep=0pt,minimum size=1mm},
	int/.style={circle,fill=black,draw=black,inner sep=0pt,minimum size=0.7mm},
	var/.style={circle,fill=black!10,draw=black,inner sep=0pt, minimum size=2mm},
	dotred/.style={circle,fill=black!50,inner sep=0pt, minimum size=2mm},
	generic/.style={semithick,shorten >=1pt,shorten <=1pt},
	dist/.style={ultra thick,draw=testcolor,shorten >=1pt,shorten <=1pt},
	testfcn/.style={ultra thick,testcolor,shorten >=1pt,shorten <=1pt,<-},
	testfcnx/.style={ultra thick,testcolor,shorten >=1pt,shorten <=1pt,<-,
		postaction={decorate,decoration={markings,mark=at position 0.6 with {\drawx}}}},
	keps/.style={semithick,shorten >=1pt,shorten <=1pt,densely dashed,->},
	kprimex/.style={semithick,shorten >=1pt,shorten <=1pt,densely dashed,->,
		postaction={decorate,decoration={markings,mark=at position 0.4 with {\drawx}}}},
	kernel/.style={semithick,shorten >=1pt,shorten <=1pt,->},
	multx/.style={shorten >=1pt,shorten <=1pt,
		postaction={decorate,decoration={markings,mark=at position 0.5 with {\drawx}}}},
	kernelx/.style={semithick,shorten >=1pt,shorten <=1pt,->,
		postaction={decorate,decoration={markings,mark=at position 0.4 with {\drawx}}}},
	kernel1/.style={->,semithick,shorten >=1pt,shorten <=1pt,postaction={decorate,decoration={markings,mark=at position 0.45 with {\draw[-] (0,-0.1) -- (0,0.1);}}}},
	kernel2/.style={->,semithick,shorten >=1pt,shorten <=1pt,postaction={decorate,decoration={markings,mark=at position 0.45 with {\draw[-] (0.05,-0.1) -- (0.05,0.1);\draw[-] (-0.05,-0.1) -- (-0.05,0.1);}}}},
	kernelBig/.style={semithick,shorten >=1pt,shorten <=1pt,decorate, decoration={zigzag,amplitude=1.5pt,segment length = 3pt,pre length=2pt,post length=2pt}},
	rho/.style={dotted,semithick,shorten >=1pt,shorten <=1pt},
	renorm/.style={shape=circle,fill=white,inner sep=1pt},
	labl/.style={shape=rectangle,fill=white,inner sep=1pt},
	xi/.style={circle,fill=symbols!10,draw=symbols,inner sep=0pt,minimum size=1.2mm},
	xix/.style={crosscircle,fill=symbols!10,draw=symbols,inner sep=0pt,minimum size=1.2mm},
	xib/.style={circle,fill=symbols!10,draw=symbols,inner sep=0pt,minimum size=1.6mm},
	xibx/.style={crosscircle,fill=symbols!10,draw=symbols,inner sep=0pt,minimum size=1.6mm},
	not/.style={circle,fill=symbols,draw=symbols,inner sep=0pt,minimum size=0.5mm},
cumu2n/.style={inner sep=3pt},
cumu2/.style={draw=red!80,fill=red!40},
cumu2b/.style={draw=blue!80,fill=blue!40},
cumu2nv/.style={inner sep=3pt},
cumu2v/.style={draw=red!80,fill=white,very thick},
cumu3/.style={regular polygon, regular polygon sides=3,draw=red!80,rounded corners=3pt,fill=red!40,minimum size=5mm},
cumu4/.style={regular polygon, regular polygon sides=4,draw=red!80,rounded corners=3pt,fill=red!40,minimum size=7mm},
cumu5/.style={regular polygon, regular polygon sides=5,draw=red!80,rounded corners=3pt,fill=red!40,minimum size=7mm},
	>=stealth,
	}





\newcommand{\rmd}{\mathrm{d}}
\newcommand{\rmI}{\mathrm{I}}

\newcommand{\tF}{\tilde{F}}
\newcommand{\tiR}{\tilde{R}}
\newcommand{\ttu}{{\tt u}}

\newcommand{\Kappa}{\text{{\Large $\kappa$}}}

\newcommand{\X}{\mathbf{X}}

\DeclareFontFamily{U}{mathx}{}
\DeclareFontShape{U}{mathx}{m}{n}{<-> mathx10}{}
\DeclareSymbolFont{mathx}{U}{mathx}{m}{n}
\DeclareMathAccent{\widecheck}{0}{mathx}{"71}

\makeatletter

\DeclareRobustCommand{\TitleEquation}[2]{\texorpdfstring{\StrLeft{\f@series}{1}[\@firstchar]$\if%
b\@firstchar\boldsymbol{#1}\else#1\fi$}{#2}}

\makeatother

\newcommand{\noise}{\raisebox{0ex}{\includegraphics[scale=1.2]{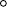}}}

\newcommand{\bignoise}{\raisebox{+0.05ex}{\includegraphics[scale=1.5]{figures/Xi.pdf}}}
\newcommand{\tOne}{\raisebox{-0.5ex}{\includegraphics[scale=1.2]{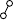}}}
\newcommand{\tOnesmall}{\raisebox{-0.5ex}{\includegraphics[scale=0.8]{figures/XiX.pdf}}}

\newcommand{\tTwo}{\raisebox{-1ex}{\includegraphics[scale=1.2]{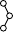}}}
\newcommand{\tTwosmall}{\raisebox{-0.5ex}{\includegraphics[scale=0.8]{figures/Tree1.pdf}}}
\newcommand{\tThree}{\raisebox{-.5ex}{\includegraphics[scale=1.2]{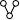}}}
\newcommand{\tThreesmall}{\raisebox{-0.5ex}{\includegraphics[scale=0.8]{figures/Tree2.pdf}}}

\usepackage[margin=1.15in]{geometry}

\begin{document}

\title{Rough differential equations in the flow approach}

\author{Ajay Chandra$^{1}$, L\'{e}onard Ferdinand$^{2}$}

\institute{Imperial College London, UK 
\and Max–Planck Institute for Mathematics in the Sciences, Leipzig, Germany\\[1em]
\email{a.chandra@ic.ac.uk, lferdinand@mis.mpg.de}}

\maketitle

\begin{abstract}
We show how the flow approach of Duch \cite{Duch21}, with elementary differentials as coordinates as in \cite{gKPZFlow}, can be used to prove well-posedness for rough stochastic differential equations driven by fractional Brownian motion with Hurst index $H > \frac{1}{4}$. 
A novelty appearing here is that we use coordinates for the flow that are indexed by trees rather than multi-indices. 
\end{abstract}

\setcounter{tocdepth}{2}
\tableofcontents

\section{Introduction}
We revisit local and global well-posedness for the finite dimensional rough differential equation 
\begin{equs}\label{eq:eq0}   
\mathrm{d}u=V\big(u(t)\big)\mathrm{d}W(t)\,,\;u(0)={\tt u}\in\R^{\cbn}
\end{equs}
where $u$ takes values in $\R^{\cbn}$, $W$ is an $\R^\cbm$-valued fractional Brownian motion with Hurst index $H \in \big(\frac{1}{4},\frac{1}{2}\big]$, and $V$ is a sufficiently regular map $V: \R^{\cbn} \rightarrow \mcL(\R^{\cbm},\R^{\cbn})$ where $\mcL(\R^\cbm,\R^{\cbn})$ is the space of linear maps from $\R^{\cbm}$ to $\R^{\cbn}$. 

In particular, we take $W:[0,\infty) \rightarrow \R^{\cbm}$ to be a Gaussian process with covariance 
\begin{equs}\label{eq:fracBrown}
\E \big[ \scal{W(t),e}_{\R^{\cbm}} \scal{W(s),e'}_{\R^{\cbm}}  \big]
=
\scal{e,e'}_{\R^{\cbm}} \Cov^+_W(t,s)\,,
\end{equs}
for any $s,t \in \R_{\geqslant 0} \eqdef [0,\infty)$ and $e,e' \in \R^{\cbm}$, where
\begin{equs}    \Cov^+_W(t,s)\eqdef C_H\big(t^{2H}+s^{2H}-|t-s|^{2H}\big)\,.
\end{equs}
Here $C_{1/2}=\frac12$ and for $H\neq 1/2$ we choose $C_H \eqdef\frac{1}{2H(2H-1)} $. 

We write $W(t)=\int_0^t\xi(s)\rmd s$ where $\xi$ is a stationary noise on $\R$ with covariance
\begin{equs}\label{eq:fracBrown2}
  \E \big[ \scal{\xi,e\otimes\varphi} \scal{\xi,e'\otimes\psi} \big] =\langle e,e'\rangle_{\R^{\cbm}}\int_{\R^2} |s-r|^{2H}\frac{\rmd \varphi}{\rmd t}(s)\frac{\rmd\psi}{\rmd t}(r)\rmd s\rmd r
\end{equs} 
for any $\varphi,\psi\in\mcD(\R)$. 
Above and in what follows, the notation $\scal{ \bigcdot, \bigcdot}$ without a subscript denotes the $L^{2}(\R; \mathcal{V})$ inner product, where $\mathcal{V} = \R^{\cbm}$ above but more generally $\mathcal{V}$ is some inner product space that can be inferred from context and we have 
\begin{equation}\label{def:innerproduct}
\scal{f,g} \equiv \int_{\R} \scal{f(t),g(t)}_{\mathcal{V}}\ \rmd t\;.
\end{equation}
In the sequel, we denote by $\Cov_W^{\tt S}(t,s)\eqdef -C_H|t-s|^{2H}$ the stationary part of the covariance of $W$, and by
\begin{equs}\label{eq:defCov}
   \Cov^{\tt S}(t,s)\equiv \Cov^{\tt S}(t-s)\eqdef\d_t\d_s\Cov^{\tt S}_W(t,s)
\end{equs}
the Schwarz kernel of the covariance of the stationary noise. We also denote by $\Cov^+(t,s)\eqdef\d_t\d_s\Cov^+_W(t,s)$ the Schwarz kernel of the covariance of the non-stationary noise. When there is no ambiguity, we always refer to the stationary covariance and drop the superscript $\tt S$, just writing $\Cov$ instead of $\Cov^{\tt S}$. For $H = \frac{1}{2}$, we have $\Cov = \delta_{\R}$ where $\delta_{\R}$ is the Dirac delta distribution on $\R$. Additionally, for $H < \frac{1}{2}$, the covariances are not locally integrable. However, the key property we will use is that they are homogeneous of degree $-2+2H$, in the sense that for any $\varphi,\psi\in\mcD(\R)$, setting, for $\mu\in(0,1]$, $\mcS_\mu\rho(t)\eqdef\mu^{-1}\rho(t/\mu)$, we have for every $\mu\in(0,1]$
\begin{equs}\label{eq:homogeneity}    \Big( \Cov^{\tt S/+},\mcS_\mu\varphi\otimes\mcS_\mu\psi \Big) =
\mu^{-2+2H} \Big( \Cov^{\tt S/+},\varphi\otimes\psi \Big)\,,
\end{equs}
where the brackets above denote the pairing of $\mcD(\R)^{\otimes2}$ with $\mcD'(\R)^{\otimes2}$, and the superscript $\tt S/+$ means that the property holds for both $\Cov^{\tt S}$ and $\Cov^+$.

The equation \eqref{eq:eq0} is formal: for $H \leqslant 1$, $\xi$ is a distribution rather than function over time. 
At a deterministic level, the equation \eqref{eq:eq0} is  ill-posed once $H \leqslant \frac{1}{2}$ -- we make this precise.  
Denote by $\mcC^{\alpha} (\R)$ the local H\"{o}lder-Besov space\footnote{See Section~\ref{subsec:notation}.} on $\R$ with exponent $\alpha\in\R$. 
We have $W \in \mcC^{H-\kappa}(\R)$ for every $\kappa > 0$, which gives $\xi 
\in \mcC^{-1+H-\kappa}(\R)$ due to the derivative. 
Since an integral improves regularity by $1$, one can hope for at best $H-\kappa$ regularity for $u$ and also $V(u)$, but since $H\leqslant\frac12\Rightarrow H + (-1 + H)  \leqslant 0$ we cannot canonically define $V(u)\xi$.

\subsection{Main results}
Our main result is the probabilistic well-posedness of \eqref{eq:eq0}. It is obtained by showing convergence of the solution $u_\eps$ to \eqref{eq:eq1}, a regularised version of \eqref{eq:eq0} which is introduced as follows.
\eqref{eq:eq0} can be formally rewritten as the integral equation  
\begin{equs}    u(t)=G\big(\1_{\geqslant0}Z[u]\big)(t)+{\tt u}\,,\quad  Z[u](t)\eqdef V\big(u(t)\big)\xi(t)\,. 
\end{equs} 
Here, $\1_{\geqslant0}$ denotes the indicator function of $\R_{\geqslant0} \subset \R$, and $G$ is the operator given by convolution with $\1_{\geqslant0}$. $G$ is therefore the ``Green's function'' for $\frac{\mathrm{d}}{\mathrm{d}t}$. 


We then introduce a regularisation scale $\eps\in(0,1]$ and study the regularised equation
\begin{equs}\label{eq:eq1}    
u_\eps(t)
= G\big(\1_{\geqslant0}
Z_\eps[u_\eps]\big)(t)+{\tt u}\,,\quad Z_\eps[u_\eps](t)\eqdef V\big(u_\eps(t)\big)\xi_{\eps}(t) 
\,,
\end{equs} 
where $\xi_{\eps}\eqdef\rho_\eps*\xi$, $\rho_{\eps}(t) \eqdef \mcS_\eps\rho(t)$ and $\rho : \R \rightarrow \R_{ \geqslant 0}$ is a smooth and compactly supported function that integrates to $1$.

\begin{theorem}\label{thm:main} 
Fix any $\kappa>0$, $H\in(1/4,1/2]$, and $V\in C^9\big( \R^{\cbn} , \mcL(\R^{\cbm},\R^{\cbn})\big)$. 
Then there exists a random variable $0 < \tilde T \leqslant 1$ such that, for any deterministic $T \in (0,1]$, the following holds on the event $\{T \leqslant \tilde T \}$: the equation \eqref{eq:eq1} is well posed on $\CC^{H -\kappa }([0,{T}])$  with solution $u_{\eps}$ which converges in probability to a limit $u$ in $\CC^{H-\kappa}( [0,{T}] )$ as $\eps \downarrow 0$.
\end{theorem}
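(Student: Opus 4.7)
The plan is to adapt the Polchinski–Wilson flow analysis of \cite{Duch21, gKPZFlow} to the finite dimensional ODE setting, with the key novelty being that the effective force expansion is indexed by trees rather than multi-indices. Throughout I work with a mollified noise $\xi_\eps \eqdef \rho_\eps \ast \xi$, so that \eqref{eq:eq1} is a classical ODE driven by a smooth noise, and the task is to prove uniform in $\eps$ bounds together with the convergence as $\eps \downarrow 0$.

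First, I would introduce a second scale parameter $\mu \in (0,1]$ and a scale-by-scale effective coefficient $\mfF_\mu[\mfu]$ satisfying a flow equation of the form $\partial_\mu \mfF_\mu = \mathrm{B}_\mu[\mfF_\mu, \mfF_\mu]$, where $\mathrm{B}_\mu$ is built from the kernel $\partial_\mu \Cov_\mu$ (a decomposition of $\Cov$ into scales). The boundary condition at $\mu = 0$ is the bare coefficient $V$, and the goal is to construct $\mfF_1$ as the coefficient of an \emph{effective} equation whose unique smooth part is the remainder $u - \sum_\tau c^\tau \Pi^\tau$. Expanding $\mfF_\mu$ into elementary differentials indexed by trees $\tau$ with leaves labelled by noise copies and internal nodes labelled by derivatives of $V$,
\begin{equs}
  \mfF_\mu[\mfu] = \sum_{\tau} \mfF_\mu^\tau[\mfu]\, \Pi^\tau,
\end{equs}
reduces the flow equation to a triangular system on the coefficients $\mfF_\mu^\tau$ that can be solved inductively in the number of leaves of $\tau$.

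Next I would prove the key analytic estimate: using \eqref{eq:homogeneity}, a Schauder-type estimate on the scale kernel $\partial_\mu\Cov_\mu$, and the smoothness hypothesis $V \in C^7$, one obtains that each $\mfF_\mu^\tau$ is bounded, uniformly in $\eps$ and in $\mu \in (0,1]$, by $\mu^{|\tau|(H-) + k}$ where $|\tau|$ counts the leaves and $k$ accounts for derivatives taken. The hypothesis $H > 1/4$ is exactly what ensures the induction terminates after finitely many trees (those with at most $3$ leaves) before the remainder has strictly positive regularity and no further subtraction is needed; the $C^7$ assumption leaves sufficient room for the derivatives appearing along the induction. Stochastic control of the $\Pi^\tau$, viewed as iterated integrals of $\xi_\eps$, is then a standard Gaussian computation using \eqref{eq:fracBrown2}: one checks by Wick's theorem and equivalence of moments that each $\Pi^\tau$ lives in $\CC^{|\tau|H-}$ almost surely and converges as $\eps \downarrow 0$ in that space.

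With the effective coefficient $\mfF_1$ in hand, the original equation reduces to a fixed point problem for a remainder $r$ satisfying $\partial_t r = \mfF_1[r + \sum_\tau c^\tau \Pi^\tau]$ with smooth right-hand side; the $C^7$ regularity of $V$ plus the tree estimates put this in the form $r = \int_0^\cdot G(s,r(s))\,\mrd s$ with $G$ Lipschitz in $r$ and of regularity strictly better than $H$ in time. A standard Picard iteration on $\CC^{H-}([0,\tilde T])$ closes on a small random interval, giving the random $T > 0$ of the statement, uniform convergence on $\{\tilde T \leqslant T\}$ then follows from the $\eps$-uniform bounds combined with the $\eps \downarrow 0$ convergence of the $\Pi^\tau$ together with continuity of the map $(V, \text{trees}) \mapsto \mfF_1$.

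The main obstacle I expect is setting up the tree-indexed flow so that the inductive estimate closes without the multi-index combinatorics used in \cite{gKPZFlow}: one must verify that the bilinear map $\mathrm{B}_\mu$ respects the tree grading, that repeated differentiation of $V$ produces only trees of the expected complexity, and that the homogeneity $-2+2H$ is correctly distributed among the subtrees so that the cumulative exponent stays strictly positive throughout the finite induction. The non-stationarity of $\xi$ (its vanishing for $t \leqslant 0$) is a minor technical point that is absorbed by working with $\mcD(\R)$-based norms localized to compact time intervals.
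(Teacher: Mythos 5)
Your outline captures the broad shape of a flow-based argument, but it contains a genuine conceptual error at the heart of the construction. You build the bilinear map $\mathrm{B}_\mu$ in the flow equation from a scale decomposition of the noise covariance, $\partial_\mu \Cov_\mu$. The paper's Polchinski flow \eqref{eq:polch} is built from a scale decomposition of the \emph{Green's function} of the equation, i.e.\ from $\dot G_\mu = \partial_\mu G_\mu$ where $G_\mu(t)=\chi(t/\mu)$ regularizes the convolution operator $G$ that inverts $\frac{\mathrm{d}}{\mathrm{d}t}$. These are not interchangeable: the flow for effective forces is derived by differentiating $S_{\eps}[u_{\eps}]=S_{\eps,\mu}[u_{\eps,\mu}]$ in $\mu$ through the relation $u_{\eps,\mu}=G_\mu(S_\eps[u_\eps])$, so the $\dot G_\mu$ enters structurally, independently of any Gaussianity of the noise. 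A covariance decomposition is the right object in constructive QFT for building measures, but it does not produce a closed effective equation for the \emph{solution} of a singular equation. Without the Green's function decomposition, the flow you write does not describe the quantity you need.

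Two further gaps are worth flagging. First, your remainder ansatz $u = r + \sum_\tau c^\tau \Pi^\tau$ at a single terminal scale $\mu=1$ is a regularity-structures / paracontrolled-style decomposition of the solution; the paper instead decomposes the \emph{force} as $F_\eps[v_\eps]=F_{\eps,\mu}[v_{\eps,\mu}]+R_{\eps,\mu}$ and solves the coupled system \eqref{eq:sys2}--\eqref{eq:sys3} as a simultaneous fixed point over all running scales $\mu\in(0,T]$. It is this scale-by-scale argument (Proposition~\ref{eq:prop1}) that delivers the uniform-in-$\eps$ control; a single terminal-scale fixed point would not see the blow-up of $F_{\eps,\mu}$ as $\mu\downarrow 0$. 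Second, the stochastic input is not simply ``Wick's theorem on iterated integrals'': the paper derives and solves a \emph{cumulant flow} (Lemma~\ref{lem:cumul}) hierarchically in the tree order, and there is a non-trivial counterterm analysis for the relevant object $\xi^{\tOnesmall}_{\eps,\mu}$ whose expectation looks $\eps^{-1+2H}$-divergent by naive power counting but is shown to be $\lesssim (t\vee\eps)^{-1+2H}$ (Lemma~\ref{lem:CT}), hence integrable and absorbable without a diverging counterterm in the equation. Your sketch does not engage with this point, which is exactly where the non-stationarity of $\xi$ (vanishing for $t\leqslant 0$) requires care rather than being a ``minor technical point''.
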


\begin{remark}\label{rem:no_counterterm} 
Our main theorem gives convergence of local solutions to regularised equations \eqref{eq:eq1} without any renormalisation counterterm.
In our flow approach analysis, the solution is built from a collection of ``force coefficients" indexed by a class of trees (see Definition~\ref{def:trees}) and solving a truncated Polchinski equation (see Definition~\ref{def:proj}). We construct two such solutions, the stationary and non-stationary force coefficients, that have different initial conditions specified at the beginning of Section~\ref{sec:2_4}. 

We encounter a tree $\tOne$ with associated stationary force coefficient $\xi^{\tOnesmall}_{\eps,1}$ whose expectation might diverge like $\eps^{-1+2H}$, and thus require a renormalisation counterterm. 
However, we show in Lemma~\ref{lem:CT} that the potential counterterm is actually finite. 

Moreover (even though it is not needed for our analysis), in Lemma~\ref{lem:CT}, we also compute explicitly the counterterm for the corresponding non-stationary force coefficient $\zeta^{\tOnesmall}_{\eps,1}$, which is the one that appears on the RHS of the equation we use to construct the solution.
It turns out that this contribution is indeed finite in the regularisation parameter $\eps>0$, but not in time. Indeed the expectation of $\mfL\zeta^{\tOnesmall}_{\eps,1}(t)$ verifies 
\begin{equs}\label{eq:estimateCTintro}
       |\E[\mfL\zeta^{\tOnesmall}_{\eps,1}(t)]|\lesssim (t\vee \eps)^{-1+2H}\,.
\end{equs}
Here, $\mfL$ denotes a time-localisation operator, since in the flow approach divergences are first encountered in a time non-local form. 

The estimate \eqref{eq:estimateCTintro} shows that the time dependent counterterm blows up for small times. Of course, as expected, the divergence is integrable, so that this counterterm would simply boil down to a finite Itô-Stratonovich correction of the form $\rmd t^{2H}$ (which we do not need to add to the RHS of the equation).
 
In summary, no divergent renormalisation is needed in order to define the force coefficients associated with $\tOne$, in agreement with the rough path approach to fBm. 
\end{remark}

Additionally, we show that when $V$ and its first nine derivatives are globally bounded, then Theorem~\ref{thm:main} can be extended to pathwise global well-posedness.
\begin{theorem}\label{thm:thm2}
    Assume that
\begin{equs}\label{eq:assumpV}
\norm{V}_{C^9(\R^{\cbn} , \mcL(\R^{\cbm},\R^{\cbn}))}<\infty\,.
\end{equs}
Then, there exists $a,b,c, C \geqslant 0$ such that the following holds.
For every $\mcb N\geqslant1$, there exists an event\footnote{See Lemma~\ref{lem:Thm21precise} for a precise definition of $A_{\mcb N}$ in terms of the enhanced data.} $A_{\mcb N}$ with $\P(A_{\mcb N})\geqslant 1- e^{-a\mcb N^{b}}$ such that, for any initial condition $\ttu\in\R^\cbn$, \eqref{eq:eq0} is globally well-posed on $A_{\mcb N}$ and the solution $u$ to \eqref{eq:eq0} with initial condition $\ttu$ satisfies the estimate
\begin{equs}
    \sup_{t\in[0,1]}|u(t)|
     &\leqslant|\ttu|
     +C\mcb N^c\,.
\end{equs}
\end{theorem}
\begin{remark}\label{rem:1.3}
Our assumption on the regularity of $V$ is not optimal. 
The ansatz we use for the ``force'' which describes the right hand side of \eqref{eq:eq0}  (see \eqref{eq:defFmu}, \eqref{eq:trees}, and Definition~\ref{def:elem_diff1}), requires $V$ be $C^3$. 
However, when performing the probabilistic construction of the force coefficients $\zeta^\tau_\epmu$, and in particular the localisation procedure, the Kolmogorov  argument, and the construction of the non-stationary force, we need several Sobolev embedding type estimates~\eqref{eq:KmuLp}. 
This makes it necessary to control the force coefficients using the kernel $Q_\mu^{*6}$ introduced in Definition~\ref{def:defQmu}. 
The adjoint of the inverse of this kernel ends up being applied to $V$, which is why we need $V$ to be of regularity $C^9$.
Small technical improvements could likely allow us to use a weaker assumption than $V \in C^{9}$, but it seems unclear if these improvements would get us all the way to $V \in C^{3}$. 
\end{remark}

\begin{remark}\label{rem:rem1_4}
As mentioned earlier, \eqref{eq:eq1} is singular for $H\leqslant 1/2$.
In the language of singular SPDE/QFT it is subcritical/super-renormalisable for $H>0$ -- as explained above, $V(u)\xi$ is associated a power counting $-1+2H-\kappa$ which is better than the power counting $-1+H-\kappa$ of the noise $\xi$ as long as $H>0$.
The requirement that $H>1/4$ comes from the fact that below this threshold, one has $-1+2H \leqslant -1/2$, and the covariance of $V(u)\xi$ could have a $2(-1+2H) \leqslant -1$ blow-up which is not locally integrable in one dimension of time. 
This problem is familiar in rough paths, and also has analogues for singular partial differential equations - see \cite{Hairer24} where this regime is investigated. 

Regarding the case where $H\in(0,1/4]$, while the problem of the covariance blow-up would lead to a failure of the probabilistic argument performed in Section~\ref{sec:proba}, the rest of the construction of the solution only requires subcriticality, and would therefore still work all the way down to $H=0$, see Remark~\ref{rem:2_31}.
\end{remark}


Of course, we make no claim to novelty regarding the results stated in Theorem~\ref{thm:main} and Theorem~\ref{thm:thm2}, rather we use  simpler setting of \eqref{eq:eq0} to present the flow method.

We build a path-wise solution theory by following the lines Duch's implementation of the Polchinski flow \cite{Pol84} for local solution theory for parabolic SPDE \cite{Duch21}, along with a remainder argument\footnote{This remainder argument is not crucial for our either our local or global well-posedness results, but we believe it gives a more steamlined proof.} developed in \cite{GR23} and a family of flow coordinates introduced in \cite{gKPZFlow}. 
In Section~\ref{subsec:flow_approach} we briefly review the flow approach in our context. 

Stochastic differential equations driven by fractional Brownian motion have been heavily studied using a variety of methods, we point to \cite{fBMBook} which gives a good overview. 
An approach to \eqref{eq:eq0} that is the close to the path-wise solution obtained via the flow method is Lyons' theory of rough paths \cite{Lyons}, where equations driven by rough drivers can be made sense of as long as one can lift the driver to richer ``enhanced driver''. 
The construction of this enhancement for fBM with $H > \frac{1}{4}$ was first carried out in \cite{CQ02}, see also \cite{MR2667703} for more results on rough paths over fBM and rough equations driven by them. 
We note that rough path methods only require the condition $V \in C^{3}$ for both local and global well-posedness, which seems to be a relative drawback of the flow approach - see Remark~\ref{rem:1.3}. 
On the other hand, an advantage of the flow approach is that it is a unified approach to two different steps, analytic and probabilistic, that are separated in the rough path approach.  

Previous results on global in time existence for rough differential can be found in \cite[Thm~10.57]{FV10} and \cite[Thm~8.3]{FrizHairer}. 

\subsection*{Acknowledgements}
{\small
AC gratefully acknowledges partial support by the EPSRC through EP/S023925/1 through the ``Mathematics of Random Systems'' CDT EP/S023925/1. AC and LC thank the referees for helpful comments, including one that streamlined our approach to proving global in time well-posedness.}

\subsection{Review of the Polchinski flow}\label{subsec:flow_approach}

We first sketch some details of the flow approach under the assumption of vanishing initial data, that is ${\tt u} = 0 $. 
In this case the integral equation \eqref{eq:eq1} can be rewritten as  
\begin{equs}\label{eq:eqDef1_0}    u_{\eps}(t)&=G\big(F_\eps[u_\eps]\big)(t)\,,
\end{equs}
where we have set $F_\eps[\bigcdot]= \1_{\geqslant0}Z_{\eps}[\bigcdot]$.

The next key step is introducing a second scale, called an ``effective scale'', denoted by $\mu \in [0,1]$, along with associated cut-off Green's functions $(G_{\mu})_{\mu \in [0,1]}$, where $G_{\mu}$ suppresses time scales $\lesssim$ $\mu $. 
\begin{definition}\label{def:G_mu}
Fix a smooth, increasing function $\chi:\R\rightarrow[0,1]$ 
such that $\chi|_{(-\infty,1]}=0$ and $\chi|_{[2,\infty)}=1$.
For any $\mu\in[0,1]$ and $t\geqslant0$, 
we define the regularized kernel $G_\mu$, which is given by
   \begin{equs}\label{eq:DefGmu}
       G_\mu(t)\eqdef \chi(t/\mu) \,, \end{equs} 
with the convention that $G_0=G$. 
Finally, we write $\dot G_\mu(t)\eqdef\d_\mu G_\mu(t)$.
\end{definition}
\begin{remark}
Key to our analysis are the following support properties of $G_\mu$ and $\dot G_\mu$:
    \begin{equs}\label{eq:suppGmu}        \text{supp}\,G_\mu\subset[\mu,\infty)\,,\;\text{and}\;\text{supp}\,\dot G_\mu \subset[\mu,2\mu]\,.
    \end{equs}
In particular, we have $G_1=0$ on $[0,1]$.
\end{remark}
We use the kernels introduced in Definition~\ref{def:G_mu} to define an additional regularization of $u_{\eps}$ at effective scale $\mu$, which we denote by $u_{\epmu}$ and is defined by 
\begin{equs}\label{eq:eqDef2_0}    u_{\epmu}(t)&=G_{\mu}\big(F_{\eps}[u_\eps]\big)(t)\;.
\end{equs}
We obtain control over the $\eps \downarrow 0$ limit of $u_{\eps}$ over small times by getting, for any sufficiently small $\mu$, \textit{uniform in} $\eps$ control over $u_{\epmu}$. 
The RG approach obtains this control by deriving a closed, ``effective'' equation for $u_{\epmu}$ and studying how this equation evolves as $\mu$ is increased. 
The ``effective equation'' in our context is given by
\begin{equs}\label{eq:eqDefreg2}
u_{\epmu} (t)&= G_{\mu}\big( F_{\eps,\mu}[u_{\epmu}]\big)(t)\;,
\end{equs}
where we enforce that 
\begin{equ}\label{eq:force_identity}
F_{\eps,\mu}[u_{\epmu}]
=
F_{\eps}[u_{\eps}]\;.
\end{equ}
The functionals $F_{\epmu}$ or $F_{\eps}$, which have both as input and output functions of time $u: \R \rightarrow \R^{\cbn}$, will be called ``forces''.
While the force $F_{\eps}$ is local in time (in that $F_{\eps}[u](t)$ is determined by $u(t)$, many of the forces we encounter will not be local in time. 

Taking the derivative in $\mu$ of \eqref{eq:force_identity} gives the following continuous dynamic in the space of (random) forces: 
\begin{equs}\label{eq:polch}
\partial_{\mu} F_{\eps,\mu}[\bigcdot]
&=
- \scal{\D F_{\eps,\mu}[\bigcdot], \dot{G}_{\mu} F_{\eps,\mu}[\bigcdot]}\,,\\
F_{\eps,0}[\bigcdot] &= F_{\eps}[\bigcdot]\;.
\end{equs}
Above, the notation $\D$ refers to the functional derivative of the force , for a force $F$ and $f,g: \R \rightarrow \R^{\cbn}$ we have $\scal{\D F[f],g}(t) = \frac{\rmd}{\rmd\lambda}F[f + \lambda g](t)\Big|_{\lambda = 0}$. 

The idea of using a continuous change in scale in RG analysis first appeared in \cite{Pol84} in Quantum Field Theory, and the dynamic \eqref{eq:polch} is called a ``Polchinski flow''. 
The article \cite{Kupiainen2016} introduced a discretized analog of this argument for singular SPDE, and then \cite{Duch21} developed a continuous Polchinski flow approach to singular SPDE.

In the next section we incorporate the initial data {\tt u}, and also describe how we exchange the infinite dimensional flow \eqref{eq:polch} for (i) a finite dimensional approximate flow which captures the rough parts of the evolving force along with (ii) an inexplicit, evolving remainder which solves a fixed point problem in a space of fairly regular functions of time.

\subsection{Initial data and remainder flow}\label{sec:flow}

We incorporate the initial data ${\tt u}$ into our integral equation by introducing $ v_{\eps}(t)\eqdef  u_{\eps}(t)-{\tt u}$, so that setting $S_\eps[\bigcdot]\eqdef Z_\eps[\bigcdot+\tt u]$ and $F_\eps[\bigcdot]\eqdef\1_{\geqslant0}S_\eps[\bigcdot]$ we rewrite \eqref{eq:eq1} as 
\begin{equ}
 v_{\eps}(t)=G \big(F_{\eps}[v_\eps]\big)(t)\,.
\end{equ}

For fixed $\eps \in (0,1]$ and a trajectory of ``effective forces'' $\big(F_{\eps,\mu}[\bigcdot] : \mu\in(0,1] \big)$ we define a trajectory of remainder stochastic processes $R_{\eps,\mu}$ by requiring that for every $\mu \in (0,1]$ and $t \in [0,1]$,  
\begin{equs}\label{eq:FmuRmu}
  F_\eps[v_\eps](t)= F_{\eps,\mu}[v_{\eps,\mu}](t)+R_{\eps,\mu}(t)\,,
\end{equs}
where $v_\epmu$ is defined as
\begin{equs}   v_{\eps,\mu}(t)&\eqdef G_\mu \big(F_\eps[v_\eps]\big)(t)\label{eq:psimu}\,.
\end{equs}
Note that \eqref{eq:FmuRmu} and \eqref{eq:psimu} allow us to write 
\begin{equs}\label{eq:phi_mu}
    v_\epmu(t)=G_\mu\big( F_\epmu[v_\epmu]+R_\epmu\big)(t)\,.
\end{equs}
As described in Section~\ref{sec:2_1_2}, we will in practice choose $F_{\eps,\mu}$ to be the solution to an approximate flow equation which is written below as \eqref{eq:Pol} with initial condition 
\begin{equs}\label{eq:F_0}
    F_{\eps,0}[\bigcdot]=F_\eps[\bigcdot]=\1_{\geqslant0}S_\eps[\bigcdot]=\1_{\geqslant0}Z_\eps[\bigcdot+{\tt u}]\,.
\end{equs}
$F_{\eps,\mu}$ will be an explicit polynomial enhancement of the noise and will take values in spaces of stochastic processes as rough as the noise, while $R_{\eps,\mu}$ will be an inexplicit random \textit{remainder} which we will aim to show belongs to $L^\infty([0,1])$ uniformly in $\eps \in (0,1]$. Before constructing $F_\epmu$, we will first construct a stationary effective force $S_\epmu$, which solves the same equation as $F_\epmu$ but with initial condition $S_\eps$. 

Note that since $F_{\eps,0}=F_\eps$, we necessarily have $R_{\eps,0}=0$. Moreover, since by definition $F_\eps[\bigcdot]$ is supported on positive times, we have the freedom to enforce that so are $F_\epmu[\bigcdot]$ and $R_\epmu$.

\begin{remark}
  Plugging the definition \eqref{eq:F_0} of $F_\eps$ into the definition \eqref{eq:psimu} of $v_\epmu$ and using the support property \eqref{eq:suppGmu} of $G_\mu$ shows that for every $\mu\geqslant0$, $v_\epmu$ is supported after time $\mu$. In particular, this implies that for any $T\in(0,1]$, one has for every $t\in[0,T]$
  \begin{equs}
      v_\epmu(t)=(G_\mu-G_{T})F_\eps[v_\eps](t)=-\int_\mu^{T}\dot G_\nu\big(F_\epnu[v_\epnu]+R_\epnu\big)(t)\rmd\nu\,.
  \end{equs}
\end{remark}
With all of this in place, we can formulate a dynamic in $\mu$ for $R_{\eps,\mu}$ that preserves the relationship \eqref{eq:FmuRmu} and allows us to solve for it scale by scale in $\mu$.  
Observing that
\begin{equ}
   0= \frac{\rmd}{\rmd\mu} F_\eps[v_\eps]=  \frac{\rmd}{\rmd\mu}\Big(F_{\eps,\mu}[v_{\eps,\mu}]+R_{\eps,\mu}\Big)=\d_\mu F_{\eps,\mu}[v_{\eps,\mu}]+\D F_{\eps,\mu}[v_{\eps,\mu},\d_\mu v_{\eps,\mu}]+\d_\mu R_{\eps,\mu}\,,
\end{equ}
and combining the latter with $\d_\mu v_{\eps,\mu}=\dot G_\mu\big(F_{\epmu}[v_\epmu]+R_{\eps,\mu}\big)$, which is just the  application of $\partial_{\mu}$ to  \eqref{eq:phi_mu}, we obtain on $[0,T]$ the system of equations
\begin{subequations}\label{eq:sys1}
  \begin{empheq}[left=\empheqlbrace]{alignat=2}  
\d_\mu R_{\eps,\mu}&=-\D F_{\eps,\mu}[v_{\eps,\mu},\dot G_\mu R_{\epmu}]-\big(\d_\mu F_{\eps,\mu}[v_{\eps,\mu}]+\D F_{\eps,\mu}\big[v_{\eps,\mu},\dot G_\mu  F_{\eps,\mu}[v_{\eps,\mu}]\big]\big)\label{eq:flow1} \ \,,\\
        v_{\epmu}&= -\int_\mu^{T}\dot G_\nu\big(F_{\epnu}[v_\epnu]+R_{\epnu}\big)\rmd\nu\label{eq:phimu}\,,
  \end{empheq}
\end{subequations}
for any $T\in(0,1]$. One can view  the second term of the r.h.s. of \eqref{eq:flow1}, which is quadratic in  $F_{\eps,\mu}$, as a rough forcing term in the dynamic for $R_{\eps,\mu}$. However it will not be too rough -- since we will choose $F_{\eps,\mu}$ to satisfy an approximate flow equation.  
We will then solve ~\eqref{eq:sys1} for $R$ using path-wise / deterministic analysis, but we will only be able to due so for $\mu \in (0,{T}] \subset (0,1]$ for some random scale / time ${T} > 0$ --  which is enough to solve \eqref{eq:eq1} \textit{locally in time}.

\subsection{Notation and basic estimates}\label{subsec:notation}

We often consider functions $v$ taking values in a finite dimensional vector space $\mcE$ (typically a Hilbert space) with a distinguished basis $(e_i)_{i\in[\mathrm{dim}(\mcE)]}$ and we write $v=\sum_{i\in[\mathrm{dim}(\mcE)]}v^i e_i$. For $p\in[1,\infty]$ and $T\geqslant0$, we endow $C^\infty(\R,\mcE)$ with the norms
\begin{equs}    \norm{\psi}_{L^p_{T}}\eqdef\max_{i\in[\mathrm{dim}(\mcE)]}\bigg(\int_{-\infty}^T|\psi^i(t)|^p \rmd t\bigg)^{1/p}\,,
\end{equs}
with the usual convention when $p = \infty$. We drop the subscript $T$ when $T=1$. 
\auth{comment 2}
We denote by $L^p_T$ the completion, under the norm $\norm{\bullet}_{L^p_{T}}$ above, of all smooth functions on  $(-\infty,T]$ under the norm $L^p_T$.
We write $L^p_{0;T} \subset L^{p}_{T}$ for the subset of $L^p_T$ consisting of functions that vanish on $(-\infty,0)$.


Throughout the paper, we systematically identify distributions $K\in\mcD'(\R)$ with operators
    \begin{equs}
     K:C^\infty_c(\R)\ni F\mapsto   K(F)\equiv KF\eqdef K*F=\int_{\R}K(\bigcdot-z)F(z)\rmd z\,.
    \end{equs}
We endow such operators with the norm
\begin{equs}    \norm{K}_{\mcL^{p,\infty}}\equiv\norm{K}_{\mcL(L^p,L^\infty)}\,.
\end{equs}
When the kernel of $K$ is sufficiently integrable and supported on positive times, then the $\mcL^{p,\infty}$ norm corresponds to the $L^{p/(p-1)}(\R)$ norm of the kernel.

Next, following \cite[Section~4]{Duch21}, we define kernels $K_{N,\mu}$.
\begin{definition}\label{def:defQmu}
Fix $\mu\in(0,1]$. For $t\in\R$, we define
    \begin{equs}\label{eq:defQum}
        Q_\mu(t)\eqdef \mu^{-1}e^{-t/\mu}  \1_{\geqslant0}(t)
        \,,
    \end{equs}
the fundamental solution to the differential operator $\mcP_\mu\eqdef \big(1+\mu\partial_{t}\big) $ on $\R$, with null initial condition at $t=-\infty$. 
We sometimes write $Q_\mu$ for the operator given by convolution with the kernel $Q_\mu(t)$ on $C^\infty_c(\R)$, with the convention that $Q_0=\Id$ the operator with kernel $\delta_\R(t)$.\\
We now introduce the kernels we use to test scale-dependent quantities. For $N\in\N$, we let
\begin{equs}\label{eq:defKmu}
    K_{N,\mu}\eqdef Q_\mu^{*N}
\end{equs}
stand for the fundamental solution for $\mcP_{N,\mu}\eqdef\mcP^N_\mu$. We will also use the short-hand notations 
\begin{equ}\label{eq:fix_kernel}
K_\mu\eqdef K_{6,\mu}\,,\;
\text{  and  }\;\mcR_\mu\eqdef\mcP_{6,\mu}\,.
\end{equ}
\end{definition}

Given $\beta<0$, we define the parabolic Hölder-Besov space $\mcC^{\beta}([a,b])$ as the completion of smooth functions supported on $[a,b]$ under the norm\auth{comment 2}
\begin{equs}   \norm{\bigcdot}_{\mcC^\beta([a,b])}\eqdef\sup_{\mu\in(0,1]}\mu^{-\beta}\norm{K_{\lceil-\beta \rceil,\mu}\bigcdot}_{L^\infty_{a;b}}\,.
\end{equs}
For $\beta\in(0,1)$ and $b>0$, we define $\mcC^{\beta}([0,b])$ as the completion of smooth functions supported after $0$ under the norm
\begin{equs}    \norm{\bigcdot}_{\mcC^\beta([0,b]\times\T^n)}\eqdef\sup_{\mu\in(0,1]}\mu^{-\beta}\norm{(Q_\mu-\Id)\bigcdot}_{L^\infty_{0;b}}\,.
\end{equs}
Finally, given $k \in \N$, we write $C^k([a,b])$ for the usual Banach space of $k$-times continuously differentiable functions from $[a,b]$ to $\R$, and for $t \geqslant 0$, we write $B_0(t)\eqdef[-t,t] \subset \R$. 

We now give an elementary bound on the scale derivative of the effective Green's function.
\begin{lemma}\label{lem:dotG}
For every $N\in\N$, we have
\begin{equs}        \label{eq:heat1}
    \norm{\mcP_{N,\mu}\dot G_\mu}_{\mcL^{p,\infty}}&\lesssim \mu^{-1/p}\;\mathrm{for}\;\mathrm{all}\;p\in[1,\infty]
    \end{equs}
uniformly in $\mu\in(0,1]$.
\end{lemma}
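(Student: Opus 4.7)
The plan is to exploit the scaling form of $\dot G_\mu$ and to verify that the operator $\mcP_{N,\mu}$ preserves that form, so that the computation reduces to a change of variables.

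First I would observe that, by direct differentiation of $G_\mu(t) = \chi(t/\mu)$, one has
\begin{equs}
    \dot G_\mu(t) = -\frac{t}{\mu^2}\chi'(t/\mu) = \mu^{-1}\,h(t/\mu)\,,\qquad h(s) \eqdef -s\,\chi'(s)\,,
\end{equs}
so that $h$ is smooth and supported in $[1,2]$ (matching the support property \eqref{eq:suppGmu}).

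Next I would verify, by a one-line calculation, that $\mcP_\mu = 1 + \mu\partial_t$ preserves functions of the form $\mu^{-1} f(t/\mu)$: namely
\begin{equs}
    \mcP_\mu\bigl[\mu^{-1} f(\cdot/\mu)\bigr](t) = \mu^{-1}(f+f')(t/\mu)\,.
\end{equs}
Iterating this identity $N$ times shows that $\mcP_{N,\mu}\dot G_\mu(t) = \mu^{-1} g_N(t/\mu)$, where $g_N \eqdef (\Id+\partial)^N h$ is smooth and still supported in $[1,2]$.

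Finally, since the kernel $\mu^{-1} g_N(\cdot/\mu)$ is smooth and compactly supported, the $\mcL^{p,\infty}$ norm coincides with the $L^{p/(p-1)}$ norm of the kernel (as recalled after the definition of $\mcL^{p,\infty}$). A change of variables then yields, for $p' = p/(p-1)$,
\begin{equs}
    \bigl\|\mu^{-1} g_N(\cdot/\mu)\bigr\|_{L^{p'}} = \mu^{-1+1/p'}\,\|g_N\|_{L^{p'}} = \mu^{-1/p}\,\|g_N\|_{L^{p'}}\,,
\end{equs}
which is the stated bound, with the cases $p=1$ and $p=\infty$ handled identically via $p'=\infty$ and $p'=1$. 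There is no real obstacle: the only substantive point is the observation that $\mcP_\mu$ respects the parabolic scaling $t \mapsto t/\mu$, and that the smooth compactly supported $g_N$'s arising from iteration have $L^{p'}$ norms bounded uniformly in $\mu$.
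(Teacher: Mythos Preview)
Your argument is correct and follows essentially the same route as the paper: write $\dot G_\mu = \mcS_\mu\tilde\chi$ for a smooth compactly supported $\tilde\chi$, then use the $L^{p'}$ scaling bound $\|\mcS_\mu\rho\|_{L^{p'}}\lesssim\mu^{-1/p}\|\rho\|_{L^{p'}}$. You are in fact slightly more explicit than the paper in spelling out that $\mcP_{N,\mu}$ preserves the scaling form $\mu^{-1}f(\cdot/\mu)$, which the paper leaves implicit.
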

\begin{proof} 
Observe that $\dot G_\mu(t)=\mu^{-1}\tilde\chi(t/\mu)$ where $\tilde\chi=-\Id\chi$. 
Therefore, we have $\dot G_\mu(t)=$ $\mcS_\mu\tilde\chi$ where $\tilde\chi$ is integrable on $\R$, so that the thesis follows using the bound
\begin{equs} \label{eq:boundSmu}   \norm{\mcS_\mu\rho}_{L^{p}(\R)}&\lesssim\mu^{-(p-1)/p}\norm{\rho}_{L^{p}(\R)}\,,\;\text{$\mathrm{for}$}\;\text{$\mathrm{all}$}\;p\in[1,\infty]\,.
\end{equs}
\end{proof}

\section{Deterministic analysis}\label{sec:2}
We now introduce the coordinates we will use for the flow approach. 
Since our non-linearity $V$ is not necessarily polynomial, we will use elementary differentials as coordinates as in \cite{gKPZFlow}.
However, in \cite{gKPZFlow} these elementary differentials were indexed by multi-indices - this seems more natural for scalar valued solutions driven by a scalar noise, while for $\cbn \vee \cbm > 1$ it seems more natural to follow the standard \cite{Cay1857,Trees} approach of indexing elementary differentials by rooted trees.  

\subsection{Coordinates for the flow equation}\label{subsec:tree_coord}
We first describe an indexing of elementary differentials (and associated force coefficients) using \textit{labelled} rooted trees, which allows us to give concrete and simple definitions of the key grafting operations \eqref{eq:labeled_graftingrelation} and \eqref{eq:defB}, before formulating them with unlabelled trees in the subsequent subsection.

\subsubsection{Elementary differentials indexed by labelled trees}\label{sec:elemdiff_labelled}
\begin{definition}
 A \emph{labelled} rooted tree $\btau = (N,E,r)$ is a \textit{connected acyclic graph} with finite node set $N \subset \N$, edge set $E \subset N^{(2)}$, and a distinguished root node $r \in N$. 
 We often write $N(\btau) = N$, $E(\btau) = E$ and $r(\btau) = r$. 


We write $\bT$ for the set of all labelled rooted trees, and often just call elements of $\bT$ labelled trees. 
We also write, for any $r\in\N$, $\circ_r$ for the labeled tree with a single node $N(\circ_{r}) = \{r\}$.
\end{definition}

\begin{definition}
Given $\btau, \bsigma \in \bT$, we call a bijection $f: N(\btau) \rightarrow N(\bsigma)$ an \textit{isomorphism} from $\btau$ to $\bsigma$ if $f\big(r(\btau)\big) = r(\bsigma)$, and if the induced mapping $\vec{f}: E(\btau) \rightarrow \big(N(\bsigma)\big)^{(2)}$ given by $e = \{v,\bar{v}\} \mapsto \vec{f}(e) = \{f(v),f(\bar{v})\}$ is in fact a bijection between $E(\btau)$ and $E(\bsigma)$. 

We write $\Iso(\btau,\bsigma)$ for the collection of isomorphisms from $\btau$ to $\btau'$ and use the shorthand $\Aut(\btau) \eqdef \Iso(\btau,\btau)$ for the \textit{automorphism group} of $\tau$. 
We say $\btau$ and $\bsigma$ are isomorphic if there exists an isomorphism between them, and denote by $\mfT$ the set of isomorphism classes of elements of $\bT$. 
An element of $\mfT$ is called an \textit{unlabelled rooted tree}, or just a tree. 

We write $\circ = \{ \circ_{r}: r \in \N\}$ as the isomorphism class of labelled trees with a single node. 
\end{definition}

Recall that we will use these trees to index elementary differentials. 
In the next definition, we make use of the fact that given a labeled tree $\btau$, we can always write, for some $k \geqslant 0$ and $r \in \N$, 
\begin{equ}\label{eq:product_of_trees}
\btau = [\btau_{1}, \dots,  \btau_{k}]_{r}\;.
\end{equ}
where $\btau_1, \dots, \btau_k$ are labelled trees whose node sets are disjoint and do not contain $r \in \N$, and $\btau$ consists of of joining all the roots of the $\btau_1,\dots,\btau_k$ to the new root $r$. 

We now describe how these labelled trees correspond to elementary differentials. 
\begin{definition}\label{def:elem_diff1}  
First, we associate to each labelled tree $\btau \in \bT$ the Hilbert space $\mathcal{H}^{\btau} \eqdef \bigotimes_{n \in N(\btau)} \R^{\cbm}$. 
We introduce this notation since it is natural to write $j$-multi-linear functionals of the noise using duality pairings in $(\R^{\cbm})^{\otimes j}$, and since in our trees each node will represent an occurrence of the noise.
We also write 
\[
\text{$\cbB$}[\btau] \eqdef C^{\infty}\Big( (\R^{\cbn})^{N(\btau)}, \R^{\cbn} \otimes \mcH^{\btau}\Big)\;.\]

The elementary differential corresponding to a labelled tree $\btau$ will be a map $\bUpsilon^{\btau} \in \cbB[\btau]$ inductively defined by setting, for $\btau$ as in \eqref{eq:product_of_trees}, and $x_{N(\btau)} = (x_{n})_{n \in N(\btau)} \in (\R^{\cbn})^{N(\btau)}$, 
\begin{equ}\label{eq:elem_diff1}
\bUpsilon^{\btau}(x_{N(\btau)}) 
=
\big( \dd^{k}V(x_{r(\btau)}) \big) \big[ \bUpsilon^{\btau_{1}}(x_{N(\btau_1)}),\cdots, \bUpsilon^{\btau_{k}}(x_{N(\btau_k)}) \big]\,,
\end{equ}
where $\dd$ here represents taking a differential on $\R^{\cbn}$.
The the base case for this induction given by the above formula when $k=0$, that is when $\btau = \circ_{r}$, so
\begin{equ}\label{eq:elem_diff1_base}
\bUpsilon^{\circ_{r}}(x_{r})
=
V(x_{r})\;.
\end{equ}
and we use the canonical isomorphism $\mathcal{L}(\R^{\cbn},\R^{\cbm}) \simeq \R^{\cbn} \otimes \R^{\cbm}$. 
\end{definition}
Note that in \eqref{eq:elem_diff1} we are overloading this derivative notation in the natural way: if $a_{j} \otimes b_{j} \in \R^{\cbn} \otimes \mathcal{H}^{\btau_j}$ for every $1 \leqslant j \leqslant k$, then we set
\[
\rmd^{k}V(\bigcdot) [ a_{1} \otimes b_{1} ,\dots,  a_{k} \otimes b_{k} ]
\eqdef
\rmd^{k}V(\bigcdot)[ a_{1} ,\dots,  a_{k}] 
\otimes  b_{1} \otimes \cdots \otimes b_{k} \in \R^{\cbn} \otimes \mathcal{H}^{\btau}\;. 
\]
Note also that our explicit definition of $\mathcal{H}^{\btau}$ and $\bUpsilon^{\btau}$ refers to our labelled tree $\btau$. 

One twist versus the elementary differentials seen in Butcher series is non-locality in its dependence on the solution -- i.e. the variables $x_{N(\btau)}$ associated to each node of the labelled tree are allowed to be distinct. 
Given $\btau\in\bT$, $\Gamma^{\btau} \in \text{$\cbB$}[\btau]$, and $v:\R\rightarrow\R^{\cbn}$, we define the composition $\Gamma^{\btau}[v]$ acting on $s_{N(\btau)} = (s_{n})_{n \in N(\btau)} \in \R^{N(\btau)}$ by setting 
\begin{equ}\label{eq:path_eval}
\Gamma^{\btau}[v](s_{N(\btau)}) \eqdef \Gamma^{\btau}\Big( \big( v(s_{n}) : n \in N(\btau) \big) \Big)\;.
\end{equ}

Ignoring truncations that we will introduce in Section~\ref{subsec:trunc}, and allowing for infinite sums over trees, we start with an expansion of an effective force $A_{\epmu}$ in terms of labelled trees would be given by 
\begin{equs}\label{eq:defFmu_0}
    \text{$A$}_{\epmu}[v](t)=\sum_{\btau\in \bT} \int_{\R^{N(\btau)}}\langle \mfz_\epmu^{\btau}(t,s_{N(\btau)}),\bUpsilon^{\btau}[v+{\tt u}](s_{N(\btau)})\rangle_{\mcH^{\btau}}\rmd s_{N(\btau)}\,,
\end{equs}
for some stochastic force coefficients 
 $\text{$\mfz$}_\epmu^{\btau} \in \text{$\cbC$}[\btau]$ where, for $\btau \in \bT$ we set
\begin{equ}\label{def:coordinate}
\text{$\cbC$}[\btau] \eqdef C^{\infty}\Big( [0,1]\times \R^{N(\btau)},  \mcH^{\btau}\Big)\,.
\end{equ}

\begin{remark}\label{rem:not_smooth}
For Sections~\ref{sec:elemdiff_labelled}, \ref{sec:2_1_2}, and \ref{subsec:trunc} we assume that $V$ in \eqref{eq:eq0} is smooth and the force coefficients are also smooth in the $s$ variables but allow a delta function dependence in $t$ (see the initialisation given in the next remark).  
The discussion here is primarily to describe algebraic/combinatorial operations \eqref{eq:labeled_graftingrelation} \slash  
and \eqref{eq:defB} \slash \eqref{eq:grafting_forcecoeff} that describe the hierarchy of elementary differentials and force coefficients. 

The truncation we perform in \eqref{subsec:trunc} makes it clear that $V \in C^3$ will in the end suffice for defining these operations since this is the maximum branching we will see in any tree. 
More major is the extension to rough force coefficients and subsequent sections will introduce the necessary norms and probabilistic and deterministic arguments to allow this. 
\end{remark}

\begin{remark}\label{remark:two_forces}
In the expansion \eqref{eq:defFmu_0}, we will always encode the functional dependence of the effective force on $v$ through elementary differentials translated by ${\tt u}$, but we will have two different families of important force coefficients whose main difference is their initialisation data for $\mu = 0$ for some fixed\footnote{This choice of $r$ will disappear when we move to unlabelled trees in the next section} $r$:
\begin{itemize} 
    \item A collection $\xi_\epmu^{\btau}$ of stationary (in time) force coefficients, that have the benefit of being simpler to construct, and are initialised using 
    \begin{equ}
    \xi_{\eps,0}^{\circ_r}(t,s_r) = \delta(t-s_r)\xi_{\eps}(s_r)\quad
    \text{ and }\quad \xi_{\eps,0}^{\circ_r} = 0 \text{ for } \btau \not = \circ_{r}\;.
    \end{equ}
\item  A collection $\zeta_\epmu^{\btau}$ of non-stationary force coefficients, that are initialised using 
  \begin{equ}
    \zeta_{\eps,0}^{\circ_r}(t,s_r)  = 
    \delta(t-s_r)
    \mathbf{1}_{\geqslant0}(s_r) \xi_{\eps}(s_r)
    \quad \text{ and }\quad \zeta_{\eps,0}^{\circ_r} = 0 \text{ for } \btau \not = \circ_{r}\;, 
\end{equ}
which is the correct initialisation for solving the initial value problem for \eqref{eq:eq1}. 
\end{itemize}  
\end{remark}

We now show how grafting allows us to write the Polchinski flow. 
Indeed, the initial data at scale $\mu = 0$ is given by the contribution from just $\circ_{r}$, and the expansion of \eqref{eq:defFmu_0} for $\mu > 0$ is generated by iterative graftings of trees due to the non-linear term
\begin{equ}\label{eq:Pol_nonlinearity}
   \text{$\D$}A_{\epmu} \big[v, \dot G_\mu A_{\epmu}[v] \big]\;.
\end{equ}
When grafting labelled trees, it is convenient to make sure node sets of trees we graft are disjoint. 
\begin{definition}\label{def:grafting0}
Given $\ell \in\N$ and a labelled tree $\bsigma\in\bT$, we define a new labelled tree ${\bsigma}+\ell \in\bT$ to be the labeled tree with node set ${N}(\bsigma+\ell) \eqdef \{ j + \ell : j \in N(\bsigma)\}$ and such that the \textit{shift by $\ell$} defined as $ N(\bsigma)\ni j \mapsto j+ \ell \in{N}(\bsigma+\ell)$ belongs to $\Iso(\bsigma,\bsigma+\ell)$ -- in particular, we have $ r(\bsigma+\ell)= r(\bsigma)+\ell$. 
Note that $\bsigma + 0 = \bsigma$. 

Finally, given two labelled trees $\btau,\bsigma\in\bT$, we set 
\[
\ell_{\btau,\bsigma}
\eqdef 
 \begin{cases}  
    \max N(\btau) + 1& \text{if} \;N(\btau) \cap N(\bsigma) \not = \emptyset\,, \\
    0 & \text{otherwise} \,. \\
\end{cases}
\]
We overload notation, identifying $\ell_{\btau,\bsigma}$ with the shift-isomorphism and write $\ell_{\btau,\bsigma} \in \Iso(\bsigma, \bsigma + \ell_{\btau,\bsigma})$. 
\end{definition}
We now describe grafting on labelled trees.
\begin{definition}\label{def:grafting}
Given $\btau_1, \btau_2\in \bT$, we define, for any $n \in N(\btau_1)$ a new labelled tree $\btau_2 \graft_{n} \btau_1 \in \bT$ which will be constructed by ``grafting the tree $\btau_2$ onto $\btau_1$ at the node $n$''. More precisely, we set 
\begin{equs}  
r(\btau_2 \graft_{n} \btau_1 )\eqdef r(\btau_1)\,,\; 
\enskip 
N(\btau_2 \graft_{n} \btau_1 ) \eqdef N(\btau_1) \cup {N}\big(\btau_2+ \ell_{\btau_1,\btau_2} \big)\,,
\end{equs}
and
\begin{equs}   
{E}(\btau_2 \graft_{n} \btau_1) \eqdef E(\btau_1) \cup E\big(\btau_2+ \ell_{\btau_1,\btau_2} \big) \cup \big\{\{{r}\big(\btau_2+ \ell_{\btau_1,\btau_2} 
 \big),n\}\big\}\,.
\end{equs}
\end{definition}

\begin{remark}\label{rem:summed_grafting}
Note that for $n \not = r(\btau_1)$, the operation $\btau_2\graft_{n} \btau_1$  depends on $\btau_1$ being labelled -- in particular, it does not directly lift to an operation on isomorphism classes of trees. 

However, if we sum over all the nodes $n \in N(\btau_1)$ we are grafting over, this does lift to trees without labelling. 
Namely, one can define 
$\graft : \mVec(\bT) \otimes \mVec(\bT) \rightarrow \mVec(\bT)$ given by
\begin{equ}\label{summed_grafting}
\btau_2 \graft \btau_1 = \sum_{n \in N(\btau_1)} \btau_2 \graft_{n} \btau_1\;,
\end{equ}
which lifts to a well-defined map $\graft: \mVec(\mfT) \otimes \mVec(\mfT) \rightarrow  \mVec(\mfT)$ by setting 
\begin{equ}\label{summed_grafting_iso}
\pi(\btau_2) \graft \pi(\btau_1) = \sum_{n \in N(\btau_1)} \pi(\btau_2 \graft_{n} \btau_1)\;,
\end{equ}
where $\pi: \bT \rightarrow \mfT$ maps labeled trees to (unlabelled) trees. 
Note that each labelled tree appearing on the right hand side of \eqref{summed_grafting} appears at most once, while (unlabelled) trees can appear with multiplicity on the right hand side of \eqref{summed_grafting_iso}. 
\end{remark}

Next we define some useful actions of isomorphisms between labelled trees. 
\begin{definition}\label{def:iso_action}  
Given $\btau, \bsigma \in \bT$ and $f\in\Iso(\btau,\bsigma)$, we define an induced linear isomorphism $\hat f: \mathcal{H}^{\btau} \rightarrow \mathcal{H}^{\bsigma}$ and, for any $k \in \{1,\cbn\}$, a bijection $\tilde f^{[k]}: (\R^k)^{N(\btau)} \rightarrow (\R^k)^{N(\bsigma)}$ by setting
\begin{equs}\label{induced_maps}
\hat f:\mathcal{H}^{\btau} 
\ni
\bigotimes_{n \in N(\btau)}
e_{n}
&\mapsto 
\bigotimes_{m \in N(\bsigma)} e_{f^{-1}(m)} \in \mathcal{H}^{\bsigma}\;,\\
\tilde f^{[k]}:(\R^{k})^{N(\btau)} 
\ni
\big(x_n: n \in N(\btau)\big) 
&\mapsto 
 \big(x_{f^{-1}(m)} : m \in N(\bsigma)\big) \in (\R^{k})^{N(\bsigma)}\;.
\end{equs}
We also lift $f$ to a map $\mathbf{f}:\cbB[\btau]\rightarrow \cbB[\bsigma]$ by setting $$\text{$\mathbf{f}$}F(\bigcdot) \eqdef \big(\Id_{\R^\cbn}\otimes \hat f\big) \circ F(\tilde f^{[\cbn]} \,\bigcdot )\,.$$ 
Overloading notation, we also lift $f$ to a map $\text{$\mathbf{f}$}:\text{$\cbC$}[\btau]\rightarrow \text{$\cbC$}[\bsigma]$ by setting $$\text{$\mathbf{f}$}F(\bigcdot) \eqdef \hat f \circ F\big( 
 (\Id_{[0,1]}\otimes \tilde{f}^{[1]}) \bigcdot \big)\,.$$ 
\end{definition}
We now formulate, for $\btau_1, \btau_2 \in \bT$ and $n \in N(\btau_1)$, a map  $$\graft_{n}: \cbB[\btau_1] \otimes \cbB[\btau_2] \rightarrow \cbB[\btau_2 \graft_n \btau_1]=\cbB[\btau_1]\otimes\cbB[\btau_2+\ell_{\btau_1,\btau_2}]$$ by setting, for $\Gamma^{\btau_1} \in \cbB[\btau_1] $, and $\Gamma^{\btau_2} \in \cbB[\btau_1]$, 
\begin{equ}\label{eq:labelled_grafting_function}
\Gamma^{\btau_2} \graft_{n} \Gamma^{\btau_1}\equiv \graft_n(\Gamma^{\btau_1},\Gamma^{\btau_2}) 
\eqdef
\dd_{n}\Gamma^{\btau_1}\big[
\bell_{\btau_1,\btau_2}
\Gamma^{\btau_2}\big]\,,
\end{equ}
where we are applying Definition~\ref{def:iso_action}  to $f = \ell_{\btau_1,\btau_2}$ which gives us a map $\bell_{\btau_1,\btau_2}:\cbB[\btau_2]\rightarrow \cbB[\btau_2 + \ell_{\btau_1,\btau_2}]$ and on the right hand side we are using the canonical isomorphism between $\mathcal{H}^{\btau_1} \otimes \mathcal{H}^{\btau_2 + \ell_{\btau_1,\btau_2}}$ and $\mathcal{H}^{\btau_1 \graft_n \btau_2}$. 
Here $\dd_{n}$ denotes a total $\R^{\cbn}$-derivative in the $n \in N(\btau_1)$ component of $(\R^{\cbn})^{N(\btau_1)}$.

It then follows that, for any $\btau_1, \btau_2 \in \mfT$,  $n \in N(\btau)$, and $v: \R \rightarrow \R^{\cbn}$, we have 
\begin{equ}\label{eq:labeled_graftingrelation}
\bUpsilon^{\btau_2}
\graft_{n}
\bUpsilon^{\btau_1}
=
\bUpsilon^{\btau_2 \graft_n \btau_1}
\quad
\text{and}
\quad
\mathrm{D}\bUpsilon^{\btau_1} 
\big[ v,
\bell_{\btau_1,\btau_2}\bUpsilon^{\btau_2}[v] \big]
=
\sum_{n \in N(\btau_1)}
\bUpsilon^{\btau_2 \graft_n \btau_1} [v]\;.
\end{equ}
Using this notation to expand the non-linearity in \eqref{eq:Pol_nonlinearity} evaluated on \eqref{eq:defFmu_0} gives:
\begin{equs}\label{eq:expressionDF}
{}&\text{$\D$}A_{\epmu}\big[v,\dot G_\mu A_{\epmu}[v] \big](t)\\
&=
\sum_{\btau \in \bT}
\int_{\R^{N(\btau)}}
\Big\langle \sum_{\btau_1, \btau_2 \in \bT} 
\sum_{n \in N(\btau_1)}
\text{$\1$}\{ \btau = \btau_2 \graft_n \btau_1 \} 
\big( \text{$\mfz$}^{\btau_2}_\epmu \graft_{n}^{\mu} \text{$\mfz$}^{\btau_1}_\epmu \big)  (t,s_{N(\btau)}),\bUpsilon^{\btau}[v+{\tt u}](s_{N(\btau)}) \Big\rangle_{\mcH^{\btau}} \rmd s_{N(\btau)}\,,
\end{equs}
where the notation $\graft_{n}^{\mu}$ is defined below. 

\begin{definition}\label{def:scaledlabeledgraft}
Fix $\mu \in[0,1]$, $\btau_1,\btau_2\in\bT$, and $n\in N(\btau_1)$. 
We define $$\graft_{n}^{\mu}: \text{$\cbC$}[\btau_1]  \otimes \text{$\cbC$}[\btau_2] \rightarrow \text{$\cbC$}[\btau_2 \graft_n \btau_1]=\cbC[\btau_1]\otimes\cbC[\btau_2+\ell_{\btau_1,\btau_2}]$$ as follows. For $\text{$\mfz$}^{\btau_1} \in\cbC[\btau_1]$, $\text{$\mfz$}^{\btau_2} \in\cbC[\btau_2]$ and using the shorthand ${\btau}_{1,2,n} = \btau_2 \graft_n \btau_1$, we let 
\begin{equs}[eq:defB]
{}&
\big(\text{$\mfz$}^{\btau_2} \graft_{n}^{\mu} \text{$\mfz$}^{\btau_1}
\big)(t,s_{N({\btau_{1,2,n}})})\equiv\graft_n^\mu\big(\text{$\mfz$}^{\btau_1},\text{$\mfz$}^{\btau_2}\big)(t,s_{N({\btau_{1,2,n}})})\\
{}& \enskip \eqdef  \text{$\mfz$}^{\btau_1}(t,s_{N(\btau_1)})\int_{0}^1 \dot G_\mu(s_n-w )(\bell_{\btau_1,\btau_2}\text{$\mfz$}^{\btau_2})(w,s_{N({\btau_2}+\ell_{\btau_1,\btau_2})}) \rmd w\,
    \in \text{$\cbC$}[\btau] \,.\textcolor{white}{blabla}
\end{equs}
\end{definition}

The formal equation \eqref{eq:expressionDF} shows that the Polchinski flow preserves our system of coordinates. However, while labellings were necessary for writing down these explicit formulas for $\bUpsilon$, they are already inconvenient -- we need to relabel with $\bell_{\bigcdot,\bigcdot}$ when grafting trees).
Infinitely labelled trees correspond to the same tree, but the labelling itself is an extraneous and messy bit of data to carry along. 

It is cleaner to index the expansion \eqref{eq:defFmu_0} with (unlabelled) trees, in the following section we use some ideas from \cite[Sec. 5]{CCHS2d} to do this. 
Those readers who are comfortable with taking the final representation given in Lemma~\ref{lem:unlabelled_Pol} on trust can skip to that point in the article.

\subsubsection{Elementary differentials indexed by isomorphism classes of trees}\label{sec:2_1_2}
We first introduce analogs of the spaces $\text{$\cbB$}[\btau]$ and $\text{$\cbC$}[\btau]$ that are independent of the labelling of the tree.
These are defined as a class of ``sections'' in a ``bundle'' over a base space of different labellings of $\tau$ with that the constraint that the sections should be invariant under isomorphisms of labelled trees that move between fibers. 

\begin{definition}\label{def:sections}
Given $\tau \in \mfT$, we define $\text{$\cbB$}[\tau] \subset \prod_{\btau \in \tau} \text{$\cbB$}[\btau]$ 
by setting 
\begin{equs}
\text{$\cbB$}[\tau]
\eqdef
\big\{ 
\big(F_{\btau}\in \text{$\cbB$}[\btau] : \btau \in \tau\big): \enskip 
\forall \btau, \btau' \in \tau\;\text{and}\; f \in \Iso(\btau,\btau')\,,\; \mathbf{f} F_{\btau} =  F_{\btau'}
\big\}\,.
\end{equs}
We define $\text{$\cbC$}[\tau]\subset \prod_{\btau \in \tau} \text{$\cbC$}[\btau]$ analogously. 

Moreover, for $\btau \in \tau \in \mfT$, let $\iota_{\tau,\btau}: \text{$\cbB$}[\tau] \rightarrow \text{$\cbB$}[\btau]$ be the ``fiber'' map $\big(F_{\btau'}\in\mcH^{\btau'} : \btau' \in \tau\big) \mapsto F_{\btau}$. 
Observe that $\iota_{\tau,\btau}$ has a left inverse given by the ``section'' map $\pi_{\tau,\btau}: \text{$\cbB$}[\btau] \rightarrow \text{$\cbB$}[\tau]$ defined by setting
\[
\text{$\cbB$}[\btau]\ni F_{\btau}\mapsto\pi_{\tau,\btau}F_{\btau}\eqdef\Big(\frac{1}{|\Iso(\btau,\btau')|}
\sum_{f \in \Iso(\btau,\btau')} \text{$\mathbf{f}$} F_{\btau} : \btau'\in\tau\Big)\in \text{$\cbB$}[\tau]
\,.
\] 
We use the same notation for the analogous maps between $\text{$\cbC$}[\btau]$ and $\text{$\cbC$}[\tau]$. 

Note that $|\Iso(\btau,\btau')|$ actually does not depend on the choice of $\btau,\btau'\in\tau$. In the sequel, we therefore just write $|\Aut(\tau)|\eqdef |\Aut(\btau)|$ for any $\btau\in\tau$.

Finally, for $\btau \in \tau$ we sometimes write $\iota_{\btau} \eqdef \iota_{\tau,\btau}$ and $\pi_{\btau} \eqdef \pi_{\tau,\btau}$ to lighten subscripts. 
\end{definition}

The elementary differential and force coefficient corresponding to a given $\tau \in \mfT$ will be elements of $\text{$\cbB$}[\tau]$ and $\text{$\cbC$}[\tau]$, respectively. 

The main idea behind passing between operations and objects defined for labelled trees to those defined for unlabelled trees follows two steps. 
The first step is to verify that the operation/function on labeled objects behaves  ``covariantly'' with respect to relabellings/isomorphisms on labelled trees. 
The second step is to transfer the operation/function from labelled trees to unlabelled trees by using section and fiber maps -- this will be well-defined because of the first step. 
We first describe this in the case of elementary differentials themselves in the following definition.

\begin{definition} We define the family of elementary differentials $(\Upsilon^{\tau}: \tau\in\mfT)$, with $\Upsilon^{\tau} \in \text{$\cbB$}[\tau]$, by using an induction in $\tau$. 
For the base case, we define  $\Upsilon^{\circ} = \big( \Upsilon^{\circ_{r}}\big)_{r \in \N} \in \prod_{r \in \N} \text{$\cbB$}[\circ_{r}]$ by setting, in analogy with \eqref{eq:elem_diff1_base}, 
\[
\Upsilon^{\circ_{r}}(x_{r}) = V(x_{r})\;.
\]
It is clear that one indeed has $\Upsilon^{\circ_{r}} \in \text{$\cbB$}[\circ]$. 
In other words, we could have just fixed a single $r \in \N$, applied the above formula, and then defined $\Upsilon^{\circ} = \pi_{\circ,\circ_{r}} \Upsilon^{\circ_{r}}$ and our construction would be well-defined in that it didn't depend on our initial choice of $r$. 

We now proceed to the inductive step. 
Suppose we have $\tau=[\tau_1,\dots,\tau_k]$, where $k > 0$, the $\tau_1,\dots,\tau_k$ are unlabelled, and $[\cdot]$ denotes joining the roots of unlabelled trees at a new unlabelled root. 
Our inductive assumption is that we have been given $\Upsilon^{\circ} \in \text{$\cbB$}[\bignoise]$, and $\Upsilon^{\tau_{j}} \in \text{$\cbB$}[\btau_j]$ for $1 \leqslant j \leqslant k$.

Next, fix any labelled $\btau \in \tau$, and choose any labelled $\btau_1,\dots,\btau_k$ with $\btau_j \in \tau_j$, such that $\btau = [\btau_1,\dots,\btau_k]_{r}$ (here we use the notation of \eqref{eq:product_of_trees}, and $r = r(\btau)$).
For every $\bsigma\in \{\circ_{r}, \btau_1,\cdots,\btau_k\}$, we first set
\begin{equs}\label{eq:deftildeupsilon}
   \Upsilon^{\bsigma} \eqdef \iota_{\sigma,\bsigma} \Upsilon^{\sigma}\in\text{$\cbB$}[\bsigma]\,.
\end{equs}
We then define $\Upsilon^{\btau} \in \text{$\cbB$}[\btau]$ in analogy with \eqref{eq:elem_diff1} by setting
\begin{equ}\label{eq:elem_diff2}
\Upsilon^{\btau}(x_{N(\btau)})
=
\big( \dd^{k} \Upsilon^{\circ_r}(x_{r(\btau)}) \big) \big[ \Upsilon^{\btau_{1}}(x_{N(\btau_1)}),\cdots, \Upsilon^{\btau_{k}}(x_{N(\btau_k)}) \big]\,,
\end{equ}
and finally we can set
\begin{equ}\label{eq:already_sym}
\Upsilon^{\tau} \eqdef
\pi_{\tau,\btau} \Upsilon^{\btau}\;.
\end{equ}
In the next lemma we verify that $\Upsilon^{\tau}$ as defined above is well-defined, in that it does not depend on how we realised $\tau$ as the labelled tree $[\btau_1,\dots,\btau_k]_{r}$. 
\end{definition}
\begin{lemma}\label{lem:already_sym}
The elementary differential $\Upsilon^\tau$ is well-defined. 
Moreover, one has, for any $\btau \in \tau \in \mfT$,  $\Upsilon^{\btau} = \bUpsilon^{\btau}$ where the latter is defined as in Definition~\ref{def:elem_diff1}. 
\end{lemma}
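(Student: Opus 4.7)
The plan is to prove both assertions simultaneously by induction on the number of nodes of $\tau$. Since the assertion $\Upsilon^{\btau}=\bUpsilon^{\btau}$ fixes the normalization and well-definedness reduces to showing that $\bUpsilon^{\btau}\in \cbB[\tau]$ (in the sense that, extended over $\tau$ via isomorphisms, it is a section), the main content is an \emph{isomorphism-covariance} property
\begin{equs}\label{eq:cov-claim}
\mathbf{f}\,\bUpsilon^{\btau}=\bUpsilon^{\btau'}\qquad\text{for every }f\in\Iso(\btau,\btau'),
\end{equs}
to be proved alongside the identity $\Upsilon^{\btau}=\bUpsilon^{\btau}$.

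The base case $\tau=\circ$ is immediate: for every $r\in\N$, $\bUpsilon^{\circ_{r}}(x_{r})=V(x_{r})=\Upsilon^{\circ_{r}}(x_{r})$, and any $f\in\Iso(\circ_{r},\circ_{r'})$ must send $r\to r'$, so $\mathbf{f}\bUpsilon^{\circ_{r}}(x_{r'})=V(x_{r'})=\bUpsilon^{\circ_{r'}}(x_{r'})$ (the induced $\hat f$ on $\mcH^{\circ_{r}}=\R^{\cbm}$ is the identity). For the inductive step, fix $\tau=[\tau_{1},\dots,\tau_{k}]$ and a labelled representative $\btau=[\btau_{1},\dots,\btau_{k}]_{r}$. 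By the inductive hypothesis, $\iota_{\tau_{j},\btau_{j}}\Upsilon^{\tau_{j}}=\Upsilon^{\btau_{j}}=\bUpsilon^{\btau_{j}}$ for each $j$, so comparing \eqref{eq:elem_diff2} (read through \eqref{eq:deftildeupsilon}) with \eqref{eq:elem_diff1} gives directly $\Upsilon^{\btau}=\bUpsilon^{\btau}$.

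To check \eqref{eq:cov-claim} at level $\btau$, observe that any $f\in\Iso(\btau,\btau')$ sends $r(\btau)$ to $r(\btau')$, permutes the children according to some $\sigma\in\mathfrak{S}_{k}$, and restricts to isomorphisms $f_{j}\in\Iso(\btau_{j},\btau'_{\sigma(j)})$. Unpacking Definition~\ref{def:iso_action}, the induced map $\hat f$ on $\mcH^{\btau}=\bigotimes_{n\in N(\btau)}\R^{\cbm}$ factors through the corresponding $\hat f_{j}$ on each $\mcH^{\btau_{j}}$, tensored with the obvious permutation of the $k$ children-factors. Applying the inductive covariance $\mathbf{f_{j}}\bUpsilon^{\btau_{j}}=\bUpsilon^{\btau'_{\sigma(j)}}$ and the symmetry of $\dd^{k}V$ in its $k$ arguments yields
\begin{equs}
\mathbf{f}\bUpsilon^{\btau}
=\dd^{k}V(x_{r(\btau')})\bigl[\bUpsilon^{\btau'_{\sigma(1)}},\dots,\bUpsilon^{\btau'_{\sigma(k)}}\bigr]
=\bUpsilon^{\btau'},
\end{equs}
which proves \eqref{eq:cov-claim}. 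The covariance in turn implies that on any component $\btau'\in\tau$ the section map collapses: $\frac1{|\Iso(\btau,\btau')|}\sum_{f\in\Iso(\btau,\btau')}\mathbf{f}\bUpsilon^{\btau}=\bUpsilon^{\btau'}$, so $\pi_{\tau,\btau}\Upsilon^{\btau}$ is precisely the tuple $(\bUpsilon^{\btau'})_{\btau'\in\tau}$, independently of the initial choice of labelled representative. This gives well-definedness of $\Upsilon^{\tau}$ and closes the induction.

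The one delicate point, which I expect to be the main obstacle, is the bookkeeping in \eqref{eq:cov-claim}: one must match the tensor-factor reordering produced by $\hat f$ on $\mcH^{\btau}$ with the reordering of the derivative arguments produced by the child permutation $\sigma$, and verify that the $\hat f_{j}$'s on the sub-Hilbert spaces assemble correctly. Once this is set up carefully using the canonical isomorphism $\mcH^{\btau}\simeq\R^{\cbm}\otimes\bigotimes_{j=1}^{k}\mcH^{\btau_{j}}$, the claim follows from symmetry of $\dd^{k}V$ and the inductive hypothesis.
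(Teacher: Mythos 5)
Your proof is correct and takes essentially the same approach as the paper's: an induction on tree size that exploits the structure of rooted-tree isomorphisms (root $\mapsto$ root, permutation of children, restrictions to subtree isomorphisms) together with the symmetry of $\dd^{k}V$. The paper organizes this as two separate inductive checks — a well-definedness argument for \eqref{eq:elem_diff2}, then an $\Aut(\btau)$-invariance argument to show $\iota_{\btau}\Upsilon^{\tau}=\Upsilon^{\btau}$ — while you fold both into a single induction by making the stronger $\Iso$-covariance $\mathbf{f}\bUpsilon^{\btau}=\bUpsilon^{\btau'}$ the explicit inductive invariant, which is a slightly cleaner packaging of the same ideas.
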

\begin{proof}    
The case of $\tau = \circ$ is immediate as described above. 
 
For the inductive step, one should see the right hand side of $\eqref{eq:elem_diff2}$ in terms of a map 
\[
\text{$\cbB$}[\circ_r] \otimes \text{$\cbB$}[\btau_1] \otimes \cdots \otimes \text{$\cbB$}[\btau_k] \rightarrow \text{$\cbB$}[\btau]\,.
\]
To see \eqref{eq:elem_diff2} is indeed well-defined as such a map, we first note that there is an obvious bijection from $N(\circ_{r}) \sqcup \big(N(\btau_1) \sqcup \cdots \sqcup N(\btau_k)\big)$ to $N(\btau)$ that transports relabellings on $\circ_{r}$ and $\btau_1,\dots, \btau_k$ to relabellings of $\btau$ -- our construction here does not depend on the specific choice of $r$ and $\btau_j \in \tau_j$. 
Moreover, the symmetry of the derivative means our construction also does not depend on the ordering of the $\tau_1,\dots,\tau_k$ themselves. 

For the second claim we again argue inductively. 
For the base case it is immediate. 
For the inductive step, it suffices to verify that for any $\btau \in \tau$, $\iota_{\btau} \Upsilon^{\tau} = \Upsilon^{\btau}$ - this means that there is no actual symmetrization occurring in \eqref{eq:already_sym} and so the inductive step for $\Upsilon^{\btau}$ matches that of $\bUpsilon^{\btau}$.
What must be checked is that $\Upsilon^{\btau}$ automatically satisfies $\mathbf{f} \Upsilon^{\btau} = \Upsilon^{\btau}$ for every $f \in \Aut(\btau)$ if $\Upsilon^{\btau_1}, \dots, \Upsilon^{\btau_k}$ are assumed to satisfy to be invariant under $\Aut(\btau_1),\dots,\Aut(\btau_k)$. 
However, this is immediate since $\Aut(\btau)$ is generated by $\Aut(\btau_1),\dots,\Aut(\btau_k)$ and swaps of trees -- and for the latter we can argue due to the symmetry of $\dd^{k}$ in its $k$-arguments. 
\end{proof}

We now prepare to write the \eqref{eq:Pol_nonlinearity} in terms of grafting of unlabelled trees.  
We write 
\[
\text{$\cbB$} \eqdef \prod_{\tau \in \text{\scriptsize{$\mfT$}}} \text{$\cbB$}[\tau]
\quad
\text{and}
\quad
\text{$\cbC$} \eqdef \prod_{\tau \in\text{\scriptsize{$\mfT$}}} \text{$\cbC$}[\tau]\;.
\]
\begin{definition}\label{def:grafting_full}  
We define, for any $\mu \in [0,1]$, a ``grafting with scale'' $\graft^{\mu}: \text{$\cbC$} \otimes \text{$\cbC$} \rightarrow \text{$\cbC$}$. 

Let $\mu \in [0,1]$. 
Given $\tau_1,\tau_2 \in \mfT$ and $\text{$\mfz$}^{\tau_1} \in \text{$\cbC$}[\tau_1]$, $\text{$\mfz$}^{\tau_2} \in \text{$\cbC$}[\tau_2]$, we define $\text{$\mfz$}^{\tau_2} \graft^{\mu} \text{$\mfz$}^{\tau_1} \in \text{$\cbC$}$ as follows. 
We first choose $\btau_1 \in \tau_1$, $\btau_2 \in \tau_2$, and write  $\text{$\mfz$}^{\btau_1} = \iota_{\tau_1,\btau_1} \text{$\mfz$}^{\tau_1}$, $\text{$\mfz$}^{\btau_2} = \iota_{\tau_2,\btau_2} \text{$\mfz$}^{\tau_2}$. 
We then set
\begin{equ}\label{eq:grafting_forcecoeff}
\text{$\mfz$}^{\tau_2} \graft^{\mu} \text{$\mfz$}^{\tau_1} 
\eqdef \sum_{n \in N(\btau_1)} 
\pi_{\btau_{1,2,n}} \big(
\text{$\mfz$}^{\btau_1}\graft^\mu_n\text{$\mfz$}^{\btau_2}\big)\,,
\end{equ}
where $\graft^\mu_n$ is given in Definition~\ref{def:scaledlabeledgraft}. 
where we are writing $\btau_{1,2,n} = \btau_{2} \graft_n \btau_{1}$, and using, for $\bsigma \in \sigma \in \mfT$, the shorthand $\pi_{\bsigma} =\pi_{\sigma,\bsigma}$. 
We then extend $\graft^\mu$ to $\cbC\otimes\cbC$ by linearity.
\end{definition}

One must argue that $\text{$\mfz$}^{\tau_2} \graft^{\mu} \text{$\mfz$}^{\tau_1}$ is indeed well defined, this is done in the following lemma.

\begin{lemma}
For any $\mu \in [0,1]$, $\graft^{\mu}: \text{$\cbC$} \otimes \text{$\cbC$} \rightarrow \text{$\cbC$}$ is well-defined. 
\end{lemma}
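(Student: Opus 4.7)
The claim is that the construction gives a well-defined element of $\cbC$ independent of the choice of labelled representatives $\btau_1 \in \tau_1$ and $\btau_2 \in \tau_2$. That the sum lies in $\cbC$ is immediate: each summand $\pi_{\btau_{1,2,n}}(\zeta^{\btau_1}\graft_n^\mu \zeta^{\btau_2})$ sits in $\cbC[\pi(\btau_{1,2,n})]$ by construction of the section map, so the sum defines an element of $\cbC = \prod_\tau \cbC[\tau]$ by grouping contributions according to their target isomorphism class. The substance of the proof is thus representative-independence.

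The strategy is to establish a covariance identity for the labelled grafting $\graft_n^\mu$ from Definition~\ref{def:scaledlabeledgraft}. Given any $g_1 \in \Iso(\btau_1,\btau_1')$, $g_2 \in \Iso(\btau_2,\btau_2')$, and any $n \in N(\btau_1)$, one constructs an explicit isomorphism $h \in \Iso(\btau_2 \graft_n \btau_1,\,\btau_2' \graft_{g_1(n)} \btau_1')$ by concatenating $g_1$ on the $\btau_1$-part with $g_2$ on the $\btau_2$-part, conjugated by the shift bijections $\ell_{\btau_1,\btau_2}$ and $\ell_{\btau_1',\btau_2'}$ used to disambiguate node labels. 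The key identity to verify is
\[
\mathbf{h}\big( \zeta^{\btau_1} \graft_n^\mu \zeta^{\btau_2}\big)
=
\big(\mathbf{g}_1 \zeta^{\btau_1}\big) \graft_{g_1(n)}^\mu \big(\mathbf{g}_2 \zeta^{\btau_2}\big)\,.
\]
This is a direct unraveling from \eqref{eq:defB}: the kernel $\dot G_\mu(s_n - w)$ depends only on the time variable $s_n$, which transforms covariantly under $\tilde h^{[1]}$, while the Hilbert space factors transform under $\hat h = \hat g_1 \otimes \hat g_2$ (up to reshuffling by the shifts).

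With this covariance in hand, representative-independence follows at once. The invariance condition built into the definition of $\cbC[\tau_i]$ gives $\mathbf{g}_i \zeta^{\btau_i} = \zeta^{\btau_i'}$, while the section map satisfies $\pi_{\bsigma'}(\mathbf{h} F) = \pi_\bsigma F$ in $\cbC[\pi(\bsigma)]$ for any $h \in \Iso(\bsigma,\bsigma')$. Combining these, the $n$-th summand in the expansion of $\zeta^{\tau_2} \graft^\mu \zeta^{\tau_1}$ using $(\btau_1, \btau_2)$ coincides, in the common fiber $\cbC[\pi(\btau_{1,2,n})]$, with the $g_1(n)$-th summand in the expansion using $(\btau_1', \btau_2')$. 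Since $g_1$ is a bijection $N(\btau_1) \to N(\btau_1')$, the two sums match term by term. The main obstacle is purely bookkeeping: tracking how the shifts $\ell_{\btau,\btau'}$ interact with $g_1$ and $g_2$. Since these shift maps themselves lie in $\Iso(\bsigma,\bsigma + \ell)$ and are absorbed into the symmetrization performed by $\pi_{\btau_{1,2,n}}$, no actual difficulty arises.
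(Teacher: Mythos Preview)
Your proof is correct and follows essentially the same approach as the paper's own argument, which is a brief two-sentence sketch saying that relabellings of $\btau_1,\btau_2$ induce a rearrangement of the sum over $n$ together with relabellings of the summand trees. You have simply made this precise by writing down the covariance identity $\mathbf{h}(\zeta^{\btau_1}\graft_n^\mu\zeta^{\btau_2}) = (\mathbf{g}_1\zeta^{\btau_1})\graft_{g_1(n)}^\mu(\mathbf{g}_2\zeta^{\btau_2})$ and the compatibility $\pi_{\bsigma'}(\mathbf{h}F)=\pi_{\bsigma}F$ of the section maps, which is exactly the content behind the paper's sketch.
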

\begin{proof}   
It suffices to check that the construction of $\text{$\mfz$}^{\tau_2} \graft^{\mu} \text{$\mfz$}^{\tau_1} $ given in Definition~\ref{def:grafting_full} does not depend on the labellings of $\tau_1$ and $\btau_2$ we chose. 

This follows from the observation for each $n$, there is an explicit bijection between $N(\btau_1) \sqcup N(\btau_2)$ and $N(\btau_2 \graft_n \btau_1)$, and relabelling of $\btau_1$ and $\btau_2$ just generate a rearranging of the sum over $n$ and relabellings of trees that appear in the summands. 
\end{proof}

\begin{remark}
One can argue similarly to show that there is a well-defined map $\graft: \text{$\cbB$} \otimes \text{$\cbB$} \rightarrow \text{$\cbB$}$. 
Given $\tau_1,\tau_2 \in \mfT$, $\Gamma^{\tau_1} \in \text{$\cbB$}[\tau_1]$ and $\Gamma^{\tau_2} \in \text{$\cbB$}[\tau_1]$ one defines $\Gamma^{\tau_2} \graft \Gamma^{\tau_1} \in \text{$\cbB$}$ as follows. 

Again choose $\btau_1 \in \tau_1$, $\btau_2 \in \tau_2$, and write $\Gamma^{\btau_1}  =  $, $\Gamma^{\btau_2}  = \iota_{\tau_2,\btau_2} \Gamma^{\tau_2} $. 
We can then set
\begin{equ}\label{eq:grafting_elemdiff}
\Gamma^{\tau_2}  \graft \Gamma^{\tau_1} \eqdef \sum_{n \in N(\btau_1)} \pi_{\btau_{1,2,n}} 
\big( (\iota_{\btau_2} \Gamma^{\tau_2} ) \graft_{n} ( \iota_{\btau_2} \Gamma^{\btau_2}  
)\big)\, \in \text{$\cbB$}\,,
\end{equ}
where $\graft_{n}$ is as \eqref{eq:labelled_grafting_function} and $\btau_{1,2,n} = \btau_{2} \graft_{n} \btau_{1}$.

In particular, one can find symmetry factors $S(\tau) \in \N_{>0}$ such that, if we define 
\[
\Upsilon \eqdef\prod_{\tau \in \mathfrak{T}} \frac{1}{S(\tau)}\Upsilon^{\tau} \in \text{$\cbB$}\,,
\]
then 
\[
\Upsilon \graft \Upsilon = \Upsilon\;.
\]
An analogue of the above relation is important in regularity structures \cite{BCCH21}, but does not immediately seem to play an important role in the flow approach. 
\end{remark}

\begin{lemma} 
Let $\mu \in[0,1]$, $\tau_i \in \mfT$, and $\text{$\mfz$}^{\tau_i} \in \cbC[\tau_i]\,$ for $i = 1,2$. 
We can then write 
\[
\text{$\mfz$}^{\tau_2}
\graft^\mu\text{$\mfz$}^{\tau_1}=\prod_{\tau\in\mfT} \big(\text{$\mfz$}^{\tau_2}\graft^\mu\text{$\mfz$}^{\tau_1}\big)^\tau\,,
\]
where one has $\big(\text{$\mfz$}^{\tau_2}\graft^\mu\text{$\mfz$}^{\tau_1}\big)^\tau \in \cbC[\tau]$. Moreover, for any $\btau \in \tau$, $\btau_1 \in \tau_1$ and $\btau_2 \in \tau$, again writing for $n \in N(\btau_1)$ $\btau_{1,2,n} = \btau_{2} \graft_{n} \btau_{1}$, we have
\begin{equ}  
\big(\text{$\mfz$}^{\tau_2}\graft^\mu\text{$\mfz$}^{\tau_1}\big)^{\btau}
\eqdef
\iota_{\btau}
\big(\text{$\mfz$}^{\tau_2}\graft^\mu\text{$\mfz$}^{\tau_1}\big)^{\tau}
=
\sum_{\substack{n\in N(\btau_1)\\
\btau_{1,2,n}\in\tau}}\
\frac{1}{|\Aut(\tau)|} 
\sum_{ f \in \Iso(\btau_{1,2,n},\btau)}
\mathbf{f} \big( \text{$\mfz$}^{\btau_2} \graft^{\mu}_{n} \text{$\mfz$}^{\btau_1} \big)\;.
\end{equ}
\end{lemma}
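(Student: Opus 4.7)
The first claim is essentially unpacking definitions: by Definition~\ref{def:grafting_full}, $\zeta^{\tau_2}\graft^\mu\zeta^{\tau_1}$ is defined as an element of $\cbC=\prod_{\tau\in\mfT}\cbC[\tau]$, so we can automatically decompose it coordinate-by-coordinate as a product over $\tau$, with $(\zeta^{\tau_2}\graft^\mu\zeta^{\tau_1})^\tau$ being the component in $\cbC[\tau]$.

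The heart of the proof is then the explicit formula for the fiber $\iota_\btau(\zeta^{\tau_2}\graft^\mu\zeta^{\tau_1})^\tau$. The plan is to compute $\iota_\btau$ applied to the right-hand side of the definition
\begin{equs}
\zeta^{\tau_2}\graft^{\mu}\zeta^{\tau_1}=\sum_{n\in N(\btau_1)}\pi_{\btau_{1,2,n}}\big(\zeta^{\btau_2}\graft^\mu_n\zeta^{\btau_1}\big)\,,
\end{equs}
and read off the $\btau$-fiber using Definition~\ref{def:sections}. The key observation is that if $\bsigma\in\sigma\in\mfT$ and $F_\bsigma\in\cbB[\bsigma]$, then
\begin{equs}
\iota_{\btau}\pi_{\sigma,\bsigma}F_\bsigma
=\begin{cases}\dfrac{1}{|\Iso(\bsigma,\btau)|}\displaystyle\sum_{f\in\Iso(\bsigma,\btau)}\mathbf{f}F_\bsigma& \text{if }\btau\in\sigma\,,\\[0.5em]0&\text{otherwise.}\end{cases}
\end{equs}
This follows directly from the definition of $\pi_{\sigma,\bsigma}$. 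Applied termwise in our sum over $n$, only those $n\in N(\btau_1)$ for which $\btau_{1,2,n}$ lies in the isomorphism class $\tau$ contribute; for such $n$, since $\btau_{1,2,n}$ and $\btau$ are then in the same isomorphism class $\tau$, one has $|\Iso(\btau_{1,2,n},\btau)|=|\Aut(\tau)|$, which yields exactly the stated formula.

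Finally, one must confirm that this formula is independent of the choice of representatives $\btau_1\in\tau_1$ and $\btau_2\in\tau_2$ used to unwind the definition. This is already established in the preceding lemma, whose proof explains that a different choice of labellings merely relabels the summation variable $n$ and composes the isomorphisms $f$ with a fixed relabelling, leaving the sum invariant; in particular the requirement $\zeta^{\tau_i}\in\cbC[\tau_i]$ guarantees $\mathbf{g}\zeta^{\btau_i}=\zeta^{\btau_i'}$ for $g\in\Iso(\btau_i,\btau_i')$, which is what is needed to transport the computation across different labellings. The only delicate bookkeeping, and hence the main (mild) obstacle, lies in verifying that the double sum on the right of the claim reassembles cleanly: this is handled by noticing that every labelled tree in the isomorphism class $\tau$ arises as some $\btau_{1,2,n}$ with multiplicity exactly matching $|\Aut(\tau)|/|\Iso(\btau_{1,2,n},\btau)|=1$, so no overcounting occurs.
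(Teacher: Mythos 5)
Your proof is correct and follows essentially the same route as the paper's: project the definition of $\graft^\mu$ onto the $\tau$-component, then unwind $\iota_{\btau}\circ\pi_{\btau_{1,2,n}}$ using the explicit formula for $\pi_{\sigma,\bsigma}$, noting $|\Iso(\btau_{1,2,n},\btau)|=|\Aut(\tau)|$. One cosmetic remark: the final sentence about ``every labelled tree in $\tau$ arising as some $\btau_{1,2,n}$ with multiplicity $|\Aut(\tau)|/|\Iso(\btau_{1,2,n},\btau)|=1$'' is a red herring --- it is not true that every labelling of $\tau$ occurs as a $\btau_{1,2,n}$, and no such fact is needed; the computation is already complete once you expand $\iota_{\btau}\circ\pi_{\btau_{1,2,n}}$, so this extra discussion should simply be dropped.
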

\begin{proof}
From the construction of $\graft^{\mu}$, we know that
\begin{equ}\label{eq:grafting_treepiece}
\big(\text{$\mfz$}^{\tau_2}\graft^\mu\text{$\mfz$}^{\tau_1}\big)^{\tau}
=
\sum_{\substack{n\in N(\btau_1)\\
\btau_{1,2,n}\in\tau}}\pi_{\btau_{1,2,n}} 
\big( \text{$\mfz$}^{\btau_2} \graft^{\mu}_{n} \text{$\mfz$}^{\btau_1} \big)\,.
\end{equ}
Note, that, unlike what we saw in construction of $\Upsilon$, the action of $\pi_{\tau,\btau_{1,2,n}}$ may impose some symmetrization, in particular we may have $\iota_{\btau_{1,2,n}} \circ \pi_{\btau_{1,2,n}} \big( \text{$\mfz$}^{\btau_2} \graft^{\mu}_{n} \text{$\mfz$}^{\btau_1} \big) \not = \text{$\mfz$}^{\btau_2} \graft^{\mu}_{n} \text{$\mfz$}^{\btau_1}$. 

Explicitly writing this symmetrization gives the desired result. 
\end{proof}

We now suppose that we have an expansion for the effective force
\begin{equs}\label{eq:defFmu_1}
    A_{\epmu}[v](t)=\sum_{\tau\in\text{\scriptsize{$\mfT$}} } \int_{\R^{\text{\tiny{$\mfs$}}(\tau)}}\langle \text{$\mfz$}_\epmu^{\tau}(t,s^{\tau}),\Upsilon^{\tau}[v+{\tt u}](s^{\tau})\rangle_{\mcH^{\tau}}\rmd s^{\tau}\,,
\end{equs}
where $\Upsilon$ is as defined above, and we are given $\text{$\mfz$}_{\epmu} = \prod_{\tau \in \mfT} \text{$\mfz$}^\tau_{\epmu} \in \cbC$. 
Here, the sum is over unlabelled trees $\tau$. 
For the notations $\int_{\R^{\mfs(\tau)}}$, $s^{\tau}$, and $\mcH^{\tau}$, the convention here is that 
\[
\int_{\R^{\text{\scriptsize{$\mfs$}}(\tau)}}\langle \text{$\mfz$}_\epmu^{\tau}(t,s^{\tau}),\Upsilon^{\tau}[v+\text{${\tt u}$}](s^{\tau})\rangle_{\mcH^{\tau}}\rmd s^{\tau}
\equiv
\int_{\R^{N(\btau)}}\langle \text{$\mfz$}_\epmu^{\btau}(t,s_{N(\btau)}),\bUpsilon^{\tau}[v+\text{${\tt u}$}](s_{N(\btau)})\rangle_{\mcH^{\btau}}\rmd s_{N(\btau)}\;,
\]
where the choice of $\btau \in \tau$ does not matter above. \eqref{eq:defFmu_1} is therefore a rewriting \eqref{eq:defFmu_0}, but with the additional restriction that the sum over $\bT$ includes just one ``representative'' labelled tree for each unlabelled tree. 

\begin{lemma}\label{lem:unlabelled_Pol}
Expanding the non-linearity in \eqref{eq:Pol_nonlinearity} evaluated on \eqref{eq:defFmu_1} gives

\begin{equs}\label{eq:expressionDF_1}
{}&\text{$\D$}A_{\epmu}\big[v,\dot G_\mu A_{\epmu}[v] \big](t)\\
&=
\sum_{\tau \in \text{\scriptsize{$\mfT$}}}
\int_{\R^{\text{\tiny{$\mfs$}}(\tau)}}
\Big\langle \sum_{\tau_1, \tau_2 \in \text{\scriptsize{$\mfT$}}} 
\big(\text{$\mfz$}^{\tau_2}_\epmu\graft^\mu \text{$\mfz$}^{\tau_1}_\epmu \big)^\tau  (t,s^{\tau}),\Upsilon^{\tau}[v+{\tt u}](s^{\tau}) \Big\rangle_{\mcH^{\tau}} \rmd s^{\tau}\,.
\end{equs} 

\end{lemma}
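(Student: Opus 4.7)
The plan is to compute the left hand side directly by inserting the expansion \eqref{eq:defFmu_1}, rewrite it using the labelled-tree formula \eqref{eq:expressionDF}, and then reorganise the resulting labelled sum into an unlabelled sum via the fiber/section maps of Definition~\ref{def:sections}.

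First I would expand $\D F_{\epmu}[v,\dot G_\mu F_{\epmu}[v]](t)$ by the chain rule. Since $\Upsilon^\tau$ is a smooth function of $v$ evaluated at several time points $s^\tau$, the differential in the $v$-direction at the argument $\dot G_\mu F_{\epmu}[v]$ produces, for each $\tau \in \mfT$ and each node $n$ of a representative $\btau \in \tau$, a factor $\dot G_\mu(s_n - w)$ convolved against the kernel obtained from the $\tau_2$-contribution to $F_{\epmu}$. Using Lemma~\ref{lem:already_sym} and the labelled identity \eqref{eq:labeled_graftingrelation}, this is exactly the labelled grafting $\xi^{\btau_2}_\epmu \graft_n^\mu \xi^{\btau_1}_\epmu$ paired with $\bUpsilon^{\btau_2 \graft_n \btau_1}[v+{\tt u}]$. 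In other words, picking arbitrary representatives $\btau_i \in \tau_i$, the left hand side equals
\begin{equs}
\sum_{\tau_1,\tau_2 \in \mfT}\; \sum_{n \in N(\btau_1)} \int_{\R^{N(\btau_{1,2,n})}} \big\langle (\xi^{\btau_2}_\epmu \graft_n^\mu \xi^{\btau_1}_\epmu)(t,s_{N(\btau_{1,2,n})}),\; \bUpsilon^{\btau_{1,2,n}}[v+{\tt u}](s_{N(\btau_{1,2,n})}) \big\rangle_{\mcH^{\btau_{1,2,n}}} \rmd s_{N(\btau_{1,2,n})}\,,
\end{equs}
with $\btau_{1,2,n} = \btau_2 \graft_n \btau_1$.

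Next, for each fixed $\tau \in \mfT$ I would group together the terms for which $\btau_{1,2,n} \in \tau$, and regroup the inner sum over $n$ using the preceding lemma which gives the explicit expression for $(\xi^{\tau_2}_\epmu \graft^\mu \xi^{\tau_1}_\epmu)^{\btau}$ in terms of labelled graftings and isomorphisms. The symmetrisation appearing there involves the $|\Aut(\tau)|^{-1}$ factor and a sum over $f \in \Iso(\btau_{1,2,n},\btau)$; because both $\xi^{\tau_j}_\epmu$ and $\Upsilon^{\tau_j}$ are invariant under $\Iso$-actions by definition of the spaces $\cbC[\tau]$ and $\cbB[\tau]$, each such $f$ contributes identically under the change of variables $s \mapsto \tilde f^{[1]} s$, so the $|\Aut(\tau)|$-fold redundancy cancels exactly. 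This produces the single-tree summand
\begin{equs}
\int_{\R^{\mfs(\tau)}} \big\langle (\xi^{\tau_2}_\epmu \graft^\mu \xi^{\tau_1}_\epmu)^\tau(t,s^\tau),\; \Upsilon^\tau[v+{\tt u}](s^\tau) \big\rangle_{\mcH^\tau} \rmd s^\tau\,,
\end{equs}
and summing over $\tau_1,\tau_2,\tau \in \mfT$ yields \eqref{eq:expressionDF_1}.

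The main obstacle is the bookkeeping in the second step: one has to make sure that the combinatorial factor $|\Aut(\tau)|^{-1}$ in the section map $\pi_{\tau,\btau_{1,2,n}}$ is precisely cancelled by the multiplicity of labelled realisations of $\tau$ arising in the sum over $n \in N(\btau_1)$ and over isomorphisms $f$, so that no extraneous symmetry factor is left behind. Once this is checked, the rest of the argument is a direct application of Lemma~\ref{lem:already_sym} together with the defining invariance of sections in $\cbC[\tau]$ and $\cbB[\tau]$.
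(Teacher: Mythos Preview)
Your proposal is correct and follows essentially the same approach as the paper's proof. Both arguments reduce to the labelled expansion \eqref{eq:expressionDF} and then exploit the $\Iso$-covariance of $\Upsilon^{\tau}$ (Lemma~\ref{lem:already_sym}) to insert the symmetrisation for free; the paper phrases this as inserting the idempotent $\iota_{\btau_{1,2,n}}\circ\pi_{\btau_{1,2,n}}$ on the force-coefficient side and invoking \eqref{eq:grafting_treepiece}, while you unpack $\pi$ via the explicit formula of the preceding lemma and verify directly that each $f\in\Iso(\btau_{1,2,n},\btau)$ contributes identically after the change of variables, so that the $|\Aut(\tau)|^{-1}$ factor cancels against $|\Iso(\btau_{1,2,n},\btau)|=|\Aut(\tau)|$. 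One small clarification: in your second step the cancellation hinges on the invariance of $\Upsilon^{\tau}$ (the full grafted tree), not of the individual $\xi^{\tau_j}$; the latter is only needed so that the unlabelled grafting is well-defined in the first place.
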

\begin{proof}
For every $\tau \in \mfT$ we choose a fixed representative $\btau \in \tau$, and let $\hat{\bT}$ be the collection of such representatives. 
We can then rewrite \eqref{eq:defFmu_1} in the form \eqref{eq:defFmu_0} with the convention that for any $ \btau' \in \tau \in \mfT$, we have\footnote{Note that here $\text{$\mfz$}_{\epmu}^{\btau'}$ does not denote the $\btau'$-component of the section $\text{$\mfz$}^{\tau} \in \cbC[\tau]$.} 
\[
\text{$\mfz$}_{\epmu}^{\btau'} =
\begin{cases}
\iota_{\btau} \text{$\mfz$}_{\epmu}^{\tau} & \text{ if }\btau' = \btau\;,\\
0 & \text{otherwise}\;. 
\end{cases}
\] 
Using the notation $\btau_{1,2,n} = \btau_{2} \graft_{n} \btau_{1}$ used earlier, we have:
\begin{equs} 
{}&\text{$\D$}A_{\epmu}\big[v,\dot G_\mu A_{\epmu}[v] \big](t)\\
&=
\sum_{\substack{\tau \in  \text{\scriptsize{$\mfT$}}\\\btau_1, \btau_2 \in  \text{\scriptsize{$\hat\bT$}}}} 
\sum_{\substack{n\in N(\btau_1)\\
\btau_{1,2,n}\in\tau}} 
\int_{\R^{N(\btau_{1,2,n})}}
\Big\langle 
\big( \text{$\mfz$}^{\btau_2}_\epmu \graft_{n}^{\mu} \text{$\mfz$}^{\btau_1}_\epmu \big)  (t,s_{N(\btau_{1,2,n})}),\Upsilon^{\btau_{1,2,n}}[v+{\tt u}](s_{N(\btau_{1,2,n})}) \Big\rangle_{\mcH^{\btau_{1,2,n}}} \rmd s_{N(\btau_{1,2,n})}\\
&=
\sum_{
\substack{ \tau \in  \text{\scriptsize{$\mfT$}} \\
\btau_1, \btau_2 \in  \text{\scriptsize{$\hat\bT$}}}} 
\sum_{\substack{n\in N(\btau_1)\\
\btau_{1,2,n}\in\tau}} 
\int_{\R^{N(\btau_{1,2,n})}}
\Big\langle 
\iota_{\btau_{1,2,n}}
\circ 
\pi_{\btau_{1,2,n}}
\big( \text{$\mfz$}^{\btau_2}_\epmu \graft_{n}^{\mu} \text{$\mfz$}^{\btau_1}_\epmu \big)  (t,s_{N(\btau_{1,2,n})}),\Upsilon^{\btau_{1,2,n}}[v+{\tt u}](s_{N(\btau_{1,2,n})}) \Big\rangle_{\mcH^{\btau_{1,2,n}}} \rmd s_{N(\btau_{1,2,n})}\\
&=
\sum_{
\substack{ \tau \in  \text{\scriptsize{$\mfT$}} \\
\btau_1, \btau_2 \in  \text{\scriptsize{$\hat\bT$}}}} 
\int_{\R^{\text{\scriptsize{$\mfs$}}(\tau)}}
\Big\langle 
\sum_{\substack{n\in N(\btau_1)\\
\btau_{1,2,n}\in\tau}} 
\pi_{\btau_{1,2,n}}
\big( \text{$\mfz$}^{\btau_2}_\epmu \graft_{n}^{\mu} \text{$\mfz$}^{\btau_1}_\epmu \big)  (t,s^{\tau}),\Upsilon^{\tau}[v+{\tt u}](s^{\tau}) \Big\rangle_{\mcH^{\tau}} \rmd s^{\tau}\\
&=
\sum_{
\substack{ \tau \in  \text{\scriptsize{$\mfT$}} \\
\btau_1, \btau_2 \in  \text{\scriptsize{$\hat\bT$}}}} 
\int_{\R^{\text{\scriptsize{$\mfs$}}(\tau)}}
\Big\langle 
\big( \text{$\mfz$}^{\tau_2}_\epmu \graft^{\mu} \text{$\mfz$}^{\tau_1}_\epmu \big)^{\tau} (t,s^{\tau}),\Upsilon^{\tau}[v+{\tt u}](s^{\tau}) \Big\rangle_{\mcH^{\tau}} \rmd s^{\tau}\;.
\end{equs}
The first equality is from the labelled case, where we can interchange between $\bUpsilon$ and $\Upsilon$ thanks to Lemma~\ref{lem:already_sym}. 
In the second equality, we can insert a symmetrization thanks to the symmetry of $\Upsilon^{\btau_{1,2,n}}$. 
For the third equality, we note that we are integrating labelling invariant objects, so the integral clearly does not depend on the labeling of $\btau_{1,2,n} \in \tau$. 
In the fourth equality, we used \eqref{eq:grafting_treepiece}. 
Replacing the sum over $\btau_1, \btau_2 \in \hat{\bT}$ with a sum over $\tau_1, \tau_2 \in \mfT$ in the last line finally gives the desired result. 
\end{proof}

\subsection{Truncation}\label{subsec:trunc}


\begin{definition}\label{def:trees}
We associate to any $\btau  \in \bT$, an \textit{order} $\mfo(\btau) \eqdef |E(\btau)|$, a \textit{size} $\mfs(\btau) \eqdef |N(\btau)| = |E(\btau)|+1$, and a \textit{scaling} $|\btau|$ defined by
    \begin{equs}
        |\btau|\eqdef -1+\mfs(\btau)H\equiv -1+(\mfo(\btau)+1)H\,.
    \end{equs}
All these quantities are well-defined for elements of $\mfT$ as well and we overload notation accordingly. 
    
For $k\in\N$, we denote by $\mfT_{\leqslant k}$ the set of all trees $\tau\in\mfT$ with $\mfo(\tau)\leqslant k$, and let
\begin{equs}\label{eq:trees}
    \mcT\eqdef\mfT_{\leqslant 2}\equiv\Big\{ \noise,\tOne,\tTwo,\tThree\Big\}\,,
\end{equs}    
where in the graphical representation for trees, we always draw the root as the bottom node. 

We also define 
\begin{equs}
    \mcT_*\eqdef \{\bignoise\}\cup
    \Bigg\{\tau\in\mfT_{\leqslant5} :
\begin{array}{c}
\exists\, \btau_1\in\tau_1\in\mcT\,,\;\btau_2\in\tau_2\in\mcT,\text{ and}\;n\in N(\btau_1)\\
\text{such that}\;\btau_2\graft_n\btau_1\in\tau
\end{array}
    \Bigg\}\,.
\end{equs}
Observe that $\mcT\subset\mcT_*$ and
\begin{equs}\label{eq:def_delta}
    \tau\in\mcT_*\setminus\mcT\Rightarrow\mfo(\tau)\geqslant3\Rightarrow|\tau|\geqslant\delta\eqdef-1+4H>0\,.
\end{equs}
\end{definition}

\begin{remark}
The key input for our method will come from estimates on the force coefficients $(\zeta_\epmu^\tau)_{\epmu\in(0,1]}^{\tau\in\mcT}$,  which we will obtain by post-processing\footnote{Note, we could have also directly obtained stochastic estimates on $(\zeta_\epmu^\tau)_{\epmu\in(0,1]}^{\tau\in\mcT}$, see Remark~\ref{rem:2_24}. } of stochastic estimates on the stationary force coefficients $(\xi_\epmu^\tau)_{\epmu\in(0,1]}^{\tau\in\mcT}$.   

Note that the set $\mcT$ also indexes the components of the branched rough path \cite{Trees} over fBM with Hurst index $H > \frac{1}{4}$, with the components representing iterated integrals. 
Writing $W(t) = W_{t}$, we would have:
\begin{equs}\label{eq:increments}
{}&W^{\noise}_{s,t} = \int_{s}^{t} \rmd W_{u} = W_{t} - W_{s}\,,
\quad
W^{\tOnesmall}_{s,t} = \int_{s}^{t}  W^{\noise}_{s,u} \otimes \rmd W_{u}\,,\\
{}& 
W^{\tTwosmall}_{s,t}
=
\int_{s}^{t}  W^{\tOnesmall}_{s,u} \otimes \rmd W_{u}\,,
\quad
W^{\tThreesmall}_{s,t}
=
\int_{s}^{t}  W^{\noise}_{s,u} \otimes W^{\noise}_{s,u} \otimes \rmd W_{u}\,.
\end{equs}
In the setting of branched rough paths, the objects in \eqref{eq:increments} serve as generalised increments that are used in local expansions of the solution to a rough differential equation.   
Increments play an important role because these capture the power-counting of objects of H\"{o}lder regularity $ \alpha \in (0,1)$. 
Though the force coefficients $(\zeta_{0,\mu}^\tau)_{\mu\in(0,1]}^{\tau\in\mcT}$ are not directly increments, they do have similarities to the objects in \eqref{eq:increments}

Take for instance $\tau=\tOne$, we have, for $t\in[0,\mu]$, and using the support properties of $G_\mu$, 
\begin{equs}
  \int_{\R^2}  \zeta^{\tOnesmall}_{0,\mu}(t,s^{\tOnesmall}_1,s^{\tOnesmall}_2)\rmd s^{\tOnesmall}_1\rmd s^{\tOnesmall}_2= \xi(t)(G-G_\mu)\big(\1_{\geqslant0}\xi\big)(t)=\xi(t)\int_0^t\rmd W_s=W^{\noise}_{0,t}\otimes \rmd W_t\,,
\end{equs}
which looks like (the derivative of) $W^{\tOnesmall}_{0,t}$.

However, $\zeta^{\tOnesmall}_{0,\mu}$ is not an increment process since we have fixed $s=0$, but in the flow approach the effective scale $\mu$ gives the desired power-counting without taking increments \footnote{In particular, for $t\geqslant2\mu$, $(G-G_\mu)\big(\1_{\geqslant0}\xi\big)=0$.}.

A major difference between the flow approach versus branched rough paths is that the framework of branched rough paths requires important algebraic relations between the components of \eqref{eq:increments} which are related to them being increments. 
These do not appear to have any analogue in the flow approach.

\end{remark}
\begin{remark}
The only trees $\tau$ in $\mcT_*$ whose elements have non-trivial automorphism group are $\tThree$ and the trees appearing in $\tThree\graft\sigma$ for $\sigma\in\mcT$. 
All of these mentioned trees $\tau$ have $\Aut(\btau)=\Z_2$. 
\end{remark}

In analogy with \eqref{eq:defFmu_1}, our ansatz for the truncated effective force $A_\epmu$ is given by 
\begin{equs}\label{eq:defFmu}
     A_{\epmu}[v](t)=\sum_{\tau\in \mcT} \int_{\R^{\mfs(\tau)}}\langle \text{$\mfz$}_\epmu^{\tau}(t,s^{\tau}),\Upsilon^{\tau}[v+{\tt u}](s^{\tau})\rangle_{\mcH^{\tau}}\rmd s^{\tau}\,.
\end{equs}
We work out how this truncation affects the system \eqref{eq:sys1}.
\begin{definition}\label{def:proj}
Given $\tt u\in\R$ and some force coefficients $(\mfz^\tau)^{\tau\in\mcT_*}$, we define the projector $\Pi$ acting on functionals $L:C^\infty([0,1],\R^\cbn)\longrightarrow C^\infty([0,1],\R^\cbn)$ of the form 
\begin{equs}\label{eq:formL}
    L[v](t)=\sum_{\tau\in\mcT_*}\int_{\R^{\mfs(\tau)}}\langle \mfz^{\tau}(t,s^{\tau}),\Upsilon^{\tau}[v+{\tt u}](s^{\tau})\rangle_{\mcH^{\tau}}\rmd s^{\tau}\,,
\end{equs}
by projection onto $\mcT$. More precisely, we have
\begin{equs}
    \Pi L[v](t)\eqdef\sum_{\tau\in \mcT} \int_{\R^{\mfs(\tau)}}\langle \mfz^{\tau}(t,s^{\tau}),\Upsilon^{\tau}[v+{\tt u}](s^{\tau})\rangle_{\mcH^{\tau}}\rmd s^{\tau}\,.
\end{equs}
Finally, we also define an operator $\Pol_\mu$ acting on families $(L_\mu)_{\mu\in(0,1]}$ of functionals of the form \eqref{eq:formL} -- where the $(\mfz^\tau)^{\tau\in\mcT_*}$ is replaced with $(\mfz^\tau_\mu)^{\tau\in\mcT_*}_{\mu \in (0,1]}$ -- by
\begin{equs}\label{eq:defPol}
    \Pol_{\mu}(L_\mu)[\bigcdot]\eqdef\Pi\big(\d_\mu L_\mu[\bigcdot]+ \D L_\mu\big[\bigcdot,\dot G_\mu L_{\mu}[\bigcdot]\big]\big)\,.
\end{equs}
Note that $\Pol_{\mu}$ is a truncated version of the Polchinski flow equation. 
\end{definition}
Our aim is to construct the stationary effective force $S_\epmu$ such that it solves the equation 
\begin{equs}\
   \Pol_{\mu}(S_\epmu)=0 \label{eq:Pol} 
\end{equs}
 with initial condition $S_{\eps,0}[\bigcdot]=S_\eps[\bigcdot]= Z_\eps[\bigcdot+{\tt u}]$. Once this is done, we aim to use $S_\epmu$ is order to construct an effective force $F_\epmu$ solving $\Pol_\mu(F_\epmu)=0$ with initial condition $F_{\eps,0}[\bigcdot]=F_{\eps}[\bigcdot]=\1_{\geqslant0}S_\eps[\bigcdot]=\1_{\geqslant0}Z_\eps[\bigcdot+\tt u]$. Provided this condition is satisfied, the system~\eqref{eq:sys1}
 rewrites as  \begin{subequations}\label{eq:sys2}
  \begin{empheq}[left=\empheqlbrace]{alignat=2}
  \label{eq:eqRwelldefined}  
\d_\mu R_{\eps,\mu}&=-\D F_{\eps,\mu}[v_{\eps,\mu},\dot G_\mu R_{\epmu}]-I_\epmu[v_\epmu]\,,\\
        v_\epmu&= -\int_\mu^{T}\dot G_\nu\big(F_\epnu[v_\epnu]+R_\epnu\big)\rmd\nu\,,\label{eq:vphimu}
  \end{empheq}
\end{subequations}\auth{comment 2}
where the pair $(R_{\eps,\bigcdot},v_{\eps,\bigcdot})$ is supported on $[0,T]$ and we introduced
\begin{equs}\label{eq:expL}
   I_\epmu[\bigcdot]\eqdef(1-\Pi)\big( \D  F_\epmu\big[\bigcdot,\dot G_\mu F_{\epmu}[\bigcdot]\big]\big)\,,
\end{equs}
and used the fact that $F_\epmu=\Pi F_\epmu$.

Recall that \eqref{eq:sys2} is a reformulation of \eqref{eq:eq1} on $[0,T]$ - in Section~\ref{sec:deter} we show that for sufficiently small $T$, it can be controlled uniformly in $\eps$. 

We introduce one more piece of notation and then give our truncated flow equation.  
\begin{definition}\label{def:defBmu} 
    For every $\tau\in\mcT_*$, we define 
    \begin{equs}
        \Ind(\tau)\eqdef\Big\{(\tau_1,\tau_2)\in\mcT^2 :\exists\,\btau_i\in\tau_i\in\mcT\,,\;\exists\,n\in N(\btau_1)\,,\;\btau_2\graft_n\btau_1\in\tau\Big\}
    \end{equs}
\end{definition}

  


\begin{lemma}
For $A_{\epmu}$ given by the ansatz \eqref{eq:defFmu}, we have
\begin{equs}\label{eq:rhsPol}
  {}& \text{$\D$} A_{\epmu}\big[v,\dot G_\mu A_{\epmu}[v]\big](t)=  
  \sum_{\substack{ \tau\in\text{\scriptsize{$\mcT_{*}$}} \\ (\tau_1,\tau_2)\in\text{\scriptsize{$\mathrm{Ind}(\tau)$}}}} \int_{\R^{\text{\tiny{$\mfs$}}(\btau)}}
   \langle \big(\text{$\mfz$}_\epmu^{\tau_2} \,\graft^\mu\text{$\mfz$}_\epmu^{\tau_1}\big)^{\tau}(t,s^{\tau}),\Upsilon^{\tau}[v+{\tt u}](s^{\tau})\rangle_{\mcH^{\tau}}\rmd s^{\tau}\,.\textcolor{white}{blablablabl}
\end{equs}
In particular, the definition of $\Pi$ combined with \eqref{eq:rhsPol} implies that \eqref{eq:Pol} is equivalent to the following system of equations for the stationary force coefficients for $\tau \in \mcT$
\begin{equs}\label{eq:flow2}
   \d_\mu\text{$\mfz$}_\epmu^{\tau}(t,s^\tau)&=-\sum_{\substack{(\tau_1,\tau_2)\in\text{\scriptsize{$\mathrm{Ind}(\tau)$}}}}    \big(\text{$\mfz$}_\epmu^{\tau_2}   \,\graft^\mu\text{$\mfz$}_\epmu^{\tau_1}\big)^{\tau}(t,s^\tau)\,.
     \end{equs}
\end{lemma}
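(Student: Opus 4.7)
The plan is to apply Lemma~\ref{lem:unlabelled_Pol} directly to the truncated ansatz \eqref{eq:defFmu} and then identify which trees carry a nonzero contribution.

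First, I would observe that \eqref{eq:defFmu} is a special case of \eqref{eq:defFmu_1}, obtained by setting $\xi^{\tau}_{\epmu} \equiv 0$ for every $\tau \in \mfT \setminus \mcT$. Plugging this choice into the right-hand side of Lemma~\ref{lem:unlabelled_Pol} collapses the inner sum to pairs $(\tau_1,\tau_2) \in \mcT^2$, yielding
\[
\D F_{\epmu}[v, \dot G_\mu F_{\epmu}[v]](t) = \sum_{\tau \in \mfT} \int_{\R^{\mfs(\tau)}} \Big\langle \sum_{\tau_1, \tau_2 \in \mcT} \big(\xi^{\tau_2}_{\epmu} \graft^\mu \xi^{\tau_1}_{\epmu}\big)^{\tau}(t, s^{\tau}), \Upsilon^{\tau}[v+{\tt u}](s^{\tau})\Big\rangle_{\mcH^{\tau}} \rmd s^{\tau}.
\]

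Next, I would argue that the outer sum is in fact supported on $\mcT_*$. By Definition~\ref{def:grafting_full}, the component $\big(\xi^{\tau_2}_{\epmu} \graft^\mu \xi^{\tau_1}_{\epmu}\big)^{\tau}$ vanishes unless there exist labellings $\btau_i \in \tau_i$ and a node $n \in N(\btau_1)$ with $\btau_2 \graft_n \btau_1 \in \tau$, which is precisely the grafting clause in the definition of $\mcT_*$. The order bound $\tau \in \mfT_{\leqslant 5}$ built into $\mcT_*$ is automatic: grafting two trees of order at most $2$ produces a tree with at most $2+2+1=5$ edges. The only element of $\mcT_*$ that is \emph{not} itself a grafting is $\bignoise$, since every graft has at least two nodes; but $\mathrm{Ind}(\bignoise) = \emptyset$ by inspection, so including $\bignoise$ in the outer sum is harmless. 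Reindexing the surviving pairs as $(\tau_1,\tau_2) \in \mathrm{Ind}(\tau)$ then yields \eqref{eq:rhsPol}.

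For the system \eqref{eq:flow2}, I would apply $\Pi$ to the flow equation \eqref{eq:Pol}. Since $F_{\epmu}$ has the form \eqref{eq:formL} with coefficients supported on $\mcT$, the projector acts as the identity on $\d_\mu F_{\epmu}[\bigcdot]$, whereas on \eqref{eq:rhsPol} it simply truncates the outer sum to $\tau \in \mcT$. Matching the coefficients of $\Upsilon^{\tau}$ for each $\tau \in \mcT$ then produces \eqref{eq:flow2}. The one point that deserves mild care is this coefficient-matching step; the cleanest reading --- and the one used downstream in the paper --- is to \emph{define} $F_{\epmu}$ via \eqref{eq:defFmu} from any solution $(\xi^{\tau}_{\epmu})_{\tau \in \mcT}$ of \eqref{eq:flow2} with the prescribed initial data, so that the implication \eqref{eq:flow2} $\Rightarrow$ \eqref{eq:Pol} is a direct consequence of the preceding paragraph. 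No analytical input is needed anywhere; the entire lemma is an unpacking of the combinatorics developed in Section~\ref{subsec:tree_coord}, and the only bookkeeping subtleties are the exceptional role of $\bignoise$ in $\mcT_*$ and the order bound verifying that no grafting escapes $\mfT_{\leqslant 5}$.
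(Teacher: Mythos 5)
Your proof is correct and follows the route the paper clearly intends: apply Lemma~\ref{lem:unlabelled_Pol} to the truncated ansatz (viewed as \eqref{eq:defFmu_1} with $\xi^\tau_\epmu = 0$ for $\tau \notin \mcT$), then observe that the definitions of $\mcT_*$ and $\mathrm{Ind}(\tau)$ are tailored precisely to index the surviving grafting products, which is why the paper states the lemma without proof. Your remark on the ``equivalence'' being cleanest read as defining $F_\epmu$ from a solution of \eqref{eq:flow2} is also the right reading of how the lemma is actually used downstream.
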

We have the following estimate on the right hand side of \eqref{eq:flow2}. 
\begin{lemma}
Fix $\mu \in[0,1]$, $\tau_i \in \mfT$, and $\text{$\mfz$}^{\tau_i} \in \cbC[\tau_i]\,$ for $i = 1,2$. The most important property of the grafting $\big(\text{$\mfz$}^{\tau_2}\graft^\mu\text{$\mfz$}^{\tau_1}\big)^\tau$, upon which we rely extensively, is that for every $N\in\N$, uniformly in $\text{$\mfz$}^{\tau_i}$ and $\mu\in(0,1]$ we have
    \begin{equs}\label{eq:propB}
        \|K_{N,\mu}^{\otimes1+\mfs(\tau)}&\big(\text{$\mfz$}^{\tau_2}\graft^\mu\text{$\mfz$}^{\tau_1}\big)^\tau\|_{L^\infty_t L^1_{s^\tau}}\\&\lesssim \|   \mcP_{2N,\mu} \dot G_\mu\|_{\mcL^{\infty,\infty}}
\|K_{N,\mu}^{\otimes1+\mfs(\tau_1)}\text{$\mfz$}^{\tau_1}\|_{L^\infty_t L^1_{s^{\tau_1}}}
\|K_{N,\mu}^{\otimes1+\mfs(\tau_1)}\text{$\mfz$}^{\tau_2}\|_{L^\infty_t L^1_{s^{\tau_2}}}    \,.\textcolor{white}{a}    
    \end{equs}
\end{lemma}
\begin{proof}
The proof follows from the Definition given in \eqref{eq:defB} of the grafting operation. Writing $\hat N(\btau_1)\eqdef N(\btau_1)\setminus\{n\}$, we have that the convolution of the RHS of \eqref{eq:defB} with $K_{N,\mu}^{\otimes1+\mfs(\tau)}$ is equal to 
\begin{equs}
 {} &\int_{-\infty}^1K_{N,\mu}(s_n-v)\big(K_{N,\mu}^{\otimes\mfs(\tau_1)}\otimes\Id\big)   \text{$\mfz$}^{\btau_1}(t,s_{\hat N(\btau_1)},v)\int_{-\infty}^1\mcP_{N,\mu} \dot G_\mu(v-w )K_{N,\mu}^{\otimes\mfs(\tau_2)+1}(\bell_{\btau_1,\btau_2}\text{$\mfz$}^{\btau_2})(w,s_{N({\btau_2}+\ell_{\btau_1,\btau_2})}) \rmd w\rmd v
\\     
&=\int_{-\infty}^1K_{N,\mu}^{\otimes\mfs(\tau_1)+1}   \text{$\mfz$}^{\btau_1}(t,s_{\hat N(\btau_1)},v)  \mcP_{N,\mu}^\dagger \big(K_{N,\mu}(s_n-\bigcdot)\int_{-\infty}^1
\mcP_{N,\mu}\dot G_\mu( \bigcdot-w)
K_{N,\mu}^{\otimes\mfs(\tau_2)+1}(\bell_{\btau_1,\btau_2}\text{$\mfz$}^{\btau_2})(w,s_{N({\btau_2}+\ell_{\btau_1,\btau_2})})\big)(v)\rmd w  \rmd v\,.
    \end{equs}
When the operator $\big(\mcP_\mu^{N}\big)^\dagger$ hits the kernel of $K_{N,\mu}$
this can create some space-time derivatives of $K_{N,\mu}$ multiplied by $\mu$. Then, by \eqref{eq:spacederivKmu}, these newly created kernels are still in $L^1$ uniformly in $\mu$. This means that there exist some kernels $\big(A_\mu^{(i)}:i\leqslant N\big)$ belonging to $L^1$ uniformly in $\mu$ such that the last line of the above equation is equal to 
    \begin{equs}        \int_{-\infty}^1K_{N,\mu}^{\otimes\mfs(\tau_1)+1}   \text{$\mfz$}^{\btau_1}(t,s_{\hat N(\btau_1)},v)   \sum_{i\leqslant N} A_\mu^{(i)}(s_n-v)\int_{-\infty}^1
\mu^{i}\partial^{i}\mcP_{N,\mu}\dot G_\mu( v-w)
K_{N,\mu}^{\otimes\mfs(\tau_2)+1}(\bell_{\btau_1,\btau_2}\text{$\mfz$}^{\btau_2})(w,s_{N({\btau_2}+\ell_{\btau_1,\btau_2})})\rmd w  \rmd v\,,
    \end{equs}
which concludes the proof.   
\end{proof}

\begin{remark}\label{rem:flow_remark}
The system \eqref{eq:flow2} is hierarchical in the order of the tree, since, for $(\tau_1,\tau_2) \in \Ind(\tau)$, $\mfo(\tau_1)\vee\mfo(\tau_2)\leqslant\mfo(\tau)-1$.
We also have $\Ind(\bignoise)=\emptyset$, so that $\partial_{\mu} \text{$\mfz$}_\epmu^{\noise}(t,s) = 0$ -- that is the noise does not flow. 
\end{remark}

\begin{remark}  \label{rem:2_24}       
Written in terms of labelled trees, \eqref{eq:flow2} reads
  \begin{equs}  \label{eq:flowLabelled}       
 \d_\mu\text{$\mfz$}^{\btau}_\epmu&(t,s_{N(\btau)}) \\&=-\frac{1}{|\Aut(\tau)|}\sum_{\substack{(\tau_1,\tau_2)\in\text{\scriptsize{$\mathrm{Ind}(\tau)$}}}}  
 \sum_{\substack{n\in N(\btau_1)\\
\btau_{1,2,n}=\btau}}
\sum_{ f \in \Aut(\btau)}
 \int_{-\infty}^1\dot G_\mu(s_n-w)\text{$\mathbf{M}$}^{\text{\scriptsize{$\bff$}}}_{\btau_1,\btau_2}\bigotimes_{i=1}^2\text{$\mfz$}_\epmu^{\btau_i}(t_{i},s_{{N(\btau_i+\ell_{\btau_1,\btau_2,i})}})\rmd w
    \,.
\end{equs}
where the choice of $\btau_i\in\tau_i$ for $i\in[2]$ is arbitrary, and where we use the notation
 \begin{equs}       
    \ell_{\btau_1,\btau_2,i}\eqdef \begin{cases}  
    0 & \text{for} \;i= 1\,, \\
   \ell_{\btau_1,\btau_2} & \text{for} \;i= 2\,, \\
\end{cases}
           \quad\text{and}\quad
           t_i \equiv t_i(t,w)\eqdef \begin{cases}  
    t & \text{for} \;i= 1\,, \\
      w & \text{for} \;i= 2\,, \\
\end{cases}
   \end{equs}
and the map 
   \begin{equs}       \text{$\mathbf{M}$}^{\text{\scriptsize{$\bff$}}}_{\btau_1,\btau_2}\eqdef\text{$\mathbf{f}$}  \circ
 \Big(   \Id_{\text{\scriptsize{$\cbC$}}[\btau_1]} \otimes
       \text{$\bell$}_{\btau_1,\btau_2}\Big) : \bigotimes_{i=1}^2\text{$\cbC$}[\btau_i]\rightarrow\text{$\cbC$}[\btau]\,.
       \end{equs}
Note that $\text{$\mathbf{M}$}^{\text{\scriptsize{$\bff$}}}_{\btau_1,\btau_2}$ 
is well-defined, since $\text{$\cbC$}[\btau_{1,2,n}]= \text{$\cbC$}[\btau_1]\otimes\text{$\cbC$}[\btau_2+\ell_{\btau_1,\btau_2}]$.
 \end{remark}

An immediate consequence of the flow equation \eqref{eq:flow2} is the following support property for force coefficients.
\begin{lemma}\label{lem:support} 
Assume that the force coefficient $\text{$\mfz$}_\epmu^{\noise}(t,s)$ is supported on the diagonal $\{t=s\}$. Then, for every $\tau\in\mcT$ and $\epmu\in(0,1]$, we have the following support property of $\text{$\mfz$}^{\tau}_\epmu$:   
\begin{equs}    \text{$\mfz$}^\tau_\epmu(t,s^\tau)=0\;\text{$\mathrm{if}$}\; s^\tau\notin[t-2\mu\mfo(\tau),t]^{\mfs(\tau)}\,.\label{eq:supportXi}
\end{equs}
\end{lemma}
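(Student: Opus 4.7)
The plan is to proceed by induction on the order $\mfo(\tau)$, exploiting the hierarchical structure of the flow equation \eqref{eq:flow2} (cf.\ Remark~\ref{rem:flow_remark}) together with the support property \eqref{eq:suppGmu} of $\dot G_\mu$.

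For the base case $\tau=\noise$ we have $\mfo(\tau)=0$ and $\mfs(\tau)=1$, so the claim reduces to $\xi^{\noise}_{\eps,\mu}(t,s)=0$ whenever $s\neq t$, which is exactly the standing hypothesis (note that $\xi^{\noise}_{\eps,\mu}$ does not flow by \eqref{eq:noise_coeff}, so the condition on the $\mu=0$ datum propagates to all $\mu$).

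For the inductive step, fix $\tau\in\mcT$ with $\mfo(\tau)\geqslant 1$ and assume the support property holds for all trees of strictly smaller order. Since $\Ind(\tau)$ only involves pairs $(\tau_1,\tau_2)\in\mcT^2$ with $\mfo(\tau_i)\leqslant\mfo(\tau)-1$, and since the initial datum $\xi^{\tau}_{\eps,0}$ vanishes for $\tau\neq\noise$, integrating \eqref{eq:flow2} in $\mu$ from $0$ reduces the problem to showing that for every $(\tau_1,\tau_2)\in\Ind(\tau)$ and every $\nu\in(0,\mu]$, the integrand
\begin{equs}
\big(\xi_{\eps,\nu}^{\tau_2}\graft^\nu\xi_{\eps,\nu}^{\tau_1}\big)^{\tau}(t,s^\tau)
\end{equs}
is supported on $\{s^\tau\in[t-2\nu\mfo(\tau),t]^{\mfs(\tau)}\}$. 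Working with the explicit labelled representation given in Remark~\ref{rem:2_24}, pick $\btau_i\in\tau_i$ and a node $n\in N(\btau_1)$ so that $\btau_{1,2,n}\in\tau$: the grafting formula \eqref{eq:defB} produces a product of $\xi^{\btau_1}_{\eps,\nu}(t,s_{N(\btau_1)})$, $\dot G_\nu(s_n-w)$, and $\xi^{\btau_2}_{\eps,\nu}(w,s_{N(\btau_2+\ell_{\btau_1,\btau_2})})$, integrated over $w$. (Relabelings $\bell_{\btau_1,\btau_2}$ and actions of $f\in\Aut(\btau)$ only permute node indices and therefore do not affect the support in time.)

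The key support calculation is then the following: by the inductive hypothesis applied to $\btau_1$, the factor $\xi^{\btau_1}_{\eps,\nu}(t,s_{N(\btau_1)})$ forces $s_{N(\btau_1)}\in[t-2\nu\mfo(\tau_1),t]^{\mfs(\tau_1)}$; in particular $s_n\in[t-2\nu\mfo(\tau_1),t]$. The factor $\dot G_\nu(s_n-w)$ then forces $w\in[s_n-2\nu,s_n-\nu]\subset[t-2\nu\mfo(\tau_1)-2\nu,\,t-\nu]$. Finally, applying the inductive hypothesis to $\btau_2$ at base time $w$ forces the remaining coordinates to lie in $[w-2\nu\mfo(\tau_2),w]\subset[t-2\nu(\mfo(\tau_1)+\mfo(\tau_2)+1),\,t-\nu]$. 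Since grafting adds exactly one edge, $\mfo(\tau)=\mfo(\tau_1)+\mfo(\tau_2)+1$, so both blocks of coordinates (and $w$ itself) live in $[t-2\nu\mfo(\tau),t]$. Using $\nu\leqslant\mu$ to enlarge the interval and integrating in $\nu\in(0,\mu]$ yields \eqref{eq:supportXi}, completing the induction. The whole argument is essentially bookkeeping; the only thing to watch is the additive behaviour of $\mfo$ under grafting, which is precisely what matches the $2\nu$-window contributed by $\dot G_\nu$.
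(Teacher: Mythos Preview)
Your proof is correct and follows exactly the same approach as the paper's own proof, which proceeds by induction on $\mfo(\tau)$ using the support property \eqref{eq:suppGmu} of $\dot G_\mu$. You have simply spelled out in detail the bookkeeping (in particular the identity $\mfo(\tau)=\mfo(\tau_1)+\mfo(\tau_2)+1$ under grafting and the vanishing of $\xi^\tau_{\eps,0}$ for $\tau\neq\noise$) that the paper leaves implicit.
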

\begin{proof}
Noting Remark~\ref{rem:flow_remark}, we argue by induction on the order of $\tau$, with the base case $\tau = \bignoise$ being immediate since it does not flow with $\mu$. 
The inductive step follows from the support properties of $\dot G_\mu$ stated  in \eqref{eq:suppGmu}. 
\end{proof}

\subsection{Norms on coordinates and force coefficients}
Following Remark~\ref{rem:not_smooth}, we now move to a rough setting. 
From this point on in our argument, we assume that $V \in C^{9}$

Next, for $\btau \in \tau \in  \mcT_*$ , we define $\widecheck{\cbB}[\btau]$ to be the closure of $\cbB[\btau]$ in the $C^6$ norm. As in the smooth case, we define $\widecheck{\cbB}[\tau]$ in terms of sections living in $\widecheck{\cbB}[\btau]$ with $\btau \in \tau$ analogously to what was done in the smooth case in the last section. 
By our assumption that $V \in C^{9}$ and the definition of $\mcT_*$, we certainly have, for $\tau \in \mcT_{*}$, $\Upsilon^{\tau} \in \widecheck{\cbB}[\tau]$. 
We now introduce norms on the $\cbC$-spaces. 

\begin{definition}
   For every $N\geqslant1$, $\tau\in\mcT_*$ and $\text{$\mfz$}^{\tau} \in \cbC[\tau]$, and  $\mu\in(0,1]$, set
    \begin{equs}        K^{\otimes 1+\mfs(\tau)}_{N,\mu}\text{$\mfz$}^\tau(t,s^\tau)\eqdef \big(K_{N,\mu}\otimes\dots\otimes K_{N,\mu}\big)*\text{$\mfz$}^\tau(t,s^\tau)\,.
    \end{equs}
We then endow $\text{$\mfz$}^\tau$ with the norm
    \begin{equs}     \label{eq:defTripleNorm}  \nnorm{\text{$\mfz$}^\tau}_{N,\mu} \eqdef \norm{K^{\otimes1+\mfs(\tau)}_{N,\mu}\text{$\mfz$}^\tau}_{L^\infty_tL^1_{s^\tau}((-\infty,1]\times\R^{\mfs(\tau)})}\equiv \sup_{t\leqslant1}\int_{\R^{\mfs(\tau)}} |K^{\otimes1+\mfs(\tau)}_{N,\mu} \text{$\mfz$}^{\tau}(t,s^\tau)|\rmd s^\tau\,.\textcolor{white}{blabl}
    \end{equs}
\end{definition}

We will later make a global choice of $N \geqslant 1$ in our argument, but $\mu$ will be a running scale parameter. 
For $\btau \in \tau \in \mcT_{*}$ we define $\check{\cbC}_{\mu}[\btau]$ to be the closure of $\check{\cbC}_{\mu}[\btau]$ in the topology $\nnorm{\bigcdot}_{N,\mu}$ and similarly define $\check{\cbC}_{\mu}[\tau]$ analogously to what was done in the smooth case. 
 
While the norm $\nnorm{\bigcdot}_{N,\mu}$ depends on the effective scale $\mu$, along with the space $\check{\cbC}_{\mu}[\tau]$, we are almost always working with some $\text{$\mfz$}^\tau_\epmu$ which comes with an implied scale $\mu$ already. 
Therefore we often drop the scale $\mu$ from some subscripts, just writing $\nnorm{\text{$\mfz$}^\tau_\epmu}_{N}$. 

We also define for $N\in\N_{\geqslant1}$ and $c>0$, 
\begin{equs}
    \nnnorm{\text{$\mfz$}}_{N}
\eqdef
\max_{\tau\in\mcT}\sup_{\eps,\mu\in(0,1]}\mu^{-|\tau|+\eta}\nnorm{\text{$\mfz$}^\tau_{\eps,\mu}}_{N,\mu}\quad\text{and}\quad
\nnnorm{\text{$\mfz$}}_{N,c}
\eqdef
\max_{\tau\in\mcT}\sup_{\eps,\mu\in(0,1]}(\eps\vee\eps')^{-c}\mu^{-|\tau|+\eta}\nnorm{\text{$\mfz$}^\tau_{\eps,\mu}-\text{$\mfz$}^\tau_{\eps',\mu}}_{N,\mu}\,.
\end{equs}

\subsection{Force estimates and remainder construction} \label{sec:2_4}
\auth{comment 3}
As alluded to in Remark~\ref{remark:two_forces}, we will be interested in two families of (truncated) force coefficients $(\xi_\epmu^{\tau})^{\tau\in\mcT}_{\epmu\in(0,1]}$ and $(\zeta_\epmu^{\tau})^{\tau\in\mcT}_{\epmu\in(0,1]}$ which are two solutions to the force coefficient flow equation \eqref{eq:flow2} subject to the two sets of initial data:
\begin{equs}\label{eq:noise_initialisation}
\begin{aligned}
\xi_{\eps,0}^{\circ}(t,s) = \delta(t-s)\xi_{\eps}(s)
\quad&\text{ and }\quad
\xi_{\eps,0}^{\tau}(t,s) = 0
\text{ for } \tau \in \CT \setminus \{\circ\}
\,,\\
\zeta_{\eps,0}^{\circ}(t,s)  = 
\delta(t-s)
\mathbf{1}_{\geqslant0}(s) \xi_{\eps}(s)
\quad&\text{ and }\quad
\zeta_{\eps,0}^{\tau}(t,s) = 0
\text{ for } \tau \in \CT \setminus \{\circ\}\,.
\end{aligned}
\end{equs}
We remark again that the initialisation for $\zeta_\epmu^{\tau}$ is the correct one for solving \eqref{eq:eq1}, namely $\zeta_\epmu^{\tau}$ is an enhancement of the derivative of a mollified fBM (where the fBM vanishes for negative times) and $\xi_{\eps,\mu}^{\tau}$ is an enhancement of a ``stationary, two sided'' analogue of this same derivative. 

\begin{remark}\label{eq:no_renormalisation}
The fact that the two initialisations \eqref{eq:noise_initialisation} vanish for $\tau \not = \circ$ corresponds to the fact that we implement no renormalisation in our equation.

In the flow approach, the need for renormalisation of a force coefficient associated to some $\tau$ is put in by hand as an $\eps$-dependent choice of the initial condition $\zeta_{\eps,0}^{\tau}$ \dash changing this initial condition induces a change in the equation, i.e. renormalisation. 

We can get away without doing this as we can show that expectation of $\zeta^{\tOnesmall}_{\eps,0}$, which looks like it needs to be renormalised due to power counting analysis, actually turns out to give a finite fixed contribution in the fixed point problem for the rough differential equation.  
 \end{remark}

\begin{remark}
We recall that, by Remark~\ref{rem:flow_remark} one has that, for all $\mu$, 
\begin{equ}\label{eq:noise_coeff}
\xi_{\eps,\mu}^{\circ}(t,s) = \delta(t-s)\xi_{\eps}(s)
\quad\text{ and }\quad
\zeta_{\eps,\mu}^{\circ}(t,s)  = 
\delta(t-s)
\mathbf{1}_{\geqslant0}(s) \xi_{\eps}(s)\,.
\end{equ}
\end{remark}

In Section~\ref{sec:proba} we prove the following theorem regarding the stationary force coefficients. 
\begin{theorem}\label{thm:sto} 
For every smooth, compactly supported $\omega:\R\rightarrow\R$, $P{\geqslant1}$, $\eta>0$ and $c>0$ sufficiently small, we have
\begin{equs}\label{eq:boundXi}
\E\big[ \nnnorm{\omega\xi}_{4}^P\big]^{1/P}\lesssim_{P} 1\quad\text{and}\quad\E\big[ \nnnorm{\omega\xi}_{4,c}^P\big]^{1/P}\lesssim_{P} 1\,,
\end{equs}
where we write $\omega\xi^\tau_\epmu(t,s^\tau)\eqdef\omega(t)\xi^\tau_\epmu(t,s^\tau)$
\end{theorem}
In the sequel, we only need $\omega$ to be supported on $[-1,2]$ and equal to one on $[0,1]$. Throughout the rest of the article, we always implicitly assume that these conditions are satisfied.

After turning the moment estimates of \eqref{eq:boundXi} into pathwise estimates, we can use purely\footnote{There is one tiny caveat here,  our control of the noise $\zeta_{\eps,\mu}^{\circ}$ is not obtained deterministically from control over the noise $\xi_{\eps,\mu}^{\circ}$, in fact we get control over $\zeta_{\eps,\mu}^{\circ}$ automatically since the noises do not flow in $\mu$ and it is given in terms of a derivative of the input fBM. See the proof of Theorem~\ref{thm:stoZeta} for details.} deterministic estimates to control the non-stationary force coefficients $(\zeta^\tau_\epmu)^{\tau\in\mcT}_{\epmu\in(0,1]}$. 
This gives us the following estimate.

\begin{theorem}\label{thm:stoZeta} 
Fix $\omega$ supported on $[-1,2]$ and equal to one on $[0,1]$. There exists a smooth function of polynomial growth $P:\R_{\geqslant0}^2\rightarrow\R_{\geqslant0}$ such that for every $\eta>0$ we have
\begin{equs}\label{eq:boundZeta}
\nnnorm{\zeta}_{6}\lesssim P\Big(\nnnorm{\omega\xi}_{4}
,\sup_{\epmu\in(0,1]}\mu^{2-2H+\eta}\norm{Q_\mu(\1_{\geqslant0}\xi_\eps)}_{L^\infty([0,1])}\Big)<\infty
\,.
\end{equs}
Moreover, there are limiting objects $(\zeta^\tau_{0,\mu})^{\tau\in\mcT}_{\mu\in(0,1]}$ such that $\zeta^\tau_\epmu\rightarrow\zeta^\tau_{0,\mu}$ as $\eps\downarrow0$ in the topology induced by the above norm.

Finally, note that the estimate \eqref{eq:boundZeta} combined with \eqref{eq:boundXi} implies moment bounds on the non-stationary force coefficients. 
\end{theorem}
The proof of Theorem~\ref{thm:stoZeta} is essentially the same as \cite[Theorem 10.50]{Duch21}. 
For completeness, we recall the argument in Appendix~\ref{Sec:Sec5}.

The proof of estimate \eqref{eq:boundXi} is the key ``stochastic step'', and nearly all the analysis that follows afterwards is pathwise in $\xi$.
First this deterministic analysis is used to prove Theorem~\ref{thm:stoZeta}, and then one solves the ``remainder fixed point problem'' \eqref{eq:sys2} pathwise in $\xi$. 

\begin{remark}\label{re:2_34}
Note that in our argument we could have dispensed with the introduction of the stationary force coefficients $(\xi_\epmu^{\tau})^{\tau\in\mcT}_{\epmu\in(0,1]}$ entirely and used the same cumulant flow analysis used to prove \eqref{eq:boundXi} to prove an analogous moment estimate for the $(\zeta_\epmu^{\tau})^{\tau\in\mcT}_{\epmu\in(0,1]}$, in fact a previous version of the article did this. 

This creates some changes in our argument however. 
First, the analysis to show that the expectation of $\zeta^{\tOnesmall}_{\eps,0}$ gives a finite contribution in the fixed point problem is more delicate than for $\xi^{\tOnesmall}_{\eps,0}$, we explore this in Section~\ref{sec:expectation}. 
This by itself is a minor issue, a much bigger simplification is that by having our key stochastic input involve a stationary object allows us to use a ``restarting argument'' to prove global in time well-posedness as our stochastic argument doesn't care about shifts in time. 

A minor trade-off of introducing the stationary force coefficients is that the deterministic analysis involved in going from the stationary to the non-stationary force coefficients requires us to need more regularity from $V$. 
If one works directly with the non-stationary force coefficients, then $V \in C^{7}$ suffices. 
\end{remark}

At the end of this section, for small enough $T$ and uniformly in $\eps \in (0,1]$, we solve \eqref{eq:sys2} for the remainder
$(R_{\eps,\bigcdot})_{\mu \in [0,T]}$ and, in Section~\ref{sec:deter}, we use this remainder along with the force ansatz $F_{\eps,\bigcdot}$ to construct $u_\eps$.

\begin{remark}\label{rem:2_31}
While the arguments to construct the remainder and then the solution require the probabilistic estimates~\eqref{eq:boundXi} as an input, they are absolutely deterministic, and besides the probabilistic input, they could be carried out in the entire subcritical regime, i.e. $H>0$. 

While one cannot use the cumulant flow to construct the probabilistic input (that is, the estimate \eqref{eq:boundXi}) in the regime $H \in (0,1/4]$, if one assumed the an analogue of the estimate \eqref{eq:boundXi} now also including force coefficients indexed by trees in $\mfT_{\leqslant k(H)}$ for $k(H)=\lfloor 1/H\rfloor-1$, then the key deterministic step (that is, solving  \eqref{eq:sys2}) would carry over with little change in the regime $H \in (0,1/4]$. 

However, in order to keep notation lighter - in particular, to keep our set of trees fixed and to fix some of the norms we use - we assume $H > 1/4$ in our deterministic analysis. 
\end{remark}

\begin{definition}\label{def:mcG}
For any initial condition ${\tt u}\in\R^\cbn$, we define the increasing function $\mcG_\ttu:\R_{\geqslant0}\rightarrow\R_{\geqslant0}$ by 
\begin{equs}\label{eq:defmcG}
    \mcG_\ttu:t\mapsto  \big(1+t^{36}\big)\norm{V}_{C^{9}(B_0(t+|\ttu|))}^6\,.
\end{equs}
\end{definition}
Recall that, to lighten the notation, we set $K_\mu\equiv K_{6,\mu}$ and $\mcR_\mu=\mcP_{6,\mu}$, and that $\delta=-1+4H>0$.
\begin{corollary}\label{coro:1} 
Fix $T\in(0,1]$, and two families of functions $\theta=(\theta_\mu)_{\mu\in(0,T]}$ and $\tiR=(\tiR_\mu)_{\mu\in(0,T]}$ with $\theta_\mu,\tiR_\mu\in C^\infty([0,T],\R^{\cbn})$, and which for some universal constant $C_{\theta,\tiR}>1$ satisfy, for every $\mu \in (0,T]$, 
\begin{equs}  \label{eq:eqCoro224}  \norm{\theta_\mu}_{L^\infty_{0;T}}\vee
 \mu^{-\delta/2}\norm{\tiR_\mu}_{L^\infty_{0;T}}
\leqslant C_{\theta,\tiR}\,.
\end{equs} 
Writing 
\begin{equs}
    \tF_\epmu[\bigcdot]\eqdef K_\mu F_\epmu[\bigcdot]\,,\quad \tiI_\epmu[\bigcdot]\eqdef K_\mu I_\epmu[\bigcdot]\,,\quad {and}\quad u_\mu \eqdef \theta_\mu+{\tt u}\,,
\end{equs}
we clearly have $K_\mu\theta_\mu+{\tt u}=K_\mu\theta_\mu+K_\mu\mcR_\mu{\tt u}=K_\mu u_\mu$ (since ${\tt u}$ is constant in time, so that $\mcR_\mu{\tt u}={\tt u}$) along with the estimate
\begin{equs}\label{eq:u_mu}
    \norm{u_\mu}_{L^\infty_{0;T}}\leqslant C_{\theta,\tilde R}+|{\tt u}|\,.
\end{equs}
Then, there exists universal constants $K,r\geqslant1$ such that setting $C_F\eqdef K\nnnorm{\omega\xi}^r_{4}$ we have, for every $\eta>0$, 
    \begin{equs}
     \label{eq:boundF}   \norm{ \tF_\epmu[K_\mu\theta_\mu]}_{L^\infty_{0;T}}&\leqslant C_F\mcG_\ttu(C_{\theta,\tiR})\mu^{-1+H-\eta}\,,\\
     \label{eq:boundDF}    \norm{ \D\tF_\epmu[K_\mu\theta_\mu,\mcR_\mu\dot G_\mu \tiR_{\mu}]}_{L_{0;T}^\infty}&\leqslant C_F\mcG_\ttu(C_{\theta,\tiR})\mu^{-1+H-\eta}\,,\\
   \label{eq:boundL}       \norm{\tiI_\epmu[K_\mu\theta_\mu]}_{L_{0;T}^\infty}&\leqslant C_F\mcG_\ttu(C_{\theta,\tiR})\mu^{-1+\delta-\eta}\,.
    \end{equs}
\end{corollary}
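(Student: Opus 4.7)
The plan is to reduce each of the three estimates to a Hölder-type pairing between smoothed force coefficients and smoothed elementary differentials. Starting from the ansatz \eqref{eq:defFmu}, I would first convolve with $K_\mu$ in the time variable and then insert the identity $\mathrm{Id}=K_\mu\mcR_\mu$ in each $s^\tau_n$ variable (legitimate since $K_\mu=K_{4,\mu}$ is the fundamental solution of $\mcR_\mu=\mcP_{4,\mu}$). Transferring the $K_\mu$ factors onto the force-coefficient side by duality produces
\[
\tF_\epmu[K_\mu\theta_\mu](t)=\sum_{\tau\in\mcT}\int_{\R^{\mfs(\tau)}}\big\langle(K_\mu^{\otimes(1+\mfs(\tau))}\xi_\epmu^\tau)(t,s^\tau),\,(\mcR_\mu^*)^{\otimes\mfs(\tau)}\Upsilon^\tau[K_\mu\theta_\mu+{\tt u}](s^\tau)\big\rangle_{\mcH^\tau}\rmd s^\tau,
\]
where $\mcR_\mu^*=(1-\mu\partial_{s^\tau_n})^4$ in each variable; the boundary and distributional subtleties in the transfer are settled by the support property of $\xi^\tau$ from Lemma~\ref{lem:support} together with the one-sided support and exponential decay of $K_\mu$. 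An $L^\infty_tL^1_{s^\tau}\times L^\infty_{s^\tau}$ Hölder step then isolates the two ingredients.

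Theorem~\ref{thm:sto} controls the coefficient factor by $\nnnorm{\xi}_4\cdot\mu^{|\tau|-\eta}\lesssim\mu^{|\tau|-\eta}$. For the test-function factor I would expand $\mcR_\mu^*$ by the binomial formula and apply Faà di Bruno on $\Upsilon^\tau[K_\mu\theta_\mu+{\tt u}]$. The quantitative crux is that $\|\mu^j\partial^jK_\mu\theta_\mu\|_{L^\infty}\lesssim\|\theta_\mu\|_{L^\infty}$ (from $\|\partial^jK_\mu\|_{L^1}\lesssim\mu^{-j}$), so every $\mu^j$ from $(-\mu\partial_{s^\tau_n})^j$ exactly cancels the singular $\mu^{-j}$ produced when differentiating $K_\mu\theta_\mu$. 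The residual dependence on $V$ is then a polynomial in $\|V\|_{C^7(B_0(C_{\theta,\tiR,{\tt u}}))}$: a node of $\tau\in\mcT$ has combinatorial degree at most $2$, so after applying $(\mcR_\mu^*)^4$ at that node one needs at most $V^{(6)}$. Packaging these estimates as $\mcG(C_{\theta,\tiR,{\tt u}})\mu^{|\tau|-\eta}$ and summing over $\tau\in\mcT$, with the minimum $|\tau|=-1+H$ achieved at $\tau=\noise$, yields \eqref{eq:boundF}.

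For \eqref{eq:boundDF} the same machinery applies with $\D\Upsilon^\tau[\,\cdot\,,\mcR_\mu\dot G_\mu\tiR_\mu]$ in place of $\Upsilon^\tau$; the extra $\D$ raises the maximal required regularity of $V$ to $C^7$, attained for $\tau=\tThree$. The tangent slot $h\eqdef\mcR_\mu\dot G_\mu\tiR_\mu$ satisfies $\|\mu^j\partial^jh\|_{L^\infty}\lesssim\|\tiR_\mu\|_{L^\infty}$ by expanding $\mcR_\mu$ and using the scaling $\|\partial^j\dot G_\mu\|_{L^1}\lesssim\mu^{-j}$ in the spirit of Lemma~\ref{lem:dotG}, and the hypothesis $\|\tiR_\mu\|_{L^\infty}\lesssim\mu^{\delta/2}$ contributes a spare $\mu^{\delta/2}$ that is absorbed since $\delta>0$. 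For \eqref{eq:boundL} I would begin from the representation \eqref{eq:rhsPol} of $\D F_\epmu[\,\cdot\,,\dot G_\mu F_\epmu[\cdot]]$ projected onto $\mcT_*\setminus\mcT$ and apply the same duality transfer; the new input is an auxiliary estimate $\nnorm{(\xi^{\tau_2}\graft^\mu\xi^{\tau_1})^\tau}_{4,\mu}\lesssim\nnnorm{\xi}_4^2\cdot\mu^{|\tau|-\eta}$, which follows from Definition~\ref{def:scaledlabeledgraft} by invoking Lemma~\ref{lem:dotG} for the $\dot G_\mu$ that appears in the grafting together with the identity $|\tau|=|\tau_1|+|\tau_2|+1$. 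The lower bound $|\tau|\geqslant\delta$ on $\mcT_*\setminus\mcT$ then delivers the stated $\mu^{-1+\delta-\eta}$ scaling.

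I expect the main technical obstacle to be the Faà di Bruno book-keeping for $(\mcR_\mu^*)^{\otimes\mfs(\tau)}\Upsilon^\tau[K_\mu\theta_\mu+{\tt u}]$ and its tangent variant: one must verify that every singular $\mu^{-j}$ from derivatives of $K_\mu\theta_\mu$ (or $h$) is precisely killed by the corresponding $\mu^j$ coming from $(-\mu\partial_{s^\tau_n})^j$, so that the final constant is a genuine polynomial in $\|V\|_{C^7(B_0(C_{\theta,\tiR,{\tt u}}))}$ and $C_{\theta,\tiR,{\tt u}}$ with no residual $\mu$-dependence. A secondary subtlety is justifying the duality transfer and handling the reflection $\mcR_\mu\leftrightarrow\mcR_\mu^*$ introduced by moving kernels across the pairing, which is why the support/decay properties of $\xi^\tau$ and $K_\mu$ must be invoked carefully.
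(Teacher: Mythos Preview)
Your proposal is correct and follows essentially the same approach as the paper: insert $\mathrm{Id}=K_\mu\mcR_\mu$ in each $s^\tau$-variable, transfer $K_\mu$ to the force-coefficient side, apply an $L^\infty_tL^1_{s^\tau}\times L^\infty_{s^\tau}$ H\"older step, control the coefficient factor via Theorem~\ref{thm:sto} (and, for \eqref{eq:boundL}, via the grafting bound $\nnorm{\xi^{\tau_1}}_4\,\norm{\mcR_\mu\dot G_\mu}_{\mcL^{\infty,\infty}}\,\nnorm{\xi^{\tau_2}}_4\lesssim\mu^{|\tau|-1-\eta}$), and control the $\Upsilon$-factor via Fa\`a di Bruno. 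The paper's proof is more compressed: where you spell out the binomial expansion of $(\mcR_\mu^\dagger)^{\otimes\mfs(\tau)}$ and argue that each $\mu^j$ exactly compensates a $\mu^{-j}$ from differentiating $K_\mu\theta_\mu$, the paper simply invokes \eqref{eq:spacederivKmu} to observe that $\mcR_\mu^\dagger K_\mu$ has $L^1$-kernel bounded uniformly in $\mu$, which is the same content packaged differently. Your accounting of the $C^7$ requirement (degree $\leqslant 3$ at a node in $\mcT_*$, plus four derivatives from $\mcR_\mu^\dagger$) also matches the paper's.
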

\begin{proof}
The only parts of the lemma requiring proof are the estimates \eqref{eq:boundL}, \eqref{eq:boundF}, and \eqref{eq:boundDF}, of which we only prove the hardest \eqref{eq:boundL} since \eqref{eq:boundF} and \eqref{eq:boundDF} are similar and easier.
Recalling the definition \eqref{eq:expL} of $I_\epmu$ and the ansatz \eqref{eq:defFmu} for $F_\epmu$. We place some kernels $K_\mu$ in front of all the variables $s^\tau$ of the force coefficients, which places some operators $\mcR_\mu^\dagger $ is front of all the variables of $\Upsilon^\tau$. Using H\"older's inequality we then have the bound
 \begin{equs}
\norm{\tiI_\epmu[K_\mu\theta_\mu]}_{L_{0;T}^\infty}&\lesssim
\sum_{\substack{ \tau\in\text{\scriptsize{$\mcT_{*}\setminus\mcT$}}\\(\tau_1,\tau_2)\in\text{\scriptsize{$\mathrm{Ind}(\tau)$}}}}  \norm{K^{\otimes1+\mfs(\tau)}_\mu\big( \big(\zeta_\epmu^{\tau_2}   \,\graft^\mu\zeta_\epmu^{\tau_1}\big)^{\tau}\big)}_{{L^\infty L^1([0,T]\times\R^{\mfs(\tau)})}}
\norm{\big(\mcR^\dagger_\mu\big)^{\otimes\mfs(\tau)}\Upsilon^\tau[K_\mu u_\mu]}_{L^\infty(\R^{\mfs(\tau)})}\,. 
 \end{equs}
Using \eqref{eq:defB} and \eqref{eq:propB} we have, for any $\eta>0$,
\begin{equs}    
\norm{K^{\otimes1+\mfs(\tau)}_\mu\big( \big(\zeta_\epmu^{\tau_2}   \,\graft^\mu\zeta_\epmu^{\tau_1}\big)^{\tau}\big)}_{{L^\infty L^1([0,T]\times\R^{\mfs(\tau)})}}
{}&\lesssim\nnorm{\zeta^{\tau_1}_\epmu}_{6}\norm{\mcR_\mu\mcR_\mu^\dagger\dot G_\mu}_{\mcL^{\infty,\infty}}\nnorm{\zeta^{\tau_2}_\epmu}_{6}\\
{}&\lesssim\nnnorm{\omega\xi}^r_4\mu^{|\tau_1|+|\tau_2|-\eta}=\nnnorm{\omega\xi}^r_4\mu^{|\tau|-1-\eta}\,,
\end{equs}
where in the second inequality we used the estimate \eqref{eq:boundZeta} on force coefficients, as well as the fact that the stationary force coefficients are controlled by the non-stationary force coefficients. By \eqref{eq:def_delta} we have $|\tau|\geqslant\delta$ so the desired scaling in $\mu$ of this bound follows.

We turn to the factor involving $\Upsilon^\tau$. 
For any $\tau\in\mcT_*$, $\Upsilon^\tau$ is a product of at most $6$ factors of $V$ and its derivatives - trees in $\mcT$ are of size at most $3$, so they produce trees of size at most $6$ when grafted onto themselves.
Moreover, the derivatives appearing in this product are of at most third order. 
Indeed, the element of $\mcT$ such that $\Upsilon^\tau$ contains a derivative of maximal order is $\tau=\tThree$, which contains a second derivative, and grafting can only increase the order of a derivative by $1$. 

Combining these two observations, we can now control for $i\leqslant3$ 
$  \norm{\mcR^\dagger_\mu \rmd^i V[K_\mu u_\mu]}_{L^\infty(\R)}$ using the Fa\'a di Bruno formula. To do so, observe that $\mcR^\dagger_\mu$ is a polynomial of degree 6 in $\mu\d_t$. Moreover, we have for $k\leqslant8$
\begin{equs}\label{eq:ICvanishes}
    \mu\d_t \rmd^k V[K_\mu u_\mu]=\mu\d_t\big(K_\mu\theta_\mu+\ttu\big) \rmd^{k+1} V[K_\mu u_\mu]= \mu\d_tK_\mu\theta_\mu \rmd^{k+1} V[K_\mu u_\mu]\,,
\end{equs}
so that using \eqref{eq:spacederivKmu} we end up with
\begin{equs}
    \norm{\mcR^\dagger_\mu \rmd^i V[K_\mu u_\mu]}_{L^\infty(\R)}\lesssim\norm{\theta_\mu}^6_{L^\infty_{0;T}}\norm{V}_{C^{9}(B_0(\norm{K_\mu u_\mu}_{L^\infty_{0;T}}))}\lesssim\mcG_\ttu(C_{\theta,\tiR})^{1/6}\,.\label{eq:boundUplsilon}
\end{equs}
To conclude, as mentioned above, \eqref{eq:boundF} and \eqref{eq:boundDF} are proven similarly, using the ansatz \eqref{eq:defFmu} for $F_\epmu$, the stochastic estimate \eqref{eq:boundXi}, and controlling all the $\Upsilon^\tau$ by means of \eqref{eq:boundUplsilon}.
\end{proof}
We can now use Corollary~\ref{coro:1} to argue that, for $T$ sufficiently small, the system~\eqref{eq:sys2} can be solved by a fixed-point argument.
For convenience, we use a change of variables in \eqref{eq:sys2}
\begin{equs}  (\theta_\epmu,\tiR_\epmu)\eqdef(\mcR_\mu v_\epmu , K_\mu R_\epmu)\,,
\end{equs}
so that \eqref{eq:sys2} is equivalent to
\begin{subequations}\label{eq:sys3}
  \begin{empheq}[left=\empheqlbrace]{alignat=2}  
 \tiR_{\eps,\mu}&=-\int_0^\mu \tilde K_{\mu,\nu}\big(\D \tF_{\eps,\nu}[K_\nu\theta_{\eps,\nu},\mcR_\nu\dot G_\nu \tiR_\epnu]+\tilde I_\epnu[K_\nu\theta_\epnu]\big)\rmd\nu\,,\\
        \theta_\epmu&= -\int_\mu^{T}\tilde K_{\nu,\mu}\mcR_\nu^2 \dot G_\nu\big(\tF_\epnu[K_\nu\theta_\epnu]+\tiR_\epnu\big)\rmd\nu\,,
  \end{empheq}
\end{subequations}
Here, we used the fact that $R_{\eps,0}=0$, and set, for $\lambda\geqslant\tau$, $\tilde K_{\lambda,\tau}\eqdef\mcR_\tau K_\lambda$. 
Note that by \eqref{eq:Kmunu}, for $\lambda\geqslant\tau$, $\tilde K_{\lambda,\tau}$ is a bounded operator $L^\infty\rightarrow L^\infty$. Finally, we recall that the pair $(\tiR_{\eps,\bigcdot},\theta_{\eps,\bigcdot})$ is supported on $[0,T]$.\auth{comment 2}

The following proposition solves the fixed point problem~\eqref{eq:sys3}.
\begin{proposition}\label{eq:prop1}
Fix a universal constant $C_R>0$, and define 
\begin{equ}\label{eq:size_constant}
\Xi \equiv \Xi(\xi, V,\ttu ,C_R)
\eqdef
\max \Big(
\nnnorm{\omega\xi}_{4}, C_R,
\|V\|_{C^9(B_0(C_R+|\ttu|))} \Big)\;.
\end{equ}
If there exists constants $C,p\geqslant1$ such that $T=C\Xi^{-p}$, we just write $T = T_{\Xi}$.

There exists $\tilde{T} = \tilde{T}_{\Xi} \in(0,1]$ such that, for any $T \in (0,\tilde{T}]$ and $\eps \in (0,1]$, the map
\begin{equs}
    \Phi:\binom{\theta_{\eps,\bigcdot}}{\tiR_{\eps,\bigcdot}}\mapsto\binom{\Phi^\theta_{\eps,\bigcdot}}{\Phi^{\tiR}_{\eps,\bigcdot}}(\theta_{\eps,\bigcdot},\tiR_{\eps,\bigcdot})\eqdef\binom{-\int_{\bigcdot}^{T}\tilde K_{\nu,\bigcdot}\mcR^2_{\nu} \dot G_{\nu}  \big(\tF_{\eps,\nu}[K_{\nu}\theta_{\eps,\nu}]+\tiR_{\eps,\nu}\big)\rmd\nu}{-\int_0^{\bigcdot}\tilde K_{\bigcdot,\nu}\big(
      \D \tF_{\eps,\nu}[K_{\nu}\theta_{\eps,\nu},\mcR_\nu\dot G_\nu \tiR_\epnu]+\tiI_\epnu[K_\nu\theta_\epnu]\big)
      \rmd\nu}    
 \end{equs}
is a contraction for the norm 
\begin{equs}    \nnorm{\theta_{\eps,\bigcdot},\tiR_{\eps,\bigcdot}}_T\eqdef 
    \sup_{\mu\in(0,T]}\norm{\theta_\epmu}_{L_{0;T}^\infty}\vee\sup_{\mu\in(0,T]}\mu^{-\delta/2}\norm{ \tiR_\epmu}_{L_{0;T}^\infty}\,.
\end{equs}
In particular, the system \eqref{eq:sys2} has a unique solution, denoted $(\theta_{\eps,\bigcdot},\tiR_{\eps,\bigcdot})$, satisfying the estimate
  \begin{equs}\label{eq:boundR}
        \sup_{\mu\in(0,T]}\mu^{-\delta/2}\norm{\tiR_\epmu}_{L^\infty_{0;T}}\leqslant C_R\,.
    \end{equs}
Finally, the solution $(\theta_{\eps,\bigcdot},\tiR_{\eps,\bigcdot})$ is continuous in the data $(\tF_{\eps,\bigcdot},\tiI_{\eps,\bigcdot})$.    
\end{proposition}
\begin{proof}
We show that $\Phi$ maps a ball of radius $C_{\theta,\tiR}$ into itself. 
The proof that it is a contraction, and that it is continuous in the data follows from very standard modifications of the same estimates. 
    
 Suppose that $\nnorm{\theta_{\eps,\bigcdot},\tiR_{\eps,\bigcdot}}_T\leqslant C_{\theta,\tiR}$, we aim to show that $\nnorm{\Phi(\theta_{\eps,\bigcdot},\tiR_{\eps,\bigcdot})}_T\leqslant C_{\theta,\tiR}$. 
 We first observe that the assumption $\nnorm{\theta_{\eps,\bigcdot},\tiR_{\eps,\bigcdot}}_T\leqslant C_{\theta,\tiR}$ implies that $(\theta_\epmu,\tiR_\epmu)$ satisfy the assumptions of Corollary~\ref{coro:1} with $(\theta_\mu,\tiR_\mu)=(\theta_\epmu,\tiR_\epmu)$, so that we can make use of \eqref{eq:boundF}, \eqref{eq:boundDF} and \eqref{eq:boundL}.

We first deal with the $\theta$ component, for which we have the estimates
\begin{equs}    \norm{\Phi_\epmu^\theta}_{L^\infty_{0;T}}&\lesssim
\int_\mu^{T}
\norm{\mcR^2_\nu\dot G_\nu}_{\mcL^{\infty,\infty}}\big(\norm{\tF_\epnu[K_\nu\theta_\epnu]}_{L_{0;T}^\infty}+\norm{\tiR_\epnu}_{L_{0;T}^\infty}\big)
\rmd\nu\\
&\lesssim C_F\mcG_\ttu(C_{\theta,\tiR})\int_\mu^{T}\big(\nu^{-1+H-\eta}+\nu^{\delta/2}\big)\rmd\nu\lesssim C_F\mcG_\ttu(C_{\theta,\tiR})\int_0^{T}\nu^{-1+H-\eta}\rmd\nu\lesssim 
 C_F\mcG_\ttu(C_{\theta,\tiR})T^{H-\eta}
\,.
\end{equs}
To go from the first to the second line, we bounded the $L^\infty_{0;T}$ norm of $\tiR_\epnu$ using the hypothesis that $\nnorm{\theta_{\eps,\bigcdot},\tiR_{\eps,\bigcdot}}_T\leqslant C_{\theta,\tiR}$, and we bounded the $L_{0;T}^\infty$ norm of $\tF_\epnu[K_\mu\theta_\epnu]$ using \eqref{eq:boundF}. In the second line, we used that taking $\eta$ small enough, $\nu^{-1+H-\eta}$ is integrable at $\nu=0$. 
Taking $T$ small enough, we can absorb the constant to get an estimate in terms of $C_{\theta,\tiR}$.

We turn to the $R$ component. 
Using the definition of $\Phi^{\tiR}_\epmu$, and then  \eqref{eq:boundDF} and \eqref{eq:boundL}, we have
\begin{equs}
    \norm{\Phi^{\tiR}_\epmu}_{L^\infty_{0;T}}&\lesssim  \int_0^\mu\big(
    \norm{\D \tF_{\eps,\nu}[K_\nu\theta_{\eps,\nu},\mcR_\nu\dot G_\nu \tiR_\epnu]}_{L^\infty_{0;T}}+\norm{\tiI_\epnu[K_\nu\theta_\epnu]
    }_{L^\infty_{0;T}}  \big)\rmd\nu\\
 &\lesssim C_F\mcG_\ttu(C_{\theta,\tiR})  \int_0^\mu
\big(   \nu^{-1+H-\eta}+ \nu^{-1+\delta-\eta} \big) \rmd\nu\lesssim C_F\mcG_\ttu(C_{\theta,\tiR})\mu^{\delta-\eta}\\&\lesssim C_F\mcG_\ttu(C_{\theta,\tiR})\mu^{3\delta/4}\lesssim C_F\mcG_\ttu(C_{\theta,\tiR})\mu^{\delta/2}T^{\delta/4}\,,
\end{equs}
where on the line line we took $\eta\leqslant\delta/4$. Again, by taking $T$ small enough, we can absorb the constant to get a bound in terms of $C_{\theta,\tiR}$. 
\end{proof}
\subsection{Construction of the solution and proof of Theorem~\ref{thm:main}}\label{sec:deter}
We first prove a lemma allowing us to rewrite \eqref{eq:eq1} in terms of flow data.  
\begin{lemma}
Let $T\in(0,1]$ be as in Proposition~\ref{eq:prop1}. 
Then, for any $\eps \in (0,1]$, 
if $u_\eps$ solves \eqref{eq:eq1} on $[0,T]$, then, for all $t\in[0,T]$, 
    \begin{equs}\label{eq:Phi2}
        u_\eps(t)=(G-G_{T})\big(F_{\eps,T}[0]+R_{\eps,T}\big)(t)+{\tt u}\,.
    \end{equs}
\end{lemma}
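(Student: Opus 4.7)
The plan is to take the equation $u_\eps(t) = G\bigl(F_\eps[v_\eps]\bigr)(t) + {\tt u}$, obtained from \eqref{eq:eq1} via the substitution $v_\eps = u_\eps - {\tt u}$, and rewrite it by splitting $G = (G - G_T) + G_T$ and identifying each piece with either zero or flow data at scale $\mu = T$.

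First I would write $v_\eps = (G - G_T)\bigl(F_\eps[v_\eps]\bigr) + G_T\bigl(F_\eps[v_\eps]\bigr)$ and recognise the second summand as $v_{\eps,T}$ by \eqref{eq:psimu}. Using $\text{supp}\,G_T \subset [T,\infty)$ from \eqref{eq:suppGmu} together with the convention that $F_\eps[v_\eps]$ is supported on positive times, the convolution $v_{\eps,T} = G_T * F_\eps[v_\eps]$ vanishes on $(-\infty, T)$. Moreover, since $\chi(1) = 0$ by Definition~\ref{def:G_mu}, a direct computation shows $v_{\eps,T}(T) = 0$ as well. Hence $v_{\eps,T} \equiv 0$ on $[0, T]$, and therefore $v_\eps(t) = (G - G_T)\bigl(F_\eps[v_\eps]\bigr)(t)$ for every $t \in [0,T]$.

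Next I would substitute the decomposition \eqref{eq:FmuRmu} at $\mu = T$, namely $F_\eps[v_\eps] = F_{\eps,T}[v_{\eps,T}] + R_{\eps,T}$, into the right-hand side above. The heart of the argument is then to verify that for $t \in [0,T]$ one has $(G - G_T)\bigl(F_{\eps,T}[v_{\eps,T}]\bigr)(t) = (G - G_T)\bigl(F_{\eps,T}[0]\bigr)(t)$. The kernel $G - G_T$ is supported in $[0, 2T]$, so the convolution at $t \in [0,T]$ only uses values of $F_{\eps,T}[v_{\eps,T}](s)$ for $s \in [t - 2T, t] \subset (-\infty, T]$. Plugging into the ansatz \eqref{eq:defFmu} and invoking the support property \eqref{eq:supportXi} of the force coefficients, this value depends only on $v_{\eps,T}$ evaluated at times in $[s - 2T\mfo(\tau), s] \subset (-\infty, T]$ for each $\tau \in \mcT$ (using $\mfo(\tau) \leqslant 2$). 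By the previous step, $v_{\eps,T}$ vanishes on this set, so $\Upsilon^\tau[v_{\eps,T} + {\tt u}]$ can be replaced by $\Upsilon^\tau[0 + {\tt u}]$ termwise in the ansatz, yielding the identity.

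Combining these steps gives $v_\eps(t) = (G - G_T)\bigl(F_{\eps,T}[0] + R_{\eps,T}\bigr)(t)$ on $[0,T]$, and adding back ${\tt u}$ yields \eqref{eq:Phi2}. The main technical point is the support bookkeeping around $t = T$, specifically that $v_{\eps,T}$ vanishes even at the endpoint (which relies on $\chi(1) = 0$); once this is in hand, the rest follows directly from the definitions and from \eqref{eq:FmuRmu}.
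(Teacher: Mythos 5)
Your proof is correct and follows essentially the same route as the paper: it combines \eqref{eq:FmuRmu} at scale $T$ with the support property \eqref{eq:suppGmu} of $G_T$ (giving $v_{\eps,T}\equiv0$ on $(-\infty,T]$) and the support property \eqref{eq:supportXi} of the force coefficients (justifying the replacement $F_{\eps,T}[v_{\eps,T}]\rightsquigarrow F_{\eps,T}[0]$). The only difference is cosmetic ordering — you split $G=(G-G_T)+G_T$ up front, while the paper first substitutes $F_\eps[v_\eps]=F_{\eps,T}[v_{\eps,T}]+R_{\eps,T}$ and subtracts $G_T(\cdot)$ at the end — and you spell out the replacement step slightly more explicitly than the paper's terse ``we can replace it by $0$''.
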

\begin{proof} 
\eqref{eq:eq1}, the definition of $F_\eps$, and  \eqref{eq:FmuRmu} for $\mu=T$ imply that 
\begin{equs}   u_{\eps}(t)&=G\big(F_\eps[v_\eps]\big)(t)+{\tt u}=G\big(F_{\eps,T}[v_{\eps,T}]+R_{\eps,T}\big)(t)+{\tt u}\,,\label{eq:somestephere}
\end{equs}
where $v_{\eps,T}$ is defined by \eqref{eq:psimu} with $\mu=T$. The support properties of $G_{T}$ (see \eqref{eq:suppGmu}) and the fact that $F_\eps$ is supported on positive times then imply that $v_{\eps,T}$ is supported outside $ [0,T]$, so that we can replace it by $0$ in \eqref{eq:somestephere}. Similarly, we have the same support property for $G_T\big(F_{\eps,T}[0]+R_{\eps,T}\big)$, which concludes the proof.
\end{proof}
Next, we need the following consequence of \eqref{eq:boundF} and \eqref{eq:boundR}.
\begin{lemma}\label{lem:Lem45} 
Pick $\varsigma\in(0,1)$ satisfying\footnote{Note the $4$ appearing below stems from \eqref{eq:fix_kernel}.}  
\begin{equs}\label{eq:constrainsigma}
    (2/\varsigma-1)^4\leqslant2\,. 
\end{equs}
Then, there exists $\tilde{T} = \tilde{T}_{\Xi} \in(0,1]$, such that, for any $T \in (0,\tilde{T}]$, $\eta > 0$,  $p\in\N$, and $\eps \in (0,1]$, 
    \begin{equs}
         \norm{K_{\varsigma^pT}\big( F_{\eps,T}[0]+R_{\eps,T}\big)}_{L^\infty_{0;T}}\leqslant 2 C_F(\varsigma^pT)^{-1+H-\eta}\mcG_\ttu(1)\,,
    \end{equs}
where $C_F$ is as per Corollary~\ref{coro:1}. Consequently, by interpolation, we have that 
\begin{equs}\label{eq:Lem45}
     \norm{K_{\mu}\big( F_{\eps,T}[0]+R_{\eps,T}\big)}_{L^\infty_{0;T}}\lesssim C_F \mcG_\ttu(1)\mu^{-1+H-\eta}
\end{equs}
uniformly in $\mu\in(0,1]$.
\end{lemma}
\begin{proof} 
We argue by induction in $p$. 
By \eqref{eq:boundF} and \eqref{eq:boundR} we have, for sufficiently small $T>0$, 
\begin{equ}
    \norm{K_{T}\big( F_{\eps,T}[0]+R_{\eps,T}\big)}_{L^\infty_{0;T}}\leqslant C_FT^{-1+H-\eta}\mcG_\ttu(0)+C_RT^{\delta/2}\leqslant2 C_FT^{-1+H-\eta}\mcG_\ttu(0)\,.
\end{equ}
We now prove the induction step, assuming that 
 \begin{equs}\label{eq:stepprooflem}
         \norm{K_{\varsigma^pT}\big( F_{\eps,T}[0]+R_{\eps,T}\big)}_{L^\infty_{0;T}}\leqslant 2 C_F(\varsigma^pT)^{-1+H-\eta}\mcG_\ttu(1)\,.
    \end{equs}
We use \eqref{eq:Kmunu} to replace $K_{\varsigma^{p}T}$ by $K_{\varsigma^{p+1}T}$ in \eqref{eq:stepprooflem}, which, thanks to our constraint \eqref{eq:constrainsigma} on $\varsigma$, gives 
\begin{equs}    \norm{K_{\varsigma^{p+1}T}\big( F_{\eps,T}[0]+R_{\eps,T}\big)}_{L^\infty_{0;T}}
 &\leqslant 4C_F(\varsigma^{p+1}T)^{-1+H-\eta}\mcG_\ttu(1)
    \,.\textcolor{white}{blb}\label{eq:intertau}
\end{equs}
 In particular, by interpolation, \eqref{eq:intertau} also holds with $\varsigma^{p+1}T$ replaced by any $\nu\in[\varsigma^{p+1}T,T]$.
 
Next, recall that, for all $t\in[0,T]$ and $\mu\in(0,T]$,
\begin{equs}\label{eq:FmuRmuBis}
   \big( F_{\eps,T}[0]+R_{\eps,T}\big)(t)=\big(F_{\eps,\varsigma^{p+1}T}[v_{\eps,\varsigma^{p+1}T}]+R_{\eps,\varsigma^{p+1}T}\big)(t)\,.
\end{equs}
Using \eqref{eq:vphimu}, \eqref{eq:FmuRmuBis} and the support properties of $G_{T}$, we obtain that 
\begin{equs}    \norm{v_{\eps,\varsigma^{p+1}T}}_{L^\infty_{0;T}}&=\norm{\big(G_{\varsigma^{p+1}T} -G_{T}\big) \big( F_{\eps,T}[0]+R_{\eps,T}\big)}_{L^\infty_{0;T}}\\
{}&\leqslant\int^{T}_{\varsigma^{p+1}T}\norm{\mcR_\nu\dot G_{\nu}}_{\mcL^{\infty,\infty}}\norm{K_\nu \big( F_{\eps,T}[0]+R_{\eps,T}\big)}_{L^\infty_{0;T}}\rmd\nu\,.
\end{equs}
Then, using \eqref{eq:heat1} and \eqref{eq:intertau} with $\varsigma^{p+1}T$ replaced by $\nu\in[\varsigma^{p+1}T,T]$ yields
\begin{equs}    \norm{v_{\eps,\varsigma^{p+1}T}}_{L^\infty_{0;T}}&\leqslant 4C_GC_F\mcG_\ttu(1)
\int^{T}_{\varsigma^{p+1}T}\nu^{-1+H-\eta}\rmd\nu \leqslant 4C_GC_{H,\eta}C_F\mcG_\ttu(1)T^{H-\eta}\,,
\end{equs}
where $C_G>0$ is the implicit constant in \eqref{eq:heat1} and $C_{H,\eta}>0$ is universal and only depends on $H,\eta$. 
By taking the scale $T$ small enough, we have 
$$ 4C_GC_F\mcG_\ttu(1)T^{H-\eta}\leqslant 1\,.$$ $\theta_{\varsigma^{p+1}T}=\mcR_{\varsigma^{p+1}T}v_{\eps,\varsigma^{p+1}T}$ thus verifies the hypothesis of Corollary~\ref{coro:1} with $C_{\theta,\tiR}=1$, and we can make use of \eqref{eq:boundF} to control $F_{\eps,\varsigma^{p+1}T}[v_{\eps,\varsigma^{p+1}T}]$ in \eqref{eq:FmuRmuBis}. 
Using \eqref{eq:boundF} and \eqref{eq:boundR}, we obtain
\begin{equs}
   {} &\norm{K_{\varsigma^{p+1}T}\big( F_{\eps,T}[0]+R_{\eps,T}\big)}_{L^\infty_{0;T}}=\norm{\tF_{\eps,\varsigma^{p+1}T}[v_{\eps,\varsigma^{p+1}T}]+\tiR_{\eps,\varsigma^{p+1}T}}_{L^\infty_{0;T}}
    \\ &\quad\leqslant    \norm{\tF_{\eps,\varsigma^{p+1}T}[v_{\eps,\varsigma^{p+1}T}]}+\norm{\tiR_{\eps,\varsigma^{p+1}T}}_{L^\infty_{0;T}}\leqslant   C_F(\varsigma^{p+1}T)^{-1+H-\eta}\mcG_\ttu(1)+C_R(\varsigma^{p+1}T)^{\delta/2}\,.
\end{equs}
Once more, taking $T$ small enough, the second term of the r.h.s. can be bounded by the first one, which yields the desired result.  
\end{proof}
We are now ready to construct the solution $u_\eps$ in the space $\mcC^{H-\eta}([0,T])$ for any $\eta>0$. 
\begin{lemma}\label{lem:solutionconstruction}
There exists $\tilde{T} = \tilde{T}_{\Xi} \in(0,1]$, such that, for any $T \in (0,\tilde{T}]$, $\eta > 0$, and $\eps \in (0,1]$, 
    \begin{equs}
\norm{u_\eps}_{\mcC^{H-\eta}([0,T])} \equiv 
\sup_{\mu\in(0,1]}  \mu^{-H+\eta}   \norm{Q_\mu u_\eps-u_\eps}_{L^\infty_{0;T}}\lesssim |\ttu|+ C_F \mcG_\ttu(1)\,.
    \end{equs}
\end{lemma}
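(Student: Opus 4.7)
The plan is to combine the representation \eqref{eq:Phi2} with the kernel bound \eqref{eq:Lem45} from Lemma~\ref{lem:Lem45} and the scaling estimates of Lemma~\ref{lem:dotG}. Set $f \eqdef F_{\eps,T}[0] + R_{\eps,T}$, so that \eqref{eq:Phi2} reads $u_\eps - {\tt u} = (G - G_T) f$ on $[0,T]$. Since $K_\mu = Q_\mu^{*4}$ is convolution with a nonnegative probability density, it preserves the constant ${\tt u}$, and hence $K_\mu u_\eps - u_\eps = (K_\mu - \Id)(G - G_T) f$. Writing $G - G_T = -\int_0^T \dot G_\nu \, d\nu$ (from $G_0 = G$) produces
\begin{equs}
K_\mu u_\eps - u_\eps = -\int_0^T (K_\mu - \Id)\, \dot G_\nu f \, d\nu \quad \text{on } [0,T]\,.
\end{equs}
The task then reduces to bounding the right-hand side in $L^\infty_{0;T}$ by $\mu^{H-\eta}$ uniformly in $\mu\in(0,1]$. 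My strategy is to split the integral at $\nu = \mu \wedge T$ and treat the two pieces separately.

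For the small-scale part $\nu \in (0, \mu \wedge T]$, I would use only the trivial boundedness of $K_\mu - \Id$ on $L^\infty$, together with the factorization $\dot G_\nu f = (\dot G_\nu \mcR_\nu) K_\nu f$ (valid since $\mcR_\nu K_\nu = \Id$). Lemma~\ref{lem:dotG} at $p = \infty$ gives $\norm{\dot G_\nu \mcR_\nu}_{\mcL^{\infty,\infty}} \lesssim 1$, while \eqref{eq:Lem45} gives $\norm{K_\nu f}_{L^\infty_{0;T}} \lesssim \nu^{-1+H-\eta}$, so that $\norm{(K_\mu - \Id)\dot G_\nu f}_{L^\infty_{0;T}} \lesssim \nu^{-1+H-\eta}$. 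Integrating yields a contribution of order $(\mu \wedge T)^{H-\eta} \leq \mu^{H-\eta}$, which in particular handles the entire case $\mu \geq T$.

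For $\nu \in [\mu, T]$ (relevant when $\mu < T$), smoothing of $K_\mu - \Id$ at scales larger than $\mu$ is needed, and this is the main point of the argument. Using $\Id = K_\mu \mcR_\mu = K_\mu(1 + \mu\partial_t)^4$ yields the algebraic expansion
\begin{equs}
K_\mu - \Id = -\sum_{k=1}^{4} \binom{4}{k}\mu^k\, K_\mu \partial_t^k\,.
\end{equs}
Composing with $\dot G_\nu \mcR_\nu$ and using that $\mcR_\nu \dot G_\nu$ is a rescaling by $\nu$ of a fixed compactly supported smooth function (as in the proof of Lemma~\ref{lem:dotG}), one has $\norm{\partial_t^k \mcR_\nu \dot G_\nu}_{L^1} \lesssim \nu^{-k}$ for every $k \leq 4$, and therefore, by Young's inequality, $\norm{(K_\mu - \Id)\dot G_\nu \mcR_\nu}_{\mcL^{\infty,\infty}} \lesssim \sum_{k=1}^4 (\mu/\nu)^k \lesssim \mu/\nu$. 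Combined with \eqref{eq:Lem45}, this gives $\norm{(K_\mu - \Id)\dot G_\nu f}_{L^\infty_{0;T}} \lesssim \mu\,\nu^{-2+H-\eta}$, whose integral over $\nu \in [\mu, T]$ is $\lesssim \mu^{H-\eta}$, which concludes. The only non-trivial input is the smoothing bound at scales $\nu > \mu$, and this falls out of the explicit kernel structure of $K_\mu$.
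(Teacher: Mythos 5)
Your proof is correct and follows the same architecture as the paper's: start from the representation \eqref{eq:Phi2}, write $G-G_T = -\int_0^T \dot G_\nu\,\rmd\nu$, split at scale $\mu$, and use \eqref{eq:Lem45} together with the scaling bound of Lemma~\ref{lem:dotG}. There is one genuine deviation worth noting, in the regime $\nu \in [\mu, T]$. The paper invokes the abstract commutator inequality \eqref{eq:comKmuKnu} (giving $\norm{(K_\mu-\Id)K_\nu\psi}_{L^\infty} \lesssim (\mu/\nu)\norm{K_\mu\psi}_{L^\infty}$) and then evaluates \eqref{eq:Lem45} at scale $\mu$, which yields $\mu\int_\mu^T\nu^{-1}\rmd\nu\cdot\mu^{-1+H-\eta}$ and hence a logarithmic loss $\mu^{H-2\eta}$. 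You instead expand $K_\mu-\Id=-\sum_{k=1}^4\binom{4}{k}\mu^k K_\mu\partial_t^k$ directly from $K_\mu\mcR_\mu=\Id$, land the derivatives on $\mcR_\nu\dot G_\nu$ (which is $\mcS_\nu\tilde\chi$ for a fixed smooth compactly supported $\tilde\chi$, so $\norm{\partial_t^k\mcR_\nu\dot G_\nu}_{L^1}\lesssim\nu^{-k}$), and then apply \eqref{eq:Lem45} at scale $\nu$. This gives the integrand $\mu\nu^{-2+H-\eta}$, whose integral is $\lesssim\mu^{H-\eta}$ without a log; it is a self-contained re-derivation of essentially the content of \eqref{eq:comKmuKnu}, and it is marginally sharper. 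Your observation that $K_\mu$ preserves constants, so that ${\tt u}$ drops out of $(K_\mu-\Id)u_\eps$ exactly, is also a slight simplification over the paper's separate bound on $\norm{(K_\mu-\Id)*{\tt u}}_{L^\infty_{0;T}}$. Both routes work; yours is a touch more explicit about where the $\mu/\nu$ smoothing factor comes from, at the cost of not reusing the lemma already on record.
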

\begin{proof}
We first consider the more difficult case of $\mu\leqslant T$.
Starting from \eqref{eq:Phi2}, we have
    \begin{equs} 
 {}&\norm{Q_\mu u_\eps-u_\eps}_{L^\infty_{0;T}}\\
 {}&\lesssim   \norm{(Q_\mu-\Id)*{\tt u}}_{L^\infty_{0;T}}+\int_0^{T}   \norm{(Q_\mu-\Id)\dot G_\nu\big(F_{\eps,T}[0]+R_{\eps,T}\big)}_{L^\infty_{0;T}}\rmd\nu  \\  
 {}&\lesssim   \norm{(Q_\mu-\Id)*{\tt u}}_{L^\infty_{0;T}}+\int_0^{\mu}  \norm{\dot G_\nu\big(F_{\eps,T}[0]+R_{\eps,T}\big)}_{L^\infty_{0;T}}\rmd\nu\\
 {}& \qquad +\int_\mu^{T}   \norm{(Q_\mu-\Id)Q_\nu\mcP_\nu\dot G_\nu\big(F_{\eps,T}[0]+R_{\eps,T}\big)}_{L^\infty_{0;T}}\rmd\nu\,.
    \end{equs}
The first term of the last line is bounded by $\mu^{H-\eta}|\ttu|$ since the constant function is certainly $(H-\eta)$-H\"{o}lder continuous. 
For the second term we use \eqref{eq:Lem45} from Lemma~\ref{lem:Lem45} and \eqref{eq:heat1} to obtain
\begin{equs}
\int_0^{\mu} &  \norm{\dot G_\nu\big(F_{\eps,T}[0]+R_{\eps,T}\big)}_{L^\infty_{0;T}}\rmd\nu \\  
    \lesssim &
      \int_0^{\mu} \norm{\mcR_\nu\dot G_\nu}_{\mcL^{\infty,\infty}_{T}}\norm{K_\nu\big(F_{\eps,T}[0]+R_{\eps,T}\big)}_{L^\infty_{0;T}}\rmd\nu \lesssim C_F \mcG_\ttu(1)
      \int_0^\mu \nu^{-1+H-\eta}\rmd\nu\lesssim C_F \mcG_\ttu(1)\mu^{H-\eta}\,.
\end{equs}
For the remaining term we use \eqref{eq:comKmuKnu}:
\begin{equs}
    \int_\mu^{T}   \norm{(Q_\mu-\Id)Q_\nu\mcP_\nu\dot G_\nu\big(F_{\eps,T}[0]+R_{\eps,T}\big)}_{L^\infty_{0;T}}\rmd\nu&\lesssim\mu\int_\mu^{T}\nu^{-1} \norm{\mcP_\nu\mcR_\nu\dot G_\nu K_\nu\big(F_{\eps,T}[0]+R_{\eps,T}\big)}_{L^\infty_{0;T}}\rmd\nu\\
    &\lesssim C_F \mcG_\ttu(1)\mu\int_\mu^{T}\nu^{-2+H-\eta}\rmd\nu  \lesssim C_F \mcG_\ttu(1)\mu^{H-\eta}\,,
\end{equs}
where on the second line we used \eqref{eq:Lem45} and \eqref{eq:heat1}.

The case $\mu\geqslant T$ is easier since we do not have to take advantage of the presence of the operator $Q_\mu-1$. Proceeding exactly as for the second term of the case $\mu\leqslant T$ yields
\begin{equs}
    \norm{Q_\mu u_\eps-u_\eps}_{L^\infty_{0;T}}&\lesssim C_F \mcG_\ttu(1) T^{H-\eta}\lesssim C_F \mcG_\ttu(1)\mu^{H-\eta}\,.
\end{equs}
\end{proof}

\begin{proof}[Theorem~\ref{thm:main}]

We first note that Lemma~\ref{lem:solutionconstruction} gives uniform in $\eps$ control over $u_\eps$ in $\mcC^{H-}([0,\tilde T])$.    
    
To show convergence of $u_\eps$ as $\eps\downarrow 0$, the key step is to show convergence in probability of the stationary force coefficients $(\xi^\tau_{\eps,\mu})^{\tau\in\mcT}_{\mu\in(0,1]}$ (and thus of the non-stationary force coefficients built from them) to a limiting $(\xi^\tau_{0,\mu})^{\tau\in\mcT}_{\mu\in(0,1]}$ in the topology given by \eqref{eq:boundXi}. This stems from the second estimate in \eqref{eq:boundXi}.

This $\eps \downarrow 0$ convergence in probability of force coefficients is passed on to the force ansatz $F_{\eps,\bigcdot}$ given in \eqref{eq:defFmu} along with $I_{\eps,\bigcdot}$, the error term generated in by our force ansatz which is given in \eqref{eq:expL} - the topology for this convergence is with respect to the estimates of Corollary~\ref{coro:1}. 

Proposition~\ref{eq:prop1} then solves a fixed point problem for the remainder field $R_{\eps,\bigcdot}$ using the data  $\tF_{\eps,\bigcdot}$ and $\tilde I_{\eps,\bigcdot}$ (built from $F_{\eps,\bigcdot}$ and $I_{\eps,\bigcdot}$). 
Thanks to continuity of the fixed point problem with respect to this data, we obtain that $R_{\eps,\bigcdot}$ converges to $R_{0,\bigcdot}$ in $L^\infty([0,\tilde T])$ as $\eps\downarrow0$.

Finally, using the convergence of $F_{\eps,\bigcdot}$ and $R_{\eps,\bigcdot}$ in Lemma~\ref{lem:solutionconstruction} gives convergence as $\eps \downarrow 0$ of $u_\eps$ to a limiting $u$ in $\mcC^{H-}([0,\tilde T])$. 
    
Finally, regarding the regularity of $V$, the bounds in Corollary~\ref{coro:1} and the definition of $\mcG$ in Definition~\ref{def:mcG} show that $V \in C^9$ suffices.    
\end{proof}

\subsection{Proof of Theorem~\ref{thm:thm2}}
This section is devoted to the proof of Theorem~\ref{thm:thm2}. We therefore restrict our attention to the case where \eqref{eq:assumpV} holds, that is $V$ and its first nine derivatives are globally bounded on $\R^\cbn$.
The key ingredient of the proof of Theorem~\ref{thm:thm2} is the fact that we have control on the time of existence that is uniform in the initial condition $\ttu$.
\begin{lemma}\label{lem:2_35}
  There exist constants $C_{\mathfrak{t}}, C_{1}>0$ and $q \geqslant 1$ such that the following holds. 
  
  For every initial condition $\ttu\in\R^\cbn$ and under assumption that \eqref{eq:assumpV} holds, the statement of Lemma~\ref{lem:solutionconstruction} holds with the existence time $\tilde{T}_{\Xi}$ replaced by  $\tilde{T}_{\hat\Xi}$, with $\tilde{T}_{\hat\Xi} = C_{\mathfrak{t}}\hat\Xi^{-p}$ for   
\begin{equ}\hat\Xi =
\max \Big(
\nnnorm{\omega\xi}_{4}, C_R,
\|V\|_{C^9(\R^\cbn)} \Big)\,.
\end{equ}
In particular, $\hat{\Xi}$ is independent of $\ttu$.

Moreover, on the event $\{T\leqslant \tilde T_{\hat\Xi}\}$, 
    \begin{equs}
       \sup_{t\in[0,T]} |{u_\eps}(t)| \leqslant |\ttu|+C_1T^{-q}\,.
    \end{equs}
Finally, in view of the $\eps \downarrow 0$ convergence in probability of $u_\eps$ to a limit $u$ stated in Theorem~\ref{thm:main}, on the event $\{T\leqslant \tilde T_{\hat\Xi}\}$, one has
    \begin{equs}\label{eq:boundsol}
  \sup_{t\in[0,T]} |{u}(t)|\leqslant    \liminf_{\eps\downarrow0} \sup_{t\in[0,T]} |{u_{\eps}}(t)| \leqslant |u(0)|+C_1T^{-q}\,.
    \end{equs} 
\end{lemma}
\begin{proof}
We crucially use the observation made in the second equality of \eqref{eq:ICvanishes}: since the initial condition is a constant in time, it vanishes when applying the chain rule. This is the step that breaks when trying to apply this argument to singular SPDEs such as gPAM -- see Remark~\ref{rem:compawithgPAM}.

Therefore, the initial condition only appears in the upper bounds on the force inside $\rmd^kV$, and under the assumption \eqref{eq:assumpV}, $\mcG_\ttu(t)$ can be bounded uniformly in $\ttu$ by $\big(1+t^{36}\big)\norm{V}_{C^9(\R^\cbn)}^6$, which establishes the first part of the lemma. 

The estimate \eqref{eq:boundsol} is an immediate consequence of Lemma~\ref{lem:solutionconstruction} and of
    \begin{equs}
        C_F\mcG_\ttu(1)\lesssim K\nnnorm{\omega\xi}_4^{r}\norm{V}_{C^9(\R^\cbn)}^6\lesssim \hat\Xi^{pq}=(C/\tilde T_{\hat\Xi})^{q}\lesssim T^{-q}\,.
    \end{equs}     
\end{proof}
With this observation in hand, we can now give a precise formulation of Theorem~\ref{thm:thm2}.
\begin{lemma}\label{lem:Thm21precise}
There exist constants $C,c > 0$ such that the following holds. 

Fix any $\mcb N\geqslant C_R\vee
\|V\|_{C^9(\R^\cbn)} $ and initial condition $\ttu \in \R^\cbn$. 
Then, on the event $\{\nnorm{\omega\xi}_4\leqslant\mcb N\}$, the solution $u$ to \eqref{eq:eq0} is global, and one has the bound 
\begin{equs}
     \sup_{t\in[0,1]}  |u(t)|\leqslant |{\tt u}|+C\mcb N^{c}\,.
\end{equs}
\end{lemma}
 \begin{proof}
In view of Lemma~\ref{lem:2_35}, on the event $\{\nnorm{\omega\xi}_4\leqslant\mcb N\}$, the solution to \eqref{eq:eq0} exists up to the time $T_{\mcb N}\eqdef C_{\mathfrak{t}}\mcN^{-p}$ for every initial condition, where $C_{\mathfrak{t}}$ is as in Lemma~\ref{lem:2_35}. We can therefore split the interval $[0,1]$ into $T_{\mcb N}^{-1}$ intervals of length $T_{\mcb N}$. Restarting from the previous final value and iteratively solving the equation on the next interval of length $T_{\mcb N}$ yields, using \eqref{eq:boundsol}
\begin{equs}
  \sup_{t\in[0,1]}  |u(t)|\leqslant |u(0)|+C_1 T_{\mcb N}^{-q-1}\,.
\end{equs}

 \end{proof}

\begin{remark}\label{rem:compawithgPAM}
    The global well-posedness result stated in Theorem~\ref{thm:thm2} stems from the fact that assuming~\eqref{eq:assumpV}, we were able to derive some bounds on the effective force that are uniform in the initial condition -- see Corollary~\ref{coro:1}. This is made possible by the conjunction of two facts: Firstly, in our ansatz for the effective force, the initial condition is located inside $V$ or one of its derivative. Secondly, while in order to control the solution it is necessary to control some time derivatives of $V^{(i)}\big(\theta(t)+\ttu\big)$, the chain rule acts nicely on on the initial condition, in the sense that it keeps it inside some derivative of $V$: 
    \begin{equs}
        \d_t V^{(i)}\big(\theta(t)+\ttu\big) =\d_t\theta(t)V^{(i+1)}\big(\theta(t)+\ttu\big)\,.
    \end{equs}
A natural question is to ask whether the same kind of argument can be used to study the generalised parabolic Anderson model 
\begin{equs}\label{eq:gPAM}
    (\d_t-\Delta)\psi=V(\psi)\xi\,,
\end{equs}
in the case where $V$ and its derivatives are uniformly bounded. Global existence of solutions to \eqref{eq:gPAM} was proved in \cite{chandra2024prioribounds2dgeneralised,shen2024globalwellposedness2dgeneralized} in the case where the regularity of the noise is in a small window below $-1$, but relying on techniques that are not uniform in the distance to criticality.

It is therefore interesting to ask if the flow approach can help give a proof holding in the full subcritical regime. It turns out that in this case the situation is much more complicated. Indeed, we consider \eqref{eq:gPAM} in \cite{gKPZFlow}, where we showed that in the ansatz for the effective force used to study \eqref{eq:gPAM}, the initial condition $\psi_0$ is still present only inside $V$ and its derivative, but that this dependence is of the form 
\begin{equs}    V^{(i)}\big(\theta(t,x)+e^{t\Delta}\psi_0(x)\big)\,.
\end{equs}
Here, when acting with $\d_t$, $\Delta$, or even $\d_t-\Delta$ on the above expression, we will get contributions coming from $\psi_0$, which rules out the possibility of extending the argument of the proof of Theorem~\ref{thm:thm2} to the case of \eqref{eq:gPAM}.
\end{remark}

\section{Probabilistic analysis}\label{sec:proba}
This section is devoted to the proof of Theorem~\ref{thm:sto}. 
The main step is an inductive estimate on the cumulants of the force coefficients which is stated in Lemma~\ref{lem:cumul}.

We do this by applying one of the major innovations of \cite{Duch21}, which is to use the Polchinski flow, in the form of \eqref{eq:flow2}, to generate a hierarchy of equations for cumulants of force coefficients. 
\auth{comment 1: Remark 3.1 was removed}

We first introduce notation for working with cumulants. 
\begin{definition}
Fix a finite subset $J\subset \N_{\geqslant1}$. 
We denote by $\mcP(J)$ the set of all partitions of $J$. For $\rho\in \mcP(J)$, we write $|\rho|$ for the number of elements of $\rho$. We denote the elements of $\rho$ by $(\rho_q)_{q\in[|\rho|]}$: they are non-empty subsets of $J$, non-overlapping, and their union is $J$.
We adopt the convention of ordering these subsets by the order of their minima, writing $\rho=(\rho_q)_{q\in[|\rho|]}$ where $k<p\Rightarrow\min\rho_k<\min\rho_p$. 

Moreover, for two finite non-empty subsets $I,J \subset \N_{\geqslant1}$ we define 
\begin{equs}
    \mcQ(I,J)\eqdef\Big\{(\pi,\rho):\rho\in\mcP(J)\;\text{and}\;\pi:I\rightarrow[|\rho|]\Big\}\,,
\end{equs}
and we adopt the convention that if $I=\emptyset$, then we set $  \mcQ(I,J)=\mcP(J)$.

Finally, given $(\pi,\rho)\in\mcQ(I,J)$, for every $q\in[|\rho|]$, we use the shorthand $\pi_q\eqdef \pi^{-1}(q)$. The $\pi_q$'s are therefore some possibly empty subsets of $I$, non-overlapping, and whose union is $I$.
\end{definition}

We let $\kappa_{|I|}\big((X_i)_{i\in I}\big)$ denote the joint cumulant of the family of random variables $(X_i)_{i\in I}$. 
In the next lemma we state a standard identity for joint cumulants that we will use in what follows. 
\begin{lemma}
Let $I, J \subset \N_{\geqslant1}$ with $|J| > 0$ and $|I| \geqslant 0$, and suppose we are given two families of random variables $(X_i)_{i\in I}$ and $(\hat{X}_j)_{j\in J}$.
Then we have
    \begin{equs}   \label{eq:relcum0} \kappa_{|I|+1}\big((X_i)_{i\in I},\prod_{j\in J} \hat{X}_j\big)=\sum_{(\pi,\rho)\in\mcQ(I,J)}\prod_{q=1}^{|\rho|}\kappa_{|\pi_q|+|\rho_q|}\big((X_i)_{i\in \pi_q},(\hat{X}_j)_{j\in \rho_q}\big)\,.
\end{equs}
\end{lemma}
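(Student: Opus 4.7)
The plan is to expand the left-hand side by the cumulant-to-moment inversion formula, re-expand each surviving moment by the moment-to-cumulant formula, and regroup the result according to a finer partition of $I \sqcup J$; a combinatorial cancellation on the partition lattice then eliminates all unwanted terms.

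Concretely, write $Y = \prod_{j \in J} \hat X_j$ and let $\star$ denote the slot that $Y$ occupies inside $\kappa_{|I|+1}((X_i)_{i\in I}, Y)$. Cumulant-to-moment inversion applied to the LHS gives
\begin{equ}
\kappa_{|I|+1}((X_i)_{i\in I}, Y) = \sum_{\pi \in \mcP(I \sqcup \{\star\})} (-1)^{|\pi|-1}(|\pi|-1)!\,\prod_{B \in \pi} \E\Big[\prod_{i \in B \cap I} X_i \cdot Y^{\mathbf{1}_{\star \in B}}\Big]\,.
\end{equ}
For the unique block $B_\star \in \pi$ containing $\star$, I would expand $\E[\prod_{i \in B_\star \cap I} X_i \cdot \prod_{j \in J} \hat X_j]$ by the moment-to-cumulant formula on $(B_\star \cap I) \sqcup J$, and for each other block $B \in \pi$ expand $\E[\prod_{i \in B} X_i]$ on $B$. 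The combined result is a sum indexed by partitions $\tilde\rho \in \mcP(I \sqcup J)$, subject to the constraint that every block of $\tilde\rho$ meeting $J$ must lie inside $B_\star$.

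Next, I would interchange the order of summation, putting $\tilde\rho$ outside and admissible coarsenings $\pi$ inside. Such a $\pi$ is uniquely specified by (i) merging all $J$-touching blocks of $\tilde\rho$ into the $\star$-block of $\pi$, (ii) choosing a subset $A$ of the purely $I$-valued blocks of $\tilde\rho$ to also absorb into that $\star$-block, and (iii) a further partition $\pi'$ of the remaining pure-$I$ blocks. Writing $k$ for the number of pure-$I$ blocks of $\tilde\rho$ and noting that $|\pi|-1 = |\pi'|$, the total combinatorial weight collected at $\tilde\rho$ becomes
\begin{equ}
\sum_{A \subset [k]} \sum_{\pi' \in \mcP([k]\setminus A)} (-1)^{|\pi'|}\,|\pi'|! \;=\; \sum_{A \subset [k]} (-1)^{k - |A|} \;=\; (1-1)^k \;=\; \mathbf{1}_{k=0}\,,
\end{equ}
where the first equality uses the classical identity $\sum_{\sigma \in \mcP([m])}(-1)^{|\sigma|}|\sigma|! = (-1)^m$, equivalent to $\sum_k S(m,k)(-1)^k k! = (-1)^m$ for Stirling numbers of the second kind.

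Consequently, the only $\tilde\rho$ that survive are those whose every block meets $J$, and these are in canonical bijection with pairs $(\pi,\rho) \in \mcQ(I,J)$ via $\rho_q = \tilde\rho_q \cap J$ and the rule $\pi(i) = q \Leftrightarrow i \in \tilde\rho_q$. Substituting this bijection into the remaining product of cumulants recovers the right-hand side exactly. The main technical obstacle is the bookkeeping in steps (i)--(iii): once admissible coarsenings are parameterised cleanly by the pair $(A, \pi')$, the binomial cancellation is immediate, and the remainder is routine manipulation on the partition lattice.
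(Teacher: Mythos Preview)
The paper does not prove this lemma; it simply introduces it as ``a standard identity for joint cumulants'' and moves on. Your argument is correct and provides a self-contained proof: the expansion via cumulant-to-moment inversion, the re-expansion of each moment, and the parametrisation of admissible coarsenings by pairs $(A,\pi')$ are all sound, and the Stirling-number identity $\sum_{\sigma\in\mcP([m])}(-1)^{|\sigma|}|\sigma|!=(-1)^m$ is exactly what drives the cancellation down to $\mathbf{1}_{k=0}$. This route is in fact the classical one (the identity is often attributed to Leonov--Shiryaev or Malyshev), so while the paper offers nothing to compare against, your proof is the standard argument carried out cleanly.
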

Note that we will in fact want a vector form of \eqref{eq:relcum0}. 
Suppose we are given $I,J$ as above, families of finite dimensional inner product spaces $(E_{i})_{i \in I}$ and $(\bar{E}_{j})_{j \in J}$ and two families of random vectors  $(X_i)_{i\in I}$ and $(\hat{X}_j)_{j\in J}$ where for each $i \in I$, the $X_{i}$ is a random element of $E_{i}$ and for each $j \in J$ the $\hat{X}_{j}$ is a random element of $\bar{E}_{j}$. 
Then we first recall that, by multi-linearity of cumulants, the joint cumulant $\kappa_{|I|}\big( (X_{i})_{i \in I} \big)$ can be viewed as an element of $\bigotimes_{i \in I} E_{i}$. 

In particular, \eqref{eq:relcum0} can be written as 
\begin{equs}\label{eq:relcum} 
\kappa_{|I|+1}\big((X_i)_{i\in I},\bigotimes_{j\in J} \hat{X}_j\big)
=
\sum_{(\pi,\rho)\in\mcQ(I,J)}\bigotimes_{q=1}^{|\rho|}\kappa_{|\pi_q|+|\rho_q|}\big((X_i)_{i\in \pi_q},(\hat{X}_j)_{j\in \rho_q}\big)\,.
\end{equs}

We now introduce the indexing set for our cumulants.
Throughout this subsection we fix an arbitrary $P \in \N_{\geqslant 1}$, this corresponds to the $P$ taken in Theorem~\ref{thm:sto}. 

\begin{definition}\label{def:forests}
Define $\mcT_{1,2}\eqdef\mcT\times\{0,1\}\times\{0,1,2\}$, and denote by $\tilde\tau=(\tau,r,u)$ an element of $\mcT_{1,2}$. 
We introduce a set of (ordered, unlabelled) forests
\begin{equs}
\text{$   \widehat\mcT$}\eqdef
\Big\{\stau=(\tilde {\tau}_1,\dots,\tilde{\tau}_{p(\stau)})
=
\Big( (\tau_1,r_1,u_1), \dots, (\tau_{p(\stau)},r_{p(\stau)},u_{p(\stau)}) 
\Big)\in\text{$\mcT$}_1^{p(\stau)}:p(\stau)\in[P]
\Big\}\,. 
\end{equs}

For two forests $\stau,\ssigma\in\widehat\mcT$ and a tuple $I=(i_1,\dots,i_{|I|})$ consisting of distinct elements of $[p(\stau)]$, we define the forest $\stau_{I}$ by
\begin{equs}    \stau_I\eqdef\big(\tilde\tau_{i_1},\dots,\tilde\tau_{i_{|I|}}\big)\,,
\end{equs}
and we define the forest  $\stau\sqcup\ssigma$, the concatenation of $\stau$ and $\ssigma$, by
\begin{equs}    \stau\sqcup\ssigma\eqdef(\tilde\tau_1,\dots,\tilde\tau_{p(\stau)},\tilde\sigma_1,\dots,\tilde\sigma_{p(\ssigma)})\,.
\end{equs}
Fix $\stau=(\tilde\tau_1,\dots,\tilde\tau_{p(\stau)})\in\widehat\mcT$ and
suppose that for every $i\in[p(\stau)]$, we are given tuples of times $(t_{i},s^{\tau_i})\in(-\infty,1]\times\R^{\mfs(\tau_i)}$.
We then write
\begin{equs}
    t_{\stau}\eqdef (t_{\tau_1},\dots,t_{\tau_{p(\stau)}})  \in (-\infty,1]^{p(\stau)} \,,\;\text{and}\;s^{\stau}\eqdef(s^{\tau_1},\dots,s^{\tau_{p(\stau)}}) \in \R^{\mfs(\stau)} \,,
\end{equs}
where $\mfs(\stau)\eqdef\sum_{i\in[p(\stau)]}\text{$\mfs$}(\tau_i)$ is the \textit{size} of $\stau$. We also write $\mfl(\stau)\eqdef p(\stau)+\mfs(\stau)$. 

Finally, we define the \textit{scaling} of $\stau$ by
\begin{equs}    |\stau|\eqdef\big(p(\stau)-1\big)+\sum_{i=1}^{p(\stau)}|\tau_i|\,,\label{eq:defScalingCum}
\end{equs}
the \textit{order} of $\stau$ by $\text{$\mfo$}(\stau)\eqdef \sum_{i\in[p(\stau)]}\text{$\mfo$}(\tau_i) $, and set $r(\stau)\eqdef\sum_{i\in[p(\stau)]}r_i$ and $u(\stau)\eqdef\sum_{i\in[p(\stau)]}u_i$. 
\end{definition}
The individual trees $\tau_i$ in $\stau$ keep track of, with multiplicity, the particular force coefficients appearing in the cumulant. 
The indices $r_{i}$ and $u_i$ determine whether the corresponding force coefficient comes with a scale derivative in respectively $\partial_{\mu}$ and $\d_\eps$.

\begin{definition}
We introduce function spaces for our $\widehat{\CT}$-indexed cumulants. 
Analogously to how we worked with trees in Section~\ref{subsec:tree_coord}, it will be convenient to define operations at the level of labelled forests but have the cumulants themselves indexed by (unlabelled) forests. 

Given  $\stau = \big( (\tau_{i},r_{i},u_i) \big)_{i=1}^{p(\stau)} \in \text{$   \widehat\mcT$}$, a labelling $\underline{\stau}$ of $\stau$  is a tuple
$\big( (\underline{\tau}_{i},r_{i},u_i) \big)_{i=1}^{p(\stau)}$ where, for any $i \in [p(\stau)]$, we have $\underline{\tau}_{i} \in \tau_{i}$ (that is, $\underline{\tau}_{i}$ is a labelling of the tree $\tau_i$) and for any distinct $i,j \in [p(\stau)]$, 
\begin{equs}\label{eq:restrictionforest}
  N(\btau_i)\cap N(\btau_j)=\emptyset
\end{equs}
We call $\underline{\stau}$ a labelled forest, and also set 
\begin{equs}
 N(\underline\stau)\eqdef\bigcup_{i\in[p(\stau)]}N(\btau_i)\,.
\end{equs}

Note that every forest $\stau$ has at least one labelling, and we can also view elements of ${\widehat{\CT}}$ as sets of all the labelled forests $\underline{\stau} \in \stau$ that label them.

Given a labelled forest $\underline{\stau}$ as above, we define the functional space 
\begin{equs} 
    \text{$\cbC$}\text{$[\underline{\stau}]$}
\eqdef
\bigotimes_{i \in p(\stau)}
\text{$\cbC[\underline{\tau}_i]$}\,.
\end{equs}
Note that, thanks to \eqref{eq:restrictionforest}, one can really view elements of $\cbC[\underline{\stau}]$ are really functions of time variables $\big(t_{v}: v \in N(\underline\stau) \big)$. 

Analogously to what we did for labelled vs unlabelled trees, we define for each unlabelled forest $\stau \in \widehat{\CT}$ a corresponding spaces  $\cbC[{\stau}]$ defined using relabelling-invariant sections taking value in $\cbC[{\stau}]$

\end{definition}

\begin{definition}
For $N\geqslant1$ and $\mu \in (0,1]$ we write, for any  $W^{\stau} \in \cbC[\stau]$, 
\begin{equs}
K_{N,\mu}^{\otimes\text{\scriptsize{$\mfl$}} (\text{\scriptsize{$\stau$}})}    &W^{\stau}( t_\stau,s^\stau)\eqdef (K_{N,\mu}\otimes\dots\otimes K_{N,\mu})*W^{\stau}\text{$( t_\stau,s^\stau)$}
\end{equs}
for the convolution of $\text{$\Kappa$}_{\eps,\mu}^{\stau}$ with $K_{N,\mu}$ at the level of all its $\mfl(\stau)$ arguments $t_\stau$ and $s^\stau$.

We also endow $\text{$\cbC$}[\text{{$\stau$}}]$ with the norm 
\begin{equs}   
\text{$\nnorm{W^{\text{\scriptsize{$\stau$}}}}_{N,\mu}$}&\eqdef\|\text{$K_{N,\mu}^{\otimes \text{\scriptsize{$\mfl$}}(\text{\scriptsize{$\stau$}})}$}W^{\text{\scriptsize{$\stau$}}}\|_{L^\infty_{t_{\tau_1}}L^1_{t_{\tau_2}}\cdots L^1_{t_{\tau_{p(\text{\tiny{$\stau$}})}}}((-\infty,1]^{p(\text{\tiny{$\stau$}})})
L^1_{s^{\text{\tiny{$\stau$}}}}(\R^{\mfs(\text{\tiny{$\stau$}})})}\\
&\equiv \sup_{t_{\tau_1}\leqslant1}
\int_{(-\infty,1]^{p(\text{\tiny{$\stau$}})-1}\times\R^{\text{\tiny{$\mfs$}}(\text{\tiny{$\stau$}})}}
| K_{N,\mu}^{\otimes \text{\scriptsize{$\mfl$}}(\text{\scriptsize{$\stau$}})}    \Kappa_{\eps,\mu}^{\text{\scriptsize{$\stau$}}}( t_{\text{\scriptsize{$\stau$}}},s^{\text{\scriptsize{$\stau$}}})|\text{$\rmd$} s^{\text{\scriptsize{$\stau$}}}\rmd t_{\tau_2}\cdots\rmd t_{\tau_{{p(\stau)}}}
\,.
\end{equs}
We write $\widecheck{\cbC}_{\mu}[\stau]$ for the closure of $\widecheck{\cbC}[\stau]$ with respect to $\nnorm{\bigcdot}_{N,\mu}$, again dropping the scale $\mu$ from notation when it is clear from context. 
\end{definition}

We now define the joint cumulant associated to $\text{$\stau$}\in\widehat\mcT$. 
\begin{definition}\label{def:joint_cumulant}
For $\eps,\mu\in(0,1]$, $\text{$\stau$}\in\widehat\mcT$ 
we define $\text{$\Kappa$}_{\eps,\mu}^{\stau} \in \widecheck{\cbC}[\stau]$
by setting, for any $(t_{\stau},s^\stau)\in(-\infty,1]^{p(\stau)}\times\R^{\mfs(\stau)}$, 
\begin{equ}
\Kappa_{\eps,\mu}^{\stau}( t_\stau,s^\stau)
\eqdef
\kappa_{p(\stau)}\Big(\d_\eps^{u_i}\d_\mu^{r_i}\xi_{\eps,\mu}^{\tau_i}(t_{i},s^{\tau_i}):{i\in[p(\stau)]}\Big)\,.
\end{equ} 
\end{definition} 

\begin{definition}
We say that $\stau \in \text{$   \widehat\mcT$}$ is \textit{relevant} if $|\stau|\leqslant0$, and that it is \textit{irrelevant} if $|\stau| > 0$. 
\end{definition}
The terms ``relevant'' and ``irrelevant'' are from quantum field theory -  relevant cumulants will have boundary conditions fixed at large scales and propagated backwards, while irrelevant cumulants will have (vanishing) boundary conditions fixed at small scales that are propagated to larger scales. 
\begin{definition}
   In the same way \eqref{eq:flow2} is simply a shorthand notation for \eqref{eq:flowLabelled} justified by the fact that \eqref{eq:flow2} does not depend on the choice of labelling, while in Definition~\ref{def:forests} we expressed the forests in terms of unlabelled trees, in order to write the flow equation for cumulants, we will actually use labelled trees, and give an analogue to \eqref{eq:flowLabelled}.

   Therefore, for $\underline{\stau}\in\stau\in\widehat \CT$, we denote by $\Kappa_\epmu^{\underline{\stau}}(t_{\underline{\stau}},s_{N(\underline{\stau})})$ the cumulant of the force coefficients $\big(\d_\mu^{r_i}\xi_{\eps,\mu}^{\btau_i}(t_{i},s_{N(\btau_i)}):{i\in[p(\stau)]}\big)$ indexed by the labelled trees composing $\underline{\stau}$.    
\end{definition}

We now present a hierarchy of equations for cumulants indexed by $\widehat\mcT$. 
This hierarchy is generated as follows: 
\begin{itemize}
\item For a given $\stau \in \widehat\mcT$ with $r(\stau) > 0$, we look at the first $\tau_{k}$ appearing in $\stau$ with $r_{k} = 1$ - which corresponds to a term $\partial_{\mu}\xi^{\tau_k}_{\epmu}$ in the cumulant - and replace this term with the corresponding right hand side of \eqref{eq:flow2}. 
This gives an expression like \eqref{eq:relcum} with $|J|=2$, this $J$-indexed term coming from the  quadratic grafting term from the right hand side of \eqref{eq:flow2}.  

\item For an irrelevant $\stau$ with $r(\stau) = 0$, we write $\kappa_{\epmu}^{\stau} = \int_{0}^{\mu} \partial_{\nu} \kappa_{\epnu}^{\stau}\rmd\nu$, rewrite $\partial_{\nu} \kappa_{\epnu}^{\stau}$ using Leibniz rule as a sum of cumulants each with a single derivative, and then use the first bullet point to rewrite these derivative terms. 

\item We will treat relevant  $\stau$ by hand, see Section~\ref{sec:3_2}.
\end{itemize}

In the first bullet point above, inserting \eqref{eq:flow2} will correspond to choosing a tree in an ordered forest, rewriting the corresponding force coefficient as grafted-product of two force coefficients, and then using \eqref{eq:relcum} to rewrite the result in terms of cumulants indexed by $\widehat{\mcT}$. 
The following definition introduces notation for describing this. 
 
\begin{definition}
For any $\stau \in\text{$\widehat{\mcT}$}$ such that $r(\stau)\geqslant1$, we set $k\equiv k(\stau)\eqdef\min\{i\in[p(\stau)]:r_i=1\}$. Moreover, to have notation for writing Leibniz rule for scale derivatives of cumulants, for $\stau = \big( (\tau_{i},0,u_i) \big)_{i=1}^{p(\stau)} \in \text{$\widehat{\mcT}$} $ (note that $r(\stau) = 0$) and $1 \leqslant k \leqslant p(\stau)$, we define \textcolor{white}{$\mathbf d$}
\begin{equs}
    \mathbf{d}_{k} \stau \eqdef \Big( (\tau_{i}, \mathbf{1}\{i = k\},u_i) : 1 \leqslant i \leqslant p(\stau) \Big) \in \widehat{\mcT}\,.
\end{equs}
Note that is holds $k(\mathbf{d}_k\stau)=k$.

Fix $\stau \in\widehat\mcT$ such that $r(\stau)\geqslant1$, and recall the definition of $\Ind(\tau_k)$ given in Definition~\ref{def:defBmu}. Moreover, fix $v_1,v_2\in\{0,1,2\}$ such that $v_1+v_2=u_k$. We combine $v_1,v_2$ and $(\sigma_1,\sigma_2)\in\Ind(\tau_k)$ in the forest $\ssigma = \big((\sigma_1,v_1),(\sigma_2,v_2) \big)$. 

We then define 
\begin{equs}
\text{$ {\mathrm{Ind}}$}(\text{${\stau}$})\eqdef
\Bigg\{ 
\big(\ssigma, \pi,\rho \big):
\begin{array}{c}
(\sigma_1,\sigma_2) \in \Ind(\tau_k)\,,\\
 v_1,v_2\in\{0,1,2\}:\,v_1+v_2=u_k\,,
\\
(\pi,\rho) \in \mcQ\big([p(\text{${\stau}$})]\setminus\{k\},[2]\big)
\end{array}
\Bigg\}\;.
\end{equs}
The set $\mathrm{Ind}(\stau)$ will appear as an indexing set for a sum in our cumulant equation. 
To motivate this notation, we note that upon fixing $\ssigma$, we are working with a cumulant of the form (suppressing time arguments) 

\[
\kappa_{p(\stau)}
\Big( \big(\xi^{\tau_i}_{\epmu}: i \in [p(\stau)] \setminus \{k\} \big), 
\xi^{\sigma_2} \graft^{\mu} \xi^{\sigma_1}
\Big)
\]
We then generate a sum over $(\pi,\rho)$ using \eqref{eq:relcum}, each term corresponding to a product of joint cumulants indexed by the $|\rho|$ different blocks of  $\rho$. 

We fix $(\ssigma,\pi,\rho)\in\mathrm{Ind}(\stau)$. For each $1 \leqslant q \leqslant |\rho|$, we define $\slambda_{q} \in \text{$\widehat{\mcT}$}$ via the concatenation of ordered forests $\slambda_q \eqdef {\stau}_{\pi_q}\sqcup{\ssigma}_{\rho_q}$, where we set  $\slambda_q=\ssigma_{\rho_q}$ if $p(\stau)=1$. 

Now, in order to define our analogue of the r.h.s. of \eqref{eq:flowLabelled} for the cumulant flow, we fix some representative $\underline\stau\in \stau$ verifying \eqref{eq:restrictionforest}. For $j\in[2]$, we also fix some representatives $\bsigma_j\in\sigma_j$, along with $f\in\Aut(\btau_k)$ and $n\in N(\bsigma_1)$ such that $\btau_k=\bsigma_2\graft_n\bsigma_1$.

Then, following Remark~\ref{rem:2_24}, we introduce the short-hand notations
 \begin{equs}       
    \ell_{\bsigma_1,\bsigma_2,j}\eqdef \begin{cases}  
  0 & \text{for} \;j= 1\,, \\
   \ell_{\bsigma_1,\bsigma_2}& \text{for} \;j= 2\,, \\
\end{cases}
           \quad\text{and}\quad
           t_{\bsigma_i} \equiv t_{\bsigma_i}(t_{\btau_k},w)\eqdef \begin{cases}  
    t_{\btau_k} & \text{for} \;j= 1\,, \\
      w & \text{for} \;j= 2\,. \\
\end{cases}
   \end{equs}
Recall that $f$ was promoted to a map
$\text{$\bff$}:\bigotimes_{j=1}^2\text{$\cbC$}[\bsigma_j+\ell_{\bsigma_1,\bsigma_2,j}]\longrightarrow\cbC[\btau_k]$. We further promote it to a map $$\text{$\bff^k$}:\bigotimes_{q=1}^{|\rho|}  \Big(\big( \bigotimes_{i \in \pi_{q}}{\cbC[\btau_i]}\big) \otimes \big( \bigotimes_{j \in \rho_q} \text{$\cbC$}[\bsigma_j+\ell_{\bsigma_1,\bsigma_2,j}] \big)\Big)\longrightarrow\text{$\cbC$}[\underline\stau]\eqdef\bigotimes_{i\in[p(\stau)]}\text{$\cbC$}[\btau_i]$$ acting as the identity on all factors $\cbC[\btau_i]$ for $i\neq  k$. Finally, following the definition of the map $\text{$\mathbf{M}$}^{\text{\scriptsize{$\bff$}}}_{\btau_1,\btau_2}$ in Remark~\ref{rem:2_24}, we define a map
\begin{equs}  \text{$\mathbf{M}$}^{\text{\scriptsize{$\bff,k$}}}_{\bsigma_1,\bsigma_2}\eqdef \text{$\mathbf{f}^k$} \circ\bigg( \bigotimes_{q=1}^{|\rho|}  \Big(\big( \bigotimes_{i \in \pi_{q}}\Id_{\cbC[\btau_i]}\big) \otimes \big( \bigotimes_{j \in \rho_q} \bell_{\bsigma_1,\bsigma_2,j} \big)\Big)\bigg):\bigotimes_{q=1}^{|\rho|}\text{$\cbC$}[\underline\slambda_q]  \longrightarrow\text{$\cbC$}[\underline\stau]\,.
\end{equs}
Here $\ell_{\bsigma_1,\bsigma_2,j}$ is promoted to a map $\bell_{\bsigma_1,\bsigma_2,j} $ following the set-up of Definition~\ref{def:iso_action}.

With this notation, our analogue of the summand in the last line of \eqref{eq:flowLabelled} for the cumulant flow equation will be given by the operator 
\begin{equs}    
\label{eq:defA} 
\text{A}^{\underline\stau, (\ssigma, \pi,\rho)}_\mu
\Big(&
\big(\Kappa_{\eps,\mu}^{\underline\slambda_q}\big)_{q=1}^{|\rho|}
\Big)
(t_{\underline\stau},s_{N(\underline\stau)})
\\
{}&  
\eqdef\frac{1}{|\text{$\Aut$}(\tau_k)|}  \frac{u_k!}{v_1!v_2!}\sum_{\substack{n\in N(\bsigma_1)\\
\bsigma_{1,2,n}=\btau_k}}
\sum_{ f \in \Aut(\btau_k)}
\int_{-\infty}^1\dot G_\mu( s_{n}-w)   
\text{$\mathbf{M}$}^{\text{\scriptsize{$\bff,k$}}}_{\bsigma_1,\bsigma_2}
\bigotimes_{q=1}^{|\rho|}\Kappa_{\eps,\mu}^{\underline\slambda_q}(t_{\underline\slambda_q},s_{N(\widetilde{\underline\slambda}_q)}) \,\text{$\rmd$} w\,.\textcolor{white}{bllabl}
\end{equs}
Here
\begin{equs}    \widetilde{\underline\slambda}_q\eqdef\stau_{\pi_q}\sqcup\widetilde{\underline\ssigma}_{\rho_q} \quad\text{where}\quad\widetilde{\underline\ssigma} \eqdef\big((\bsigma_j+\ell_{\bsigma_1,\bsigma_2,j},0):j\in[2]\big)\,.
\end{equs}
Finally, one can check as before that our definition of the maps $\A_\mu^{\underline\stau,(\ssigma,\pi,\rho)}$ on labelled spaces allows us to define a corresponding operator $\A_\mu^{\stau,(\ssigma,\pi,\rho)}$ in the unlablelled setting, namely by choosing some representatives and acting with $\A_\mu^{\underline\stau,(\ssigma,\pi,\rho)}$, in the same way the definition of the notation \eqref{eq:flow2} is \eqref{eq:flowLabelled}.

\end{definition}

With the above definitions, and based on the bullet points above the definition, we immediately have the following lemma. 
\begin{lemma}\label{lem:cumulant_flow}
Suppose that the force coefficients satisfy the flow equation \eqref{eq:flow2}, we then have the following equations for their joint cumulants as defined in Definition~\ref{def:joint_cumulant}. 

\begin{enumerate}
\item For $\text{$\stau$}\in\widehat\mcT$ such that $r(\text{$\stau$})\geqslant1$, and recall that $k
=
\min\{i\in[p(\stau)]:r_i=1\}$. We then have
\begin{equs}\label{eq:flowcumuldmu}
\text{$\Kappa$}_{\eps,\mu}^{\text{\scriptsize{$\stau$}}}=&-  \sum_{(\text{\scriptsize{$\ssigma$}},\pi,\rho)\in\text{\scriptsize{$\mathrm{Ind}$}}(\text{\scriptsize{$\stau$}})     }  
\mathrm{A}^{\text{\scriptsize{$\stau$}}, (\text{\scriptsize{$\ssigma$}},\pi,\rho)}_\mu
\Big(\big(\Kappa_{\eps,\mu}^{\text{\scriptsize{$\slambda$}}_q}\big)_{q=1}^{|\rho|}\Big)
       \,.
\end{equs}
\item Next, we turn to $\text{$\stau$}\in\text{$\widehat\mcT$}$ such that $r(\stau)=0$, and $|\stau|>0$. 
We then have 
\begin{equs}\label{eq:floweqcumulIrrel}
\text{$\Kappa$}_{\eps,\mu}^{\stau}=&-  
\sum_{k = 1}^{p(\stau)} 
\sum_{(\text{\scriptsize{$\ssigma$}},\pi,\rho)\in\text{\scriptsize{$\mathrm{Ind}$}}(\text{\scriptsize{$\mathbf{d}_{k}$}}\text{\scriptsize{$\stau$}})     } 
\int_0^\mu
\mathrm{A}^{\text{\scriptsize{$\mathbf{d}$}}_{k}\stau, (\ssigma,\pi,\rho)}_\nu \Big(\big(\Kappa_{\eps,\nu}^{\text{\scriptsize{$\slambda_q$}}}\big)_{q=1}^{|\rho|}
\Big)\rmd\nu
       \,.
\end{equs}
\end{enumerate}
\end{lemma}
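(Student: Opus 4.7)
The plan is to derive both identities by combining the Polchinski flow \eqref{eq:flow2} for individual force coefficients with the standard cumulant-of-a-product identity \eqref{eq:relcum}. Case (2) will reduce to case (1) by integration in $\mu$, once the boundary condition $\Kappa^{\stau}_{\eps,0}=0$ is verified.

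For (1), I fix $k$ as in the statement, observe that $\partial_\mu$ and $\partial_\eps$ commute, and use \eqref{eq:flow2} to rewrite $\partial_\mu \xi^{\tau_k}_\epmu$ as the grafted sum $-\sum_{(\sigma_1,\sigma_2)\in\Ind(\tau_k)}\big(\xi^{\sigma_2}_\epmu \graft^\mu \xi^{\sigma_1}_\epmu\big)^{\tau_k}$. Expanded at the labelled level \eqref{eq:defB}, each such summand is a convolution of $\dot G_\mu$ against a pointwise product of two force coefficients; applying $\partial_\eps^{u_k}$ by Leibniz produces the weighted sum $\sum_{v_1+v_2=u_k}\frac{u_k!}{v_1!v_2!}(\partial_\eps^{v_1}\xi^{\sigma_1}_\epmu)(\partial_\eps^{v_2}\xi^{\sigma_2}_\epmu)$. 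Multilinearity of $\kappa$ then pulls the grafting integral, the sum over $\Ind(\tau_k)$ and the Leibniz sum outside the joint cumulant, leaving a cumulant of $p(\stau)-1$ ordinary factors and one product of two factors. To this, identity \eqref{eq:relcum} applies with $I=[p(\stau)]\setminus\{k\}$ and $J=[2]$, producing a sum over $(\pi,\rho)\in\mcQ(I,[2])$ of products of cumulants whose argument lists are precisely $\stau_{\pi_q}\sqcup\ssigma_{\rho_q}=\slambda_q$. Collecting the grafting kernel, the symmetry weight and the binomial coefficient into $\mathrm{A}^{\stau,(\ssigma,\pi,\rho)}_\mu$ then gives \eqref{eq:flowcumuldmu}.

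For (2), I first verify that $\Kappa^{\stau}_{\eps,0}=0$ for any irrelevant $\stau$ with $r(\stau)=0$. If some $\tau_i\neq\noise$, then by Remark~\ref{rem:flow_remark} and the initial condition \eqref{eq:F_0} we have $\xi^{\tau_i}_{\eps,0}=0$, and multilinearity of $\kappa$ kills the joint cumulant. If instead every $\tau_i=\noise$, then $|\stau|=-1+p(\stau)H$, and the irrelevance assumption combined with $H\leqslant 1/2$ forces $p(\stau)\geqslant 3$, in which case Gaussianity of the driving noise annihilates the cumulant. Hence we may write $\Kappa^{\stau}_\epmu=\int_0^\mu \partial_\nu\Kappa^{\stau}_\epnu\,\rmd\nu$; the Leibniz rule inside the cumulant gives $\partial_\nu\Kappa^{\stau}_\epnu=\sum_{k=1}^{p(\stau)}\Kappa^{\mathbf{d}_k\stau}_\epnu$, and applying case (1) to each summand delivers \eqref{eq:floweqcumulIrrel}.

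The only real obstacle is bookkeeping the labellings: one must check that the factor $\tfrac{1}{|\Aut(\tau_k)|}\sum_{n,f}$ together with the map $\mathbf{M}^{\bff,k}_{\bsigma_1,\bsigma_2}$ inside $\mathrm{A}^{\stau,(\ssigma,\pi,\rho)}_\mu$ is precisely what the labelled flow \eqref{eq:flowLabelled} and the section/fiber formalism of Definition~\ref{def:sections} produce when one passes from the unlabelled $\xi^{\tau_k}_\epmu$ to a representative $\xi^{\btau_k}_\epmu$, applies the above manipulations, and re-sections back. This verification is a direct repetition of the covariance argument already carried out for Lemma~\ref{lem:unlabelled_Pol}, and introduces no new idea.
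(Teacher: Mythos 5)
Your proof is correct and follows exactly the derivation the paper sketches in the three bullet points preceding the lemma (the paper then states the result ``immediately'' without a displayed proof). Your one genuine addition is the explicit verification that $\Kappa^{\stau}_{\eps,0}=0$ when $r(\stau)=0$ and $|\stau|>0$, which the paper tacitly uses when writing $\Kappa^{\stau}_{\eps,\mu}=\int_0^\mu\partial_\nu\Kappa^{\stau}_{\eps,\nu}\,\rmd\nu$; your case split (some $\tau_i\neq\noise$, forcing $\xi^{\tau_i}_{\eps,0}=0$ from the initial condition \eqref{eq:F_0}, versus all $\tau_i=\noise$, forcing $p(\stau)\geqslant 3$ by irrelevance and hence a vanishing Gaussian cumulant) is correct and worth having on record.
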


Note that the flow equations of the cumulants are hierarchical in the order of the cumulants, in the sense that we have $\text{$\mfo(\slambda_q)\leqslant\mfo(\stau)-1$}$ for every $q\in[|\rho|]$.
Therefore, we can use these flow equations to formulate an inductive scheme for estimating cumulants. 
The following estimate is a straightforward consequence of \eqref{eq:defA}. 

\begin{lemma}\label{lem:boundA} 
For any partition $\rho\in\mcP([2])$, we introduce the short-hand notation $${\tt I}(\rho)\eqdef \begin{cases}  
    1 & \text{if} \;|\rho|= 1\,, \\
       \infty & \text{if} \;|\rho|= 2\,. \\
\end{cases} $$
Moreover, fix $N\geqslant1$, $\text{$\stau$}\in\widehat\mcT$, $k\in[p(\stau)]$, and $(\text{$\ssigma$},n,f\pi,\rho)\in\Ind(\stau)$. Then, for any collection $(\psi_q)_{q\in[|\rho|]}$ of functions such that, for every $q\in[|\rho|]$, $\psi_q\in\mcD'(\R^{p(\slambda_q)}\times\R^{\mfs(\slambda_q)})$,  
    \begin{equs}        \nnorm{\A^{\stau,(\ssigma,\pi,\rho)}_\mu\big((\psi_q)_{q\in[|\rho|]}\big)}_N\lesssim\norm{\mcP_{N,\mu} \dot G_\mu}_{\mcL^{{\tt I}(\rho),\infty}}\prod_{q\in[|\rho|]}\nnorm{\psi_q}_N
\end{equs}
uniformly in $\mu\in(0,1]$.
\end{lemma}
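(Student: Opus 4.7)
\medskip

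\noindent\textbf{Overall strategy.} The bound is obtained by direct computation from the explicit formula \eqref{eq:defA}, by distributing the convolutions $K_{N,\mu}$ across the tensor factors $\psi_q$ and the coupling kernel $\dot G_\mu(s_n-w)$, then applying Young/Hölder in the ``coupling'' variables $s_n$ and $w$.

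\medskip

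\noindent\textbf{Step 1: Reduction to a single summand.} The combinatorial prefactor $\frac{1}{|\Aut(\tau_k)|}\frac{u_k!}{v_1!v_2!}$ is bounded by a finite constant depending only on $\tau_k$ (note $u_k\leqslant 2$). The sum $\sum_n\sum_{f\in\Aut(\btau_k)}$ is over a finite set, so by the triangle inequality it suffices to estimate a single term. The linear maps $\bell_{\bsigma_1,\bsigma_2,j}$ and the action of $\bff$ act by permuting tensor factors and relabelling time/space variables, and $\mathbf{M}^{\bff,k}_{\bsigma_1,\bsigma_2}$ is built from these together with identities on the untouched factors; all such operations preserve any mixed $L^p$-based norm, hence in particular $\nnorm{\bigcdot}_{N,\mu}$.

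\medskip

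\noindent\textbf{Step 2: Distributing $K_{N,\mu}^{\otimes\mfl(\stau)}$.} The arguments of the integrand split into ``internal'' variables, each of which appears inside exactly one factor $\psi_q$, and the two ``coupling'' variables $s_n$ (a space variable of the $\bsigma_1$-block, hence of $\psi_{q_1}$ where $q_1$ is the block of $\rho$ containing $1$) and $w$ (identified after integration with the first time coordinate of $\bsigma_2+\ell_{\bsigma_1,\bsigma_2}$, hence belonging to $\psi_{q_2}$). Since $K_{N,\mu}$ is a convolution in each variable separately, its action on the internal variables distributes directly onto the factors $\psi_q$, contributing to the convolutions appearing in the definition of $\nnorm{\psi_q}_{N,\mu}$.

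\medskip

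\noindent\textbf{Step 3: Decoupling through $\dot G_\mu$.} For the two coupling variables, use the defining identity $K_{N,\mu}*\mcP_{N,\mu}=\Id$ in each of $s_n$ and $w$ to write
\[
\dot G_\mu(s_n-w) \;=\; \big(K_{N,\mu}\otimes K_{N,\mu}\big)\,\ast\,\big(\mcP_{N,\mu}\dot G_\mu\big)(s_n-w),
\]
which moves the remaining two $K_{N,\mu}$ convolutions onto $\psi_{q_1}$ and $\psi_{q_2}$ (thereby completing their $\nnorm{\bigcdot}_{N,\mu}$-convolutions in those variables) at the cost of replacing the coupling by $\mcP_{N,\mu}\dot G_\mu$.

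\medskip

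\noindent\textbf{Step 4: Case analysis.} When $|\rho|=2$ one has $q_1\neq q_2$, so the coupling variables sit in distinct independent factors. Taking $L^\infty$ in the first time variable of $\stau$ and $L^1$ in the remaining variables (as dictated by $\nnorm{\bigcdot}_{N,\mu}$), Young's inequality in the pair $(s_n,w)$ pulls out $\norm{\mcP_{N,\mu}\dot G_\mu}_{L^1}=\norm{\mcP_{N,\mu}\dot G_\mu}_{\mcL^{\infty,\infty}}$ times $\nnorm{\psi_{q_1}}_{N,\mu}\nnorm{\psi_{q_2}}_{N,\mu}$, matching ${\tt I}(\rho)=\infty$. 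When $|\rho|=1$ both coupling variables belong to the same factor $\psi_{q_1}$; Young's inequality now requires the $L^\infty$-norm of the kernel, yielding $\norm{\mcP_{N,\mu}\dot G_\mu}_{L^\infty}=\norm{\mcP_{N,\mu}\dot G_\mu}_{\mcL^{1,\infty}}$ times $\nnorm{\psi_{q_1}}_{N,\mu}$, matching ${\tt I}(\rho)=1$.

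\medskip

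\noindent\textbf{Main obstacle.} The estimate itself is a routine application of Young's inequality; the delicate point is the bookkeeping in Step~2. One must check, after all relabellings $\bell_{\bsigma_1,\bsigma_2,j}$ and the action of $\bff$, that the $L^\infty$-in-the-first-time-variable / $L^1$-in-the-rest structure required for $\nnorm{\A}_{N,\mu}$ is compatible with the analogous structure of the factors $\nnorm{\psi_q}_{N,\mu}$. This is essentially a combinatorial verification using the fact that $\widetilde{\underline\slambda}_q=\stau_{\pi_q}\sqcup\widetilde{\underline\ssigma}_{\rho_q}$, together with the symmetry of Young's inequality in the choice of which of the two coupling slots absorbs the $L^\infty$-norm.
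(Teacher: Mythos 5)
The paper offers no proof of this lemma (the sentence preceding it simply calls it ``a straightforward consequence of \eqref{eq:defA}''), so your sketch is in effect a new argument, and its overall shape---distribute the $K_{N,\mu}$-convolutions onto the $\psi_q$, extract the coupling kernel via $\mcP_{N,\mu}K_{N,\mu}=\Id$, then apply Young's inequality in the pair $(s_n,w)$ with the case split on $|\rho|$---is the right one.

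That said, two points are genuine gaps rather than routine checks. First, the factorization in Step~3 is false: testing on the Fourier side, $\dot G_\mu(s_n-w)$ has symbol $\widehat{\dot G_\mu}(\xi)\delta(\xi+\eta)$, while $\big(K_{N,\mu}\otimes K_{N,\mu}\big)*(\mcP_{N,\mu}\dot G_\mu)(s_n-w)$ carries the extra factor $(1-i\mu\xi)^{-N}$ after enforcing $\eta=-\xi$. Because the kernel depends only on the difference $s_n-w$, applying $K_{N,\mu}$ separately in $s_n$ and in $w$ is not the same as convolving twice in the difference variable, and only one of the two factors of $K_{N,\mu}$ is cancelled by $\mcP_{N,\mu}$. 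The correct mechanism is the integration-by-parts device already used in the proof of Lemma~\ref{lem:tricks}: insert $\psi_q=\mcP_{N,\mu}K_{N,\mu}\psi_q$ in the $s_n$- and $w$-arguments, move $\mcP_{N,\mu}^\dagger$ onto $K_{N,\mu}(s_n-\bigcdot)\dot G_\mu(\bigcdot-w)$, then use Leibniz together with \eqref{eq:spacederivKmu} and the scaling $\dot G_\mu=\mcS_\mu\tilde\chi$ from the proof of Lemma~\ref{lem:dotG} to reduce to the $\mcL^{{\tt I}(\rho),\infty}$-norm of $\mcP_{N,\mu}\dot G_\mu$. Second, the ``main obstacle'' you flag is where the bound can actually fail and must be addressed, not merely acknowledged. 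The norm $\nnorm{\cdot}_{N,\mu}$ places $L^\infty$ on the first tree's time variable and $L^1$ on everything else, and this slot must land on the coupling tree when you perform Young. With the ordering $\slambda_q=\stau_{\pi_q}\sqcup\ssigma_{\rho_q}$, if $\pi_{q_2}\neq\emptyset$ then the $L^\infty$-slot of $\nnorm{\psi_{q_2}}_{N,\mu}$ sits on $t_{\min\pi_{q_2}}$ rather than on $w$, and a naive Young forces you to take $\norm{\mcP_{N,\mu}\dot G_\mu}_{\mcL^{1,\infty}}\sim\mu^{-1}$ in place of $\norm{\mcP_{N,\mu}\dot G_\mu}_{\mcL^{\infty,\infty}}\sim 1$, which would wreck the induction in Lemma~\ref{lem:cumul}; similarly, if $k=1$ and $\pi_{q_1}\neq\emptyset$, the $L^\infty_{t_1}$ in $\nnorm{\A}_{N,\mu}$ does not match any $L^\infty$-slot in $\nnorm{\psi_{q_1}}_{N,\mu}$, and the two mixed norms are genuinely incomparable. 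You should state explicitly that the inductive estimate on $\nnorm{\Kappa^{\slambda_q}_{\eps,\mu}}_N$ is ordering-independent (the cumulant and the right-hand side of \eqref{eq:Boundcum} are both invariant under permuting the trees of $\slambda_q$), so one is free to reorder $\slambda_q$ to put the tree carrying the coupling variable first before applying Young---without this remark the bound as written does not close.
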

The main estimate on cumulants we will obtain is as follows.
\begin{lemma}\label{lem:cumul}
For every $\text{$\stau$}\in\widehat\mcT$, uniformly in $(\eps,\mu)\in(0,1]$, we have for $c>0$ small enough
    \begin{equs} \label{eq:Boundcum}\nnorm{\Kappa_{\eps,\mu}^{\stau}}_{2}\lesssim \eps^{u(\stau)(-1+c)}\mu^{|\stau|-u(\stau)c-r(\stau)}\,.
    \end{equs}
\end{lemma}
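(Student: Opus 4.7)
\textbf{Proof plan for Lemma~\ref{lem:cumul}.} The strategy is induction on the order $\mfo(\stau)$, using the cumulant flow equations of Lemma~\ref{lem:cumulant_flow}: these express $\Kappa^{\stau}_{\eps,\mu}$ in terms of applications of the operator $\A_\mu^{\stau,(\ssigma,\pi,\rho)}$ to cumulants $\Kappa^{\slambda_q}_{\eps,\mu}$ whose constituent trees all have strictly smaller order. For the base case $\mfo(\stau)=0$, every tree in $\stau$ is $\bignoise$ and the force coefficient $\xi^{\bignoise}_{\epmu}$ is explicit by \eqref{eq:noise_coeff}, so the Gaussianity of the noise kills all cumulants of order $\geqslant 3$ and reduces the order-$2$ case to estimates on $\Cov$ at scale $\mu$, which is controlled by the homogeneity \eqref{eq:homogeneity} through the kernels $K_{N,\mu}$. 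Relevant $\stau$ (with $|\stau|\leqslant 0$ and $r(\stau)=0$) are handled directly in the next subsection, so in this lemma I only need to treat irrelevant $\stau$ by induction.

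For the inductive step I distinguish two cases. \emph{Case A:} $r(\stau)\geqslant 1$. Here I apply \eqref{eq:flowcumuldmu} together with Lemma~\ref{lem:boundA} and the basic kernel bound $\norm{\mcP_{N,\mu}\dot G_\mu}_{\mcL^{p,\infty}}\lesssim\mu^{-1/p}$ from Lemma~\ref{lem:dotG}. The core computation is the scaling balance: since the grafted tree $\tau_k=\sigma_2\graft_n\sigma_1$ satisfies $|\tau_k|=|\sigma_1|+|\sigma_2|+1$, one checks
\begin{equs}
\sum_{q=1}^{|\rho|}|\slambda_q| = |\stau| - (|\rho|-1)\,,\quad \sum_q r(\slambda_q) = r(\stau)-1\,,\quad \sum_q u(\slambda_q)=u(\stau)\,,
\end{equs}
where the last identity uses the constraint $v_1+v_2=u_k$ in $\mathrm{Ind}(\stau)$. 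Feeding the inductive hypothesis for each $\slambda_q$ into Lemma~\ref{lem:boundA} and multiplying by $\mu^{-1/{\tt I}(\rho)}=\mu^{-(|\rho|-1)}$ gives exactly $\mu^{|\stau|-u(\stau)c-r(\stau)}\eps^{u(\stau)(-1+c)}$, as desired. \emph{Case B:} $r(\stau)=0$ and $|\stau|>0$. Here I apply \eqref{eq:floweqcumulIrrel}; for each $k$, the forest $\mathbf{d}_k\stau$ has $r(\mathbf{d}_k\stau)=1$ and $|\mathbf{d}_k\stau|=|\stau|$ and $u(\mathbf{d}_k\stau)=u(\stau)$, so Case A applied at scale $\nu$ bounds the integrand by $\nu^{|\stau|-u(\stau)c-1}\eps^{u(\stau)(-1+c)}$, and irrelevance $|\stau|>0$ (together with $c$ small enough) makes $\int_0^\mu\nu^{|\stau|-u(\stau)c-1}\rmd\nu\lesssim\mu^{|\stau|-u(\stau)c}$ integrable at the origin.

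A small but crucial intermediate point is that the estimate in Case B is naturally produced at scale $\nu$ inside the integral whereas one wants $\nnorm{\cdot}_{N,\mu}$ with $\mu\geqslant\nu$. This is handled by a standard monotonicity lemma: the convolution kernels $K_{N,\mu}$ satisfy $K_{N,\mu}=K_{N,\nu}*K'_{\mu,\nu}$ for some probability-like kernel $K'_{\mu,\nu}$ when $\mu\geqslant\nu$, so $\nnorm{\,\bigcdot\,}_{N,\mu}\lesssim\nnorm{\,\bigcdot\,}_{N,\nu}$. A second bookkeeping point is the tracking of the $\eps$-derivatives: Leibniz rule applied to $\d_\eps^{u_k}$ on a grafted product generates exactly the sum over $(v_1,v_2)$ with $v_1+v_2=u_k$ that appears in $\mathrm{Ind}(\stau)$, and the multinomial factor $u_k!/(v_1!v_2!)$ already sits in the definition \eqref{eq:defA} of $\A_\mu^{\stau,(\ssigma,\pi,\rho)}$, so nothing extra is needed.

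The main obstacle is not the induction itself but the combinatorial-analytic identity in Case A: the kernel $\mcP_{N,\mu}\dot G_\mu$ loses exactly one power of $\mu^{-1/{\tt I}(\rho)}$, and one must verify that this loss is compensated exactly by the fact that splitting into $|\rho|$ blocks removes $|\rho|-1$ from $\sum_q|\slambda_q|$ compared to $|\stau|$. The $|\rho|=2$ case is tight: both the kernel factor (now bounded only in $\mcL^{\infty,\infty}$ and giving $\mu^0$) and the sum-of-scalings lose exactly one power, and consuming the single factor $r_k=1$ on the left gives them back. Once this accounting is set up, the remaining inequalities are standard manipulations of the norms $\nnorm{\bigcdot}_{N,\mu}$, which commute with the grafting structure via the usual Young-type convolution bounds implicit in Lemma~\ref{lem:boundA}.
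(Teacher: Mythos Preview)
Your proposal is correct and matches the paper's own proof: induction on $\mfo(\stau)$, with the relevant cumulants ($r(\stau)=0$, $|\stau|\leqslant 0$) deferred to Lemmas~\ref{lem:basecaseCum} and~\ref{lem:expDumbbell}, Case~A handled directly via \eqref{eq:flowcumuldmu} together with Lemma~\ref{lem:boundA}, and Case~B via \eqref{eq:floweqcumulIrrel} with the scale change supplied by \eqref{eq:Kmunu}. One small slip: your displayed identity $\mu^{-1/{\tt I}(\rho)}=\mu^{-(|\rho|-1)}$ has the two cases swapped (it should be $\mu^{-(2-|\rho|)}$, i.e.\ $\mu^{-1}$ when $|\rho|=1$ and $\mu^{0}$ when $|\rho|=2$), though your subsequent discussion of the $|\rho|=2$ case shows you have the correct values in mind, and the power-counting closes exactly as you and the paper describe.
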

The flow equations \eqref{eq:flowcumuldmu} and \eqref{eq:floweqcumulIrrel} will allow us to deal with all the induction steps except the initialisation and the first step, both of which involve relevant cumulants. We postpone the study of the relevant cumulants to the next section but, admitting this input, we are ready to give the proof of Lemma~\ref{lem:cumul}.  
\begin{proof}[of Lemma~\ref{lem:cumul}]
We argue by induction on the order of $\stau$, the base case being handled by Lemma~\ref{lem:basecaseCum} below. 
We now deal with the induction step. 

In the first two induction steps, one can either construct an irrelevant cumulant, a cumulant with $r(\stau)\geqslant1$, or the expectation of $\xi_\epmu
^{\tau}$ for $\tau=\tOne,\tTwo$, or $\tThree$. By parity, the expectation of $\xi_\epmu
^{\tau}$ vanishes when $\tau=\tTwo,\tThree$, and in view of Lemma~\ref{lem:expDumbbell} below, we have for $\stau=\big((\tOne,0,u)\big)$
\begin{equs}
  \nnorm{\Kappa^\stau_\epmu}_2  \lesssim\eps^{u(-1+c)}\mu^{|\stau|-u(\stau)c-r(\stau)}\,.
\end{equs}
It remains to deal with case of cumulants with $r(\stau)\geqslant1$ or $|\stau|>0$. This is done using equations \eqref{eq:flowcumuldmu} and \eqref{eq:floweqcumulIrrel}, and the property of the operator $\A_\mu^{\stau,(\ssigma,\pi,\rho)}$ stated in Lemma~\ref{lem:boundA}. 
For example, using \eqref{eq:Kmunu}, we have in the case where $r(\stau)=0$
\begin{equs}
\nnorm{\Kappa_{\eps,\mu}^{\stau}}_{2}\lesssim  &\int_0^\mu\sum_{k=1}^{p(\stau)}\sum_{\substack{(\text{\scriptsize{$\ssigma$}},\pi,\rho)\in\text{\scriptsize{$\mathrm{Ind}$}}(\text{\scriptsize{$\mathbf{d}_k\stau$}})  } } \label{eq:inter4_15}\norm{\mcP_{2,\nu}\dot G_\nu}_{\mcL^{{\tt I}(\rho),\infty}}\prod_{q\in[|\rho|]}\nnorm{\Kappa^{\slambda_q}_\epnu}_{2} \rmd\nu\,.\textcolor{white}{bl}
\end{equs}
At this stage, the desired result now follows using the induction hypothesis and \eqref{eq:heat1}. We end up with 
\begin{equs}    \nnorm{\Kappa_{\eps,\mu}^{\stau}}_{2}&\lesssim \int_0^\mu\sum_{k=1}^{p(\stau)}\sum_{\substack{(\text{\scriptsize{$\ssigma$}},\pi,\rho)\in\text{\scriptsize{$\mathrm{Ind}$}}(\text{\scriptsize{$\mathbf{d}_k\stau$}})    } }\eps^{u(\stau)(-1+c)}\Big(\nu^{\sum_{j\neq k}|\tau_j|+|\sigma_1|+|\sigma_2|+(p(\stau)-1)-u(\stau)c}+\nu^{\sum_{j\neq k}|\tau_j|+|\sigma_1|+|\sigma_2|-1+p(\stau)-u(\stau)c}\Big)\rmd\nu\\
    &\lesssim   \eps^{u(\stau)(-1+c)}\int_0^\mu\nu^{|\stau|-1-u(\stau)c}\rmd\nu\lesssim\eps^{u(\stau)(-1+c)}\mu^{|\stau|-u(\stau)c}\,.
\end{equs}
In the last step, we have use the fact that $|\stau|>0$, so that the integral is convergent, which yields the desired result. Finally, note that in \eqref{eq:inter4_15}, we do not need to increase the value of $N$ in the kernel $K_{N,\mu}$, so that after the first induction step we can always take $N=2$.
\end{proof}
\subsection{Relevant cumulants}\label{sec:3_2}
Recall that the noise is contained in the effective force coefficient $\xi_\epmu^{\noise}(t,s)=\delta(t-s)\xi_\eps(s)$, so that $\xi_\epmu^{\noise}$ is constant along the flow, and its only non-vanishing cumulant is its covariance. 

Then, the relevant cumulants are the covariance of $\xi_\epmu^{\noise}$, along with the expectations of the relevant force coefficients $\xi^\tau_\epmu$ for $\tau=\bignoise$, $\tOne$, $\tTwo$, and $\tThree$. Moreover, by parity, the expectations for $\tau=\bignoise$, $\tTwo$ and $\tThree$ vanish, so that the only two non-trivial divergent cumulants are $$\kappa_2\big(\d_\eps^{u_1}\xi^{\noise}_\epmu(t_1,s_1),\d_\eps^{u_2}\xi^{\noise}_\epmu(t_2,s_2)\big) \quad\text{and}\quad\E[\d_\eps^{u}\xi^{\tOnesmall}_\epmu(t,s^{\tOnesmall}_1,s^{\tOnesmall}_2)]\,.$$
\subsubsection{The covariance of the noise}
We first deal with the covariance of $\xi_\epmu^{\noise}$. Since it does not flow, we simply have to handle it as an initialisation step. To do so, we need the following notation: for $u_1,u_2\in\{0,1,2\}$, we set
\begin{equs}
    \Cov^{u_1,u_2}_\eps(t-s)\eqdef \d_\eps^{u_1}\rho_\eps*\d_\eps^{u_2}\rho_\eps*\Cov(t-s)\,,
\end{equs}
and when $u_1=u_2=0$, we just write $\Cov_\eps\eqdef \Cov_\eps^{0,0}$.

The initialisation step is done in the following lemma.
\begin{lemma}\label{lem:basecaseCum}
For $\text{$\stau$}=\big((\bignoise,0,u_1),(\bignoise,0,u_2)\big)$ and $c>0$ small enough, uniformly in $\epmu\in(0,1]$, we have
   \begin{equs}   \nnorm{\Kappa_{\eps,\mu}^{\stau}}_{2}\lesssim \eps^{u(\stau)(-1+c)}\mu^{|\stau|-u(\stau)c-r(\stau)}\,.
    \end{equs}
\end{lemma}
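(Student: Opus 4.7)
My starting point is the explicit form of the noise force coefficient. Since $\xi^{\noise}_\epmu(t,s) = \delta(t-s)\xi_\eps(s)$, the cumulant factorises as
\begin{equs}
\Kappa^\stau_\epmu(t_1,t_2,s_1,s_2) = \delta(t_1-s_1)\delta(t_2-s_2)\Cov^{u_1,u_2}_\eps(s_1,s_2)\,,
\end{equs}
so that applying $K_{2,\mu}^{\otimes 4}$ yields
\begin{equs}
(K_{2,\mu}^{\otimes 4}\Kappa^\stau_\epmu)(t_1,t_2,s_1,s_2) = \int K_{2,\mu}(t_1-s'_1)K_{2,\mu}(s_1-s'_1)K_{2,\mu}(t_2-s'_2)K_{2,\mu}(s_2-s'_2)\Cov^{u_1,u_2}_\eps(s'_1,s'_2)\,ds'_1\,ds'_2\,.
\end{equs}
The $L^\infty_{t_1}L^1_{t_2,s_1,s_2}$ estimate thus reduces to controlling $\Cov^{u_1,u_2}_\eps$ tested against explicit combinations of scale-$\mu$ kernels.

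The next step is to rewrite the $\eps$-derivatives of the mollifier as $\partial_\eps^u\rho_\eps = \eps^{-u}\mcS_\eps\tilde\rho^{(u)}$ for a fixed smooth compactly supported function $\tilde\rho^{(u)}$, which, for $u\geqslant 1$, has vanishing integral since $\int\rho_\eps = 1$ is independent of $\eps$. This yields the factorisation
\begin{equs}
\Cov^{u_1,u_2}_\eps = \eps^{-u(\stau)}\bigl(\mcS_\eps\tilde\rho^{(u_1)}\otimes\mcS_\eps\tilde\rho^{(u_2)}\bigr)*\Cov\,,
\end{equs}
cleanly separating the $\eps$-dependence into an overall prefactor and scale-$\eps$ mollifications.

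The estimation then proceeds by combining two complementary bounds. A direct pointwise estimate $|\Cov^{u_1,u_2}_\eps(s'_1,s'_2)|\lesssim \eps^{-u(\stau)}(|s'_1-s'_2|\vee\eps)^{-2+2H}$ yields a naive bound $\nnorm{\Kappa^\stau_\epmu}_{2,\mu}\lesssim\eps^{-1+2H-u(\stau)}$ after integrating out the $L^1$ variables against the unit-$L^1$-norm kernels $K_{2,\mu}$. Separately, by coupling the four $K_{2,\mu}$ kernels in pairs (rather than integrating them away as unit-mass factors) and using the homogeneity \eqref{eq:homogeneity} of $\Cov$, one obtains a refined estimate $\lesssim\eps^{-u(\stau)}\mu^{-1+2H}$ in the regime $\eps\leqslant\mu$. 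Interpolating these two estimates with parameter $c\in(0,1)$ yields the claimed bound $\eps^{u(\stau)(-1+c)}\mu^{-1+2H-u(\stau)c}$.

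The main technical obstacle is the refined estimate in the regime $\eps\leqslant\mu$: the coupling of the $K_{2,\mu}$ kernels must be done in such a way that $\Cov$ is genuinely tested at scale $\mu$ (with the pairs $K_{2,\mu}(t_i-s'_i)K_{2,\mu}(s_i-s'_i)$ acting effectively as a $\mu$-scale kernel in the composite variable), so as to invoke \eqref{eq:homogeneity} and produce the correct $\mu^{-1+2H}$ scaling. The vanishing-moment property of $\tilde\rho^{(u)}$ for $u\geqslant 1$ plays a role here in avoiding an extraneous factor of $(\mu/\eps)^{u}$ that a crude Taylor expansion would introduce. The remaining ingredients, namely the homogeneity of $\Cov$ and the $L^1$-normalisation of $K_{2,\mu}$, are routine.
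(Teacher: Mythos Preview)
Your overall structure is sound, but the interpolation step has a genuine gap when $u(\stau)>0$. Your bound A is $\eps^{-1+2H-u(\stau)}$ and bound B is $\eps^{-u(\stau)}\mu^{-1+2H}$; both carry the \emph{same} $\eps^{-u(\stau)}$ prefactor. No convex combination $A^{\alpha}B^{1-\alpha}$ with $\alpha\in[0,1]$ can therefore produce the target $\eps^{-u(\stau)(1-c)}\mu^{-1+2H-u(\stau)c}$, since matching the $\eps$-exponent forces $\alpha(-1+2H)=u(\stau)c$, which for $H\leqslant 1/2$ gives $\alpha\leqslant 0$. Equivalently, for $\eps\leqslant\mu$ your best bound is B, and B divided by the target equals $(\mu/\eps)^{u(\stau)c}\geqslant 1$: bound B is simply too weak.

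The missing mechanism is that the vanishing moment of $\partial_\eps\rho_\eps$ must be combined with the scale-$\mu$ smoothing \emph{at the level of that single factor}, not at the level of the full cumulant. Concretely, one needs $\norm{Q_\mu*\partial_\eps\rho_\eps}_{L^1}\lesssim\eps^{-1+c}\mu^{-c}$ (this is the bound \eqref{eq:duch4_19} the paper cites, itself an interpolation between the trivial $\eps^{-1}$ and the vanishing-moment bound $\mu^{-1}$). The paper obtains this by splitting $K_{2,\mu}=Q_\mu*Q_\mu$ in each variable, pairing one $Q_\mu$ with $\partial_\eps^{u_i}\rho_\eps$ via Young's inequality, and using the remaining $Q_\mu^{\otimes2}$ together with the homogeneity of $\Cov$ to produce $\mu^{-1+2H}$. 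This factorised route gives $\prod_i\eps^{(-1+c)u_i}\mu^{-cu_i}\cdot\mu^{-1+2H}$ directly, with no interpolation of global bounds needed. Your remark about the vanishing moment ``avoiding an extraneous factor of $(\mu/\eps)^u$'' is pointing in the right direction, but as written your bound B does not actually use that moment---it only uses $\norm{\mcS_\eps\tilde\rho^{(u)}}_{L^1}\lesssim 1$.
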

\begin{proof}
First, note that
\begin{equs}    \Kappa^\stau_\epmu(t_1,t_2,s_1,s_2)=\delta(t_1-s_1)\delta(t_1-s_2)\Cov_\eps^{u_1,u_2}(s_1-s_2)\,,
\end{equs}
Therefore, using \eqref{eq:trick1} proven in Lemma~\ref{lem:tricks}, we have
\begin{equs} {}&\nnorm{\Kappa^{\stau}_\epmu}_{2}
\lesssim\norm{K_{4,\mu}*\Cov_\eps^{u_1,u_2}(r_1-r_2)
}_{L^\infty_{r_1}L^{1}_{r_2}}=
\norm{K_{4,\mu}*\Cov_\eps^{u_1,u_2}
}_{L^{1}}
\,.
\end{equs}
Then, we have
\begin{equs}    K_{4,\mu}\Cov_\eps^{u_1,u_2}=Q_\mu*\d^{u_1}_\eps\rho_\eps* Q_\mu*\d^{u_2}_\eps\rho_\eps*K_{2,\mu}\Cov\,,
\end{equs}
which implies that by Young's inequality for convolution, 
\begin{equs}   \nnorm{\Kappa^{\stau}_\epmu}_{2}
&\lesssim  \prod_{i=1}^2\norm{Q_\mu*\d^{u_i}_\eps\mcS_\eps\rho}_{L^1}\times \norm{K_{2,\mu}\Cov}_{L^1}\,.
\end{equs}
We first eliminate the product over $i$ using the bound \begin{equs}\label{eq:duch4_19}
\norm{Q_\mu*\d_\eps\mcS_\eps\rho}_{L^1}\lesssim\eps^{-1+c}\mu^{-c}
\end{equs}
proven in \cite[Lemma 4.19 (B)]{Duch21}, and if $u_i=2$ the fact that $\d_\eps^{2}\mcS_\eps\rho=\eps^{-1}\d_\eps\mcS_\eps\tilde \rho$ for some other smooth compactly supported function $\tilde\rho$. 

It remains to evaluate $\norm{K_{2,\mu}\Cov}_{L^1}$. We set $Q=Q_1$, and recall that the kernel of $Q_\mu$ verifies $Q_\mu(t)=\mcS_\mu Q(t)$. Here, we view $Q(t-s)$, $Q_\mu(t-s)$ as bilinear kernels, and therefore $Q_\mu(t-s)=\mu\mcS_\mu^{\otimes2}Q(t-s)$. With this observation, we have 
\begin{equs}
\norm{K_{2,\mu}\Cov}_{L^1}&= \mu^2\norm{\langle\mcS_\mu^{\otimes2}Q({r_1-\bigcdot})\otimes\mcS_\mu^{\otimes2}Q({r_2-\bigcdot}),\Cov\rangle}_{L^\infty_{r_1}L^1_{r_2}}\,.
\end{equs}
Here, in view of the homogeneity property of $\Cov$ stated in \eqref{eq:homogeneity}, we obtain that
\begin{equs}
    \norm{Q_\mu^{\otimes2}\Cov(r_1,r_2)}_{L^\infty_{r_1}L^1_{r_2}}&= \mu^{2H}\norm{\langle\mcS_\mu^{\otimes(1,0)}Q({r_1-\bigcdot})\otimes\mcS_\mu^{\otimes(1,0)}Q({r_2-\bigcdot}),\Cov\rangle}_{L^\infty_{r_1}L^1_{r_2}}\,,
\end{equs}
where we used the shorthand $\mcS_\mu^{\otimes(1,0)}=\mcS_\mu\otimes \Id$. Note that to compensate the fact that $\Cov$ is not in $L^1$, we had to take the value of $N$ in the kernel $K_{N,\mu}$ large enough for the pairing to exist: $N=2$ is sufficient since indeed $\Cov$ can be tested against $Q^{\otimes2}$. Recalling the definition \eqref{eq:defCov} of the covariance of the noise, we see that $Q$ can indeed absorb one time derivative and yields a kernel that behaves like a Dirac on short scales, which in turn can be tested against $\Cov^{\tt S}_W$ that is continuous in time. $\langle\mcS_\mu^{\otimes(1,0)}Q({r_1-\bigcdot})\otimes\mcS_\mu^{\otimes(1,0)}Q({r_2-\bigcdot}),\Cov\rangle$ is thus equal to $\mcS_\mu^{\otimes 2}f(r_1,r_2)$ where $f$ is a bona fine bounded function.

We can now conclude the proof, using \eqref{eq:boundSmu} that implies that the $L^1$ norm in $r_2$ is bounded by a constant while the $L^\infty$ norm in $r_1$ yields a factor $\mu^{-1}$, so that we end up with 
\begin{equs}    \label{eq:bound_cov}\norm{K_{2,\mu}^{\otimes2}\Cov_\eps^{u_1,u_2}(r_1,r_2)}_{L^\infty_{r_1}L^1_{r_2}}\lesssim\eps^{(u_1+u_2)(-1+c)}\mu^{-1+2H-(u_1+u_2)c}\,.
\end{equs}
\end{proof}
\begin{remark}
An argument similar to the derivation of  \eqref{eq:bound_cov} gives 
    \begin{equs}\label{eq:bound_cov2}
      \norm{K_{2,\mu}^{\otimes2}\Cov_\eps^{u_1,u_2}(r_1,r_2)}_{L^\infty_{r_1,r_2}} \lesssim\eps^{(u_1+u_2)(-1+c)}\mu^{-2+2H-(u_1+u_2)c}
    \end{equs}
uniformly in $\eps,\mu\in(0,1]$.
\end{remark}

\begin{remark}
Note that in the present article, we are focused on obtaining  probabilistic convergence as $\epsilon \downarrow 0$ of the effective solution at scales $\mu > 0$ and so we want to prove the probabilistic convergence of the force ansatz at scales $\mu > 0$. 
The covariance computation in the last lemma does this for the noise. 

However, a modification of the argument above would also allow one to compare forces constructed with different mollifiers, allowing one to also infer stability with respect to the choice of mollifier. 
\end{remark}

The following lemma is necessary in order to handle the Dirac distributions appearing in the definitions of the revelant cumulants.
\begin{lemma}\label{lem:tricks}
    For $\varphi\in C^\infty_c(\R^2)$ and $\psi\in C_c^\infty(\R)$, we define $A^\varphi (t_1,t_2,s_1,s_2)\eqdef\delta(t_1-s_1)\delta(t_2-s_2)\varphi(s_1,s_2)$, $B^\varphi(t_1,t_2,s_1)\eqdef\delta(t_1-s_1)\varphi(s_1,t_2)$, and $C^\psi(t_1,t_2,s_1)\eqdef\delta(t_1-s_1)\delta(t_2-s_1)\psi(s_1)$.

    Then, for every $N\geqslant1$, uniformly in $\varphi$, $\psi$ and $\mu\in(0,1]$, we have
    \begin{equs}
        \label{eq:trick1}\norm{K_{N,\mu}^{\otimes4}A^\varphi(t_1,t_2,s_1,s_2)}_{L^\infty_{t_1}L^1_{t_2}L^1_{s_1}L^1_{s_2}}&\lesssim \norm{K_{N,\mu}^{\otimes2}\varphi(s_1,s_2)}_{L^\infty_{s_1}L^1_{s_2}}\,,\\   \label{eq:trick2}   \norm{K_{N,\mu}^{\otimes3}B^\varphi(t_1,t_2,s_1)}_{L^\infty_{t_1}L^1_{t_2}L^1_{s_1}}&\lesssim \norm{K_{N,\mu}^{\otimes2}\varphi(s_1,s_2)}_{L^\infty_{s_1}L^1_{s_2}}\,,\\\norm{K_{N,\mu}^{\otimes3}C^\psi(t_1,t_2,s_1)}_{L^\infty_{t_1}L^1_{t_2}L^1_{s_1}}&\lesssim \norm{K_{N,\mu}\psi(s_1)}_{L^\infty_{s_1}}\,.\label{eq:trick3}
    \end{equs}
\end{lemma}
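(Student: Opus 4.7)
The plan is to verify each of the three bounds by first computing the left-hand side explicitly---integrating against the Dirac distributions---and then applying Young's and Minkowski's inequalities together with the non-negativity and $L^1$-normalisation $\int K_{N,\mu} = 1$ of the kernel $K_{N,\mu} = Q_\mu^{*N}$.

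For \eqref{eq:trick1}, a direct computation gives
\[
(K_{N,\mu}^{\otimes 4} A^\varphi)(t_1, t_2, s_1, s_2) = \int K_{N,\mu}(t_1{-}r_1)\, K_{N,\mu}(s_1{-}r_1)\, K_{N,\mu}(t_2{-}r_2)\, K_{N,\mu}(s_2{-}r_2)\, \varphi(r_1, r_2)\, \rmd r_1\, \rmd r_2.
\]
I would observe that this coincides with $K_{N,\mu}^{\otimes 2}$ applied in the $(t_1, t_2)$ variables to the pointwise product $K_{N,\mu}(s_1{-}\cdot)\, K_{N,\mu}(s_2{-}\cdot)\, \varphi(\cdot, \cdot)$, whose integral against $\rmd r_1\, \rmd r_2$ is precisely $K_{N,\mu}^{\otimes 2}\varphi(s_1, s_2)$. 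I would then peel off the integrations one at a time: the $L^\infty_{t_1}$ and $L^1_{t_2}$ norms each absorb one factor of $K_{N,\mu}$ via Young (using $\|K_{N,\mu} * f\|_{L^\infty} \le \|f\|_{L^\infty}$ and $\|K_{N,\mu}* f\|_{L^1} \le \|f\|_{L^1}$), while the $L^1_{s_1}$ and $L^1_{s_2}$ integrations combine with the remaining two kernels to recover precisely $\|K_{N,\mu}^{\otimes 2}\varphi\|_{L^\infty_{s_1} L^1_{s_2}}$ on the right-hand side.

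Estimates \eqref{eq:trick2} and \eqref{eq:trick3} will be handled identically after the analogous computations: for $B^\varphi$ one obtains $\int K_{N,\mu}(t_1{-}r_1)\, K_{N,\mu}(s_1{-}r_1)\, K_{N,\mu}(t_2{-}r_2)\, \varphi(r_1, r_2)\,\rmd r_1\,\rmd r_2$, and for $C^\psi$ one obtains $\int K_{N,\mu}(t_1{-}r)\, K_{N,\mu}(t_2{-}r)\, K_{N,\mu}(s_1{-}r)\, \psi(r)\, \rmd r$; in each case the same Young/Minkowski strategy yields the desired factors of $K_{N,\mu}^{\otimes 2}\varphi$ (respectively $K_{N,\mu}\psi$) on the right-hand side. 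The hard part will be carefully orchestrating the order of applications of Young and Minkowski so as to preserve the cancellations in $\varphi$ or $\psi$: placing absolute values on $\varphi$ or $\psi$ too early would yield only the weaker bounds $\|\varphi\|_{L^\infty_{r_1} L^1_{r_2}}$ or $\|\psi\|_{L^\infty}$, which miss the sharper additional convolution with $K_{N,\mu}$ essential for the scale-dependent estimates used downstream in Lemma~\ref{lem:basecaseCum}.
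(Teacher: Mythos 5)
Your computation of the left-hand side after collapsing the Dirac masses is correct, but the claim that one can then finish with Young and Minkowski alone has a genuine gap, which you in fact flag yourself but do not resolve. After the deltas collapse you have
\begin{equs}
K_{N,\mu}^{\otimes 4}A^\varphi(t_1,t_2,s_1,s_2)
= \int_{\R^2}\prod_{i=1}^{2}\big(K_{N,\mu}(t_i-r_i)K_{N,\mu}(s_i-r_i)\big)\,\varphi(r_1,r_2)\,\rmd r_1\rmd r_2\,,
\end{equs}
and you want to land on $\norm{K_{N,\mu}^{\otimes 2}\varphi}_{L^\infty_{s_1}L^1_{s_2}}$. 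The obstruction is that the two kernels attached to each variable appear as a \emph{pointwise product} $K_{N,\mu}(t_i-r_i)K_{N,\mu}(s_i-r_i)$, not as a convolution, and there is no semigroup identity turning one of them into a $K_{N,\mu}$ that lands on $\varphi$. Any greedy Young/Minkowski scheme forces you to put an absolute value on the $r$-integral at some point, and once you do, one of the two kernels per variable necessarily integrates to $1$ while the other only integrates to $1$ as well, leaving $\norm{\varphi}_{L^\infty_{r_1}L^1_{r_2}}$ rather than the target $\norm{K_{N,\mu}^{\otimes 2}\varphi}_{L^\infty_{s_1}L^1_{s_2}}$. You correctly identify this as ``the hard part'' and observe that losing the extra convolution with $K_{N,\mu}$ would ruin the downstream estimates, but you offer no mechanism that actually avoids it.

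The paper supplies exactly the missing mechanism: it inserts the identity $\mcP_{N,\mu}K_{N,\mu}=\Id$ in each $r_i$-variable, replacing $\varphi$ by $\mcP_{N,\mu}^{\otimes 2}K_{N,\mu}^{\otimes 2}\varphi$, and then integrates by parts to transfer $\mcP_{N,\mu}^{\dagger}$ onto the product $K_{N,\mu}(s_i-\bigcdot)K_{N,\mu}(t_i-\bigcdot)$. This deposits one factor of $K_{N,\mu}$ onto $\varphi$ (producing the wanted $K_{N,\mu}^{\otimes 2}\varphi$), while Leibniz rule turns $\mcP^{\dagger}_{N,\mu}$ of the product into a finite sum $\sum_i R^{(i)}_\mu(s-r)S^{(i)}_\mu(t-r)$ with kernels $R^{(i)}_\mu,S^{(i)}_\mu$ that are uniformly $L^1$ in $\mu$ by \eqref{eq:spacederivKmu}; H\"older then closes the estimate. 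So the proof requires an integration-by-parts/operator-insertion trick, not a careful reordering of Young and Minkowski, and your proposal as written would not produce the stated bound. The same remark applies to \eqref{eq:trick2} and \eqref{eq:trick3}, which the paper deduces from \eqref{eq:trick1} rather than re-running the argument.
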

\begin{proof}
   We first prove \eqref{eq:trick1}. Using the identity $\mcP_{N,\mu} K_{N,\mu}=\Id$, we have
\begin{equs}    K_{N,\mu}^{\otimes4}A^\varphi(t_1,t_2,s_1,s_2)&=\int_{\R^2}  \prod_{i=1}^2 \big(K_{N,\mu}(s_i-r_i)  K_{N,\mu}(t_i-r_i)\big)\varphi(r_1,r_2)\rmd r_1\rmd r_2\\
&=\int_{\R^2} \prod_{i=1}^2\mcP_{N,\mu}^\dagger\big( K_{N,\mu}(s_i-\bigcdot)   K_{N,\mu}(t_i-\bigcdot)\big)(r_i)K_{N,\mu}^{\otimes2}\varphi(r_1,r_2)\rmd r_1\rmd r_2\,.
\end{equs}
By Hölder's inequality, we can conclude that
\begin{equs}  \label{eq:stepinproofcovnoise} {}&\norm{K_{N,\mu}^{\otimes4}A^\varphi(t_1,t_2,s_1,s_2)}_{L^\infty_{t_1}L^1_{t_2}L^1_{s_1}L^1_{s_2}}
\\&\lesssim
\norm{
\mcP^\dagger_{N,\mu}\big(K_{N,\mu}(s_1-\bigcdot)K_{2,\mu}(t_1-\bigcdot)\big)(r_1)
}_{L^\infty_{t_1}L^1_{s_1}L^1_{r_1}}\norm{\mcP^\dagger_{N,\mu}\big(K_{N,\mu}(s_2-\bigcdot) K_{N,\mu}(s_2-\bigcdot)\big)(r_2)
}_{L^{\infty}_{r_{2}}L^1_{t_2}L^1_{s_2}}
\\&\quad\times\norm{K^{\otimes2}_{N,\mu}\varphi(r_1,r_2)
}_{L^\infty_{r_1}L^{1}_{r_2}}\,.
\end{equs}
The first two terms of the r.h.s. of \eqref{eq:stepinproofcovnoise} are very similar. The aim is to show that they are bounded by a constant. Indeed, when the operator $\mcP^\dagger_{N,\mu}$ hits one of the kernels $K_{N,\mu}$, this possibly creates some time derivatives of the kernel $K_{N,\mu}$ multiplied by $\mu$. Then, by \eqref{eq:spacederivKmu}, the newly created kernel is also in $L^1$. Overall, this entails that there exists a finite set $I$ and some kernels $\big(R_\mu^{(i)},S_\mu^{(i)}:i\in I\big)$ belonging to $L^1$ uniformly in $\mu$ such that 
\begin{equs}
    \mcP_{N,\mu}^{\dagger}\big(K_{N,\mu}(s-\bigcdot)K_{N,\mu}(t-\bigcdot)\big)(r)
=\sum_{i\in I} R_\mu^{(i)}(s-r)S_\mu^{(i)}(t-r)\,.
\end{equs}
By translation invariance, this kernel really depends on two variables of the three variables $t$, $s$ and $r$, and by taking the $L^1$ norm in two of the three variables, we can indeed conclude that the first two terms are bounded uniformly in $\mu\in(0,1]$. 
    
    \eqref{eq:trick2} is an immediate consequence of \eqref{eq:trick1}. Indeed, $B^\varphi(t_1,t_2,s_1)=\int_\R A^\varphi(t_1,t_2,s_1,$ $s_2)\rmd s_2$, so that $K_{N,\mu}^{\otimes3}B^\varphi(t_1,t_2,s_1)=\int_\R K_{N,\mu}^{\otimes4}A^\varphi(t_1,t_2,s_1,$ $s_2)\rmd s_2$, and thus
    \begin{equs}        \norm{K_{N,\mu}^{\otimes3}B^\varphi(t_1,t_2,s_1)}_{L^\infty_{t_1}L^1_{t_2}L^1_{s_1}}\leqslant   \norm{K_{N,\mu}^{\otimes4}A^\varphi(t_1,t_2,s_1,s_2)}_{L^\infty_{t_1}L^1_{t_2}L^1_{s_1}L^1_{s_2}}\,.
    \end{equs}

\eqref{eq:trick3} stems from \eqref{eq:trick2}: applying \eqref{eq:trick2} to $\varphi(s_1,t_2)=\delta(t_2-s_1)\psi(s_1)  $ allows to get rid of the term $\delta(t_1-s_1)$. Applying \eqref{eq:trick2} a second time to $\varphi(s_1,t_2)=\psi(s_1)$ yields \eqref{eq:trick3}. Here, note that in the first step we took $\varphi$ to be a distribution. This is not a problem since $\delta(t_1-s_1)\delta(t_2-s_1)$ is a well-defined distribution, since the null section does not belong to the sum of the wavefront sets of the two Dirac distributions.    
\end{proof}
\subsubsection{The expectation of $\tOne$}\label{sec:expectation}
We now fix $u\in\{0,1,2\}$, and turn to the case of $\E[\d_\eps^u\xi^{\tOnesmall}_\epmu]$ which is the only relevant cumulant that has a non-trivial flow equation. 

Since we aim to show that solutions to \eqref{eq:eq1} are stable and converge as $\eps \downarrow 0$ \textit{without} any possibly divergent counterterm, we have to show, that despite being relevant, it is possible to construct $\big(\E[\xi^{\tOnesmall}_\epmu]\big)_{\epmu\in[0,1]}$ so that $\E[\xi^{\tOnesmall}_\epmu]$ vanishes when $\mu=0$. 

To do so, we do not use the general scheme of estimates we developed for the cumulant flow equations of Lemma~\ref{lem:cumulant_flow}. We instead study $\partial_{\nu} \E[\d_\eps^u\xi^{\tOnesmall}_\epnu]$ by taking the expectation of \eqref{eq:flow2} for $\tau = \tOne$, and we decompose $\E[\d_\eps^u\xi^{\tOnesmall}_\epmu]=\int_0^\mu\partial_{\nu} \E[\d_\eps^u\xi^{\tOnesmall}_\epnu]\rmd\nu$ into several terms, following the spirit of the decomposition performed in the proof of \cite[Theorem 5.3]{Duch23}.

Taking the expectation of \eqref{eq:flow2} for $\tau = \tOne$ first gives us 
\begin{equs}\label{eq:dumbell_flow}
\d_\nu \E \big[\d_\eps^u\xi^{\tOnesmall}_\epnu \big] &(t,s^{\tOnesmall}_1,s^{\tOnesmall}_2)\\
{}&=-\sum^{\noise}_{v_1,v_2}\delta(t-s_1^{\tOnesmall})\E[\d_\eps^{v_1}\xi_\eps(s_1^{\tOnesmall})\int_\R\dot G_\nu(s_1^{\tOnesmall}-w)\delta(w-s_2^{\tOnesmall})\d_\eps^{v_2}\xi_\eps(s_1^{\tOnesmall})\rmd w \big]
\\
&=-\sum^{\noise}_{v_1,v_2}\delta(t-s_1^{\tOnesmall})\E[\d_\eps^{v_1}\xi_\eps(s_1^{\tOnesmall})\dot G_\nu(t-s_2^{\tOnesmall})\d_\eps^{v_2}\xi_\eps(s_2^{\tOnesmall})]\\
   &=-\sum^{\noise}_{v_1,v_2}\delta(t-s_1^{\tOnesmall})\dot G_\mu(s_1^{\tOnesmall}-s_2^{\tOnesmall})\Cov^{v_1,v_2}_\eps
   (s_1^{\tOnesmall}-s_2^{\tOnesmall})
\,,
\end{equs}
where $s^{\tOnesmall}_1$ is the time variable associated to the root of $\tOne$, and we set
\begin{equs}
    \sum^{\noise}_{v_1,v_2}f(u,v_1,v_2)\eqdef  \sum_{\substack{v_1,v_2\in\{0,1,2\}\\v_1+v_2=u}}\frac{u!}{v_1!v_2!}f(u,v_1,v_2)\,.
\end{equs}
To further lighten the notion, we write $(s,r)$ instead of $ (s_1^{\tOnesmall},s_2^{\tOnesmall})$ and, given $v_1,v_2$ we write $\vec{v} =(v_1,v_2)$. Finally, we have 
\begin{equs}
    \d_\nu \E[\d^u_\eps\xi^{\tOnesmall}_\epnu](t,s,r)=-\delta(t-s)\sum_{v_1,v_2}^{\noise}\dot\lambda^{\tOnesmall,\vec{v}}_{\epnu}(s-r)\,,\quad\text{where}\quad\dot\lambda^{\tOnesmall,\vec{v}}_{\epnu}(t)\eqdef \dot G_\nu(t)\Cov^{v_1,v_2}_\eps
   (t)\,.
\end{equs} 

As is common in renormalisation, we will want to extract its ``local parts'', for which the following notation will be helpful. 
\begin{lemma}\label{def:local}
We define for any smooth function $f:\R^{2}\rightarrow\R$ 
\begin{equs}
   L^1(\R^2)\ni    \bfL_\tau f:(s,r)\mapsto \tau^{-1}f(s,s+(r-s)/\tau)\,.
\end{equs}
Then, setting $\X(s,r)\eqdef(s-r)$, the following equality holds true in a distributional sense:
\begin{equs}\label{eq:localisation}
f(s,r)=\delta (s-r) \int_\R f(s,w)\rmd w +\int_0^1\partial_{r}\text{$\bfL$}_\tau \big(\X f\big)(s,r)\rmd\tau\,.
\end{equs}
\end{lemma}



Applying Lemma~\ref{def:local}, we have 
\begin{equs}
      \d_\nu &\E[\d^u_\eps\xi^{\tOnesmall}_\epnu](t,s,r)=-\delta(t-s)\sum_{v_1,v_2}^{\noise}\dot\lambda^{\tOnesmall,\vec{v}}_{\epnu}(s-r)\\
      &=-\delta(t-s)\sum_{v_1,v_2}^{\noise}\Big(
      \big(\Id^{\otimes2}-K_{2,\nu}^{\otimes2}\big) \dot\lambda^{\tOnesmall,\vec{v}}_{\epnu}(s-r)+
      K_{2,\nu}^{\otimes2} \dot\lambda^{\tOnesmall,\vec{v}}_{\epnu}(s-r)           \Big)\\
      &=
-\delta(t-s)\sum_{v_1,v_2}^{\noise}\Big(
      \big(\Id^{\otimes2}-K_{2,\nu}^{\otimes2}\big) \dot\lambda^{\tOnesmall,\vec{v}}_{\epnu}(s-r)+
\int_0^1\partial_{r}\text{$\bfL$}_\tau \big(\X K_{2,\nu}^{\otimes2} \dot\lambda^{\tOnesmall,\vec{v}}_{\epnu} \big)(s-r)\rmd\tau 
      +\delta(s-r)\int_\R      K_{2,\nu}^{\otimes2} \dot\lambda^{\tOnesmall,\vec{v}}_{\epnu}(s-w)\rmd w           \Big) \\
      &=-\delta(t-s)\sum_{v_1,v_2}^{\noise}\Big(
      \big(\Id^{\otimes2}-K_{2,\nu}^{\otimes2}\big) \dot\lambda^{\tOnesmall,\vec{v}}_{\epnu}(s-r)+
\int_0^1\partial_{r}\text{$\bfL$}_\tau \big(\X K_{2,\nu}^{\otimes2} \dot\lambda^{\tOnesmall,\vec{v}}_{\epnu} \big)(s-r)\rmd\tau 
      \\&\qquad\qquad\qquad\qquad+\delta(s-r)\int_\R      \big(K_{2,\nu}^{\otimes2}-\Id^{\otimes2}\big) \dot\lambda^{\tOnesmall,\vec{v}}_{\epnu}(s-w)\rmd w        +\delta(s-r)\int_\R      \dot\lambda^{\tOnesmall,\vec{v}}_{\epnu}(s-w)\rmd w        \Big)
\end{equs}

Recall that we want to define $\E[\xi^{\tOnesmall}_\epmu]$ so that it goes to zero as $\mu$ goes to zero. Therefore, we integrate the previous equation between $0$ and $\mu$ to identify the five terms that contribute to $\E[\d_\eps^u\xi_\epmu^{\tOnesmall}]$, and set
\begin{equs}
\E[\d_\eps^u\xi_\epmu^{\tOnesmall,(\mathrm{I})}](t,s,r)&\eqdef-\delta(t-s)\sum_{v_1,v_2}^{\noise}\int_0^\mu\big(\Id^{\otimes2}-K_{2,\nu}^{\otimes2}\big) \dot\lambda^{\tOnesmall,\vec{v}}_{\epnu}(s-r)\rmd\nu\,,
\\
\E[\d_\eps^u\xi_\epmu^{\tOnesmall,(\mathrm{II})}](t,s,r)&\eqdef-\delta(t-s)\sum_{v_1,v_2}^{\noise}
\int_0^\mu\int_0^1\partial_{r}\text{$\bfL$}_\tau \big(\X K_{2,\nu}^{\otimes2} \dot\lambda^{\tOnesmall,\vec{v}}_{\epnu} \big)(s-r)\rmd\tau\rmd \nu\,,\\
\E[\d_\eps^u\xi_\epmu^{\tOnesmall,(\mathrm{III})}](t,s,r)&\eqdef-\delta(t-s)\delta(s-r)\sum_{v_1,v_2}^{\noise}\int_0^\mu
\int_\R \big( K_{2,\nu}^{\otimes2}-\Id^{\otimes2}\big) \dot\lambda^{\tOnesmall,\vec{v}}_{\epnu}(s-w) \rmd w\rmd\nu\,,\\
\E[\d_\eps^u\xi_\epmu^{\tOnesmall,(\mathrm{IV})}](t,s,r)&\eqdef\delta(t-s)\delta(s-r)\sum_{v_1,v_2}^{\noise}\int^1_\mu
\int_\R  \dot\lambda^{\tOnesmall,\vec{v}}_{\epnu}(s-w) \rmd w\rmd\nu\,.
\end{equs}
The above quantities verify 
\begin{equs}\label{eq:alltheterms}
    \E[\d^u_\eps\xi^{\tOnesmall}_\epnu](t,s,r)=\,&\E[\d_\eps^u\xi_\epmu^{\tOnesmall,(\mathrm{I})}](t,s,r)+\E[\d_\eps^u\xi_\epmu^{\tOnesmall,(\mathrm{II})}](t,s,r)+\E[\d_\eps^u\xi_\epmu^{\tOnesmall,(\mathrm{III})}](t,s,r)\\&+\E[\d_\eps^u\xi_\epmu^{\tOnesmall,(\mathrm{IV})}](t,s,r)+\delta(t-s)\delta(s-r)\d_\eps^u\mfc_\eps\,,
\end{equs}
where the \textit{counterterm} $\mfc_\eps$ is defined by 
\begin{equs}\label{eq:defCT}
\mfc_\eps\equiv\mfc_\eps(s)&\eqdef -\int_0^1
\int_\R  \dot\lambda^{\tOnesmall,\vec{v}}_{\epnu}(s-w) \rmd w\rmd\nu=-\int_0^1\int_\R \dot G_\nu(s-w)\Cov_\eps(s-w)\rmd w\rmd\nu\\&=\int_\R (G- G_1)(s-w)\Cov_\eps(s-w)\rmd w=(G-G_1)*\Cov_\eps(0)\,.
\end{equs}
Note thath by stationarity it does not depend on $s\in\R$.
\begin{remark}\label{rem:explanation_of_allterms}
We now discuss the five terms appearing in \eqref{eq:alltheterms}. 

\begin{itemize}
\item The first term is a remainder contribution coming from the fact that we want to convolve $\dot\lambda_\epnu^{\tOnesmall,\vec{v}}$ with $K^{\otimes 2}_{2,\nu}$ before localising. The convolution with $\big(\Id^{\otimes 2}-K^{\otimes 2}_{2,\nu}\big)$ yields a good factor that makes that the integral over $\mu$ is actually convergent -- see Lemma~\ref{lem:term1}.

\item The second term is a non-local remainder contribution resulting from the localisation. It comes with a linear time weight, and the good factor from the weight will allow us to show $\text{$\bfL$}_\tau \big(\X K_{2,\nu}^{\otimes2} \dot\lambda^{\tOnesmall,\vec{v}}_{\epnu} \big)$ is actually irrelevant -- see Lemma~\ref{lem:term2}.

\item  The third term is a remainder contribution coming from the fact that we do not want the kernel $K^{\otimes 2}_{2,\nu}$ to be present in the local part, in order to simplify the analysis of the counterterm in Appendix~\ref{sec:B}. As for the first term, the presence of $\big(\Id^{\otimes 2}-K^{\otimes 2}_{2,\nu}\big)$ yields a good factor ensuring that the integral over $\mu$ is convergent -- see Lemma~\ref{lem:term3}.

\item The fourth term is a local contribution with an integral in the scale $\nu$ being performed \textit{above} the effective scale $\mu$, which will thus be absolutely convergent -- see Lemma~\ref{lem:term4}. 

\auth{comment 1: this point was rewritten.}
\item The fifth term is the value of the local part of the expectation at $\mu=1$. This is the quantity\footnote{Modulo a choice of finite $\eps$-independent constant}, which, if it gave a divergent contribution as $\eps \downarrow 0$, would have to be subtracted in our equation - that is renormalised. This is why we call this term a counterterm. 

However, in the end we will show we do not need to renormalise and indeed show that this contribution is finite. By naive power counting, one might expect that $\mfc_\eps$ diverges like $\eps^{-1+2H}$. However, in Lemma~\ref{lem:CT}, we show that the counterterm is actually finite uniformly in $\eps>0$ when working with the stationary force. 
\end{itemize}    
\end{remark}

\begin{remark}\label{rem:explanation_of_last_term}
In Lemma~\ref{lem:CT} we also show that for the non-stationary force coefficients (the enhancement of the derivative of the fBM), this last term gives a time-dependent counterterm, which we denote $\mfc^+_\eps(t)$, and which would satisfy a point-wise in $t$ estimate $|\mfc^+_\eps(t)| \lesssim t^{-1+2H}$. 
In particular, the counterterm would produce an integrable divergence for small times.
\end{remark}

\begin{lemma}\label{lem:expDumbbell}
For $\text{$\stau$}=\big((\tOne,0,u)\big)$ and $c>0$ small enough, uniformly in $\epmu\in(0,1]$, we have
   \begin{equs}   \nnorm{\Kappa_{\eps,\mu}^{\stau}}_{2}\lesssim \eps^{u(\stau)(-1+c)}\mu^{|\stau|-u(\stau)c-r(\stau)}\,.
    \end{equs}
\end{lemma}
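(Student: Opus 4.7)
The plan is to use the decomposition \eqref{eq:alltheterms} of $\E[\partial_\eps^u \xi^{\tOnesmall}_{\eps,\mu}]$ into five summands, and to bound $\nnorm{\cdot}_{2}$ of each piece by $\eps^{u(-1+c)}\mu^{-1+2H-uc}$ separately. Since $\Kappa^{\stau}_{\eps,\mu}(t,s,r)=\E[\partial_\eps^u \xi^{\tOnesmall}_{\eps,\mu}](t,s,r)$ and $|\stau|=-1+2H$, $r(\stau)=0$, $u(\stau)=u$, this will yield the lemma.

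For the first four terms, the work is already packaged: the bounds
\begin{equs}
\nnorm{\E[\partial_\eps^u \xi^{\tOnesmall,(\mathrm{j})}_{\eps,\mu}]}_{2}\lesssim \eps^{u(-1+c)}\mu^{-1+2H-uc}\,,\quad j\in\{\mathrm{I},\mathrm{II},\mathrm{III},\mathrm{IV}\}\,,
\end{equs}
will be provided by Lemmas~\ref{lem:term1}--\ref{lem:term4} respectively. The inputs into these lemmas are the estimate \eqref{eq:bound_cov} (or rather its $\dot G_\nu$-weighted version) together with the factors $(\Id^{\otimes2}-K^{\otimes2}_{2,\nu})$ in (I) and (III), which produce integrable-in-$\nu$ gains, the linear weight $\X$ in (II) which promotes $\X K^{\otimes 2}_{2,\nu}\dot\zeta^{\tOnesmall,\vec v}_{\eps,\nu}$ to an irrelevant object, and the fact that the $\nu$-integral in (IV) starts at $\mu>0$.

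The main step, and the real novelty, is bounding the counterterm contribution $\delta(t-s)\delta(s-r)\,\partial_\eps^u \mfc_\eps(s)$. By the third estimate \eqref{eq:trick3} of Lemma~\ref{lem:tricks} applied with $\psi = \partial_\eps^u \mfc_\eps$, one has
\begin{equs}
\nnorm{\delta(t-s)\delta(s-r)\,\partial_\eps^u \mfc_\eps(s)}_{2}\lesssim \norm{K_{2,\mu}\,\partial_\eps^u \mfc_\eps}_{L^\infty_s}\,.
\end{equs}
In Appendix~\ref{sec:B} the refined pointwise bound
\begin{equs}
|\partial_\eps^u \mfc_\eps(s)|\lesssim \eps^{u(-1+c)}(s\vee \eps)^{-1+2H-uc}
\end{equs}
will be established (this is the content of \eqref{eq:estimateCTintro} together with its derivatives in $\eps$ generated via \eqref{eq:duch4_19}). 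Convolving with the probability-kernel-like $K_{2,\mu}=Q_\mu^{*2}$ at scale $\mu$ and using that $-1+2H-uc<0$ for $H\leqslant 1/2$ and $c$ small, one obtains
\begin{equs}
\norm{K_{2,\mu}\,\partial_\eps^u \mfc_\eps}_{L^\infty_s}\lesssim \eps^{u(-1+c)}(\mu\vee\eps)^{-1+2H-uc}\lesssim \eps^{u(-1+c)}\mu^{-1+2H-uc}\,,
\end{equs}
which is the required estimate.

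The main obstacle is precisely the counterterm: naive power counting suggests that $\mfc_\eps$ could diverge like $\eps^{-1+2H}$, which is exactly the scaling that (in a stationary setting) one would need to cancel with an explicit counterterm in the equation. The point, to be proven in Appendix~\ref{sec:B}, is that on a non-stationary noise such as fBm killed on $(-\infty,0]$, the integral defining $\mfc_\eps(s)$ in \eqref{eq:defCT} is actually $\eps$-uniformly finite for every fixed $s>0$, and its blow-up at $s=0$ is only of the (integrable) order $s^{-1+2H}$; this trade between a divergence in $\eps$ and an integrable singularity in $s$ is what makes the $K_{2,\mu}$-convolution convert it into the correct $\mu^{-1+2H}$ scaling, so that no renormalisation counterterm is needed in the equation.
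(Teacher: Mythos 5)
Your proposal follows the paper's proof essentially verbatim: the same five-term decomposition \eqref{eq:alltheterms}, the same appeals to Lemmas~\ref{lem:term1}--\ref{lem:term4} for the first four pieces, the same use of \eqref{eq:trick3} to reduce the counterterm to $\|K_{2,\mu}\partial_\eps^u\mfc_\eps\|_{L^\infty}$, and the same pointwise bound from Lemma~\ref{lem:CT}. The one place you compress is the final convolution estimate $\|K_{2,\mu}\partial_\eps^u\mfc_\eps\|_{L^\infty}\lesssim\eps^{u(-1+c)}\mu^{-1+2H-uc}$, which you state as immediate; the paper spends a few lines verifying it via positivity of $Q_\mu$, a rescaling, and the embedding $|\cdot|^{-1+2H-c}\in L^{1/(1-2H-c+\eta)}\hookrightarrow\mcC^{-1+2H-c}$, but the conclusion is the same.
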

\begin{proof}
The first four terms in \eqref{eq:alltheterms} are handled in Appendix~\ref{sec:C} -- see Lemmas~\ref{lem:term1}, \ref{lem:term2}, \ref{lem:term3}, and \ref{lem:term4}. It remains to deal with the norm $\nnorm{\bigcdot}_2$ of $\delta(t-s)\delta(s-r)\d_\eps^u\mfc_\eps(s)$. Since this is a local contribution, we can use \eqref{eq:trick3} to eliminate the two Dirac distributions, so that this last term is bounded by
\begin{equs}
    \| K_{2,\mu}\d_\eps^u\mfc_\eps\|_{L^\infty}\,.
\end{equs}
The proof that the counterterm is finite uniformly is $\eps>0$ is given in Appendix~\ref{sec:B} (see Lemma~\ref{lem:CT}).
\end{proof}

\subsection{Kolmogorov argument}
With the cumulant analysis in hand, we are now ready to conclude the proof of Theorem~\ref{thm:sto}. 
This is done by post-processing the estimate \eqref{eq:Boundcum}, combined with the following Kolmogorov type argument (see~\cite[Lemma 13.6]{Duch22} for a similar statement).
\begin{lemma}\label{lem:Kolmo}
Pick $\tau\in\mcT$, and suppose that for some $\eta>0$ and $c>0$ small enough we have
\begin{equs}
\max_{u\in\{0,1,2\}}\max_{r\in\{0,1\}}\sup_{\mu\in(0,1]}  \E\Big[\Big( \eps^{u(1-c)}\mu^{-|\tau|+uc+r+\eta}\norm{\d_\mu^r\big(K^{\otimes1+\mfs(\tau)}_{4,\mu}(\omega\d_\eps^u\xi^{\tau}_{\eps,\mu})\big)}_{L^\infty_tL^1_{s^\tau}}\Big)^P\Big]^{1/P}\lesssim_{P,\tau} 1\,.
\end{equs}
Then, it holds
  \begin{equs}
\max_{u\in\{0,1\}} \E\Big[\Big(\sup_{\mu\in(0,1]}\eps^{u(1-c)} \mu^{-|\tau|+uc+r+2\eta}\norm{K^{\otimes1+\mfs(\tau)}_{4,\mu}(\omega\d_\eps^u\xi^{\tau}_{\eps,\mu})}_{L^\infty_tL^1_{s^\tau}}\Big)^P\Big]^{1/P}\lesssim_{P,\tau} 1\,.
\end{equs}
\end{lemma}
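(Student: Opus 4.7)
The approach is a standard Kolmogorov-type chaining argument: dyadically discretise the scale parameter $\mu \in (0,1]$, control increments between consecutive dyadic scales using the hypothesis at $r=1$, and trade the additional $\mu^\eta$ in the conclusion for the summability of a geometric series. Concretely, set $\mu_n = 2^{-n}$ for $n\geq 0$ and introduce the rescaled quantity
\begin{equs}
Y_{\eps,\mu}^{u,\tau} \eqdef \eps^{u(1-c)}K_{4,\mu}^{\otimes 1+\mfs(\tau)}\d_\eps^u\xi_{\eps,\mu}^\tau,
\end{equs}
viewed as a random element of $L^\infty_t L^1_{s^\tau}$. The hypothesis reads $\|Y_{\eps,\mu}^{u,\tau}\|_{L^P(\Omega)} \lesssim \mu^{|\tau|-uc-\eta}$ and $\|\d_\mu Y_{\eps,\mu}^{u,\tau}\|_{L^P(\Omega)} \lesssim \mu^{|\tau|-uc-1-\eta}$.

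For $\mu \in [\mu_{n+1},\mu_n]$, the fundamental theorem of calculus gives $Y_{\eps,\mu}^{u,\tau} = Y_{\eps,\mu_n}^{u,\tau} - \int_\mu^{\mu_n}\d_\nu Y_{\eps,\nu}^{u,\tau}\rmd\nu$, so Minkowski's inequality yields
\begin{equs}
\E\Big[\sup_{\mu\in[\mu_{n+1},\mu_n]}\|Y_{\eps,\mu}^{u,\tau}\|^P\Big]^{1/P}
\lesssim \mu_n^{|\tau|-uc-\eta} + \int_{\mu_{n+1}}^{\mu_n}\nu^{|\tau|-uc-1-\eta}\rmd\nu \lesssim \mu_n^{|\tau|-uc-\eta}.
\end{equs}
Integration over the dyadic annulus produces no logarithmic loss: regardless of the sign of $|\tau|-uc-\eta$, the ratio $\mu_n/\mu_{n+1}=2$ keeps the implied constant uniform, the borderline case being the only one requiring a moment of attention. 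Setting $W_n \eqdef \mu_n^{-|\tau|+uc+\eta}\sup_{\mu\in[\mu_{n+1},\mu_n]}\|Y_{\eps,\mu}^{u,\tau}\|$, we therefore have $\|W_n\|_{L^P(\Omega)} \lesssim_{P,\tau} 1$ uniformly in $n\geq 0$.

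Since $\mu\mapsto\mu^{-|\tau|+uc+2\eta}$ is monotone on each dyadic annulus with $\mu_n/\mu_{n+1}=2$, for $\mu\in[\mu_{n+1},\mu_n]$ we have $\mu^{-|\tau|+uc+2\eta}\|Y_{\eps,\mu}^{u,\tau}\| \lesssim \mu_n^\eta W_n = 2^{-n\eta}W_n$. Bounding the supremum over $\mu$ crudely by the sum over $n$ then yields
\begin{equs}
\E\Big[\Big(\sup_{\mu\in(0,1]}\mu^{-|\tau|+uc+2\eta}\|Y_{\eps,\mu}^{u,\tau}\|\Big)^P\Big] \lesssim \sum_{n\geq 0}2^{-nP\eta}\E[W_n^P] \lesssim \sum_{n\geq 0}2^{-nP\eta}<\infty,
\end{equs}
since $P\eta > 0$, which is the claim. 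No step is truly delicate; the extra $\mu^\eta$ in the conclusion exists precisely to buy the summability needed to pass from pointwise-in-$\mu$ to uniform-in-$\mu$ moment control, exactly as in the proof of Kolmogorov's continuity theorem.
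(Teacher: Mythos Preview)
Your proof is correct and follows the standard Kolmogorov-type chaining that the paper alludes to; the paper itself does not give a proof of this lemma but only cites \cite[Lemma~13.6]{Duch22} for a similar statement. One cosmetic remark: the final displayed inequality is justified via $\sup_n a_n^P \le \sum_n a_n^P$ (with $a_n = 2^{-n\eta}W_n$), which is how your phrase ``bounding the supremum by the sum'' should be read --- had you instead bounded $\sup_n a_n \le \sum_n a_n$ before taking the $P$-th power, you would need Minkowski in $L^P(\Omega)$, which also works but gives a different intermediate expression.
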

\begin{proof}[of Theorem~\ref{thm:sto}]
In view of Lemma~\ref{lem:Kolmo}, to prove the theorem, it suffices to show that for every $P$ even, $\tau\in\mcT$, $r\in\{0,1\}$ and $u\in\{0,1,2\}$, 
\begin{equs}\label{eq:preresult}
    \E[\|\d_\mu^r\big(K^{\otimes1+\mfs(\tau)}_{4,\mu}(\omega\d_\eps^u\xi^\tau_{\eps,\mu})\big)\|^P_{L^\infty_tL^1_{s^\tau}}]^{1/P}&\lesssim \eps^{u(-1+c)}\mu^{|\tau|-uc-r-1/P}\,. 
\end{equs}
Let us prove \eqref{eq:preresult}. Fix $P\geqslant1$ even, $\tau\in\mcT$ and $r\in\{0,1\}$, and recall the support property of the force coefficient $\xi_\epmu^\tau(t,s^\tau)$ stated in Lemma~\ref{lem:support} (the hypothesis of Lemma~\ref{lem:support} is indeed verified, since $\xi^{\noise}_\epmu(t,s)=\delta(t-s)\xi_\eps(s)$ is supported on the diagonal). It implies that we can apply \eqref{eq:expdecay} in all the variables $s^\tau$ to gain a good factor when bounding the $L^1$ norms in the variables $s^\tau$ by some $L^\infty$ norms. We thus have
\begin{equs}    \E[\|K^{\otimes1+\mfs(\tau)}_{4,\mu}(\omega\d_\eps^u\d_\mu^r\xi^\tau_{\eps,\mu})\|^P_{L^\infty_tL^1_{s^\tau}}]&\lesssim \mu^{P\mfs(\tau)}\E[\|K^{\otimes1+\mfs(\tau)}_{4,\mu}(\omega\d_\eps^u\d_\mu^r\xi^\tau_{\eps,\mu})\|^P_{L^\infty_{t,s^\tau}}]\\
&\lesssim\mu^{(P-1)\mfs(\tau)-1}\E[ \|K^{\otimes1+\mfs(\tau)}_{3,\mu}(\omega\d_\eps^u\d_\mu^r\xi^\tau_{\eps,\mu})\|^P_{L^P_{t,s^\tau}}]
\\&= \mu^{(P-1)\mfs(\tau)-1}   \int_{(-\infty,1]\times\R^{\mfs(\tau)}}\E[\big(K^{\otimes1+\text{\scriptsize{$\mfs$}}(\tau)}_{3,\mu}(\omega\d_\eps^u\d_\mu^r\xi^\tau_\epmu)(t,s^\tau)\big)^P]\rmd t\rmd s^\tau\,,
\end{equs}
where to go from the first to the second line, we used the Sobolev embedding type inequality \eqref{eq:KmuLp}. 
Remember that the kernel of $K_{3,\mu}$ is supported on positive times, and that $\omega$ has compact support on say $[a,b]$. Therefore, the integral over $t$ is really an integral over $[a,1]$, so that it can be bounded by a supremum. Moreover, again, using the support properties of the force coefficient, we can bound the integral in $s^\tau$ by a supremum using \eqref{eq:expdecay} and gain a good factor. Combining these two observations yields
\begin{equs}    
\E[\|K^{\otimes1+\mfs(\tau)}_{4,\mu} &(\omega\d_\eps^u\d_\mu^r\xi^\tau_{\eps,\mu})\|^P_{L^\infty_tL^1_{s^\tau}}]\\
{}&\lesssim 
\mu^{P\mfs(\tau)-1}   \|\E[\big(K^{\otimes1+\mfs(\tau)}_{3,\mu}(\omega\d_\eps^u\d_\mu^r\xi^\tau_\epmu)(t,s^\tau)\big)^P]\|_{L_{t,s^\tau}^\infty}\\
&\lesssim\mu^{P\mfs(\tau)-1}   \|\E[\prod_{i=1}^P K^{\otimes1+\mfs(\tau)}_{3,\mu}(\omega\d_\eps^u\d_\mu^r\xi^\tau_\epmu)(t_{\tau_i},s^{\tau_i})]\|_{L_{t_{\tau_1},s^{\tau_1}}^\infty\cdots L_{t_{\tau_P},s^{\tau_P}}^\infty}\,.    
\end{equs}
Here, we will use the same trick as in the proof of Lemma~\ref{lem:tricks} to eliminate the weights $\omega$. Indeed, we can write for any smooth function $\lambda:R\rightarrow\R$ it holds
\begin{equs}
 K_{3,\mu}(\omega\lambda)(t)=\int_\R K_{3,\mu}(t-s)\omega(s)\lambda(s)\rmd s =\int_\R\mcP^\dagger_{3,\mu}\big(K_{3,\mu}(t-\bigcdot)\omega(\bigcdot)\big)K_{3,\mu}\lambda(s)\rmd s\,.
\end{equs}
When the derivatives in $\mcP_{3,\mu}^\dagger$ act on $K_{\mu,3}(t-\bigcdot)$, this will create some new kernels that are still in $L^1$ uniformly is $\mu$. Therefore, there exist some kernels $\big( A^{(i)}:i\in\{0,1,2,3\}  \big)$ belonging to $L^1$ uniformly in $\mu$ such that 
\begin{equs}
 K_{3,\mu}(\omega\lambda)(t)=\int_\R\sum_{i=0}^3 A^{(i)}(t-s) \mu^i\d_s^i\omega(s)K_{3,\mu}\lambda(s)\rmd s\,.
\end{equs}
Applying this principle $P$ times yields
\begin{equs}
     \|\E[\prod_{i=1}^P K^{\otimes1+\mfs(\tau)}_{3,\mu}(\omega\d_\eps^u\d_\mu^r\xi^\tau_\epmu)(t_{\tau_i},s^{\tau_i})]\|_{L_{t_{\tau_1},s^{\tau_1}}^\infty\cdots L_{t_{\tau_P},s^{\tau_P}}^\infty}\lesssim
      \|\E[\prod_{i=1}^P K^{\otimes1+\mfs(\tau)}_{3,\mu}\d_\eps^u\d_\mu^r\xi^\tau_\epmu(t_{\tau_i},s^{\tau_i})]\|_{L_{t_{\tau_1},s^{\tau_1}}^\infty\cdots L_{t_{\tau_P},s^{\tau_P}}^\infty}\,.
\end{equs}
Therefore, setting $\tilde\tau\eqdef(\tau,r,u)\in\mcT_{1,2}$ and introducing for any $p\in[P]$ the list of trees 
\begin{equs}
    \tilde\tau^p\eqdef\underbrace{(\tilde\tau,\cdots,\tilde\tau)}_{p\,\text{\scriptsize{$\mathrm{times}$}}}\in\widehat\mcT\,,
\end{equs}
we end up with
\begin{equs}
\E[\|K^{\otimes1+\mfs(\tau)}_{4,\mu} &(\omega\d_\eps^u\d_\mu^r\xi^\tau_{\eps,\mu})\|^P_{L^\infty_tL^1_{s^\tau}}]
\lesssim\mu^{P\mfs(\tau)-1} \sum_{\pi \in\mcP([P])}\prod_{k=1}^{|\pi|}
\|K^{\otimes\mfl(\tilde\tau^{|\pi_k|})}_{3,\mu}\Kappa_\epmu^{\tilde\tau^{|\pi_k|}}(t_{\tilde\tau^{|\pi_k|}},s^{\tilde\tau^{|\pi_k|}})\|_{L^\infty(\R^{|\pi_k|}\times\R^{\mfs(\tilde\tau^{|\pi_k|})})}
\end{equs}
We are now ready to conclude, since using again the Sobolev embedding type inequality \eqref{eq:KmuLp}, we can bound the $L^\infty$ norms in $s^{\tilde\tau^{|\pi_k|}}$ and in all the variables in $t_{\tilde\tau^{|\pi_k|}}$ but one by some $L^1$ norms. We obtain
\begin{equs}
    \E[\|K^{\otimes1+\mfs(\tau)}_{4,\mu}(\omega\d_\mu^r\xi^\tau_{\eps,\mu})\|^P_{L^\infty_tL^1_{s^\tau}}]&\lesssim 
    \mu^{P\mfs(\tau)-1} \sum_{\pi \in\mcP([P])}\mu^{-P\mfs(\tau)-(P-|\pi|)}\prod_{k=1}^{|\pi|}
\nnorm{\Kappa_\epmu^{\tilde\tau^{|\pi_k|}}}_2\\
&\lesssim \eps^{Pu(-1+c)}\mu^{P|\tau|-Puc-Pr-1}\,,
\end{equs}
where on the second line we used \eqref{eq:Boundcum}. This is the desired result for $r=0$, and for $r=1$, we can also conclude using \eqref{eq:derivKmu} that allows us to plug the kernels $K_{4,\mu}$ inside the scale derivative.    
\end{proof}

\begin{appendix}

\section{Properties of regularizing kernels}\label{app:A}
We recall here some very important properties of the operators $K_{N,\mu}$ stated in \cite{Duch21}. 

We start with some general properties of the regularizing kernels.
\begin{lemma}
Let $N \geqslant 1$, then 
\begin{equs} \label{eq:normKmu}\norm{K_{N,\mu}}_{\mcL^{\infty,\infty}}&\leqslant1   \,,\\
\norm{\partial_t^{M}K_{N,\mu}}_{\mcL^{\infty,\infty}}&\lesssim\mu^{-M}\;\mathrm{for}\;\mathrm{all} \;M\leqslant N\,,
\label{eq:spacederivKmu}
\\
\label{eq:KmuLp} \norm{K_{N,\mu}}_{\mcL^{p,\infty}}&\lesssim \mu^{-1/p} \;\mathrm{for}\;\mathrm{all} \;p\geqslant1/ N\,,\\\norm{\mcP_{N,\mu}\d_\mu K_{N,\mu}}_{\mcL^{\infty,\infty}}&\lesssim\mu^{-1}\,,\label{eq:derivKmu} \end{equs}
uniformly in $\mu\in(0,1]$. 

Moreover, there exists $C_{N} < \infty$ such that for all $\mu,\nu\in(0,1]$, $\psi\in\mcD(\R)$ and , we have 
\begin{equs}  
\norm{K_{N,\mu}\psi}_{L^\infty}&\leqslant  \bigg(1\vee\Big(2\big(\nu/\mu\big)-1\Big)^N\bigg)  \norm{K_{N,\nu}\psi}_{L^\infty}\,,\label{eq:Kmunu}
\\
    \norm{K_{N,\mu}(\Id-K_{N,\nu})\psi}_{L^\infty}&\leqslant  N\big(\nu/\mu\big)\norm{ K_{N,\nu}\psi}_{L^\infty}\;\mathrm{for}\;\nu\leqslant\mu\,,\label{eq:comKmuKnu}
\end{equs}

\end{lemma}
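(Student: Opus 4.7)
My plan is to exploit the explicit Gamma-type formula $K_{N,\mu}(t) = \mu^{-N} \frac{t^{N-1}}{(N-1)!} e^{-t/\mu} \mathbf{1}_{t \geqslant 0}$ obtained by iterating $K_{N,\mu} = Q_\mu^{*N}$. Since $K_{N,\mu} \geqslant 0$ and $\int K_{N,\mu} = 1$, Young's inequality $\|K * f\|_{L^\infty} \leqslant \|K\|_{L^1}\|f\|_{L^\infty}$ gives \eqref{eq:normKmu} immediately. For \eqref{eq:KmuLp}, I would use the duality $\|K_{N,\mu}\|_{\mcL^{p,\infty}} = \|K_{N,\mu}\|_{L^{p'}}$ with $1/p + 1/p' = 1$, together with the scaling $K_{N,\mu}(t) = \mu^{-1} K_{N,1}(t/\mu)$ to extract the factor $\mu^{-1/p}$, the condition $p \geqslant 1/N$ ensuring $K_{N,1} \in L^{p'}$.

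For the derivative bounds I would rely on the operator identities $\mcP_\mu K_{N,\mu} = K_{N-1,\mu}$ and $\mcP_{N,\mu} K_{N,\mu} = \mathrm{Id}$ (with convention $K_{0,\mu} = \delta$). The first yields $\mu \partial_t K_{N,\mu} = K_{N-1,\mu} - K_{N,\mu}$, which iterates to $\partial_t^M K_{N,\mu} = \mu^{-M} \sum_{j=0}^M (-1)^{M-j} \binom{M}{j} K_{N-j,\mu}$ for $M \leqslant N$; applying \eqref{eq:normKmu} termwise proves \eqref{eq:spacederivKmu}. Differentiating $\mcP_{N,\mu} K_{N,\mu} = \mathrm{Id}$ in $\mu$ and using $\partial_\mu \mcP_{N,\mu} = N \mcP_{N-1,\mu} \partial_t$ yields $\partial_\mu K_{N,\mu} = -N \partial_t K_{N+1,\mu}$. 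Composing with $\mcR_\mu = \mcP_{4,\mu}$ gives $\mcR_\mu \partial_\mu K_{N,\mu} = -N \partial_t K_{N-3,\mu}$ (in the regime $N \geqslant 4$ that is actually used in the paper), and one final application of \eqref{eq:spacederivKmu} with $M = 1$ finishes \eqref{eq:derivKmu}.

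The two comparison estimates \eqref{eq:Kmunu} and \eqref{eq:comKmuKnu} are the heart of the lemma, and both hinge on Fourier-side manipulations. Setting $\lambda = \nu/\mu$ and using $\hat Q_\mu(\xi) = (1+i\mu\xi)^{-1}$, the identity $\hat K_{N,\mu}(\xi) / \hat K_{N,\nu}(\xi) = \big(\lambda + (1-\lambda) \hat Q_\mu(\xi)\big)^N$ shows that $K_{N,\mu} = H_{\mu,\nu} * K_{N,\nu}$ with $H_{\mu,\nu} = (\lambda \delta + (1-\lambda) Q_\mu)^{*N}$. The total variation of $\lambda \delta + (1-\lambda) Q_\mu$ equals $|\lambda| + |1-\lambda|$, which is $1$ for $\lambda \in [0,1]$ and $2\lambda - 1$ for $\lambda > 1$; raising to the $N$-th power and applying Young delivers \eqref{eq:Kmunu}. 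For \eqref{eq:comKmuKnu}, I would write $\mathrm{Id} - K_{N,\nu} = (\mcP_{N,\nu} - \mathrm{Id}) K_{N,\nu} = \sum_{k=1}^N \binom{N}{k} \nu^k \partial_t^k K_{N,\nu}$, so that with $\phi \eqdef K_{N,\nu}\psi$ one has $K_{N,\mu}(\mathrm{Id} - K_{N,\nu}) \psi = \sum_{k=1}^N \binom{N}{k} \nu^k (\partial_t^k K_{N,\mu}) \phi$; applying \eqref{eq:spacederivKmu} to each summand produces factors $(\nu/\mu)^k$, all dominated by $\nu/\mu$ when $\nu \leqslant \mu$.

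The main obstacle is recovering the sharp constants stated in \eqref{eq:Kmunu} and \eqref{eq:comKmuKnu}, in particular the bare $N$ on the right-hand side of \eqref{eq:comKmuKnu} instead of a clumsier binomial sum. A cleaner route for that is to bound $\|((1+i\nu\xi)^N - 1)/(1+i\mu\xi)^N\|_{L^1}$ directly on the Fourier side by exploiting the factorisation $(1+i\nu\xi)^N - 1 = i\nu\xi \, P_N(\nu\xi)$ for a polynomial $P_N$ of degree $N-1$; the qualitative $\nu/\mu$ scaling, however, is already clear from the termwise estimate above, and this is all that is needed downstream.
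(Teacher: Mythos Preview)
Your proposal is correct. For \eqref{eq:normKmu}--\eqref{eq:derivKmu} the paper simply cites \cite{Duch21}, and your self-contained arguments are the natural ones; your observation that \eqref{eq:derivKmu} really requires $N\geqslant 4$ (so that $\mcP_{4,\mu}K_{N+1,\mu}=K_{N-3,\mu}$ is still a bounded convolution operator) is also right and matches the only regime used in the paper. For \eqref{eq:Kmunu} your Fourier identity $\hat K_{N,\mu}/\hat K_{N,\nu}=(\lambda+(1-\lambda)\hat Q_\mu)^N$ with $\lambda=\nu/\mu$ is precisely the operator identity $Q_\mu=\big(\tfrac{\nu}{\mu}+(1-\tfrac{\nu}{\mu})Q_\mu\big)Q_\nu$ that the paper invokes, and it already delivers the sharp constant. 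The only genuine methodological difference is \eqref{eq:comKmuKnu}: the paper uses the swap identity $Q_\mu(\Id-Q_\nu)=(\nu/\mu)\,Q_\nu(\Id-Q_\mu)$ together with the telescoping $\Id-Q_\nu^{*N}=(\Id-Q_\nu)\sum_{M=0}^{N-1}K_{M,\nu}$, obtaining $K_{N,\mu}(\Id-K_{N,\nu})=(\nu/\mu)(\Id-Q_\mu)K_{N-1,\mu}\sum_{M=1}^{N}K_{M,\nu}$; this extracts a single $\nu/\mu$ factor in one stroke and gives a constant of order $N$. Your route via $\Id-K_{N,\nu}=(\mcP_{N,\nu}-\Id)K_{N,\nu}=\sum_{k=1}^N\binom{N}{k}\nu^k\partial_t^k K_{N,\nu}$ and \eqref{eq:spacederivKmu} is valid but yields the looser constant $\sum_k\binom{N}{k}C_k$; as you say, only the $\nu/\mu$ scaling is used downstream. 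One small caveat: the ``cleaner Fourier route'' you sketch at the end would need the total variation of the inverse Fourier transform, not the $L^1$ norm of the multiplier itself, so it is not quite as direct as suggested---but this does not affect your main argument.
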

\begin{proof}    
\eqref{eq:normKmu} is an immediate consequence of the definition of $K_{N,\mu}$. \eqref{eq:spacederivKmu} is \cite[Lemma~4.14 (A)]{Duch21}. \eqref{eq:KmuLp} is a minor modification of \cite[Lemma~4.14 (D)]{Duch21} (see also \cite[Lemma~10.52 (C)]{Duch21}). \eqref{eq:derivKmu} is \cite[Lemma~4.17]{Duch21}, \eqref{eq:Kmunu} follows from
\begin{equs}
    Q_\mu=\bigg(\frac{\nu}{\mu}+\Big(1-\frac{\nu}{\mu}\Big)Q_\mu\bigg)Q_\nu
\end{equs}
and \eqref{eq:comKmuKnu} was first observed in \cite{Duch23} in an elliptic context. 
The idea is that we have
\begin{equs}
Q_\mu(\Id-Q_\nu)&=(1+\mu\partial_{t})^{-1}\big(1-  (1+\nu\partial_{t})^{-1}\big)=\nu\partial_{t}  (1+\mu\partial_{t})^{-1}  (1+\nu\partial_{t})^{-1}\\
{}&=(\nu/\mu)   (1+\nu\partial_{t})^{-1}\big(1-  (1+\mu\partial_{t})^{-1}\big)=(\nu/\mu)Q_\nu(\Id-Q_\mu)\,,
\end{equs}
so that 
\begin{equs}
 K_{N,\mu}(\Id-K_{N,\nu}) & =Q_\mu^{*N}(\Id-Q_\nu^{*N})\\
 {}&=Q_\mu(\Id-Q_\nu)K_{N-1,\mu}\sum_{M=0}^{N-1}K_{M,\nu}=(\nu/\mu)(\Id-Q_\mu)K_{N-1,\mu}\sum_{M=1}^N K_{M,\nu}\,,
\end{equs}
and we can conclude, using \eqref{eq:Kmunu} to replace $K_{N-1,\mu}$ by $K_{N-M,\nu}$ for $M\in[N]$.
\end{proof}
The following lemma is a technical result regarding the kernels $K_{N,\mu}$. It turns out that, despite the fact that $K_{N,\mu}$ is not compactly supported, thanks to its exponential decay, the convolution $K_{N,\mu}\lambda$ inherits certain properties from a  compactly supported function $\lambda$. 
\begin{lemma}
   Fix $t_0\in[0,1]$, $N\in\N$, and consider $\lambda\in C^\infty(\R)$ with $\mathrm{supp} \,\lambda\subset[t_0-c\mu,t_0]$ for some universal constant $c>0$. Then, the following holds uniformly in $t_0\in[0,1]$ and $\mu\in[0,1]$:
   \begin{equs}
         \norm{K_{N,\mu}\lambda}_{L^p}&\lesssim_c  \mu^{1/p} \norm{K_{N,\mu}\lambda}_{L^\infty}\;\;\text{for every}\;p\in[1,\infty]\,,\label{eq:expdecay}\\         
       \norm{(t_0-\bigcdot)K_{N,\mu}\lambda}_{L^1}&\lesssim_c  \mu \norm{K_{N,\mu}\lambda}_{L^1}\,. \label{eq:expdecayPol}
   \end{equs}   
\end{lemma}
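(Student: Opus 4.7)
The plan is to reduce both estimates to the scale $\mu = 1$ via an affine change of variables, observe that $K_{N,1}\tilde\lambda$ has the explicit form ``polynomial of degree at most $N-1$ times $e^{-\tau}$'' on the half-line $\tau \geqslant 0$, and then conclude via the equivalence of all norms on a finite-dimensional vector space.

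Concretely, setting $\tilde\lambda(\sigma) \eqdef \lambda(t_0 + \mu\sigma)$, which is supported in $[-c, 0]$, and $\tilde f(\tau) \eqdef (K_{N,\mu}\lambda)(t_0 + \mu\tau)$, the scaling identity $K_{N,\mu}(\mu u) = \mu^{-1}K_{N,1}(u)$ gives $\tilde f = K_{N,1}\tilde\lambda$. Elementary changes of variable then reduce \eqref{eq:expdecay} and \eqref{eq:expdecayPol} to the scale-invariant statements $\|\tilde f\|_{L^p(\R)} \lesssim_{c,N,p} \|\tilde f\|_{L^\infty(\R)}$ and $\||\bigcdot|\,\tilde f\|_{L^1(\R)} \lesssim_{c,N} \|\tilde f\|_{L^1(\R)}$. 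Using the explicit kernel $K_{N,1}(u) = u^{N-1}e^{-u}\1_{\{u \geqslant 0\}}/(N-1)!$ together with the support of $\tilde\lambda$ and the binomial expansion of $(\tau-\sigma)^{N-1}$, one checks that for $\tau \geqslant 0$ one has $\tilde f(\tau) = e^{-\tau}P(\tau)$, where $P$ is a real polynomial of degree at most $N-1$ whose coefficients are moments of $\tilde\lambda$ against $(-\sigma)^k e^\sigma$. In particular $\tilde f$ is supported in $[-c,\infty)$.

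The decisive observation is then that on the $N$-dimensional vector space of polynomials of degree at most $N-1$, each of the functionals $P \mapsto \sup_{\tau \geqslant 0}e^{-\tau}|P(\tau)|$, $P \mapsto \|e^{-\bigcdot}P\|_{L^p([0,\infty))}$, $P \mapsto \|e^{-\bigcdot}P\|_{L^1([0,\infty))}$ and $P \mapsto \|\bigcdot\, e^{-\bigcdot}P\|_{L^1([0,\infty))}$ is a norm (positive homogeneous, subadditive, and vanishing only on $P \equiv 0$), so they are pairwise equivalent with constants depending only on $N$ and $p$. Applied to the polynomial built above, this controls the $\tau \geqslant 0$ contribution to both inequalities. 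The remaining part $\tau \in [-c, 0]$ is handled by the trivial bounds $|\tilde f(\tau)| \leqslant \|\tilde f\|_{L^\infty}$ and $|\tau| \leqslant c$; reassembling the two ranges and scaling back yields the two advertised estimates. The only genuine calculation is the polynomial structure of $\tilde f$ on $[0, \infty)$, and the only ``clever'' input is finite-dimensional norm-equivalence, so no single step is especially delicate.
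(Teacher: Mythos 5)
Your proof is correct, and it takes a genuinely different route from the paper. The paper's argument (following \cite{Duch23}) truncates $K_{N,\mu}$ by writing $\Id = \psi_\mu + \int_1^\infty \partial_\tau \psi_{\tau\mu}\,\rmd\tau$ with the cutoff $\psi_\tau = 1 - \chi(\bigcdot/\tau)$, reads off support information for each truncated piece, and then controls each piece using the operator identity $K_{N,\mu}\lambda = \big(\mcP_{N,\mu}(\psi K_{N,\mu})\big)*K_{N,\mu}\lambda$ together with the exponential tail of $K_{N,\mu}$ to make the $\tau$-integral converge. That decomposition is robust: it only uses exponential decay and works verbatim for the higher-dimensional heat-type kernels appearing in the SPDE flow literature. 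You instead rescale to $\mu = 1$, use the explicit Gamma density $K_{N,1}(u) = u^{N-1}e^{-u}\1_{\geqslant 0}(u)/(N-1)!$, and observe that for $\tau \geqslant 0$ the function $K_{N,1}\tilde\lambda(\tau)$ is $e^{-\tau}$ times a polynomial of degree $\leqslant N-1$ lying in a fixed $N$-dimensional space; finite-dimensional norm equivalence then gives both estimates at once, with the compact range $[-c,0]$ handled trivially. Your argument is shorter and more transparent in this one-dimensional setting, but it leans on the exact closed form of the $N$-fold convolution of a one-sided exponential (and hence does not transfer to, say, the heat kernel), whereas the paper's argument is the one that scales to the general flow-approach framework. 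Both proofs yield constants depending only on $c$, $N$ and $p$, as required, and both correctly handle the degenerate endpoint $\mu = 0$ (where the support hypothesis forces $\lambda \equiv 0$).
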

\begin{remark}
    The observation that the exponential decay of $K_{N,\mu}$ allows to transfer some support properties was made in \cite[Lemma 5.6]{Duch23}, whose statement corresponds to \eqref{eq:expdecayPol}. In the following, we adapt the proof of \cite[Lemma 5.6]{Duch23} to the case of \eqref{eq:expdecay} and, for the readers convenience, we also recall how \eqref{eq:expdecayPol} is proven.  Moreover, the proof of \eqref{eq:expdecayPol} generalises in the exact same way to the case of $\lambda\in C^\infty(\R^2)$ supported on $s,t$ such that $|t-s|\leqslant c\mu$, and in this case we have
\begin{equs}
           \norm{(t-s)K^{\otimes2}_{N,\mu}\lambda(t,s)}_{L^\infty_t L_s^1}&\lesssim_c  \mu \norm{K^{\otimes2}_{N,\mu}\lambda(t,s)}_{ L^\infty_t L_s^1}\,. \label{eq:expdecayPol1}
\end{equs}

\end{remark}
\begin{proof}
We first prove \eqref{eq:expdecay}. 
Let $\chi$ be as per Definition~\ref{def:G_mu}, and for $\tau\geqslant0$, set $\psi_\tau(t)\eqdef1-\chi(t/\tau)$, note $\psi_{\tau}$ is supported before $2\tau$ and is non-constant on $[\tau,2\tau]$ only. 
For $\tau\in[1,\infty)$, define the operator $\psi_{\tau\mu}K_{N,\mu}$ with kernel given by $\psi_{\tau\mu}(t)K_{N,\mu}(t)$, so that $K_{N,\mu}(t)=\lim_{\tau\uparrow\infty}\psi_{\tau\mu}K_{N,\mu}(t)$. 
Then, 
    \begin{equs}
           {}\norm{ K_{N,\mu}\lambda}_{L^p}\lesssim     \norm{ (\psi_\mu K_{N,\mu})\lambda}_{L^p}+\int_1^\infty   \norm{ \partial_\tau(\psi_{\tau\mu} K_{N,\mu})\lambda}_{L^p}\rmd \tau\,.
    \end{equs}
By the support properties of $\psi_\mu$, $(\psi_\mu K_{N,\mu})\lambda$ is supported on $[t_0-c\mu,t_0+2\mu]$, giving
\begin{equs}
     \norm{ (\psi_\mu K_{N,\mu})\lambda}_{L^p}&\lesssim\mu^{1/p}
      \norm{ (\psi_\mu K_{N,\mu})\lambda}_{L^\infty}=\mu^{1/p}
      \norm{ \mcP_\mu^N(\psi_\mu K_{N,\mu})*K_{N,\mu}\lambda}_{L^\infty}\\
      &\lesssim \mu^{1/p}\norm{\mcP_\mu^N(\psi_\mu K_{N,\mu})}_{\mcL^{\infty,\infty}}\norm{K_{N,\mu}\lambda}_{L^\infty}
\end{equs}
Similarly, $\partial_\tau(\psi_{\tau\mu} K_{N,\mu})\lambda$ is supported on $[t_0-c\mu+\tau\mu,t_0+2\tau\mu]$, so that
\begin{equs}
     \norm{ \partial_\tau(\psi_{\tau\mu} K_{N,\mu})\lambda}_{L^p}&\lesssim(\tau\mu)^{1/p}
      \norm{ \partial_\tau(\psi_{\tau\mu} K_{N,\mu})\lambda}_{L^\infty}=(\tau\mu)^{1/p}
      \norm{ \mcP_\mu^N\partial_\tau(\psi_{\tau\mu} K_{N,\mu})*K_{N,\mu}\lambda}_{L^\infty}\\
      &\lesssim (\tau\mu)^{1/p}\norm{\mcP_\mu^N\partial_\tau(\psi_{\tau\mu} K_{N,\mu})}_{\mcL^{\infty,\infty}}\norm{K_{N,\mu}\lambda}_{L^\infty}\,.
\end{equs}
Now, by definition,
\begin{equs}\label{eq:somestep1}
    \norm{\mcP_\mu^N(\psi_\mu K_{N,\mu})}_{\mcL^{\infty,\infty}}\lesssim1\,,
\end{equs}
while the exponential decay of $K_{N,\mu}$ combined with the fact that $\partial_\tau(\psi_{\tau\mu})$ is supported on $[\tau\mu,2\tau\mu]$ imply that
\begin{equs}\label{eq:somestep2}
    \norm{\mcP_\mu^N\partial_\tau(\psi_{\tau\mu} K_{N,\mu})}_{\mcL^{\infty,\infty}}\lesssim\tau^{-r}
\end{equs}
for every $r\geqslant1$. Combining all the previous estimates, one finally obtains
  \begin{equs}
           {}\norm{ K_{N,\mu}\lambda}_{L^1}\lesssim     \mu^{1/p}\Big(1+\int_1^\infty\tau^{-2}\rmd\tau\Big)\norm{ K_{N,\mu}\lambda}_{L^\infty}\,,
    \end{equs}
which gives \eqref{eq:expdecay}. 

We turn to \eqref{eq:expdecayPol}, and start with the estimate
    \begin{equs}
           {}\norm{ (t_0-\bigcdot)K_{N,\mu}\lambda}_{L^1}\lesssim     \norm{(t_0-\bigcdot) (\psi_\mu K_{N,\mu})\lambda}_{L^1}+\int_1^\infty   \norm{ (t_0-\bigcdot)\partial_\tau(\psi_{\tau\mu} K_{N,\mu})\lambda}_{L^1}\rmd \tau
    \end{equs}
The support properties of $\psi_\mu$, $(\psi_\mu K_{N,\mu})\lambda$ and $\partial_\tau(\psi_{\tau\mu} K_{N,\mu})\lambda$  then allow us to trade the weights $(t_0-\bigcdot)$ for some factors of $\tau$ and $\mu$, giving 
 \begin{equs}
           {}\norm{ (t_0-\bigcdot)K_{N,\mu}\lambda}_{L^1}&\lesssim   \mu  \norm{ (\psi_\mu K_{N,\mu})\lambda}_{L^1}+\mu\int_1^\infty  \tau \norm{ \partial_\tau(\psi_{\tau\mu} K_{N,\mu})\lambda}_{L^1}\rmd \tau\\
           &\lesssim\mu\Big(\norm{\mcP_\mu^N(\psi_\mu K_{N,\mu})}_{\mcL^{1,1}}+\int_1^\infty\tau
           \norm{\mcP_\mu^N\partial_\tau(\psi_{\tau\mu} K_{N,\mu})}_{\mcL^{1,1}}\Big)\norm{K_{N,\mu}\lambda}_{L^1}\,.
    \end{equs}
Using that, since the kernels are absolutely integrable, the $\mcL^{1,1}$ norm is the same as the $\mcL^{\infty,\infty}$ norm, along with the estimates \eqref{eq:somestep1} and \eqref{eq:somestep2}, then gives us the desired result.  
\end{proof}

\section{Estimation of the counterterms}\label{sec:B}
In this section, we estimate both the counterterms of the stationary and non-stationary force coefficients $\xi^{\tOnesmall}_\epmu$ and $\zeta^{\tOnesmall}_\epmu$. 
As mentioned in the last point of Remark~\ref{rem:explanation_of_allterms} and in Remark~\ref{rem:explanation_of_last_term}, we show that the stationary counterterm is indeed finite, while the non-stationary counterterm yields a finite Itô-Stratonovich correction of the form $\rmd t^{2H}$.

First, recall that in view of \eqref{eq:defCT}, the stationary counterterm is equal to 
\begin{equs}
\mfc_\eps(s)\equiv\mfc_\eps^{\tt S}(s)&=\rho_\eps^{\otimes2}*\big(\Id\otimes (G-G_1)\big)*\Cov^{\tt S}(s,s)=
\rho_\eps^{\otimes2}*\big(\Id\otimes (G-G_1)\big)*\d^{\otimes2}\Cov_W^{\tt S}(s,s)\,,
\end{equs}
which does not depend on $s\in\R$.

The counterterm for $\zeta^{\tOnesmall}_\epmu$, which for $s\in(0,1]$ we denote by $\mfc^+_\eps(s)$, is defined in the same way, but with $\Cov$ replaced by the non-stationary covariance $\Cov^+(t,s)=\d_t\d_s\Cov^+_W(t,s)$.

\begin{lemma}\label{lem:CT} 
   For every $u\in\{0,1,2\}$ and $c>0$ small enough, uniformly in $\eps\in(0,1]$ and $s\in\R$, we have
\begin{equs}
    |\d^u_\eps\mfc_\eps^{\tt S}(s)|\lesssim \eps^{(-1+c)u}\quad\text{and}\quad|\d^u_\eps\mfc^+_\eps(s)|\lesssim \eps^{(-1+c)u}(1\wedge |s|+\eps)^{-1+2H-cu}\,.
\end{equs}
\end{lemma}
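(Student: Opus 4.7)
I would first recast \eqref{eq:defCT} via integration by parts. Writing $\Cov_\eps = \d_t\d_w K_\eps$ with $K_\eps \eqdef \rho_\eps^{\otimes 2}*\Cov_W$ a smooth function, and using $(G-G_1)'=\delta_0-\chi'$ with $\text{supp}\,\chi'\subset[1,2]$, an integration by parts in $w$ (the boundary terms vanishing because $G-G_1$ has compact support and $K_\eps$ has at most polynomial growth) gives
\begin{equs}\label{eq:plan_CT}
\mfc_\eps(s) = \d_1 K_\eps(s,s) - \int_1^2 \chi'(u)\,\d_1 K_\eps(s, s-u)\,\rmd u\,,
\end{equs}
where $\d_1$ denotes the partial derivative with respect to the first argument. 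The fractional Brownian self-similarity $\Cov_W(\lambda\bigcdot,\lambda\bigcdot) = \lambda^{2H}\Cov_W$ together with the change of variables $s'=\eps x$, $w'=\eps y$ yields the scaling identity $K_\eps(s,w) = \eps^{2H}\Psi(s/\eps, w/\eps)$ where $\Psi(x,y) := \int\!\!\int \rho(x-x')\rho(y-y')\Cov_W(x',y')\rmd x'\rmd y'$ is a smooth function supported on $\{x,y \geq -R\}$, with $R$ the support radius of $\rho$. Iterating $\eps$-derivatives using $\d_\eps\rho_\eps = \eps^{-1}\tilde\rho_\eps$ (as in the proof of \eqref{eq:duch4_19}) produces $\d_\eps^u \d_1 K_\eps(s,w) = \eps^{2H-1-u}\Theta_u(s/\eps, w/\eps)$ for smooth $\Theta_u$ that inherit the support property of $\Psi$.

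For $s \leq 1 - R\eps$ (and analogously $-\eps < s \leq 0$), the second argument $(s-u)/\eps < -R$ lies outside $\text{supp}\,\Theta_u$ for every $u\in[1,2]$, so the integral in \eqref{eq:plan_CT} vanishes identically and $\d_\eps^u\mfc_\eps(s) = \eps^{2H-1-u}\Theta_u(s/\eps, s/\eps)$. The pointwise bound $|\Theta_u(x,x)| \lesssim 1$ on $x\in[-R, R]$ together with $|\Theta_u(x,x)| \lesssim x^{2H-1-u}$ for $x \geq R$---which follows from the cancellation of the contribution of the $|t-w|^{2H}$-piece of $\Cov_W$ at the diagonal by antisymmetry under symmetric mollification, leaving only the $t^{2H}$-piece contributing $\sim x^{2H-1}$ and its scaling-reductions---yields $|\d_\eps^u\mfc_\eps(s)| \lesssim \max(\eps, s)^{2H-1-u}$ on $s \geq -\eps$. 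This implies the claim, since the ratio to $\eps^{(-1+c)u}(s+\eps)^{-1+2H-cu}$ equals $(\eps/\max(\eps,s))^{(1-c)u} \leq 1$. The case $s \leq -\eps$ gives $\mfc_\eps \equiv 0$ by support.

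The main obstacle is the regime $s \geq 1-R\eps$, where the $\chi'$-integral contributes nontrivially. I would exploit the structural decomposition
\begin{equs}
\d_1\Cov_W(s, w) = 2C_HH\,s^{2H-1} \,-\, 2C_HH\,|s-w|^{2H-1}\,\text{sgn}(s-w), \qquad s, w > 0,
\end{equs}
whose first piece depends only on $s$ and is therefore killed by $\int_1^2\chi'(u)\rmd u = \chi(2)-\chi(1) = 1$, while the second is uniformly bounded in $s$ for $u\in[1,2]$ (being $\leq 2C_HH\cdot 2^{2H-1}$). This structure lifts to a splitting $\Theta_u = \Theta_u^{\mathrm{diag}}(x) + \Theta_u^{\mathrm{off}}(x,y)$ with $\Theta_u^{\mathrm{diag}}$ annihilated by the $\int\chi'=1$ identity and $\Theta_u^{\mathrm{off}}$ uniformly bounded in the stationary regime $s/\eps \geq 1-R$. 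Together these give $|\d_\eps^u\mfc_\eps(s)| \lesssim 1$ uniformly on $s \geq 1-R\eps$, which is stronger than the claimed $\eps^{(-1+c)u}(1+\eps)^{-1+2H-cu} \sim \eps^{(-1+c)u} \geq 1$ in this regime. The small $c$-loss throughout is absorbed exactly as in \eqref{eq:duch4_19}, stemming from the non-positivity of $\tilde\rho = -(x\rho)'$ appearing in the $\d_\eps$-derivatives.
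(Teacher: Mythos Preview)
Your approach is correct and follows essentially the same route as the paper: both reduce $\mfc_\eps(s)$ to $\d_1K_\eps(s,s)$ (the paper via $(\Id\otimes G)*\Cov=\d_1\Cov_W$ and the symmetry $\d_1K_\eps(s,s)=\frac12\frac{\rmd}{\rmd s}K_\eps(s,s)$; you via integration by parts using $(G-G_1)'=\delta-\chi'$), both exploit the homogeneity $\Cov_W(\lambda\cdot,\lambda\cdot)=\lambda^{2H}\Cov_W$, and both rely on the same cancellation of the $|t-w|^{2H}$-contribution at the diagonal (the paper phrases it as ``the third term is independent of $s$, so vanishes under $\rmd/\rmd s$''; you phrase it as antisymmetry of $|s'-w'|^{2H-1}\mathrm{sgn}(s'-w')$ under symmetric mollification --- these are equivalent). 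The paper then writes everything in terms of $x=\eps/s$ and an explicit integral $\mathrm{I}^1_{-\infty}(x)$ and does case analysis, while you use $x=s/\eps$ and the Euler-operator structure of $\d_\eps$ acting on $\eps^{2H-1}\Psi(s/\eps,s/\eps)$, which is arguably cleaner and even yields the sharper bound $\max(\eps,s)^{2H-1-u}$ directly.

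Two points where your write-up needs care. First, in the regime $s\in[1-R\eps,2+R\eps]$ the ``annihilation by $\int\chi'=1$'' of the diagonal part is not exact, because $P((s-u)/\eps)=\int_0^\infty\rho((s-u)/\eps-y')\rmd y'$ need not equal $1$ when $s-u\in[-R\eps,R\eps]$; moreover, boundary effects from $\1_{w'>0}$ can make the pointwise $\d_\eps$-derivative of the $\chi'$-integrand as large as $O(\eps^{-1})$ on that set of $u$'s. Both effects are harmless because the bad set has $u$-measure $O(\eps)$, so the $u$-integral still gives $O(1)$, but this should be said. Second, your closing remark about a ``$c$-loss absorbed as in \eqref{eq:duch4_19}'' is unnecessary: your bounds are already strictly stronger than the stated estimate (the ratio to the target is $(\eps/\max(\eps,s))^{(1-c)u}\leqslant 1$, as you noted), so no interpolation with \eqref{eq:duch4_19} is needed.
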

\begin{proof}
The terms involving $G_1$ are necessarily bounded. Indeed, the kernel of $G_1$ is smooth and supported outside the diagonal, and the divergences of the covariances are supported on the diagonal, so that $(\Id\otimes G_1)*\Cov^{\tt S/+}$ is therefore a continuous function of its arguments.

We therefore focus on the terms with $G$. Because $G$ is the inverse of the derivative, they read
\begin{equs}
    \rho_\eps^{\otimes2}*(\Id\otimes G)*\d^{\otimes2}\Cov_W^{\tt S/+}(s,s)&=
     \rho_\eps^{\otimes2}*(\d\otimes\Id)*\Cov_W^{\tt S/+}(s,s)\\
&=\int_{\R^2}\rho_\eps(s-w_1)\rho_\eps(s-w_2)\d_{w_1}\Cov^{\tt S/+}_W(w_1,w_2)\rmd w_1\rmd w_2\\
    &=\int_{\R^2}\d_s\rho_\eps(s-w_1)\rho_\eps(s-w_2)\Cov^{\tt S/+}_W(w_1,w_2)\rmd w_1\rmd w_2\\
    &=\frac12\frac{\rmd}{\rmd s}\int_{\R^2}\rho_\eps(s-w_1)\rho_\eps(s-w_2)\Cov^{\tt S/+}_W(w_1,w_2)\rmd w_1\rmd w_2
    \,.
\end{equs}
In going from the second to the third line, we first integrated by part in $w_1$, and then used the relation $\d_{w_1}\rho_\eps(s-w_1)=-\d_{s}\rho_\eps(s-w_1)$, and to go to the last line we use the fact that $\Cov^{\tt S/+}_W$ is symmetric in its two arguments.
Note that is the case of $\Cov^{\tt S}_W$, by translation invariance, this expression is the derivative of a constant, and is thus $0$. 
This concludes the proof of the finiteness of $\mfc_\eps^{\tt S}(s)$, and we thus focus on $\mfc_\eps^{+}(s)$. Note that for this counterterm, we also control the derivatives in $\eps$, control on the $\eps$ derivatives of $\mfc_\eps^{\tt S}(s)$ can be obtained in the same way.

We have obtained
\begin{equs}
     {}& \rho_\eps^{\otimes2}*(\Id\otimes G)*\Cov^+(s,s)
    =\frac12\frac{\rmd}{\rmd s}\int_{\R^2}\rho(r_1)\rho(r_2)\Cov^+_W(s-\eps r_1,s-\eps r_2)\rmd r_1\rmd r_2\\
&=\frac{C_H}2\frac{\rmd}{\rmd s}\int_{\R^2}\rho(r_1)\rho(r_2)\1_{\geqslant0}(s-\eps r_1)\1_{\geqslant0}(s-\eps  r_2)\big(|s-\eps r_1|^{2H}+|s-\eps r_2|^{2H}-\eps^{2H}|r_1-r_2|^{2H}\big)\rmd r_1\rmd r_2    
    \,.
\end{equs}
Here, two situations can occur. Either the derivative in $s$ hits an indicator function, and turns it into a Dirac, or it hits the covariance term. In the first case, one obtains two contributions that are actually equal to zero. Indeed, they are of the form
\begin{equs}    {}&\int_{\R^2}\rho(r_1)\rho(r_2)\1_{\geqslant0}(s-\eps r_1)\delta(s-\eps  r_2)\big(|s-\eps r_1|^{2H}+|s-\eps r_2|^{2H}-|\eps r_1-\eps r_2|^{2H}\big)\rmd r_1\rmd r_2  \\
&\qquad=\int_{\R}\rho(r_1)\rho(s/\eps)\1_{\geqslant0}(s-\eps r_1)\big(|s-\eps r_1|^{2H}+|s-s|^{2H}-|\eps r_1-s|^{2H}\big)\rmd r_1=0\,.
\end{equs}
In the second case, observe that the third term in the covariance of the fractional Brownian motion is independent of $s$, and thus vanishes when the derivative in $s$ is taken. This yields
\begin{equs}
\rho_\eps^{\otimes2}*(&\Id\otimes G)*\Cov^+(s,s)\\
&=HC_H\int_{\R^2}\rho(r_1)\rho(r_2)\1_{\geqslant0}(s-\eps r_1)\1_{\geqslant0}(s-\eps  r_2)\big(|s-\eps r_1|^{-1+2H}+|s-\eps r_2|^{-1+2H}\big)\rmd r_1\rmd r_2  \\
    &=2HC_HP(s/\eps)\int_{-\infty}^{s/\eps}\rho(r)|s-\eps r|^{-1+2H}\rmd  r =2HC_H s^{-1+2H}P(s/\eps)\int_{-\infty}^{s/\eps}\rho(r)|1-\eps r/s|^{-1+2H}\rmd   r\\
    &=2HC_H s^{-1+2H} P(1/x)\rmI_{-\infty}^1(x)
    \,.
\end{equs}
Here, we set $x\equiv \eps/s$, denoted by $P(b)\eqdef\int_{-\infty}^b\rho(h)\rmd h$ the primitive of $\rho$, and introduced the integral
\begin{equs}
    \rmI_a^b(x)\eqdef \int_a^b \mcS_x\rho(h)|1-h|^{-1+2H}\rmd h=\int_{a/x}^{b/x}\rho(r)|1-xr|^{-1+2H}\rmd r\,.
\end{equs}
In order to conclude, it remains to bound the integral $\rmI_{-\infty}^1(x)$.\\
Moreover, remember that we want to keep track of the derivatives in $\eps$ of the counterterm. To do so, note that with the notation $x=\eps/s$, we have $\d_\eps= s^{-1}\frac{\rmd}{\rmd x}$. The derivative in $x$ can act on the integral $\rmI_{-\infty}^1(x)$, but also on $P(1/x)$. This term is easy to handle since 
\begin{equs}
    s^{-1}\frac{\rmd}{\rmd x} P(1/x)=-s^{-1}x^{-2}\rho(1/x)\,,
\end{equs}
and since $\rho$ is compactly supported, one has $|\rho(h)|\lesssim (1+|h|)^{-N}$ for every $N\geqslant0$, so that  
\begin{equs}
  \Big|  s^{-1}\frac{\rmd}{\rmd x} P(1/x)\Big|\lesssim s^{-1}x^{-2}(1+1/x)^{-2}\lesssim s^{-1}(1+x)^{-1}=(s+\eps)^{-1}\,.
\end{equs}

\noindent$\bullet\,$\underline{Case 1): $\eps\gg s$.} It corresponds to the regime $|x|$ large, and we observe that $\rmI_{-\infty}^1(x)$ decays at large $x$:
\begin{equs}
|\rmI_{-\infty}^1(x)|=\Big|\int_{-\infty}^{1/x}\rho(r)|1-xr|^{-1+2H}\rmd r\Big|=\Big|\int_0^\infty\rho(1/x-{q})|x{q}|^{-1+2H}\rmd {q}\Big|\lesssim |x|^{-1+2H}\,.
\end{equs}
In the same way, we obtain 
\begin{equs}
|s|^{-u}\Big|\frac{\rmd^u}{\rmd x^u}\rmI_{-\infty}^1(x)\Big|\lesssim |s|^{-u} |x|^{-1+2H-u}=\eps^{-u}|x|^{-1+2H}\lesssim( |s|+\eps)^{-1+2H}|x|^{-1+2H}\,.
\end{equs}

The case $\eps\ll s$, which corresponds to the regime $x$ small, is more subtle, and we need to further split $\rmI^1_{-\infty}(x)=\rmI^1_{1/2}(x)+\rmI^{1/2}_{-\infty}(x)$, and to study the two contributions separately.

\noindent$\bullet\,$\underline{Case 2a): $\eps\ll s$ and $h\geqslant1/2$.} The integral between $1/2$ and $1$ is easier to handle since, in this case, we can benefit from the fact that $\rho$ is compactly supported:
\begin{equs}
|\rmI^{1}_{1/2}(x)|=|x|^{-1}\Big|\int^{1}_{1/2}\rho(h/x)|1-h|^{-1+2H}\rmd h\Big|\lesssim |x|^{-1}\int_{1/2}^1\Big(1+\frac{1}{2|x|}\Big)^{-N}|1-h|^{-1+2H}\rmd {h}\,.    
\end{equs}
Setting $N=1$ yields $|\rmI^{1}_{1/2}(x)|\lesssim (1+|x|)^{-1}\lesssim1$.\\
Similarly, when acting with the derivative in $\eps$, we obtain for some other smooth compactly supported function $\tilde \rho$
\begin{equs}
|s|^{-u}\Big|\frac{\rmd^u}{\rmd x^u}\rmI_{1/2}^1(x)\Big|&= |s|^{-u} |x|^{-1-u}
\Big|\int^{1}_{1/2}\tilde\rho(h/x)|1-h|^{-1+2H}\rmd h\Big|\lesssim |s|^{-u}|x|^{-1-u}(1+1/|x|)^{-N}\\
&\lesssim |s|^{-u}(1+|x|)^{-1-u}\lesssim (|s|+\eps)^{-u}
\,,
\end{equs}
where on the second line we set $N=1+u$.

\noindent$\bullet\,$\underline{Case 2b): $\eps\ll s$ and $h\leqslant1/2$.} The integral between $-\infty$ and $1/2$ is the most difficult term. Indeed, here, we do not benefit from the decay of $\rho$. On the other hand, we are in the sector where the term $|1-h|^{-1+2H}$ is smooth. Therefore, we have
\begin{equs}
    |\rmI^{1/2}_{-\infty}(x)|=\Big|\int_{-\infty}^{1/2}\mcS_x\rho(h)|1-h|^{-1+2H}\rmd h\Big|\lesssim  \int_{-\infty}^{1/2}|\mcS_x\rho|(h)\rmd h \lesssim 1\,.
\end{equs}
However, such a naive approach breaks down when the derivative in $\eps$ is introduced, since using $\partial_x\mcS_x\rho=x^{-1}\mcS_x\tilde\rho $ for some other smooth compactly supported function $\tilde \rho$, we will never be able to eliminate the $\eps^{-1}$. Instead, observe that computing carefully $\tilde \rho$, one has $\tilde\rho=\theta^\prime$ for $\theta=-\Id\rho$. On the other hand, it holds
\begin{equs}
    \d_h \big(\mcS_x f\big)(h)=x^{-1}\big(\mcS_x f^\prime\big)(h)\,,
\end{equs}
so that we have obtained the relation 
\begin{equs}
    \d_x \mcS_x\rho(h)=\frac{\rmd}{\rmd h}\mcS_x\theta(h)\,.
\end{equs}
Now, taking advantage that we are integrating on $(-\infty,1/2]$ where $|1-h|^{-1+2H}$ is smooth, we can integrate by parts. This yields
\begin{equs}
    |s|^{-1}\Big|\frac{\rmd}{\rmd x}\rmI^{1/2}_{-\infty}(x)\Big|&=|s|^{-1}\Big|\int_{-\infty}^{1/2}\frac{\rmd}{\rmd h}\mcS_x\theta(h)|1-h|^{-1+2H}\rmd h\Big|
    \\&\lesssim |s|^{-1}|\mcS_x\theta(1/2)|+|s|^{-1}
    \Big|\int_{-\infty}^{1/2}\mcS_x\theta(h)|1-h|^{-2+2H}\rmd h\Big|\\
    &\lesssim |s|^{-1}|x|^{-1}(1+|x|)^{-N}+s^{-1}\int_{-\infty}^{1/2}|\mcS_x\eta|(h)\rmd h\\
    &\lesssim |s|^{-1}\lesssim (|s|+\eps)^{-1}\,.
\end{equs}
In the case $u=0$, we do not seek to optimise in the same way in the second derivative, and just obtain in a similar way
\begin{equs}
   | s|^{-2}\Big|\frac{\rmd^2}{\rmd x^2}\rmI^{1/2}_{-\infty}(x)\Big|&\lesssim \eps^{-1}(|s|+\eps)^{-1}\,,
\end{equs}
which is sufficient for our purpose.
\end{proof}
\section{Estimates on the expectation of \TitleEquation{\tOne}{tOne}}\label{sec:C}

\begin{lemma}\label{lem:term1}
  Uniformly in $\epmu\in(0,1]$, we have for $c>0$ small enough
    \begin{equs}        \norm{K_{2,\mu}^{\otimes3}\E[\d_\eps^u\xi_\epmu^{\tOnesmall,(\mathrm{I})}](t,s,r)}_{L^\infty_t L_s^1 L_{r}^1}\lesssim 
    \eps^{u(-1+c)}\mu^{-1+2H-uc}\,.
    \end{equs}
\end{lemma}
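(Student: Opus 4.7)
The plan has four linked steps. First, I would absorb the Dirac mass $\delta(t-s)$ using the localisation trick \eqref{eq:trick2}: writing $\E[\d_\eps^u\xi_\epmu^{\tOnesmall,(\mathrm{I})}](t,s,r)=-\delta(t-s)\varphi(s,r)$ with $\varphi(s,r)=\sum_{v_1,v_2}^{\noise}\int_0^\mu(\Id^{\otimes 2}-K_{2,\nu}^{\otimes 2})\dot\zeta^{\tOnesmall,\vec v}_{\epnu}(s,r)\,\rmd\nu$, and matching $(t_1,t_2,s_1)=(t,r,s)$, Lemma~\ref{lem:tricks} reduces the claim to
\[
\int_0^\mu\|K_{2,\mu}^{\otimes 2}(\Id^{\otimes 2}-K_{2,\nu}^{\otimes 2})\dot\zeta^{\tOnesmall,\vec v}_{\epnu}\|_{L^\infty_s L^1_r}\,\rmd\nu\lesssim\eps^{u(-1+c)}\mu^{-1+2H-uc}.
\]

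Second, I would extract a gain $\nu/\mu$ from the projector $\Id^{\otimes 2}-K_{2,\nu}^{\otimes 2}$, uniformly in $\nu\leqslant\mu$. Decomposing $\Id^{\otimes 2}-K_{2,\nu}^{\otimes 2}=(\Id-K_{2,\nu})\otimes\Id+K_{2,\nu}\otimes(\Id-K_{2,\nu})$, and invoking the commutator identity
\[
K_{2,\mu}(\Id-K_{2,\nu})=(\nu/\mu)(\Id-Q_\mu)Q_\mu(K_{1,\nu}+K_{2,\nu})
\]
obtained in the proof of \eqref{eq:comKmuKnu}, together with the uniform boundedness of $(\Id-Q_\mu)Q_\mu$ and the contractivity of $K_{2,\mu}$, $K_{1,\nu}$, $K_{2,\nu}$ on both $L^\infty$ and $L^1$ (noting that the untouched tensor factor carries a contractive $K_{2,\mu}$), gives
\[
\|K_{2,\mu}^{\otimes 2}(\Id^{\otimes 2}-K_{2,\nu}^{\otimes 2})F\|_{L^\infty_s L^1_r}\lesssim(\nu/\mu)\|F\|_{L^\infty_s L^1_r}
\]
for every $F$ and every $\nu\leqslant\mu$. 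This is the mechanism that renders the $\nu$-integral convergent at $\nu=0$.

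Third, I would control the unsmoothed input pointwise in $\nu$. Since $|\dot G_\nu(s-r)|\lesssim\nu^{-1}\mathbf{1}_{|s-r|\in[\nu,2\nu]}$,
\[
\|\dot\zeta^{\tOnesmall,\vec v}_{\epnu}\|_{L^\infty_s L^1_r}\lesssim\sup_{s\in\R,\,|s-r|\in[\nu,2\nu]}|\Cov_\eps^{\vec v}(s,r)|\lesssim\eps^{u(-1+c)}(\nu+\eps)^{-2+2H-uc},
\]
the pointwise bound on $\Cov_\eps^{\vec v}$ being obtained from the same interplay between the homogeneity \eqref{eq:homogeneity} of $\Cov$ and the small-$c$ trade-off \eqref{eq:duch4_19} for derivatives of $\mcS_\eps\rho$ that is used in Lemma~\ref{lem:basecaseCum}, but specialised to pointwise evaluation rather than integration against $K_{2,\mu}^{\otimes 2}$. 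Plugging this into the integral from the second step and splitting at $\nu=\eps\wedge\mu$,
\begin{equs}
\int_0^\mu\frac{\nu}{\mu}(\nu+\eps)^{-2+2H-uc}\,\rmd\nu&\lesssim\frac{1}{\mu}\Big(\eps^2(\eps\wedge\mu)^{-2+2H-uc}+\int_{\eps\wedge\mu}^\mu\nu^{-1+2H-uc}\,\rmd\nu\Big)\\
&\lesssim\mu^{-1+2H-uc},
\end{equs}
where we used that $-1+2H-uc>0$ for $c$ small enough. Multiplying by the $\eps^{u(-1+c)}$-factor from the pointwise bound yields the claim.

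The delicate point is the pointwise control on $\Cov_\eps^{\vec v}$ in the third step: it amounts to turning the averaged bound \eqref{eq:bound_cov} into a pointwise-in-$(s,r)$ statement while preserving the $\eps^{u(-1+c)}\nu^{-uc}$ balance characteristic of Duch's small-$c$ trade-off between $\eps$-derivatives and scale-$\mu$ regularity. Everything else reduces to the standard manipulations of the regularising-kernel estimates collected in Appendix~\ref{app:A}.
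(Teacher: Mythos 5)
Your overall architecture agrees with the paper: strip the Dirac with \eqref{eq:trick2}, harvest a factor $\nu/\mu$ from $\Id^{\otimes 2}-K_{2,\nu}^{\otimes 2}$ via the commutator identity behind \eqref{eq:comKmuKnu}, estimate the $\nu$-dependent input, and integrate. Your step 2 is correct: from $K_{N,\mu}(\Id-K_{N,\nu})=(\nu/\mu)(\Id-Q_\mu)K_{N-1,\mu}\sum_{M=1}^{N}K_{M,\nu}$ and the contractivity of all factors on both $L^\infty$ and $L^1$, the bound $\|K_{2,\mu}^{\otimes 2}(\Id^{\otimes 2}-K_{2,\nu}^{\otimes 2})F\|_{L^\infty_s L^1_r}\lesssim(\nu/\mu)\|F\|_{L^\infty_s L^1_r}$ does hold. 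This is where you genuinely diverge from the paper: the paper instead keeps a $K_{2,\nu}^{\otimes 2}$ on the right-hand side (i.e.\ bounds $K_{2,\mu}(\Id-K_{2,\nu})\lesssim(\nu/\mu)K_{2,\nu}$) and then estimates $\|K_{2,\nu}^{\otimes2}\dot\zeta^{\tOnesmall,\vec v}_{\epnu}\|_{L^\infty_sL^1_r}$ by pushing the kernels onto the covariance and invoking the averaged estimate \eqref{eq:bound_cov2}.

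The gap is in your step 3: the claimed pointwise bound
\begin{equs}
\sup_{|s-r|\in[\nu,2\nu]}|\Cov_\eps^{\vec v}(s,r)|\lesssim\eps^{u(-1+c)}(\nu+\eps)^{-2+2H-uc}
\end{equs}
is false. Because the noise is non-stationary (the process $W$ is cut off at $t=0$), the distribution $\Cov=\partial_t\partial_s\Cov_W$ contains boundary terms of the form $C_H\,4H\,s^{2H-1}\1_{s\geq 0}\,\delta(t)$ and its mirror. After mollification these contribute, for $s\gg\eps$ and $|r|\lesssim\eps$, a pointwise piece of $\Cov_\eps(s,r)$ of order $s^{2H-1}\rho_\eps(r)\sim s^{2H-1}\eps^{-1}$. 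With $|s-r|\sim\nu\gg\eps$ this is $\sim\nu^{2H-1}\eps^{-1}$, which dominates $\nu^{-2+2H}$ whenever $\nu\gg\eps$. So the supremum you invoke actually diverges as $\eps\downarrow 0$. The intermediate $L^1_r$ norm $\|\dot\zeta^{\tOnesmall,\vec v}_{\epnu}\|_{L^\infty_sL^1_r}$ is in fact still of the right size, because this boundary piece lives on a set of $r$-measure $O(\eps)$ and the $L^1_r$ integration kills the $\eps^{-1}$; but bounding $L^1_r$ by (measure)$\times$(sup), as your display does, throws that gain away. To repair this you would have to split $\Cov_\eps$ into its bulk and boundary pieces and estimate the $L^1_r$ integral directly, which is more work than the paper's route. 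The paper's choice to keep the $K_{2,\nu}^{\otimes 2}$ and to use the already-established averaged bound \eqref{eq:bound_cov2} avoids ever needing a pointwise bound on $\Cov_\eps^{\vec v}$ and handles the boundary terms automatically.
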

\begin{proof}
First, the Dirac distribution in the definition of $\E[\d_\eps^u\xi_\epmu^{\tOnesmall,(\mathrm{I})}]$ can be eliminated using \eqref{eq:trick2}, which yields

    \begin{equs}    \label{eq:stepfromlemterm3}    \norm{K_{2,\mu}^{\otimes3}\E[\d_\eps^u\xi_\epmu^{\tOnesmall,(\mathrm{I})}](t,s,r)}_{L^\infty_t L_s^1 L_{r}^1}&\lesssim
   \sum_{v_1,v_2}^{\noise} \int_0^\mu    \norm{K_{2,\mu}^{\otimes2}(\Id^{\otimes2}-K_{2,\nu}^{\otimes2})\dot\lambda_\epnu^{\tOnesmall,\vec{v}}(s-r)}_{L_s^\infty L_{r}^1}\rmd\nu\\  &\lesssim\mu^{-1}\sum_{v_1,v_2}^{\noise} \int_0^\mu\nu\norm{K_{2,\nu}^{\otimes2}\dot\lambda_\epnu^{\tOnesmall,\vec{v}}(s-r)}_{L_s^\infty L_{r}^1}\rmd\nu\,,
    \end{equs}
where in the second inequality we used \eqref{eq:comKmuKnu}.

Here, we repeat the argument of the proof of Lemma~\ref{lem:tricks} in order to move the operator $K_{2,\nu}^{\otimes2}$ to the covariance, and write 
\begin{equs}    K_{2,\nu}^{\otimes2}\dot\lambda_\epnu^{\tOnesmall,\vec{v}}(s-r)&=\int_{\R^2}K_{2,\nu}(s-w_1)K_{2,\nu}(r-w_2) \dot G_\mu(w_1-w_2)\Cov^{v_1,v_2}_\eps
   (w_1-w_2)\rmd w_1\rmd w_2\\   &=\int_{\R^2}\big(\mcP^\dagger_{2,\nu,w_1}\otimes\mcP^\dagger_{2,\nu,w_2}\big)\big(K_{2,\nu}(s-w_1)K_{2,\nu}(r-w_2) \dot G_\mu(w_1-w_2)\big)K^{\otimes2}_{2,\nu}\Cov^{v_1,v_2}_\eps
   (w_1-w_2)\rmd w_1\rmd w_2\,,
\end{equs}
where $\mcP^\dagger_{2,\nu,w_i}$ denotes the action of $\mcP^\dagger_{2,\nu}$ at the level of the variable $w_i$. Again, as in the proof of Lemma~\eqref{lem:tricks}, in view of \eqref{eq:heat1} and \eqref{eq:spacederivKmu}, the kernel $\big(\mcP^\dagger_{2,\nu,w_1}\otimes\mcP^\dagger_{2,\nu,w_2}\big)\big(K_{2,\nu}(s-w_1)K_{2,\nu}(r-w_2) \dot G_\mu(w_1-w_2)\big)$ is integrable uniformly in $\nu$. Sacrificing the integrals over $w_1,w_2$ and the $L^1$ norm in $r$ to integrate this kernel, we end up with 
\begin{equs}    \label{eq:astepinthelemmas}\norm{K_{2,\nu}^{\otimes2}\dot\lambda_\epnu^{\tOnesmall,\vec{v}}(s-r)}_{L_s^\infty L_{r}^1}\lesssim \norm{K_{2,\nu}^{\otimes2}\Cov_\eps^{v_1,v_2}(s-r)}_{L_{s,r}^\infty}\lesssim\eps^{u(-1+c)}\nu^{-2+2H-uc}\,,
\end{equs}
where on the second inequality we used \eqref{eq:bound_cov2}. Thus, finally, 
\begin{equs}    \norm{K_{2,\mu}^{\otimes3}\E[\d_\eps^u\xi_\epmu^{\tOnesmall,(\mathrm{I})}](t,s,r)}_{L^\infty_t L_s^1 L_{r}^1}&\lesssim\eps^{u(-1+c)}\mu^{-1}\int_0^\mu\nu^{-1+2H-uc}\rmd\nu\lesssim\eps^{u(-1+c)}\mu^{-1+2H-uc}\,,
\end{equs}
where to conclude we use the fact that the integral over $\nu$ is convergent.
\end{proof}
\begin{lemma}\label{lem:term2}
  Uniformly in $\epmu\in(0,1]$, we have for $c>0$ small enough
    \begin{equs}        \norm{K_{2,\mu}^{\otimes3}\E[\d_\eps^u\xi_\epmu^{\tOnesmall,(\mathrm{II})}](t,s,r)}_{L^\infty_t L_s^1 L_{r}^1}\lesssim   
    \eps^{u(-1+c)}\mu^{-1+2H-uc}\,.
    \end{equs}
\end{lemma}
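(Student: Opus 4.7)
The plan is to mimic the strategy of the proof of Lemma~\ref{lem:term1}, but now exploit the weight $\X(s,r)=s-r$ present in the integrand to extract an additional power of $\nu$ that makes the $\nu$-integral convergent. First I would apply \eqref{eq:trick2} to eliminate the Dirac $\delta(t-s)$, reducing the estimate to controlling
\[
\sum_{v_1,v_2}^{\noise}\int_0^\mu\int_0^1 \norm{K_{2,\mu}^{\otimes 2}\partial_r \bfL_\tau\big(\X K_{2,\nu}^{\otimes 2}\dot\zeta^{\tOnesmall,\vec{v}}_{\epnu}\big)}_{L^\infty_s L^1_r}\rmd\tau\,\rmd\nu\,.
\]

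The essential move is to avoid estimating $\partial_r \bfL_\tau(\X f)$ pointwise in $\tau$: a direct computation shows that such a bound carries a non-integrable $\tau^{-1}$ singularity at $\tau=0$, stemming from the fact that the identity \eqref{eq:localization} is well-defined only after the $\tau$-integration. Instead, since convolution commutes with differentiation, I would transfer $\partial_r$ onto the outer mollifier, writing $K_{2,\mu}^{\otimes 2}\partial_r = K_{2,\mu}\otimes(\partial_r K_{2,\mu})$, and invoke \eqref{eq:spacederivKmu} to bound $\norm{\partial_r K_{2,\mu}}_{\mcL^{\infty,\infty}}\lesssim \mu^{-1}$. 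A change of variables $w = s + (r-s)/\tau$ shows directly that $\bfL_\tau$ preserves the $L^\infty_s L^1_r$ norm of $\X f$ independently of $\tau$, i.e.\ $\norm{\bfL_\tau(\X f)}_{L^\infty_s L^1_r} = \norm{\X f}_{L^\infty_s L^1_w}$, and thus
\[
\norm{K_{2,\mu}^{\otimes 2}\partial_r \bfL_\tau(\X f)}_{L^\infty_s L^1_r}\lesssim \mu^{-1}\norm{\X f}_{L^\infty_s L^1_r}\,.
\]

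Next, since $f = K_{2,\nu}^{\otimes 2}\dot\zeta^{\tOnesmall,\vec{v}}_{\epnu}$ has effective support on $\{|s-r|\lesssim \nu\}$ thanks to the support properties of $\dot G_\nu$, estimate \eqref{eq:expdecayPol1} lets one trade the weight $\X$ for a factor of $\nu$:
\[
\norm{\X K_{2,\nu}^{\otimes 2}\dot\zeta^{\tOnesmall,\vec{v}}_{\epnu}}_{L^\infty_s L^1_r} \lesssim \nu\,\norm{K_{2,\nu}^{\otimes 2}\dot\zeta^{\tOnesmall,\vec{v}}_{\epnu}}_{L^\infty_s L^1_r}\,.
\]
The remaining factor is already controlled by the chain of estimates appearing in the proof of Lemma~\ref{lem:term1} (see \eqref{eq:astepinthelemmas}), yielding $\eps^{u(-1+c)}\nu^{-2+2H-uc}$. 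Assembling the pieces, the $\tau$-integral contributes only a bounded constant and the $\nu$-integral produces
\[
\mu^{-1}\,\eps^{u(-1+c)}\int_0^\mu \nu^{-1+2H-uc}\,\rmd\nu \lesssim \eps^{u(-1+c)}\mu^{-1+2H-uc}\,,
\]
which is the claimed bound (the $\nu$-integral is convergent at $0$ provided $c$ is taken small enough that $2H-uc>0$). The one technical subtlety, and the only real obstacle, is the apparent $\tau^{-1}$ singularity of $\partial_r\bfL_\tau(\X f)$; pushing the $\partial_r$ across the outer convolution before taking norms is precisely what regularizes this singularity and makes the $\tau$-integral manageable.
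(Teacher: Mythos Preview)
Your proposal is correct and follows essentially the same route as the paper's proof: eliminate the Dirac via \eqref{eq:trick2}, pass $\partial_r$ onto the outer $K_{2,\mu}$ to extract $\mu^{-1}$ via \eqref{eq:spacederivKmu}, use that $\bfL_\tau$ preserves the $L^\infty_s L^1_r$ norm to dispose of the $\tau$-integral, trade the weight $\X$ for a factor $\nu$ via \eqref{eq:expdecayPol1}, and then feed into \eqref{eq:astepinthelemmas}. Your explicit remark about the non-integrable $\tau^{-1}$ singularity that would arise from estimating $\partial_r\bfL_\tau(\X f)$ directly is a nice clarification of why the derivative must be transferred to the outer kernel before taking norms.
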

\begin{proof}
    Again we start by getting rid of the Dirac distribution using \eqref{eq:trick2}, which yields
\begin{equs}    \norm{K_{2,\mu}^{\otimes3}\E[\d_\eps^u\xi_\epmu^{\tOnesmall,(\mathrm{II})}](t,s,r)}_{L^\infty_t L_s^1 L_{r}^1}&\lesssim\sum_{v_1,v_2}^{\noise} 
\int_0^\mu\int_0^1
\norm{
\big(K_{2,\mu}\otimes \partial_tK_{2,\mu}\big)
\text{$\bfL$}_\tau \big(\X K_{2,\nu}^{\otimes2} \dot\lambda^{\tOnesmall,\vec{v}}_{\epnu} \big)(s-r)}_{L_s^\infty L_{r}^1}\rmd\tau\rmd \nu\\
&\lesssim
\mu^{-1}\sum_{v_1,v_2}^{\noise} \int_0^\mu\int_0^1
\norm{
\text{$\bfL$}_\tau \big(\X K_{2,\nu}^{\otimes2} \dot\lambda^{\tOnesmall,\vec{v}}_{\epnu} \big)(s-r)}_{L_s^\infty L_{r}^1}\rmd\tau\rmd \nu\\
&\lesssim
\mu^{-1}\sum_{v_1,v_2}^{\noise} \int_0^\mu
\norm{
 \big(\X K_{2,\nu}^{\otimes2} \dot\lambda^{\tOnesmall,\vec{v}}_{\epnu} \big)(s-r)}_{L_s^\infty L_{r}^1}\rmd \nu\,.
\end{equs}
To go from the first to the second inequality, we used \eqref{eq:spacederivKmu}, and to go from the second to the third inequality, we use the fact that $\bfL_\tau$ is a rescaling that preserves the $L^1$ norm.

Then, using the support property \eqref{eq:suppGmu} of the kernel $\dot G_\nu$ and the estimate \eqref{eq:expdecayPol1}, that allows to benefit from the presence of the weight $\bfX$ despite the convolution with $K^{\otimes2}_{2,\nu}$, we end up with
\begin{equs}    \norm{K_{2,\mu}^{\otimes3}\E[\d_\eps^u\xi_\epmu^{\tOnesmall,(\mathrm{II})}](t,s,r)}_{L^\infty_t L_s^1 L_{r}^1}&\lesssim\sum_{v_1,v_2}^{\noise}
    \mu^{-1}\int_0^\mu \nu
\norm{
 K_{2,\nu}^{\otimes2} \dot\lambda^{\tOnesmall,\vec{v}}_{\epnu} (s-r)}_{L_s^\infty L_{r}^1}\rmd \nu\,.
 \end{equs}
 Using, \eqref{eq:astepinthelemmas}, we thus are in the same situation as in the previous lemma, which allows us to conclude.
\end{proof}

\begin{lemma}\label{lem:term3}
  Uniformly in $\epmu\in(0,1]$, we have for $c>0$ small enough
    \begin{equs}        \norm{K_{2,\mu}^{\otimes3}\E[\d_\eps^u\xi_\epmu^{\tOnesmall,(\mathrm{III})}](t,s,r)}_{L^\infty_t L_s^1 L_{r}^1}\lesssim   
    \eps^{u(-1+c)}\mu^{-1+2H-uc}\,.
    \end{equs}
\end{lemma}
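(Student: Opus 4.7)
The plan is to follow the same overall scheme as in Lemmas~\ref{lem:term1} and~\ref{lem:term2}, but using in addition the fact that $\E[\d_\eps^u\xi_\epmu^{\tOnesmall,(\mathrm{III})}]$ is purely local, i.e.\ supported on the small diagonal $\{t=s=r\}$. The two Dirac distributions will be absorbed using \eqref{eq:trick3}, leaving us with a one-dimensional $L^\infty_s$ bound on $K_{2,\mu}\psi_{\eps,\mu}(s)$, where
\begin{equs}
\psi_{\eps,\mu}(s)\eqdef \int_0^\mu \int_\R \big(K_{2,\nu}^{\otimes2}-\Id^{\otimes2}\big)\dot\zeta^{\tOnesmall,\vec v}_{\eps,\nu}(s,w)\,\rmd w\,\rmd\nu\,.
\end{equs}

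The key observation is that the $w$-integration collapses the tensor product: since the kernel $K_{2,\nu}$ integrates to $1$, one has, for any sufficiently nice $f$, the identity
\begin{equs}
\int_\R (K_{2,\nu}^{\otimes2} f)(s,w)\,\rmd w= K_{2,\nu}\Bigl[\int_\R f(\bigcdot,w)\,\rmd w\Bigr](s)\,.
\end{equs}
Applying this to $f=\dot\zeta^{\tOnesmall,\vec v}_{\eps,\nu}$ and setting $F_{\eps,\nu}(s)\eqdef \int_\R \dot\zeta^{\tOnesmall,\vec v}_{\eps,\nu}(s,w)\,\rmd w$, the integrand above becomes $(K_{2,\nu}-\Id)F_{\eps,\nu}(s)$, a scalar-valued object. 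This reduces the estimate to bounding $\norm{K_{2,\mu}(\Id-K_{2,\nu})F_{\eps,\nu}}_{L^\infty_s}$ and integrating in $\nu\in(0,\mu]$.

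Next, we use the commutator estimate \eqref{eq:comKmuKnu} (valid for $\nu\le\mu$), which yields
\begin{equs}
\norm{K_{2,\mu}(\Id-K_{2,\nu})F_{\eps,\nu}}_{L^\infty_s}\lesssim (\nu/\mu)\norm{K_{2,\nu}F_{\eps,\nu}}_{L^\infty_s}\,.
\end{equs}
The factor $\nu/\mu$ is what allows us to convert the $(K_{2,\nu}^{\otimes 2}-\Id^{\otimes 2})$ into a convergence gain at small scales, exactly as in Lemma~\ref{lem:term1}. To control $\norm{K_{2,\nu}F_{\eps,\nu}}_{L^\infty_s}$, observe that by the same identity as above, $K_{2,\nu}F_{\eps,\nu}(s)=\int_\R (K_{2,\nu}^{\otimes2}\dot\zeta^{\tOnesmall,\vec v}_{\eps,\nu})(s,w)\,\rmd w$, and hence
\begin{equs}
\norm{K_{2,\nu}F_{\eps,\nu}}_{L^\infty_s}\leqslant \norm{K_{2,\nu}^{\otimes2}\dot\zeta^{\tOnesmall,\vec v}_{\eps,\nu}}_{L^\infty_sL^1_w}\lesssim \eps^{u(-1+c)}\nu^{-2+2H-uc}\,,
\end{equs}
where the last inequality is exactly \eqref{eq:astepinthelemmas} from the proof of Lemma~\ref{lem:term1}.

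Combining the two bounds and integrating gives
\begin{equs}
\norm{K_{2,\mu}^{\otimes3}\E[\d_\eps^u\xi_\epmu^{\tOnesmall,(\mathrm{III})}]}_{L^\infty_tL^1_sL^1_r}\lesssim \mu^{-1}\eps^{u(-1+c)}\int_0^\mu \nu^{-1+2H-uc}\,\rmd\nu\lesssim \eps^{u(-1+c)}\mu^{-1+2H-uc}\,,
\end{equs}
where the $\nu$-integral is finite thanks to $2H-uc>0$ for $c$ sufficiently small. The only mildly subtle point is the collapse of the $w$-integration via $\int K_{2,\nu}=1$, which is what makes the factor $(K_{2,\nu}^{\otimes 2}-\Id^{\otimes 2})$ behave like the one-dimensional operator $(K_{2,\nu}-\Id)$ and allows the direct application of \eqref{eq:comKmuKnu}; everything else reduces to tools already developed for Lemma~\ref{lem:term1}.
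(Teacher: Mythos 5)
Your proof is correct and follows essentially the same route as the paper's: eliminate the Dirac factors via \eqref{eq:trick3}, exploit that the kernels integrate to one, invoke the commutator estimate \eqref{eq:comKmuKnu} to harvest the factor $\nu/\mu$, and close via \eqref{eq:astepinthelemmas}. The one cosmetic difference is in how the $w$-integration is handled: you collapse $\int_\R K_{2,\nu}^{\otimes2}(\cdot,w)\,\rmd w$ directly into the one-dimensional operator $K_{2,\nu}$ acting on $F_{\eps,\nu}(s)=\int_\R\dot\zeta^{\tOnesmall,\vec v}_{\eps,\nu}(s,w)\,\rmd w$, so that \eqref{eq:comKmuKnu} applies literally in one variable, whereas the paper inserts a harmless $K_{2,\mu}$ in the $w$-variable so as to land exactly on the two-variable intermediate expression \eqref{eq:stepfromlemterm3} already handled in Lemma~\ref{lem:term1} — both maneuvers rest on the same fact ($\int K_{2,\cdot}=1$) and lead to the same bound.
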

\begin{proof}
This time, we deal with a local contribution, and get rid of the Dirac distributions using \eqref{eq:trick3}. This yields
\begin{equs}    \norm{K_{2,\mu}^{\otimes3}\E[\d_\eps^u\xi_\epmu^{\tOnesmall,(\mathrm{III})}](t,s,r)}_{L^\infty_t L_s^1 L_{r}^1}&\lesssim\sum_{v_1,v_2}^{\noise} 
\int_0^\mu
\Big\Vert
\int_\R K_{2,\mu}^{\otimes2} \big( K_{2,\nu}^{\otimes2}-\Id^{\otimes2}\big) \dot\lambda^{\tOnesmall,\vec{v}}_{\epnu}(s-w) \rmd w\Big\Vert_{L^\infty_ s}\rmd\nu\\
&\lesssim\sum_{v_1,v_2}^{\noise} 
\int_0^\mu
\norm{K_{2,\mu}^{\otimes2} \big( K_{2,\nu}^{\otimes2}-\Id^{\otimes2}\big) \dot\lambda^{\tOnesmall,\vec{v}}_{\epnu}(s-w) }_{L^\infty_ s L^1_w}\rmd\nu
\,.
\end{equs}
Here, note that in the first inequality, we used the freedom we have to introduce by hand a kernel $K_{2,\mu}$ inside the integral over $w$. We now are in the same situation as in Lemma~\ref{lem:term1} (see \eqref{eq:stepfromlemterm3}), so that the proof follows.
\end{proof}
\begin{lemma}\label{lem:term4}
  Uniformly in $\epmu\in(0,1]$, we have for $c>0$ small enough
    \begin{equs}        \norm{K_{2,\mu}^{\otimes3}\E[\d_\eps^u\xi_\epmu^{\tOnesmall,(\mathrm{IV})}](t,s,r)}_{L^\infty_t L_s^1 L_{r}^1}\lesssim   
    \eps^{u(-1+c)}\mu^{-1+2H-uc}\,.
    \end{equs}
\end{lemma}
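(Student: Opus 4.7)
The plan is to follow closely the template of Lemma~\ref{lem:term3}, exploiting that here the $\nu$-integration runs over $[\mu,1]$, \emph{above} the effective scale; this renders the analysis strictly simpler than the preceding three terms, since no singular endpoint at $\nu=0$ is involved. First I would apply \eqref{eq:trick3} to absorb the two Dirac distributions $\delta(t-s)\delta(s-r)$, reducing the left-hand side to
\[
\sum_{v_1,v_2}^{\noise}\int_\mu^1 \norm{K_{2,\mu}\int_\R \dot\zeta^{\tOnesmall,\vec{v}}_{\epnu}(s,w)\rmd w}_{L^\infty_s}\rmd\nu,
\]
and then, exactly as in Lemma~\ref{lem:term3}, insert by hand an extra $K_{2,\mu}$ kernel on the $w$-variable to pass to the stronger $L^\infty_sL^1_w$ norm on $K_{2,\mu}^{\otimes 2}\dot\zeta^{\tOnesmall,\vec v}_{\epnu}$.

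The integrand would then be estimated along the lines of \eqref{eq:astepinthelemmas}: shift the two $\mcP^\dagger_{2,\mu}$ operators arising from $\mcP^\dagger_{2,\mu}K_{2,\mu}=\Id$ onto the factor $\dot G_\nu(s-w)$. Since $\mu\leqslant\nu$, any derivative of $\dot G_\nu$ produced in this process, after being compensated by the prefactor $\mu$, yields a factor $\mu/\nu\leqslant 1$; applying \eqref{eq:bound_cov2} to the remaining $K_{2,\mu}^{\otimes 2}\Cov_\eps^{v_1,v_2}$, and using that $\dot G_\nu$ localizes $|s-w|\sim \nu$ (producing an extra $\nu$ from the $w$-integration), would give
\[
\norm{K_{2,\mu}^{\otimes 2}\dot\zeta^{\tOnesmall,\vec v}_{\epnu}}_{L^\infty_sL^1_w}\lesssim \eps^{u(-1+c)}\nu^{-2+2H-uc}.
\]

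The final step is the integration $\int_\mu^1\nu^{-2+2H-uc}\rmd\nu\lesssim \mu^{-1+2H-uc}$, in which the upper endpoint is harmless and the lower endpoint provides the desired scaling thanks to $-2+2H-uc<-1$ (guaranteed for $H<1/2$, or for $H=1/2$ with $uc>0$). The only mildly subtle point I foresee is the boundary regime $H=1/2$, $u=0$, where the naive estimate would produce a spurious logarithm; in that case one must exploit that $\Cov_\eps$ is concentrated at scale $\lesssim\eps$ while $\dot G_\nu$ is supported at scale $\nu$, so that the integrand vanishes once $\nu$ exceeds the support of $\rho_\eps\ast\rho_\eps$, making the $\nu$-integral trivially bounded. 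No serious obstacle is foreseen; this is, by design, the easiest of the four contributions.
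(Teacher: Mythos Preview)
Your overall template matches the paper's: reduce via \eqref{eq:trick3}, insert a kernel by hand in the $w$-variable, pass to the $L^\infty_s L^1_w$ norm, and then integrate $\int_\mu^1\nu^{-2+2H-uc}\rmd\nu\lesssim\mu^{-1+2H-uc}$. The gap is in your derivation of the integrand bound. You propose to shift $\mcP^\dagger_{2,\mu}$ onto $\dot G_\nu$ and then apply \eqref{eq:bound_cov2} to $K_{2,\mu}^{\otimes 2}\Cov_\eps^{v_1,v_2}$ --- but \eqref{eq:bound_cov2} at smoothing scale $\mu$ yields $\eps^{u(-1+c)}\mu^{-2+2H-uc}$, not $\nu^{-2+2H-uc}$: the exponent in that estimate tracks the scale of the kernel acting on $\Cov_\eps$, and in your scheme that scale is $\mu$. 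Your ``extra $\nu$ from the $w$-integration'' is not there either: once the $\dot G_\nu$-type factor has been absorbed into the integrable kernel produced by the $\mcP^\dagger_{2,\mu}$ manipulation, its $L^1$ norm is $O(1)$, so integrating it out contributes no power of $\nu$. With integrand $\mu^{-2+2H-uc}$, the $\nu$-integral over $[\mu,1]$ produces $\mu^{-2+2H-uc}$, one full power of $\mu$ worse than the claim.

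The paper's route differs at exactly this point: it invokes \eqref{eq:astepinthelemmas}, whose $\nu^{-2+2H-uc}$ arises because $K_{2,\nu}^{\otimes 2}$ --- not $K_{2,\mu}^{\otimes 2}$ --- is what ends up on the covariance. The correct mechanism is that $\Cov_\eps$ must be probed at the scale $\nu$ at which $\dot G_\nu$ localises, not at the outer scale $\mu$. To repair your argument you must either arrange for $K_{2,\nu}^{\otimes 2}$ to land on $\Cov_\eps$ (as in \eqref{eq:astepinthelemmas}), or exploit directly that $\dot G_\nu(s-w)$ forces $|s-w|\in[\nu,2\nu]$ so that only the size of $\Cov_\eps^{v_1,v_2}$ at distance $\sim\nu$ from the diagonal is relevant.
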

\begin{proof}
Again, we use \eqref{eq:trick3} to eliminate the two Dirac distributions, which yields 
\begin{equs}    \norm{K_{2,\mu}^{\otimes3}\E[\d_\eps^u\xi_\epmu^{\tOnesmall,(\mathrm{IV})}](t,s,r)}_{L^\infty_t L_s^1 L_{r}^1}&\lesssim\sum_{v_1,v_2}^{\noise} 
\int_\mu^1
\Big\Vert
\int_\R K_{2,\mu}^{\otimes2}\dot\lambda^{\tOnesmall,\vec{v}}_{\epnu}(s-w) \rmd w\Big\Vert_{L^\infty_ s}\rmd\nu\\
&\lesssim\sum_{v_1,v_2}^{\noise} 
\int^1_\mu
\norm{K_{2,\mu}^{\otimes2} \dot\lambda^{\tOnesmall,\vec{v}}_{\epnu}(s-w) }_{L^\infty_ s L^1_w}\rmd\nu
\,.
\end{equs}  
As in the proof of the previous lemma, in the first inequality, we inserted by hand a kernel $K_{2,\mu}$ inside the integral over $w$. We are now ready to conclude, since using \eqref{eq:astepinthelemmas} we end up with 
\begin{equs}    \norm{K_{2,\mu}^{\otimes3}\E[\d_\eps^u\xi_\epmu^{\tOnesmall,(\mathrm{IV})}](t,s,r)}_{L^\infty_t L_s^1 L_{r}^1}&\lesssim\eps^{u(-1+c)}\int_\mu^1\nu^{-2+2H-uc}\rmd\nu\lesssim \eps^{u(-1+c)}\mu^{-1+2H-uc} \,.
\end{equs}
\end{proof}

\section{Construction of the non-stationary force coefficients}\label{Sec:Sec5}
This section is devoted to the proof of Theorem~\ref{thm:stoZeta}.
We construct the non-stationary force coefficients $(\zeta^\tau_\epmu)^{\tau\in\mcT}_{\epmu\in(0,1]}$ starting from the stationary force coefficients $(\xi^\tau_\epmu)^{\tau\in\mcT}_{\epmu\in(0,1]}$ using deterministic arguments, and with the input that the stationary force coefficients verify the estimate \eqref{eq:boundXi}, which was proven in Section~\ref{sec:proba}. 

Recall that we can express the stationary effective force $S_\epmu$ defined as
\begin{equs}
   S_{\epmu}[v](t)=\sum_{\tau\in \mcT} \int_{\R^{\mfs(\tau)}}\langle \xi_\epmu^{\tau}(t,s^{\tau}),\Upsilon^{\tau}[v+{\tt u}](s^{\tau})\rangle_{\mcH^{\tau}}\rmd s^{\tau}\,,
\end{equs}
and that it verifies $S_\epmu=\Pi S_\epmu$ and solves $\Pol_\mu(S_\epmu)=0$ with initial condition $S_\eps$.

We now aim to make sense of the solution $F_\epmu$ to $\Pol_{\mu}(F_\epmu)=0$ with initial condition $F_{\eps,0}=\1_{\geqslant0}S_\eps[\bigcdot]$. The strategy is to show that far from 0, $F_\epmu$ coincides with $S_\epmu$, and that it is therefore constructed with the same renormalisation, which ultimately turns out to be the same renormalisation necessary to make sense of $S_\epmu$.

Moreover, since for $\psi\in\mcD(\R)$, $F_\eps[\psi](t)$ is supported on positive times, one is only interested in constructing $F_\epmu[\psi](t)$ supported on positive times. We therefore make the ansatz that $F_\epmu$ is of the form
\begin{equs}\label{eq:Fmu_ansatz}
    F_{\epmu}[v](t)=\sum_{\tau\in \mcT} \int_{\R^{\mfs(\tau)}}\langle \zeta_\epmu^{\tau}(t,s^{\tau}),\Upsilon^{\tau}[v+{\tt u}](s^{\tau})\rangle_{\mcH^{\tau}}\rmd s^{\tau}\,,
\end{equs}
for $(\zeta^\tau_\epmu)^{\tau\in\mcT}_{\epmu\in(0,1]}$ a collection of force coefficients such that $\zeta^\tau_\epmu(t,s^\tau)=\1_{\geqslant0}(t)\zeta^\tau_\epmu(t,s^\tau)$.

The crucial observation in Section 10.4 of \cite{Duch21} is that $\xi^\tau_\epmu$ and $\zeta^\tau_\epmu$ coincide when evaluated sufficiently far (depending on $\mu$) from the the zero time plane in their first arguments. 
This is the content of the following lemma, analogous to \cite[Lemma~10.47]{Duch21}.  
\begin{lemma}\label{lem:supportzeta}
  Fix $\tau\in\mcT$, $\epmu\in(0,1]$ and $s^\tau\in\R^{\mfs(\tau)}$. For all $t\in (2\mu\mfo(\tau),1]$, it holds
    \begin{equs}
        \zeta^\tau_\epmu(t,s^\tau)=\xi^\tau_\epmu(t,s^\tau)\,.
    \end{equs}
\end{lemma}
This lemma stems from the support properties of $\dot G_\mu$ \eqref{eq:suppGmu}, and from the fact that the initial conditions $S_{\eps,0}$ and $F_{\eps,0}$ are equal after time 0.

Since $\zeta^\tau_\epmu(t,s^\tau)$ is now well-defined for $t > 2\mu\mfo(\tau)$, it remains to construct it for $t\in(0,2\mu\mfo(\tau)]$. 
Here we follow the proof of  \cite[Theorem~10.50]{Duch21} and leverage the fact that the region $(0,2\mu\mfo(\tau)]$ has Lebesgue measure of order $\mu $ which will result in a good factor allowing one to integrate the flow equation for $\zeta^\tau_\epmu(t,s^\tau)$, provided one works with some $L^1$ norms in time instead of the usual $L^\infty$ norms. \cite[Lemma~10.52]{Duch21} is necessary to leveraging the good factors coming from the use of the $L^1$ norm. We therefore translate it with our notation:
\begin{lemma}
 For any $N\in\N$, $p\in[1,\infty]$ and $\mu\in(0,1]$, we denote by $\mcW^p_{N,\mu}$ the set of all time weights $u_\mu\in \mcD(\R)$ verifying
\begin{equs}
  \max_{i\leqslant N} 
    \mu^{-1/p+i}
    \norm{\d^i_t u_\mu}_{L^p((-\infty,1])}< \infty\,.
\end{equs}
Then, for every $\psi\in \mcD(\R)$, $M\geqslant N$ and $q\in[1,\infty]$, uniform in $\mu\in(0,1]$ we have
    \begin{equs}\label{eq:1052a}      \norm{K_{M,\mu}\big(u_\mu\psi)}_{L^{q}}&\lesssim \norm{K_{M,\mu}\psi}_{L^{q}}\,,\;\text{provided} \; u_\mu \in\mcW^\infty_{N,\mu}\,,\\
  \label{eq:1052b}        \norm{K_{M,\mu}\big(u_\mu\psi)}_{L^{1}}&\lesssim\mu \norm{K_{M,\mu}\psi}_{L^\infty}\,,\;\text{provided} \; u_\mu \in\mcW^1_{N,\mu}\,.
    \end{equs} 
\end{lemma}

\begin{proof}[of Theorem~\ref{thm:stoZeta}]
 The proof is by induction on the order of the force coefficient. 
 
We start by discussing the initialisation. in view of \eqref{eq:trick2}, it suffices to control the non-stationary noise. Setting $Q=Q_1$, recall that the kernel of $Q_\mu$ verifies $Q_\mu(t)=\mcS_\mu Q(t)$. Here, we view $Q(t-s)$, $Q_\mu(t-s)$ as bilinear kernels, and therefore $Q_\mu(t-s)=\mu\mcS_\mu^{\otimes2}Q(t-s)$. With this observation, we have 
 \begin{equs}
     \E[Q_\mu(\1_{\geqslant0}\xi)^2(t)]=\mu^2 \langle\mcS_\mu^{\otimes2}Q({t-\bigcdot})\otimes\mcS_\mu^{\otimes2}Q({t-\bigcdot}),\Cov^+\rangle\,.  
 \end{equs}
Here, in view of the homogeneity property of $\Cov$ stated in \eqref{eq:homogeneity}, we obtain that
\begin{equs}
    \E[Q_\mu(\1_{\geqslant0}\xi)^2(t)]&= \mu^{2H}\langle\mcS_\mu^{\otimes(1,0)}Q({t-\bigcdot})\otimes\mcS_\mu^{\otimes(1,0)}Q({t-\bigcdot}),\Cov^+\rangle\,,
\end{equs}
where we used the shorthand $\mcS_\mu^{\otimes(1,0)}\eqdef\mcS_\mu\otimes \Id$. Note that $\Cov$ can be tested against $Q^{\otimes2}$: indeed, recalling the definition \eqref{eq:defCov} of the covariance of the noise, we see that $Q$ can indeed absorb one time derivative and yields a kernel that behaves like a Dirac on short scales, which in turn can be tested against $\Cov^+_W$ that is continuous in time. $\langle\mcS_\mu^{\otimes(1,0)}Q({t-\bigcdot})\otimes\mcS_\mu^{\otimes(1,0)}Q({t-\bigcdot}),\Cov\rangle$ is thus equal to $\mcS_\mu^{\otimes 2}f(t,t)$ where $f$ is a bona fine bounded continuous function. This quantity is therefore bounded by $\mu^{-2}$ uniformly in $t$. Therefore, by a classical Kolmogorov argument and hyper-contractivity, for $\eta>0$, we have
\begin{equs}
    \norm{Q_\mu(\1_{\geqslant0}\xi)}_{L^\infty([0,1])}\lesssim\mu^{-2+2H-\eta}\,,
\end{equs}
where the implicit constant is an $L^P$ random variable for every $P\geqslant1$. A similar computation would yield
\begin{equs}
    \norm{Q_\mu(\1_{\geqslant0}(1-\rho_\eps)\xi)}_{L^\infty([0,1])}\lesssim\eps^\eta\mu^{-2+2H-2\eta}\,.
\end{equs}

We now turn to the induction step. To lighten notation, we only prove the bound for $\Norm{\zeta}_{P,\eta}$, the proof of the convergence as $\eps\downarrow0$ being totally similar, and only requiring a heavier notation.

Two distinguish between the cases $t\geqslant 2\mu\mfo(\tau)$ and $t\geqslant 2\mu\mfo(\tau)$, we introduce two collections of time weights $v=(v_\mu)_{\mu\in(0,1]}$, $w=(w_\mu)_{\mu\in(0,1]}$ defined in such a way that we have $v_\mu\in\mcW^\infty_{N,\mu}$ and $ w_\mu\in\mcW^1_{N,\mu}\cap\mcW^\infty_{N,\mu}$ for all $N\in\N$ and that it holds 
\begin{equs}    \text{supp}\, v_\mu\subset[4\mu,\infty)\,,\;\text{and}\;\text{supp}\,\hat w_\mu\subset[-6\mu,6\mu]\,,
\end{equs}
as well as $ v_\mu(t)+ w_\mu(t)=1$ if $t\geqslant0$.

Using Lemma~\ref{lem:supportzeta}, for $t\geqslant0$, we thus have
\begin{equs} \label{eq:zetamu}   \zeta^\tau_\epmu(t,s^\tau)= v_\mu\xi_\epmu^\tau(t,s^\tau)+ w_\mu\zeta^\tau_\epmu(t,s^\tau)\,.
\end{equs}


We deal with the first term in \eqref{eq:zetamu} as follows: we want to control
\begin{equs}    \nnorm{ v_\mu\xi^\tau_\epmu}_{4+\mfo(\tau)}=
  \nnorm{ v_\mu\omega\xi^\tau_\epmu}_{4+\mfo(\tau)}\lesssim
    \nnorm{ v_\mu\omega\xi^\tau_\epmu}_{4}
\,,
\end{equs}
where we inserted a smooth function $\omega$ supported on $(-4\mu-1,2]$ and which is equal to one on $(-4\mu^2,1]$. Since $ v_\mu\in\mcW^\infty_{N,\mu}$, we therefore use \eqref{eq:1052a} and \eqref{eq:boundXi} to obtain
\begin{equs}    \nnorm{v_\mu\xi^\tau_\epmu}_{4+\mfo(\tau)}\lesssim\nnorm{\omega\xi^\tau_\epmu}_{4}\lesssim\mu^{|\tau|-\eta}\,.
\end{equs}
Let us turn to the second term in \eqref{eq:zetamu}. The proof is by induction on the order of $\tau$, assuming we already have
\begin{equs}\label{eq:rechypo}
    \nnorm{\zeta_\epmu^{\tau}}_{4+\mfo(\tau)}\lesssim\mu^{|\tau|-\eta}
\end{equs}
for all the trees of lower order. We rely on the following flow equation:
\begin{equs}
    \d_\mu\big( w_\mu\zeta^\tau_\epmu\big)(t,s^\tau)&= w_\mu\d_\mu\zeta^\tau_\epmu(t,s^\tau)+(\d_\mu \hat w_\mu)\zeta^\tau_\epmu(t,s^\tau)\\
    &= w_\mu\d_\mu\zeta^\tau_\epmu(t,s^\tau)-(\d_\mu  v_\mu)\xi^\tau_\epmu(t,s^\tau)\,.\label{eq:dmuwmuzeta}
\end{equs}
While we use \eqref{eq:boundXi} to control the second term in the r.h.s. of \eqref{eq:dmuwmuzeta}, we use the flow equation for the force coefficients to handle the first one. Indeed, taking the weight $ w_\mu$ into account, the flow equation rewrites
\begin{equs}\label{eq:flowwmuzetamu}
  w_\mu    \d_\mu\zeta_\epmu^{\tau}(t,s^\tau)&=-\sum_{\substack{(\tau_1,\tau_2)\in\text{\scriptsize{$\mathrm{Ind}(\tau)$}}}}    \big(\zeta_\epmu^{\tau_2}   \,\graft^\mu w_\mu\zeta_\epmu^{\tau_1}\big)^{\tau}(t,s^\tau)\,.
\end{equs}

Recall the notation $        K_{N,\mu}^{1+\mfs(\tau)}\mfz^\tau(t,s^\tau)=\big(K_{N,\mu}\otimes\dots\otimes K_{N,\mu}\big)*\mfz^\tau(t,s^\tau)$. For $N\in\N_{\geqslant1}$, we define a new norm $\nnorm{\bigcdot}_{1,N}$ inspired by $\nnorm{\bigcdot}_{N}\equiv\nnorm{\bigcdot}_{N,\mu}$ by setting
    \begin{equs}        \nnorm{\mfz^\tau}_{1,N}\eqdef \norm{K^{\otimes1+\mfs(\tau)}_{N,\mu}\mfz^\tau}_{L^{1}_{t}L^1_{s^\tau}((-\infty,1]\times\R^{\mfs(\tau)})}\equiv \int_{(-\infty,1]\times \R^{\mfs(\tau)}} |K^{\otimes1+\mfs(\tau)}_{N,\mu} \mfz^\tau(t,s^\tau)|\rmd t\rmd s^\tau\,.
    \end{equs}    
The interest of this norms lies in the fact that using \eqref{eq:dmuwmuzeta} and the Sobolev type estimate \eqref{eq:KmuLp} we  have 
\begin{equs}    \nnorm{w_\mu\zeta^\tau_\epmu}_{4+\mfo(\tau)}\lesssim\mu^{-1}\nnorm{ w_\mu\zeta^\tau_\epmu}_{1,3+\mfo(\tau)}\lesssim\mu^{-1}
\int_0^\mu\Big(\nnorm{w_\nu\d_\nu\zeta^\tau_\epnu}_{1,3+\mfo(\tau)}+\nnorm{\d_\nu  v_\nu\xi^\tau_\epnu}_{1,3+\mfo(\tau)} \Big)\rmd\nu\,.
\end{equs}
To control the second term, observe that $\d_\nu v_\nu(t)=\nu^{-1}z_\nu(t)$ with $z_\nu(t)=-(\Id v')(t/\nu)$, from which we deduce that $z_\nu\in\mcW^1_{N,\mu}$ for all $N\in\N$. Thus, \eqref{eq:1052b} and \eqref{eq:boundXi} imply that 
\begin{equs}
    \nnorm{\d_\nu v_\nu\xi^{\tau}_\epnu}_{1,3+\mfo(\tau)}\lesssim
    \nu^{-1}
     \nnorm{z_\nu\xi^{\tau}_\epnu}_{1,4}\lesssim  \nnorm{\omega\xi^{\tau}_\epnu}_{1,4}\lesssim\nu^{|\tau|-\eta}\,,
\end{equs}
where as before we inserted a smooth compactly supported function $\omega$.

To control the first term, we combine the flow equation \eqref{eq:flowwmuzetamu} and \eqref{eq:propB} with the fact that $\mfo(\tau)-1\geqslant \mfo(\tau_i)$ for $i=1,2$: this gives
\begin{equs}    \nnorm{w_\nu\d_\nu\zeta^\tau_\epnu}_{1,3+\mfo(\tau)} &\lesssim 
\sum_{\substack{(\tau_1,\tau_2)\in\text{\scriptsize{$\mathrm{Ind}(\tau)$}}}}   
    \norm{\mcP_{12,\nu}\dot G_\nu}_{\mcL^{\infty,\infty}}\nnorm{ w_\nu\zeta^{\tau_1}_\epnu}_{1,4+\mfo(\tau_1)}\nnorm{\zeta^{\tau_2}_\epnu}_{4+\mfo(\tau_2)}
    \,.
\end{equs}
Here, we can use the induction hypothesis and \eqref{eq:1052b} to control 
\begin{equs}
    \nnorm{ w_\nu\zeta^{\tau_1}_\epnu}_{1,4+\mfo(\tau_1)}\lesssim\nu
    \nnorm{\zeta^{\tau_1}_\epnu}_{4+\mfo(\tau_1)}\lesssim
   \nu^{1+|b|-\eta}\,,\quad\text{and}\quad      \nnorm{\zeta^{\tau_2}_\epnu}_{4+\mfo(\tau_2)}\lesssim
  \nu^{|c|-\eta}\,,
\end{equs}
which yields
\begin{equs}      \nnorm{w_\nu\d_\nu\zeta^\tau_\epnu}_{1,3+\mfo(\tau)}&\lesssim \sum_{\substack{(\tau_1,\tau_2)\in\text{\scriptsize{$\mathrm{Ind}(\tau)$}}}}  
   \nu^{1+|\tau_1|+|\tau_2|-2\eta}
 \lesssim\nu^{|\tau|-2\eta}
    \,.
\end{equs}
Because $|\tau|\geqslant-1+H$, the singularity at $\mu=0$ is now integrable. We thus end up with
\begin{equs}    \nnorm{w_\mu\zeta^\tau_\epmu}_{4+\mfo(\tau)}\lesssim\mu^{-1}
\int_0^\mu\nu^{|\tau|-\eta}\rmd\nu\lesssim\mu^{|\tau|-\eta}\,,
\end{equs}
which concludes the proof.
\end{proof}

\end{appendix}

\bibliographystyle{Martin}
\bibliography{refs.bib}

\end{document}